\definecolor{green}{rgb}{0,0.8,0} 
\definecolor{deepgreen}{cmyk}{1,0,1,0.5}
\newcommand{\EQ}[1]{\begin{equation}\begin{split} #1 \end{split}\end{equation}}
\newcommand{\Del}[1]{}
\numberwithin{equation}{section}
\newtheorem{theorem}{Theorem}[section]
\newtheorem{corollary}[theorem]{Corollary}
\newtheorem{lemma}[theorem]{Lemma}
\newtheorem{proposition}[theorem]{Proposition}
\newtheorem{remark}[theorem]{Remark}
\newtheorem{definition}[theorem]{Definition}
\newcommand{\jap}[1]{\langle #1\rangle}
\renewcommand{\Re}{\mathrm{Re}}
\renewcommand{\Im}{\mathrm{Im}}
\renewcommand{\hbar}{{\underline h}}
\newcommand{\bbC}{\mathbb C}
\newcommand{\bbR}{\mathbb R}
\newcommand{\bbZ}{\mathbb Z}
\newcommand{\calE}{\mathcal E}
\newcommand{\calF}{\mathcal F}
\newcommand{\calI}{\mathcal I}
\newcommand{\calL}{\mathcal L}
\newcommand{\calO}{\mathcal O}
\newcommand{\calS}{\mathcal S}
\newcommand{\wtilg}{{\widetilde{g}}}
\newcommand{\wtilD}{{\widetilde{D}}}
\newcommand{\wtilY}{{\widetilde{Y}}}
\newcommand{\px}{\partial_x}
\newcommand{\pt}{\partial_t}
\newcommand{\ps}{\partial_s}
\newcommand{\jt}{\jap{t}}
\newcommand{\js}{\jap{s}}
\newcommand{\jx}{\jap{x}}
\newcommand{\jxi}{\jap{\xi}}
\newcommand{\tD}{\widetilde{D}}
\newcommand{\jtD}{\jap{\widetilde{D}}}
\newcommand{\hf}{\frac{1}{2}}
\newcommand{\thf}{\frac{3}{2}}
\newcommand{\wtcalF}{\widetilde{\calF}}
\newcommand{\bR}{{\mathbb R}}
\newcommand{\ud}{\mathrm{d}}
\DeclareMathOperator{\sech}{sech}
\def\R{\mathbb{R}}
\def\eps{\varepsilon}
\def\nn{\nonumber}
\def\ol{\overline}
\def\vphi{\varphi}
\def\les{\lesssim}
\def\one{\mathbbm{1}}
\def\les{\lesssim} 
\def\calL{\mathcal{L}}
\def\calS{\mathcal{S}}
\def\tD{\widetilde{D}}
\def\Id{\mathrm{Id}}
\begin{document}

\title[On modified scattering for 1D quadratic KG equations with non-generic potentials]{On modified scattering for 1D quadratic Klein-Gordon equations with non-generic potentials}

\author[H. Lindblad]{Hans Lindblad}
\address{Department of Mathematics \\ Johns Hopkins University \\ Baltimore, MD 21218, USA}
\email{lindblad@math.jhu.edu}

\author[J. L\"uhrmann]{Jonas L\"uhrmann}
\address{Department of Mathematics \\ Texas A\&M University \\ College Station, TX 77843, USA}
\email{luhrmann@math.tamu.edu}

\author[W. Schlag]{Wilhelm Schlag}
\address{Department of Mathematics \\ Yale University \\ New Haven, CT 06511, USA}
\email{wilhelm.schlag@yale.edu}

\author[A. Soffer]{Avy Soffer}
\address{Mathematics Department, Rutgers University, New Brunswick, NJ 08903, USA}
\email{soffer@math.rutgers.edu}

\thanks{
H. Lindblad was partially supported by NSF grant DMS-1500925 and by Simons Foundation Collaboration grant 638955. 
J. L\"uhrmann was partially supported by NSF grant DMS-1954707. 
A. Soffer was partially supported by NSF grant DMS-1600749 and by NSFC11671163. 
W. Schlag was partially supported by NSF grant DMS-1902691.
}

\begin{abstract}
 We consider the asymptotic behavior of small global-in-time solutions to a 1D Klein-Gordon equation with a spatially localized, variable coefficient quadratic nonlinearity and a non-generic linear potential. 
 The purpose of this work is to continue the investigation of the occurrence of a novel modified scattering behavior of the solutions that involves a logarithmic slow-down of the decay rate along certain rays.
 This phenomenon is ultimately caused by the threshold resonance of the linear Klein-Gordon operator.
 It was previously uncovered for the special case of the zero potential in~\cite{LLS2}.
 The Klein-Gordon model considered in this paper is motivated by the asymptotic stability problem for kink solutions arising in classical scalar field theories on the real line. 
\end{abstract}

\maketitle 

\tableofcontents

\section{Introduction}

We study the long-time behavior of small global-in-time solutions to the Cauchy problem for the following $(1+1)$-dimensional Klein-Gordon equation
\begin{equation} \label{equ:intro_nlkg}
 \left\{ \begin{aligned}
          (\partial_t^2 - \partial_x^2 + m^2 + V(x)) u &= P_c \bigl( \alpha(\cdot) u^2 \bigr)  \text{ on } \bbR^{1+1}, \\
          (u, \partial_t u)|_{t=0} &= (P_c u_0, P_c u_1),
         \end{aligned} \right.
\end{equation}
where the potential $V(x)$ and the variable coefficient $\alpha(x)$ are sufficiently smooth and decaying, where $m > 0$ is the mass parameter, and where the real-valued initial data $(u_0, u_1)$ are small in weighted Sobolev spaces. As a core assumption in this paper, we suppose that the Schr\"odinger operator $H = -\px^2 + V(x)$ exhibits a zero energy resonance, i.e., a non-trivial bounded solution of $H \varphi = 0$ that approaches $1$ as $x \to \infty$ and a non-zero constant as $x \to -\infty$, see Definition~\ref{def:reson}. In other words, we assume that the potential $V(x)$ is non-generic. The projection onto the continuous spectral subspace of $L^2(\bbR)$ relative to $H$ is denoted by $P_c$.

The goal of this work is to continue the investigation of the occurrence of a novel modified scattering behavior of small solutions to~\eqref{equ:intro_nlkg} that features a logarithmic slow-down of the free decay rate along certain rays. This phenomenon was recently discovered in~\cite{LLS2} in the special case $V(x) = 0$ and is ultimately caused by the threshold resonance of the linear operator $-\px^2 + m^2 + V(x)$. In this regard it is worth to record that a peculiar feature of the Laplacian in one space dimension -- in contrast to higher odd space dimensions -- is that it possesses a zero energy resonance, namely the constant function $1$.
We also refer to the beginning of Section~\ref{sec:spectral_scattering_theory} for precise definitions of some of the spectral theory terminology used in this introduction.

\subsection{Motivation}

Our interest in the model~\eqref{equ:intro_nlkg} stems from the asymptotic stability problem for kink solutions arising in classical scalar field theory models on the real line. 
Kinks are special soliton solutions to scalar field equations 
\begin{equation} \label{equ:EL_equation}
 (\pt^2 - \px^2) \phi = - W'(\phi) \text{ on } \bbR^{1+1},
\end{equation}
where $W \colon \bbR \to [0, \infty)$ is a sufficiently regular scalar potential that features a double-well, i.e., there exist (at least) two consecutive (global) minima $\phi_-, \phi_+ \in \bbR$ of $W$ with $\phi_- < \phi_+$, $W(\phi_\pm) = W'(\phi_\pm) = 0$, and $W''(\phi_\pm) > 0$.
Trivial solutions to~\eqref{equ:EL_equation} are given by the constant functions $\phi(t, x) = \phi_\pm$ for all $t \in \bbR$. Correspondingly, $\phi_-$ and $\phi_+$ are referred to as vacuum solutions. 
A static solution $\psi(x)$ to~\eqref{equ:EL_equation} that connects the two consecutive vacuua $\phi_-$ and $\phi_+$ is called a kink and satisfies
\begin{equation} \label{equ:static_kink_equation}
 \left\{ \begin{aligned}
  &\px^2 \psi = W'(\psi) \text{ on } \bbR, \\
  &\lim_{x \to \pm \infty} \psi(x) = \phi_\pm.
 \end{aligned} \right.
\end{equation}
Solutions to~\eqref{equ:static_kink_equation} are unique up to spatial translations. Moreover, the Lorentz invariance of~\eqref{equ:EL_equation} gives rise to moving kinks upon applying a Lorentz boost. 

Kinks are simple one-dimensional examples of topological solitons, see e.g.~\cite{MantSut04, Vachaspati06, Lamb80, DauxPey10}.
A fundamental question related to the dynamics of kinks is their asymptotic stability under small perturbations. A perturbative approach to this problem generally consists in decomposing the perturbed solution into the sum of a modulated kink, possibly discrete modes, and a dispersive remainder term. One then studies the long-time dynamics of the associated sytem of ODEs and PDEs. One of the key steps in that analysis is to conclude that the dispersive remainder term decays to zero in a suitable sense.
For concreteness, we now take a closer look at what this part of the problem entails for two prime examples of classical scalar field models on the real line, namely the $\phi^4$ model with
\begin{align*}
 W_{\phi^4}(\phi) &:= \frac14 (1-\phi^2)^2, \qquad \psi_{\phi^4}(x) = \tanh( {\textstyle \frac{x}{\sqrt{2}} } ), 
\end{align*}
and the sine-Gordon model\footnote{The sine-Gordon model is completely integrable and the study of its dynamics is therefore amenable to inverse scattering techniques, see, e.g., the recent work~\cite{CLL20}.} with
\begin{align*}
 W_{sG}(\phi) &:= 1 - \cos(\phi), \qquad \psi_{sG}(x) = 4 \arctan( e^x ).
\end{align*}
To simplify matters, we do not take into account any modulational aspects. 
For perturbations of the static kink $\psi_{\phi^4}(x)$ in the $\phi^4$ model, the remainder term $u(t,x) = \phi(t,x) - \psi_{\phi^4}(x)$ satisfies 
\begin{equation} \label{equ:phi4_perturbation}
 \bigl( \partial_t^2 - \partial_x^2 + 2 - 3 \sech^2( {\textstyle \frac{x}{\sqrt{2}} }) \bigr) u = - 3 \tanh( {\textstyle \frac{x}{\sqrt{2}} } ) u^2 - u^3,
\end{equation}
while for perturbations of the static kink $\psi_{sG}(x)$ in the sine-Gordon model, the remainder term $u(t,x) = \psi(t,x) - \psi_{sG}(x)$ is a solution to 
\begin{equation} \label{equ:sineGordon_perturbation}
 \begin{aligned}
  ( \partial_t^2 - \partial_x^2 + 1 - 2 \sech^2(x) ) u = - \sech(x) \tanh(x) u^2 + \bigl( {\textstyle \frac{1}{6}} - {\textstyle \frac{1}{3}} \sech^2(x) \bigr) u^3 + \bigl\{\text{higher order}\bigr\}.   
 \end{aligned}
\end{equation}
The study of the decay and the asymptotics of small solutions to 1D Klein-Gordon equations such as~\eqref{equ:phi4_perturbation} and~\eqref{equ:sineGordon_perturbation} encompasses several difficulties:
Due to the slow dispersive decay of Klein-Gordon waves in one space dimension, the quadratic and cubic nonlinearities cause long-range effects. In particular, subtle resonance phenomena can occur in the interactions in the (variable coefficient) quadratic nonlinearities. Moreover, the linearized operators may exhibit threshold resonances and may have internal modes, i.e., positive gap eigenvalues below the continuous spectrum. The latter are in fact an obstruction to decay at the linear level.
We note that the linearized operators for the $\phi^4$ model and the sine-Gordon model both exhibit threshold resonances, and that the linearized operator for the $\phi^4$ model additionally features an internal mode. 

Delicate resonance phenomena in the quadratic nonlinearities in 1D Klein-Gordon models such as~\eqref{equ:phi4_perturbation} and~\eqref{equ:sineGordon_perturbation} may lead to novel types of modified scattering behaviors of the solutions that are deeply related to the presence of a threshold resonance in the linearized operator. The purpose of this work is to uncover a precise picture of such behavior for the simplified Klein-Gordon model~\eqref{equ:intro_nlkg}, building on the recent analysis of the flat case $V(x) = 0$ in~\cite{LLS2}.

\subsection{Previous results}

The study of the asymptotic stability of kinks and of the asymptotics of solutions to nonlinear Klein-Gordon equations is a fascinating and vast subject that cannot be reviewed in its entirety here. 
In this subsection we give an overview of previous works that are closely related to the contents of this paper.

\medskip 

We begin with a brief review of orbital and asymptotic stability results for kinks. The orbital stability of kinks for general scalar field models was studied in the classical work of Henry-Perez-Wreszinski~\cite{HPW82}. 
In~\cite{KK11_1, KK11_2} Komech-Kopylova proved the asymptotic stability of kinks with respect to a weighted energy norm for a class of scalar field models with a certain flatness assumption on the potential near the wells and under suitable spectral assumptions (no resonances, presence of an internal mode). 
Kowalczyk-Martel-Mu\~{n}oz \cite{KMM17} established the asymptotic stability of the kink of the $\phi^4$ model locally in the energy space under odd finite energy perturbations. In this regard, Delort-Masmoudi~\cite{DelMas20} very recently obtained long-time dispersive estimates for odd weighted perturbations of the kink of the $\phi^4$ model up to times $T \sim \varepsilon^{-4+c}$, for arbitrary $c>0$, where $\varepsilon$ is the size of the initial data in a weighted Sobolev space. 
A sufficient condition for the asymptotic stability locally in the energy space of (moving) kinks in general $(1+1)$-scalar field models under arbitrary small finite energy perturbations has been introduced by Kowalczyk-Martel-Mu\~{n}oz-Van den Bosch~\cite{KMMV20}. 
Interestingly, the asymptotic stability properties of the kink of the sine-Gordon model hinge delicately on the topology with respect to which the perturbations are measured. The existence of special periodic solutions called wobbling kinks are an obstruction to asymptotic stability in the energy space, see for instance Alejo-Mu\~{n}oz-Palacios~\cite{AMP20} for a discussion. However, the sine-Gordon kink is asymptotically stable under sufficiently strongly weighted perturbations, as has recently been shown by Chen-Liu-Lu~\cite{CLL20} by relying on the complete integrability of the model and using the nonlinear steepest descent method. 
We also refer to the survey~\cite{KMM17_1} and to references therein. 

\medskip 

Next, we give a survey of results on the dispersive decay and the asymptotics of small solutions to one-dimensional Klein-Gordon equations with an eye towards Klein-Gordon models that are related to the asymptotic stability problem for kinks.
We note that the investigation of the long-time behavior of small solutions to Klein-Gordon equations with constant coefficient nonlinearities (in higher space dimensions) originates in the pioneering works of Klainerman~\cite{Kl80, Kl85} and Shatah~\cite{Sh85}.

Due to the slow decay of Klein-Gordon waves in one space dimension, quadratic and cubic nonlinearities exhibit long-range effects. Specifically, Delort~\cite{Del01, Del06}\footnote{We point out that the results of~\cite{Del01, Del06} pertain to more general quasilinear nonlinearities. With an eye towards the asymptotic stability problem for kinks, here we emphasize the applicability of~\cite{Del01, Del06} to the displayed Klein-Gordon model~\eqref{equ:intro_previous_constant_qu_cu}.} established modified scattering of small global solutions to the one-dimensional Klein-Gordon equation
\begin{equation} \label{equ:intro_previous_constant_qu_cu}
 (\pt^2 - \px^2 + 1) u = \alpha_0 u^2 + \beta_0 u^3 \text{ on } \bbR^{1+1}
\end{equation}
with $\alpha_0, \beta_0 \in \bbR$ in the sense that the solutions are shown to decay in $L^\infty_x$ at the rate $t^{-\hf}$ of free Klein-Gordon waves, but that their asymptotics feature logarithmic phase corrections with respect to the free flow. 
An alternative physical space approach was later developed by the first and fourth authors~\cite{LS05_1, LS05_2} in the cubic case, providing a detailed asymptotic expansion of the solution for large times. Subsequently, Hayashi-Naumkin~\cite{HN08, HN12} removed the compact support assumptions about the initial data required in~\cite{Del01, LS05_1, LS05_2}, see also Stingo~\cite{Stingo18} and the work of Candy and the first author~\cite{CL18}.

The study of the asymptotics of small solutions to one-dimensional Klein-Gordon equations with variable coefficient nonlinearities was initiated by the first and fourth authors~\cite{LS15} and by Sterbenz~\cite{Sterb16} for the model
\begin{equation} \label{equ:intro_previous_constant_qu_cu_plus_var_cu}
 (\pt^2 - \px^2 + 1) u = \alpha_0 u^2 + \beta_0 u^3 + \beta(x) u^3 \text{ on } \bbR^{1+1},
\end{equation}
where $\alpha_0, \beta_0 \in \bbR$ and where $\beta(x)$ is a spatially localized, variable coefficient. 
Surprisingly, the addition of a variable coefficient cubic nonlinearity in~\eqref{equ:intro_previous_constant_qu_cu_plus_var_cu} leads to non-trivial difficulties of dealing with the long-range nature of the (non-localized) constant coefficient quadratic and cubic nonlinearities. The latter typically requires to combine energy estimates for weighted vector fields with an ODE argument and normal form methods. In the case of the Klein-Gordon equation, the Lorentz boost $Z = t \px + x \pt$ is the only weighted vector field that commutes with the linear flow. However, differentiation of the variable coefficient by a Lorentz boost produces a strongly divergent factor of~$t$, which seems to place corresponding slow energy growth estimates out of reach.
In~\cite{LS15, Sterb16} the main idea to overcome this issue is the introduction of a variable coefficient cubic normal form. 
More recently, three of the authors~\cite{LLS1} obtained an improvement of~\cite{LS15, Sterb16} using local decay estimates for the Klein-Gordon propagator to overcome difficulties caused by the variable coefficient nonlinearity. 

In~\cite{LLS2} three of the authors recently considered the quadratic Klein-Gordon equation 
\begin{equation} \label{equ:intro_previous_var_qu}
 (\pt^2 - \px^2 + 1) u = \alpha(x) u^2  \text{ on } \bbR^{1+1}
\end{equation}
with a spatially localized coefficient $\alpha(x)$ and uncovered a novel modified scattering behavior of small solutions that involves a logarithmic slow-down of the free decay rate along certain rays. This discovery provided the impetus for the present work. We note that the occurrence of a logarithmic-type slow-down of the decay rate due to the presence of a space-time resonance was pointed out by Bernicot-Germain~\cite{BerGerm13} in a simpler setting of proving bilinear dispersive estimates for quadratic interactions of 1D free dispersive waves. See also~\cite{DIP17, DIPP17} for higher-dimensional instances, where the optimal pointwise decay cannot be propagated by the nonlinear flow (but where the obtained decay rate is not asserted to be sharp).
We emphasize that~\cite[Theorem 1.1]{LLS2} and Theorem~\ref{thm:thm1} of the present work uncover a sharp picture of the asymptotics for one-dimensional nonlinear Klein-Gordon models, where a logarithmic slow-down of the free decay rate occurs. In particular, the origin of the logarithmic loss is precisely identified to stem from the contribution of an explicit resonant source term that is deeply related to the threshold resonance of the Klein-Gordon operator.
Moreover, under the non-resonance assumption $\widehat{\alpha}(\pm \sqrt{3}) = 0$, \cite[Theorem 1.6]{LLS2} establishes that small solutions~to
\begin{equation} \label{equ:intro_previous_var_qu_const_cub_non_resonant}
 (\pt^2 - \px^2 + 1) u = \alpha(x) u^2 + \beta_0 u^3 + \beta(x) u^3 \text{ on } \bbR^{1+1}
\end{equation}
decay in $L^\infty_x$ at the free rate $t^{-\hf}$ and that their asymptotics feature logarithmic phase corrections (caused by the constant coefficient cubic nonlinearity $\beta_0 u^3$). 

Recently, Germain-Pusateri~\cite{GP20} studied the following general one-dimensional quadratic Klein-Gordon equation with a linear potential
\begin{equation} \label{equ:intro_previous_with_pot_ax}
 (\pt^2 - \px^2 + 1 + V(x)) u = a(x) u^2 \text{ on } \bbR^{1+1},
\end{equation}
where $a(x)$ is a smooth coefficient satisfying $a(x) \to \ell_{\pm \infty}$ as $x \to \pm \infty$ for arbitrary fixed $\ell_{\pm \infty} \in \bbR$ (and is thus not necessarily localized) and where $H = - \px^2 + V(x)$ has no bound states. Under the key assumption that the distorted Fourier transform of the solution $\tilde{u}(t,0) = 0$ vanishes at zero frequency at all times $t \in \bbR$, \cite[Theorem 1.1]{GP20} establishes that small solutions to~\eqref{equ:intro_previous_with_pot_ax} decay in $L^\infty_x$ at the free rate $t^{-\hf}$ and that their asymptotics feature logarithmic phase corrections (caused by the ``non-zero limits'' $\ell_{\pm\infty}$ of the coefficient $a(x)$).
We note that $\tilde{u}(t,0) = 0$ holds automatically for generic potentials, while in the case of non-generic potentials this condition only holds for solutions that are ``orthogonal'' to the zero energy resonance of $H$ (in the sense of an $L^1_x$--$L^\infty_x$ pairing). The latter can for instance be enforced by imposing suitable parity conditions. As an application, \cite[Corollary 1.4]{GP20} yields the full asymptotic stability of kinks with respect to \emph{odd} perturbations for the double sine-Gordon problem in an appropriate range of the deformation parameter.

For closely related results on modified scattering for nonlinear Schr\"odinger equations, we refer to~\cite{HN98, LS06, KatPus11, IT15, DZ03, GHW15, Del16, Naum16, MurphPus17, GermPusRou18, Leg18, Leg19, MasMurphSeg19, ChenPus19, PusSof20} and references therein. 

\medskip 

Finally, we anticipate that local decay estimates for the perturbed Klein-Gordon propagator $e^{it\sqrt{m^2 + H}} P_c$ play a major role in the proof of the main result in this paper. Such local decay estimates for much larger classes of unitary operators originate in the works of Rauch~\cite{Rauch78}, Jensen-Kato~\cite{KJ79}, and Jensen~\cite{Jensen80, Jensen84}, see also~\cite{HSS99, JSS91, KS06, Gold07, KomKop10, Kop11, Ger08, GLS16, LarS15, EgoKopMarTes16} as well as the survey~\cite{Schl07} and references therein.

\subsection{Main result}

We are now in the position to state the main result of this paper on the long-time behavior of small solutions to~\eqref{equ:intro_nlkg}. Without loss of generality we set the mass parameter $m=1$. We write $\jtD = \sqrt{1+H}$ on the positive spectrum of $H = -\px^2 + V(x)$ and we denote by~$\widetilde{\calF}$ the distorted Fourier transform associated with $H$. We refer to the beginning of Section~\ref{sec:spectral_scattering_theory} for a brief review of some basics of the spectral and scattering theory for Schr\"odinger operators $H$. 

Given a solution $u(t)$ to~\eqref{equ:intro_nlkg}, we introduce the new variable 
\begin{equation*}
 v(t) := \frac{1}{2} \bigl( u(t) - i \jtD^{-1} \pt u(t) \bigr)
\end{equation*}
that satisfies the first-oder Klein-Gordon equation 
\begin{equation*}
 (\pt - i \jtD) v = \frac{1}{2i} \jtD^{-1} P_c \bigl( \alpha(\cdot) (v+\bar{v})^2 \bigr) \text{ on } \bbR^{1+1}
\end{equation*}
with initial datum $v(0) = \frac12 (P_c u_0 - i \jtD^{-1} P_c u_1)$.
It suffices to derive decay estimates and asymptotics for the variable~$v(t)$ since we have that
\begin{equation} \label{equ:u_equ_v_plus_vbar}
 u(t) = v(t) + \bar{v}(t).
\end{equation}
We will occasionally use~\eqref{equ:u_equ_v_plus_vbar} as a convenient short-hand notation. The following theorem contains the main result of this paper.

\begin{theorem} \label{thm:thm1}
 Assume that the real-valued potential $V \in L^\infty(\R) \cap C^3(\R)$ satisfies $\jap{x}^9 V^{(\ell)}(x) \in L^1(\R)$ for all $0\le\ell\le3$,  
 and that $H = -\px^2 + V(x)$ exhibits a zero energy resonance $\varphi(x)$, cf.~Definition~\ref{def:reson}. Suppose that $\|\jx^{15} \alpha(x)\|_{H^3_x} < \infty$.
 Then there exists an absolute constant $0 < \varepsilon_0 \ll 1$ such that for any initial condition $v_0$ satisfying 
 \begin{equation*}
  \varepsilon := \|\jx^5 v_0\|_{H^2_x} \leq \varepsilon_0,   
 \end{equation*}
 there exists a global-in-time solution $v \in C(\bbR; H^2_x)$ to
 \begin{equation} \label{equ:thm1_nlkg}
  (\pt - i \jtD) v = \frac{1}{2i} \jtD^{-1} P_c \bigl( \alpha(\cdot) (v+\bar{v})^2 \bigr) \text{ on } \bbR^{1+1}
 \end{equation}
 with initial datum $v(0) = P_c v_0$. Moreover, the solution $v(t)$ exhibits the following asymptotic behavior as $t \to \infty$:
 \begin{itemize}[leftmargin=*]
  \item (Resonant Case) Suppose 
  \begin{equation*}
   \wtcalF\bigl[ \alpha \varphi^2 ](\sqrt{3}) \neq 0 \quad \text{or} \quad \wtcalF\bigl[ \alpha \varphi^2 \bigr](-\sqrt{3}) \neq 0.
  \end{equation*}
  Then it holds 
  \begin{equation} \label{equ:thm1_Linfty_decay_resonant}
   \|v(t)\|_{L^\infty_x} \lesssim \frac{\log(1+\jt)}{\jt^\hf} \varepsilon.
  \end{equation}
  In addition, $v(t)$ admits a decomposition
  \begin{equation*}
   v(t) = v_{free}(t) + v_{mod}(t), \qquad t \geq 1,
  \end{equation*}
  with the following properties:
  \begin{itemize}
  \item[(i)] The component $v_{free}(t)$ satisfies
  \begin{equation} \label{equ:thm1_Linfty_decay_vfree}
   \|v_{free}(t)\|_{L^\infty_x} \lesssim \frac{\varepsilon}{\jap{t}^{\hf}}, \qquad t \geq 1.
  \end{equation}
  Moreover, $v_{free}(t)$ scatters to a free Klein-Gordon wave in $H^2_x$ in the sense that there exists $v_\infty \in H^2_x$ such that 
  \begin{equation} \label{equ:thm1_vfree_scattering}
   \bigl\| v_{free}(t) - e^{it\jtD} v_\infty \bigr\|_{H^2_x} \lesssim \frac{\varepsilon^2}{\jt^\hf}, \qquad t \geq 1.
  \end{equation}
  \item[(ii)] There exists a small amplitude $a_0 \in \bbC$, $|a_0| \lesssim \varepsilon$, such that the component $v_{mod}(t)$ is given by 
  \begin{equation} \label{equ:thm1_def_vmod}
   v_{mod}(t) := c_0^2 \frac{a_0^2}{2} \int_1^t e^{i(t-s)\jtD} \jtD^{-1} P_c \bigl( \alpha \varphi^2 \bigr) \frac{e^{2is}}{s} \, \ud s,
  \end{equation}
  where the real constant $c_0$ only depends on the scattering matrix $S(0)$ of the potential $V(x)$ at zero energy, cf.~\eqref{eq:SU2}, and is explicitly given by
  \begin{equation} \label{equ:thm_def_c0}
   c_0 = \frac{1}{(2\pi)^{\frac32}} \frac{T(0)^2}{1+R_-(0)}, \quad \text{where} \quad T(0) \neq 0.
  \end{equation}
  For arbitrary $0 < \delta \ll 1$, there exists a constant $C_\delta \geq 1$ such that we have uniformly
  \begin{equation} \label{equ:thm1_resonant_decay_off_rays}
   \bigl| v_{mod}(t,x) \bigr| \leq C_\delta \frac{\varepsilon^2}{\jap{t}^{\frac{1}{2}}} \quad \text{whenever} \quad |x| < \Bigl( \frac{\sqrt{3}}{2} - \delta \Bigr) t \quad \text{or} \quad |x| > \Bigl( \frac{\sqrt{3}}{2} + \delta \Bigr) t,
  \end{equation}
  and along the rays $x = \pm \frac{\sqrt{3}}{2} t$ the asymptotics of $v_{mod}(t)$ are given by
  \begin{equation} \label{equ:thm1_resonant_asymptotics_along_special_rays}
   v_{mod}\Bigl( t, \pm \frac{\sqrt{3}}{2} t \Bigr) = c_0^2 \frac{a_0^2}{\sqrt{8}} e^{i\frac{\pi}{4}} e^{i \frac{t}{2}} \widetilde{\calF}[\alpha \varphi^2](\mp \sqrt{3}) \frac{\log(t)}{t^{\frac{1}{2}}} + \calO_{L^\infty_t} \Bigl( \frac{\varepsilon^2}{t^\hf} \Bigr), \quad t \gg 1. 
  \end{equation}  
  In particular, when $a_0 \neq 0$ the decay estimate~\eqref{equ:thm1_Linfty_decay_resonant} is sharp. 
  \end{itemize}
  \item (Non-Resonant Case) Suppose 
  \begin{equation*}
   \wtcalF\bigl[ \alpha \varphi^2 ](\sqrt{3}) = 0 \quad \text{and} \quad \wtcalF\bigl[ \alpha \varphi^2 \bigr](-\sqrt{3}) = 0.
  \end{equation*}
  Then it holds 
  \begin{equation} \label{equ:thm1_Linfty_decay_nonresonant}
   \|v(t)\|_{L^\infty_x} \lesssim \frac{\varepsilon}{\jt^\hf}.
  \end{equation}
  Moreover, $v(t)$ scatters to a free Klein-Gordon wave in $H^2_x$ in the sense that there exists $v_\infty \in H^2_x$ such that 
  \begin{equation} \label{equ:thm1_nonresonant_scattering}
   \bigl\| v(t) - e^{it\jtD} v_\infty \bigr\|_{H^2_x} \lesssim \frac{\varepsilon^2}{\jt^\hf}, \qquad t \geq 1.
  \end{equation}
 \end{itemize}
\end{theorem}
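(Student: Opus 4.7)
The plan is to work with the profile $f(t) := e^{-it\jtD} v(t)$ and its distorted Fourier transform $\tilde{f}(t,\xi) := \wtcalF[f(t)](\xi)$, and to run a bootstrap argument in a norm of the form
\begin{equation*}
\|f\|_X := \sup_{t\geq 0}\Bigl( \|f(t)\|_{H^2_x} + \jt^{-\delta}\|\jap{\xi}^2 \tilde{f}(t,\xi)\|_{L^\infty_\xi} + \jt^{-\delta}\|\partial_\xi \tilde{f}(t,\xi)\|_{L^2_\xi}\Bigr) + \sup_{t\geq0}\jt^{3/4}\|\jx^{-\sigma} v(t)\|_{L^2_x}
\end{equation*}
for a small $\delta>0$ and some $\sigma\geq 1$. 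The weighted $\partial_\xi$ norm plays the role of the Lorentz boost vector field in physical space, and the last term is a local-decay control that exploits the spatial localization of $\alpha$ and allows one to absorb variable-coefficient interactions. Global existence in $H^2_x$ then follows automatically from closing this bootstrap, and the $t^{-1/2}$ pointwise dispersive decay (possibly with a $\log t$ factor) is inherited from the weighted $L^\infty_\xi$--$L^2_\xi$ control on $\tilde{f}$ together with a linear $L^1_x \to L^\infty_x$ dispersive estimate for $e^{it\jtD} P_c$ valid in the presence of a threshold resonance, as reviewed in the paper.

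After inserting the Duhamel formula into the profile equation, the nonlinearity expands as a trilinear sum on the distorted Fourier side involving the kernels $\widetilde{K}_{\pm\pm}(\xi,\xi_1,\xi_2) := \wtcalF[\alpha(\cdot)\psi(\cdot,\pm\xi_1)\psi(\cdot,\pm\xi_2)](\xi)$ of generalized eigenfunctions, with oscillating phases $\Phi_{\pm\pm}(\xi,\xi_1,\xi_2) := -\jxi \pm \jap{\xi_1} \pm \jap{\xi_2}$. The space-time resonant set is identified by examining where the phase and its $\xi_1,\xi_2$-derivatives both vanish: the only such point that survives the spatial localization of $\alpha$ occurs at $\xi_1=\xi_2=0$, $\xi=\pm\sqrt{3}$, coming from the $(++)$ and $(--)$ interactions for which $\Phi|_{\xi_1=\xi_2=0} = -\jxi + 2$. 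This is exactly the source of the modified scattering term $v_{mod}$. For the $(+-)$ interactions the phase is bounded below away from zero and a variable-coefficient normal form eliminates them up to cubic remainders, while for the $(\pm\pm)$ interactions with $(\xi_1,\xi_2)$ away from the origin the phase is non-stationary and repeated integration by parts in $s$, combined with the local-decay estimate on $v$, provides enough integrable-in-time decay to close the bootstrap.

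To isolate the resonant contribution, I would prove that $\tilde{v}(t,0) = a_0 c_0 e^{it} + \calO(\varepsilon^2 \log(t)/t^{1/2})$, where $a_0 := \lim_{t\to\infty} e^{-it} \tilde{f}(t,0)/c_0$ exists because the ODE for $\tilde{f}(t,0)$ picks up only an integrable-in-time quadratic source, and where $c_0 = T(0)^2/((2\pi)^{3/2}(1+R_-(0)))$ arises from the explicit threshold relation between the generalized eigenfunction $\psi(\cdot,0)$ and the resonance function $\varphi$. Freezing the two low-frequency inputs of the $(++)$ interaction at this leading profile $c_0 a_0 e^{it}\varphi$ produces precisely the source $c_0^2 a_0^2 e^{2is}\alpha\varphi^2$ integrated against $e^{i(t-s)\jtD}\jtD^{-1}P_c$, which is exactly~\eqref{equ:thm1_def_vmod}; the remainder after subtraction is shown to scatter in $H^2_x$ at rate $\varepsilon^2/\jt^{1/2}$, yielding~\eqref{equ:thm1_vfree_scattering}. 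The sharp pointwise bounds~\eqref{equ:thm1_resonant_decay_off_rays} and~\eqref{equ:thm1_resonant_asymptotics_along_special_rays} then follow from a stationary phase analysis of the double oscillatory integral representing $v_{mod}(t,x)$: the spatial-frequency phase $(t-s)\sqrt{1+\xi^2} - x\xi$ has a critical point $\xi_c(s;t,x)$ that coincides with the resonant output frequency $\pm\sqrt{3}$ precisely along the rays $x = \pm \frac{\sqrt{3}}{2}t$, yielding a constructive contribution of size $\log(t)/t^{1/2}$, while off these rays the combined phase oscillates and nonstationary phase produces the uniform bound $\varepsilon^2/\jt^{1/2}$. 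In the non-resonant case $\wtcalF[\alpha\varphi^2](\pm\sqrt{3})=0$, the explicit $v_{mod}$ already decays at the free rate without any logarithm and can be absorbed into the scattering remainder, giving~\eqref{equ:thm1_nonresonant_scattering}.

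The main technical obstacle I anticipate is propagating the weighted bootstrap in the presence of the zero-energy resonance. Unlike the generic case, the Jost solutions and hence the distorted plane waves $\psi(\cdot,\xi)$ are only Lipschitz rather than smooth in $\xi$ at $\xi = 0$, and their low-frequency expansion contains an explicit resonant contribution proportional to $\varphi(x)$. Consequently the multilinear kernels $\widetilde{K}_{\pm\pm}$ fail to be smooth at $\xi_1,\xi_2 = 0$, and the estimate of $\partial_\xi \tilde{f}$ must carefully separate a smooth piece (to which classical stationary phase applies) from a singular threshold piece (where the precise form of the resonance must be used to identify the explicit source of $v_{mod}$). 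A parallel difficulty is that the normal form eliminating the non-resonant quadratic interactions must be balanced so that neither $\partial_\xi$ nor the energy estimate loses a factor of $\jt$ when differentiating the kernel, which is where the local-decay control of $v$ becomes indispensable.
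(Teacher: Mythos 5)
Your overall strategy (profile $f=e^{-it\jtD}v$, distorted Fourier analysis, space--time resonance at $\xi_1=\xi_2=0$, $\xi=\pm\sqrt3$, extraction of the resonant source and stationary phase along $x=\pm\frac{\sqrt3}{2}t$) is a Fourier-side route in the spirit of Germain--Pusateri, genuinely different from the paper, which never runs a weighted-$\partial_\xi$ bootstrap: it works in physical space with local $L^2$ decay norms, a refined low-energy estimate that subtracts $c_0 e^{i\pi/4}e^{it}t^{-\hf}(\varphi\otimes\varphi)$ from the propagator, the auxiliary function $w(t)=a(t)\varphi$, and an integration by parts in $s$ after cutting off the output frequencies $\pm\sqrt3$. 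However, as formulated your bootstrap cannot close, and the obstruction is exactly the phenomenon the theorem describes. First, the component $\sup_t\jt^{3/4}\|\jx^{-\sigma}v(t)\|_{L^2_x}$ fails already for the \emph{linear} flow: by the paper's Lemma~\ref{lem:low proj}/Corollary~\ref{cor:main}, $\|\jx^{-\sigma}e^{it\jtD}P_cv_0\|_{L^2_x}= |c_0\langle\varphi,v_0\rangle|\,t^{-\hf}\|\jx^{-\sigma}\varphi\|_{L^2_x}+\calO(t^{-\thf})$, and generically $\langle\varphi,v_0\rangle\neq0$, so the threshold resonance caps local decay at exactly $t^{-\hf}$; improved local decay holds only for the difference $v-w$ after the resonance contribution is subtracted. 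Second, in the resonant case the component $\jt^{-\delta}\|\partial_\xi\tilde f(t,\xi)\|_{L^2_\xi}$ is also not propagatable: the profile's transform develops a logarithmic divergence at $\xi=\pm\sqrt3$ (remark (viii) after the theorem), built from a kernel of the schematic form $\int_1^t e^{is(2-\jxi)}\,\ud s/s$, whose $\xi$-derivative behaves like $|2-\jxi|^{-1}$ down to scale $t^{-1}$ and hence has $L^2_\xi$ norm of order $t^{\hf}$, not $t^{\delta}$. You acknowledge the threshold singularity as a ``technical obstacle,'' but the fix is not a refinement of your norms --- it is the structural decomposition itself (carry $v_{mod}$, respectively $w=a(t)\varphi$, explicitly and measure only the remainder), which is the paper's central device and is missing from your scheme.

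Two further points in your resonance bookkeeping need repair. The claim that the $(+-)$ phase $-\jxi+\jap{\xi_1}-\jap{\xi_2}$ is ``bounded below away from zero'' is false: because $\alpha$ is localized there is no output-frequency constraint $\xi=\xi_1+\xi_2$, so this phase vanishes on a large set; it is only bounded below near the space-resonant inputs $\xi_1=\xi_2=0$, and it is precisely the faster local decay of the high-energy and $\sqrt{H}$ parts of $v$ (Proposition~\ref{prop:local_decay_bounds}) that renders the remaining regions harmless, not a normal form alone. Likewise only the $(++)$ interaction is resonant; the $(--)$ phase equals $-\jxi-2\le-3$. Finally, your identification $\tilde f(t,0)\to c_0a_0$ has the constant off (one gets $\tfrac{T(0)}{\sqrt{2\pi}}a_0$ from $e(\cdot,0)=\tfrac{T(0)}{\sqrt{2\pi}}\varphi$), and converting this single-frequency statement into the leading local behavior $a(t)\varphi$ with the factor $c_0e^{i\pi/4}t^{-\hf}$ requires exactly the refined low-energy dispersive asymptotics with the $(\varphi\otimes\varphi)$ subtraction that your outline only gestures at.
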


\noindent We proceed with several remarks on Theorem~\ref{thm:thm1}:
\begin{itemize}
 \setlength\itemsep{1em}
 \item[(i)] The amplitude $a_0 \in \bbC$ in the statement of Theorem~\ref{thm:thm1} is explicitly given by 
 \begin{equation} \label{equ:formula_a0}
  \begin{aligned}
  a_0 = \langle \varphi, v_0 \rangle &+ \frac{1}{2} \langle \varphi, \alpha(\cdot) v_0^2 \rangle - \langle \varphi, \alpha(\cdot) |v_0|^2 \rangle - \frac{1}{6} \langle \varphi, \alpha(\cdot) \overline{v}_0^2 \rangle \\
  &+ \int_0^\infty e^{is} \langle \varphi, \alpha(\cdot) \partial_s \bigl( e^{-is} v(s) \bigr) \bigl( e^{-is} v(s) \bigr) \rangle \, \ud s \\
  &- \int_0^\infty e^{-is} \langle \varphi, \alpha(\cdot) \partial_s \bigl( (e^{-is} v(s)) (e^{is} \bar{v}(s) ) \bigr) \rangle \, \ud s \\
  &- \frac{1}{3} \int_0^\infty e^{-3is} \langle \varphi, \alpha(\cdot) \partial_s \bigl( e^{is} \bar{v}(s) \bigr) \bigl( e^{is} \bar{v}(s) \bigr) \rangle \, \ud s,
  \end{aligned}
 \end{equation}
 where we use the notation $\langle f, g \rangle := \int_\bbR \overline{f(x)} g(x) \, \ud x$. In particular, we have $a_0 \neq 0$ when $\langle \varphi, v_0 \rangle \neq 0$.
 
 \item[(ii)] We did not optimize the decay and regularity assumptions on the initial data and on the potential. The proof of Theorem~\ref{thm:thm1} given below can be improved to some extent to sharpen these assumptions.  
 
 \item[(iii)] Under certain conditions, the nonlinear solution $v(t)$ to~\eqref{equ:thm1_nlkg} does not exhibit modified scattering in the sense that it just scatters to a free Klein-Gordon wave. On the one hand, this occurs in the non-resonant case $\widetilde{\calF}[\alpha \varphi^2](\pm \sqrt{3}) = 0$ for arbitrary (sufficiently small) initial data.
 On the other hand, this may also occur in the resonant case for initial data satisfying certain parity conditions. From the explicit formula~\eqref{equ:formula_a0} for the coefficient $a_0$ it is evident that $a_0 = 0$ (and thus $v_{mod}(t) \equiv 0$) if $\varphi(x)$ is even and $v(t)$ as well as $\alpha(x)$ are odd, or if $\varphi(x)$ is odd and $v(t)$ as well as $\alpha(x)$ are even. Of course, a parity condition on the solution $v(t)$ to~\eqref{equ:thm1_nlkg} in turn imposes corresponding parity conditions on the potential $V(x)$ and the eigenfunctions of $H = -\px^2 + V(x)$. 
 
 \item[(iv)] The Klein-Gordon model~\eqref{equ:intro_nlkg} considered in this paper is a simplified model for nonlinear Klein-Gordon equations with non-generic potentials such as~\eqref{equ:phi4_perturbation} and~\eqref{equ:sineGordon_perturbation} that govern the dynamics of the dispersive remainder term in a perturbative approach to the study of the asymptotic stability of kinks. 
 An important next step is to additionally allow for non-localized cubic nonlinearities $\beta_0 u^3$ on the right-hand side of~\eqref{equ:thm1_nlkg} and ultimately for non-localized variable coefficient quadratic nonlinearities in the presence of general non-generic potentials (say without bound states) \emph{without making any parity assumptions on the initial data}.
 In fact, even just for the case of a pure non-localized cubic nonlinearity, it remains a very interesting open problem to establish decay estimates and asymptotics for small solutions to the one-dimensional Klein-Gordon equation 
 \begin{equation} \label{equ:constant_cubic_kg_intro}
  (\pt^2 - \px^2 + V(x) + 1) u = \beta_0 u^3
 \end{equation}
 with a general non-generic potential $V(x)$ (say without bound states) and \emph{without making any parity assumptions on the initial data}. 
 In the case of generic potentials and in the case of non-generic potentials \emph{under suitable parity assumptions on the initial data} (or under a related assumption about the vanishing of the distorted Fourier transform of the solution at zero frequency), two approaches have emerged over the last years to prove modified scattering for small solutions to \eqref{equ:constant_cubic_kg_intro}: one based on the distorted Fourier transform, see \cite{Naum16, Naum18, GermPusRou18, ChenPus19, GP20, GHW15}, and one using the wave operator, see \cite{Del16, DelMas20}.
 Both approaches appear to crucially rely on genericity assumptions or parity assumptions to by-pass the effects of the threshold resonances of the linear operator.
 
 The proof of Theorem~\ref{thm:thm1} builds on the spatial localization of the variable coefficient~$\alpha(x)$ on the right-hand side of~\eqref{equ:thm1_nlkg} in conjunction with the use of refined local decay estimates. Correspondingly, it is not straightforward to extend Theorem~\ref{thm:thm1} to the above mentioned more general settings involving non-localized quadratic or cubic nonlinearities. Non-localized low power nonlinearities are to some extent incompatible with the use of weighted norms as in the proof of Theorem~\ref{thm:thm1}.
 
 After completion of this work, in the context of proving the asymptotic stability of the sine-Gordon kink under odd perturbations, two of the authors~\cite{LS1} introduced an approach to study modified scattering problems for Klein-Gordon equations with non-generic P\"oschl-Teller potentials by exploiting specific super-symmetric factorization properties of the corresponding linear Klein-Gordon operators. 
 
 We note that the linearized Klein-Gordon equation around a (static) kink solution to the scalar field equation~\eqref{equ:EL_equation} features a spatially localized variable coefficient $\alpha(x)$ for the quadratic nonlinearity as in~\eqref{equ:intro_nlkg} if and only if the scalar potential $W$ in~\eqref{equ:EL_equation} satisfies $W^{(3)}(\phi_\pm) = 0$. For example, this is the case for the sine-Gordon model, but not for the $\phi^4$ model.

 \item[(v)] We expect that in the presence of a non-generic potential $V(x)$, a slow-down of the decay rate as uncovered in Theorem~\ref{thm:thm1} should occur more generally for coefficients $\alpha(x)$ that may also assume non-zero limits $\alpha(x) \to \alpha_{\pm \infty} \neq 0$ as $x \to \pm \infty$.
 However, we would like to emphasize again that it is by far not straightforward to extend Theorem~\ref{thm:thm1} to this more general setting.
 The proof of Theorem~\ref{thm:thm1} crucially exploits the spatial localization of the coefficient $\alpha(x)$ in conjunction with the use of refined local decay estimates for the perturbed Klein-Gordon propagator.
 
 The extension of Theorem~\ref{thm:thm1} for arbitrary small initial data to non-localized coefficients $\alpha(x)$ likely requires to further advance normal form techniques in the presence of a non-generic potential. 
 The difficulty of this step is deeply related to a loss of regularity of the distorted Fourier transform of the profile $g(t) := e^{-it\jtD} v(t)$ of the solution to~\eqref{equ:thm1_nlkg} caused by the quadratic nonlinearity. 
 This also manifests itself prominently in the difficulty to derive slowly growing energy estimates for a Lorentz boost $Z = t \px + x \pt$ of the nonlinear solution to~\eqref{equ:intro_nlkg} in the flat case $V(x) = 0$. Indeed, when the Lorentz boost falls onto the variable coefficient of the quadratic nonlinearity, it produces a strongly divergent factor of~$t$ that is hard to sufficiently compensate for.
 
 For generic potentials as well as for non-generic potentials in the special case of solutions that are ``orthogonal'' to the threshold resonance, these difficulties have very recently been overcome in the remarkable work of Germain-Pusateri~\cite{GP20}. 
 We note that under the assumptions of~\cite[Theorem 1.1]{GP20}, the nonlinear solutions to the model~\eqref{equ:intro_previous_with_pot_ax} decay in $L^\infty_x$ at the usual free decay rate $t^{-\frac12}$ and their asymptotic behavior features logarithmic phase corrections ``caused by the non-zero limits'' $\ell_{\pm\infty}$ of the coefficient $a(x)$ in~\eqref{equ:intro_previous_with_pot_ax} at spatial infinity.
 
 For a related discussion, we refer to Remark (6) following Theorem~1.1 in~\cite{GP20} and to the remarks at the end of Subsection~2.3 in~\cite{GP20}.
  
 \item[(vi)] The explicit expression~\eqref{equ:thm1_def_vmod} for $v_{mod}(t)$ indicates that we would have $v_{mod}(t) \equiv 0$ for generic potentials $V(x)$, because their transmission coefficient vanishes at zero energy $T(0) = 0$, whence $c_0 = 0$.
 
 \item[(vii)] The proof of Theorem~\ref{thm:thm1} easily generalizes to arbitrary mass parameters $m \neq 0$ in the Klein-Gordon model~\eqref{equ:intro_nlkg}. Then the resonant case corresponds to the condition 
 \begin{equation*}
  \wtcalF\bigl[ \alpha \varphi^2 ](\sqrt{3m^2}) \neq 0 \quad \text{or} \quad \wtcalF\bigl[ \alpha \varphi^2 \bigr](-\sqrt{3m^2}) \neq 0,
 \end{equation*}
 and a logarithmic slow-down of the decay rate also occurs along the rays $x = \pm \frac{\sqrt{3}}{2} t$.

 \item[(viii)] 
 In the resonant case when $a_0 \neq 0$ the distorted Fourier transform of the profile $g(t) := e^{-it\jtD} v(t)$ of the solution $v(t)$ to~\eqref{equ:thm1_nlkg} diverges logarithmically at frequencies $\xi = \pm \sqrt{3}$, see Remark~\ref{rem:profile_log_divergence}. Specifically, one has that
 \begin{equation*}
  \wtilg(t, \pm \sqrt{3}) = c_0^2 \frac{a_0^2}{4} \widetilde{\calF}\bigl[\alpha \varphi^2\bigr](\pm \sqrt{3}) \log(t) + \calO(\varepsilon), \quad t \gg 1,
 \end{equation*}
 which indicates that the free $L^\infty_x$ decay rate $t^{-\frac12}$ cannot be expected for the solution $v(t)$.
 
 \item[(ix)] In the resonant case, the derivation of the asymptotics of $v_{mod}(t)$ in the proof of Theorem~\ref{thm:thm1} along the special rays $x = \pm \frac{\sqrt{3}}{2} t$  also applies to nearby rays $x = \lambda t$ with $|\lambda - (\pm \frac{\sqrt{3}}{2} )| \ll 1$. One finds that uniformly for all $|\lambda - (\pm \frac{\sqrt{3}}{2}) | \ll 1$,
 \begin{equation*}
  \biggl| v_{mod}(t, \lambda t) - c_0^2 \frac{a_0^2}{2} \frac{e^{i\frac{\pi}{4}} e^{i (1-\lambda^2)^{\hf} t}}{(1-\lambda^2)^{\frac14}} \widetilde{\calF}[\alpha \varphi^2]\Bigl(-\frac{\lambda}{(1-\lambda^2)^\hf}\Bigr) \frac{A(t, \lambda)}{t^\hf} \biggr| \leq C \frac{\varepsilon^2}{t^\hf} , \quad t \gg 1,
 \end{equation*}
 where the amplitude correction $A(t, \lambda)$ is of the form
 \begin{equation*}
  A(t, \lambda) := \int_1^{t^\hf} \frac{e^{i s (2-(1-\lambda^2)^{-\hf})}}{s} \, \ud s. 
 \end{equation*}
 Clearly, along the special rays $\lambda = \pm \frac{\sqrt{3}}{2}$, this yields the asymptotics~\eqref{equ:thm1_resonant_asymptotics_along_special_rays} featuring a logarithmic slow-down of the decay rate, while we obtain uniformly for all nearby rays $\lambda \neq \pm \frac{\sqrt{3}}{2}$ that
 \begin{equation*}
  |A(t, \lambda)| \lesssim \frac{1}{2 - (1-\lambda^2)^{-\hf}}, \quad t \gg 1.
 \end{equation*}

 \item[(x)] It appears that the novel type of modified scattering behavior uncovered in Theorem~\ref{thm:thm1} as well as in~\cite[Theorem 1.1]{LLS2} is reminiscent of a new phenomenon observed in the remarkable recent work of Delort-Masmoudi~\cite{DelMas20} on long-time dispersive estimates for odd perturbations of the (odd) kink $\psi_{\phi^4}(x) = \tanh(\frac{x}{\sqrt{2}})$ in the $\phi^4$ model. We recall from~\eqref{equ:phi4_perturbation} that the corresponding remainder term $u(t,x) = \phi(t,x) - \psi_{\phi^4}(x)$ satisfies the equation 
 \begin{equation} \label{equ:phi4_perturbation_recalled}
 \bigl( \partial_t^2 - \partial_x^2 + 2 - 3 \sech^2( {\textstyle \frac{x}{\sqrt{2}} }) \bigr) u = - 3 \tanh( {\textstyle \frac{x}{\sqrt{2}} } ) u^2 - u^3.
\end{equation}
 The linear operator $-\px^2 + 2 - 3 \sech^2(\frac{x}{\sqrt{2}})$ exhibits an even threshold resonance 
 \begin{equation} \label{equ:phi4_threshold_resonance}
 \varphi(x) = 1 - \frac32 \sech^2\Bigl( \frac{x}{\sqrt{2}} \Bigr), 
 \end{equation}
 and possesses an odd internal mode with eigenvalue $\mu^2$, $\mu = \sqrt{\frac{3}{2}}$, given by
 \begin{equation} \label{equ:comment_phi4_def_Y}
  Y(x) = 2^{-\frac{3}{4}} 3^{\frac12} \tanh\Bigl( \frac{x}{\sqrt{2}} \Bigr) \sech\Bigl( \frac{x}{\sqrt{2}} \Bigr), \quad \langle Y, Y \rangle = 1.
 \end{equation}
 Note that for odd perturbations one can disregard the even zero eigenfunction of the linear operator stemming from the translation invariance of the model. 
 To study the long-time behavior of odd solutions to~\eqref{equ:phi4_perturbation_recalled} one therefore enacts a spectral decomposition
 \begin{equation} \label{equ:comment_phi4_decomp}
  u(t,x) = z(t) Y(x) + w(t,x), \quad \langle Y, w(t) \rangle = 0,
 \end{equation}
 where $z(t) = \langle Y, u(t) \rangle$ is the projection of $u(t)$ onto the internal mode $Y(x)$. 
 The presence of the internal mode is a major difficulty in the study of the asymptotic dynamics of $u(t,x)$.
 In fact, at the linear level it would be an obstruction to decay. However, for the nonlinear Klein-Gordon equation~\eqref{equ:phi4_perturbation_recalled}, a coupling of the oscillations of the internal mode to the continuous spectrum occurs through the so-called nonlinear Fermi Golden Rule, see Sigal~\cite{Sigal93} and Soffer-Weinstein~\cite{SofWein99} for pioneering works in this direction. This mechanism was exploited by Kowalczyk-Martel-Mu\~{n}oz~\cite{KMM17} to establish the decay of $w(t)$ in a local energy sense and the decay of $z(t)$ in an integrated sense. Delort-Masmoudi~\cite{DelMas20} recently obtained explicit decay rates for $z(t)$ and for $w(t)$ in $L^\infty_x$ for times up to $T \sim \varepsilon^{-4+c}$ for arbitrary $c > 0$, where $\varepsilon$ is the size of the initial data measured in a weighted Sobolev space. 
 
 It appears that the limitation to times $\calO(\varepsilon^{-4})$ in~\cite{DelMas20} stems from a possible slow-down of the decay rate of $w(t,x)$ along the special rays $\frac{x}{t} = \pm \sqrt{\frac{2}{3}}$. The latter is caused by a resonant source term in the nonlinear Klein-Gordon equation for $w(t)$ whose contribution can be thought of to have the following schematic Duhamel form
 \begin{equation} \label{equ:schematic_phi4_source_term}
  \int_1^t e^{i(t-s)\sqrt{-\px^2 + 2 - 3 \sech^2(\frac{x}{\sqrt{2}})}} P_c \bigl( \alpha Y^2 \bigr) \frac{e^{2i\mu s}}{s} \, \ud s
 \end{equation}
 with $\alpha(x) = \tanh (\frac{x}{\sqrt{2}})$ and $Y(x)$ defined in~\eqref{equ:comment_phi4_def_Y}.
 It arises from the quadratic contribution of the long-time behavior of the projection $z(t) Y(x)$ to the internal mode in the nonlinear Klein-Gordon equation for~$w(t)$, see the spectral decomposition~\eqref{equ:comment_phi4_decomp} above. 
 Interestingly, the structure of the source term~\eqref{equ:schematic_phi4_source_term} is reminiscent of the source term~\eqref{equ:thm1_def_vmod} defining $v_{mod}(t)$ in the statement of Theorem~\ref{thm:thm1}. By the same mechanism described in Subsection~\ref{subsec:proof_ideas} below on the ideas of the proof of Theorem~\ref{thm:thm1}, the source term~\eqref{equ:schematic_phi4_source_term} is resonant at the distorted frequencies~$\xi_\mu$ satisfying $\sqrt{2 + \xi_\mu^2} = 2 \mu$, i.e., $\xi_\mu = \pm 2$, if $\widetilde{\calF}[\alpha Y^2](\pm \xi_\mu) \neq 0$. Correspondingly, one can expect a slow-down of the decay rate of $w(t,x)$ along the associated rays
 \begin{equation*}
  \frac{x}{t} = - \frac{\xi_\mu}{\sqrt{2+\xi_\mu^2}} = \mp \sqrt{\frac{2}{3}}.
 \end{equation*}
 In the context of the $\phi^4$ model, the resonance condition $\widetilde{\calF}[\alpha Y^2](\pm \xi_\mu) \neq 0$ is referred to as the nonlinear Fermi Golden Rule and it is in fact key for the projection $z(t) Y(x)$ of $u(t)$ to the internal mode to decay at all as $t \to \infty$. 
 
 We stress that the displayed form~\eqref{equ:schematic_phi4_source_term} of the contribution of the resonant source term is very schematic, and just serves here to highlight the intriguing resemblance of the source term~\eqref{equ:thm1_def_vmod} defining $v_{mod}(t)$ in the statement of Theorem~\ref{thm:thm1} and the source term~\eqref{equ:schematic_phi4_source_term} appearing in the analysis of perturbations of the $\phi^4$ kink. 
 While the source term~\eqref{equ:thm1_def_vmod} is ultimately caused by a threshold resonance, the source term~\eqref{equ:schematic_phi4_source_term} is caused by the internal mode of the $\phi^4$ model.
 Finally, we note that a possible slow-down effect of the decay rate of $w(t)$ in~\eqref{equ:comment_phi4_decomp} due to the threshold resonance~\eqref{equ:phi4_threshold_resonance}, similar to the result in Theorem~\ref{thm:thm1}, is not expected for odd perturbations of the $\phi^4$ kink since these are ``orthogonal'' to the even threshold resonance~\eqref{equ:phi4_threshold_resonance}.
\end{itemize}

\begin{remark}
 A natural question is whether the non-resonance condition $\widetilde{\calF}[\alpha \varphi^2](\pm \sqrt{3}) = 0$ happens to hold in concrete applications to asymptotic stability problems for kink solutions. It turns out that the sine-Gordon model features this miraculous vanishing property! Recall from~\eqref{equ:sineGordon_perturbation} that the equation for a perturbation of the static sine-Gordon kink involves the variable quadratic coefficient
 \begin{equation*}
  \alpha(x) = \sech(x) \tanh(x)
 \end{equation*}
 and the Schr\"odinger operator 
 \begin{equation*}
  H = -\px^2 - 2 \sech^2(x).
 \end{equation*}
 The latter belongs to the family of P\"oschl-Teller potentials, see for instance \cite[Problem 39]{Fluegge}, and admits a zero energy resonance that is explicitly given by
 \begin{equation*}
  \varphi(x) = \tanh(x).
 \end{equation*}
 It turns out that the distorted Fourier transform with respect to $H$ of $\alpha \varphi^2$ satisfies
 \begin{equation} \label{equ:sineGordon_vanishing}
  \widetilde{\calF}\bigl[ \alpha \varphi^2 \bigr](\pm \sqrt{3}) = 0.
 \end{equation}
 The authors are not aware of a reference in the literature for this observation\footnote{This observation has previously been made by Jacob Sterbenz (unpublished note).}. Below we provide a simple proof of~\eqref{equ:sineGordon_vanishing} using contour integration. 
\end{remark}
\begin{proof}[Proof of \eqref{equ:sineGordon_vanishing}]
By direct computation one can verify that the Jost solutions of the Schr\"odinger operator $H = -\px^2 - 2 \sech^2(x)$ are explicitly given by
\begin{align*}
 f_+(x,\xi) &= \frac{i \xi - \tanh(x)}{i \xi - 1} e^{ix \xi}, \\
 f_-(x, \xi) &= \frac{-i\xi - \tanh(x)}{-i \xi + 1} e^{-ix\xi}.
\end{align*}
The distorted Fourier basis associated with $H$ therefore takes the form
\begin{equation*}
 e(x,\xi) := \frac{1}{\sqrt{2 \pi}} \left\{ \begin{aligned}
                         &T(\xi) \frac{i \xi - \tanh(x)}{i \xi - 1} e^{ix \xi} &\text{for } \xi \geq 0, \\
                         &T(-\xi) \frac{i \xi - \tanh(x)}{i \xi + 1} e^{ix\xi} &\text{for } \xi < 0,
                        \end{aligned} \right. 
\end{equation*}
where $T(\xi)$ denotes the transmission coefficient associated with $H$.
Thus, in order to evaluate the distorted Fourier transform of $\alpha \varphi^2$ at frequencies $\xi = \pm \sqrt{3}$, 
\begin{equation*}
 \widetilde{\calF}\bigl[\alpha \varphi^2\bigr](\pm \sqrt{3}) = \int_{\bbR} \overline{e(x, \pm \sqrt{3})} \, \alpha(x) \varphi(x)^2 \, \ud x, 
\end{equation*}
it suffices to evaluate the integrals
\begin{align*}
  \calI_\pm &:= \int_\bbR e^{\pm i \sqrt{3} x} \bigl( \pm i \sqrt{3} - \tanh(x) \bigr) \frac{\sinh^3(x)}{\cosh^4(x)} \, \ud x.
\end{align*}
 To this end we observe that the function 
 \begin{equation*}
  F_\pm(z) := e^{\pm i \sqrt{3} z} \bigl( \pm i \sqrt{3} - \tanh(z) \bigr) \frac{\sinh^3(z)}{\cosh^4(z)}, \quad z \in \bbC,
 \end{equation*}
 is meromorphic on $\bbC$ with poles at $z_k = i \frac{\pi}{2} (2k+1)$, $k \in \bbZ$. It is easy to see that the integral $\calI_\pm$ can be obtained from the contour integral of $F_\pm$ along the rectangle with vertices at $\pm \ell \pi$, $\pm \ell \pi + i \ell \pi$ as $\ell \to \infty$. By the residue theorem, it follows that
 \begin{equation*}
  \calI_\pm = 2\pi i \sum_{k=0}^\infty \mathrm{Res}_{z=z_k}(F_\pm). 
 \end{equation*}
 Using that $\cosh(z_k + w) = i (-1)^k \sinh(w)$ and that $\sinh(z_k + w) = i (-1)^k \cosh(w)$, we find that
 \begin{equation*}
  F_\pm(z_k + w)= (-1)^{k-1} i e^{\mp \sqrt{3} \frac{\pi}{2} (2k+1)} \biggl( \pm i \sqrt{3} \, e^{\pm i \sqrt{3} w} \frac{\cosh^3(w)}{\sinh^4(w)} - e^{\pm i \sqrt{3}w} \frac{\cosh^4(w)}{\sinh^5(w)} \biggr).
 \end{equation*}
 Then we compute 
 \begin{align*}
  \mathrm{Res}_{w=0} \biggl( e^{\pm i \sqrt{3} w} \frac{\cosh^3(w)}{\sinh^4(w)} \biggr) &= \pm \frac{i}{\sqrt{3}}, \\
  \mathrm{Res}_{w=0} \biggl( e^{\pm i \sqrt{3} w} \frac{\cosh^4(w)}{\sinh^5(w)} \biggr) &= -1.
 \end{align*}
 Correspondingly, we obtain for all $k \in \bbZ$ that
 \begin{align*}
  \mathrm{Res}_{z=z_k}(F_\pm) = (-1)^{k-1} i e^{\mp \sqrt{3} \frac{\pi}{2} (2k+1)} \biggl( \pm i \sqrt{3} \Bigl( \pm \frac{i}{\sqrt{3}} \Bigr) - (-1) \biggr) = 0,
 \end{align*}
 whence $\calI_\pm = 0$, which implies the asserted vanishing property $\widetilde{\calF}\bigl[\alpha \varphi^2\bigr](\pm \sqrt{3}) = 0$.
\end{proof}

\subsection{Proof ideas} \label{subsec:proof_ideas}

The analysis of the asymptotic behavior of small global solutions to the one-dimensional quadratic Klein-Gordon equation
\begin{equation} \label{equ:proof_ideas_nlkg}
 (\pt - i \jtD) v = \frac{1}{2i} \jtD^{-1} P_c \bigl( \alpha(\cdot) (v + \bar{v})^2 \bigr) \text{ on } \bbR^{1+1}
\end{equation}
under the assumptions of Theorem~\ref{thm:thm1} begins with the observation that due to the spatial localization of the coefficient $\alpha(x)$, the nature of the quadratic nonlinearity $\alpha(x) (v + \bar{v})^2$ is entirely determined by the \emph{local decay} of the nonlinear solution $v(t)$. 

It is therefore instructive to first study the interactions in the quadratic nonlinearity $\alpha(x) (v(t) + \bar{v}(t))^2$ when $v(t)$ is replaced by a linear Klein-Gordon wave $v_{lin}(t) = e^{it \jtD} P_c v_0$. Since~$H = -\px^2 + V(x)$ is assumed to exhibit a zero energy resonance $\varphi(x)$, the local decay of $e^{it\jtD} P_c v_0$ (measured in weighted spaces) is only of order $t^{-\hf}$. Importantly, this slow local decay solely stems from a contribution of the zero energy resonance $\varphi(x)$ in the sense that upon subtracting a suitable projection onto $\varphi(x)$, the bulk of the linear Klein-Gordon wave $e^{it\jtD} P_c v_0$ exhibits faster local decay. More specifically, one of the key local decay estimates for the Klein-Gordon evolution on the line, which we establish in Subsection~\ref{subsec:decay_estimates}, reads
\begin{equation} \label{equ:proof_ideas_local_decay_est_subtr_off}
 \Bigl\| \jx^{-\sigma} \Bigl( e^{it\jtD} P_c v_0 - c_0 \frac{e^{i\frac{\pi}{4}} e^{it}}{t^\hf} \langle \varphi, v_0 \rangle \varphi \Bigr) \Bigr\|_{L^2_x} \lesssim \frac{1}{t^\thf} \| \jx^{\sigma} v_0 \|_{L^2_x}, \quad t \geq 1,
\end{equation}
where $\sigma > \frac92$ and the real constant $c_0$ defined in~\eqref{equ:thm_def_c0} only depends on the scattering matrix $S(0)$ of the potential $V(x)$ at zero energy.
The local decay estimate~\eqref{equ:proof_ideas_local_decay_est_subtr_off} suggests that the leading order behavior of $\alpha(x) ( v_{lin}(t) + \bar{v}_{lin}(t) )^2$ should be of the schematic form
\begin{equation*}
 c_0^2 \alpha(x) \varphi(x)^2 \frac{1}{t} \Bigl( e^{i \frac{\pi}{2}} e^{2it} (\langle \varphi, v_0 \rangle )^2 + 2 |\langle \varphi, v_0 \rangle|^2 + e^{-i\frac{\pi}{2}} e^{-2it} ( \overline{\langle \varphi, v_0 \rangle} )^2 \Bigr) + \calO_{L^\infty_t} \Bigl( \frac{1}{t^2} \Bigr), \quad t \geq 1.
\end{equation*}
Correspondingly, we can expect the asymptotic behavior of a solution $v_{inh}(t)$ to 
\begin{equation} \label{equ:proof_ideas_inhomogeneous_model}
 (\pt - i \jtD) v_{inh} = \frac{1}{2i} \jtD^{-1} P_c \bigl( \alpha(\cdot) (v_{lin} + \bar{v}_{lin})^2 \bigr) \text{ on } \bbR^{1+1}
\end{equation}
to be determined by the contributions of three source terms given in Duhamel form by 
\begin{equation} \label{equ:proof_ideas_source_terms}
 \frac{c_0^2}{2i} \int_1^t e^{i(t-s) \jtD} \jtD^{-1} P_c (\alpha \varphi^2) \frac{1}{s} \Bigl( e^{i \frac{\pi}{2}} e^{2is} (\langle \varphi, v_0 \rangle )^2 + 2 |\langle \varphi, v_0 \rangle|^2 + e^{-i\frac{\pi}{2}} e^{-2is} ( \overline{\langle \varphi, v_0 \rangle} )^2 \Bigr) \, \ud s.
\end{equation}
Due to the non-integrable time decay $s^{-1}$ of these source terms, the overall time oscillations in the integrand ultimately determine the long-time behavior of $v_{inh}(t)$. This becomes particularly transparent on the distorted Fourier side, where the overall oscillations in time $s$ for the three source terms in the parentheses in the integrand of~\eqref{equ:proof_ideas_source_terms} are respectively given by $e^{is(2-\jxi)}$, $e^{-is\jxi}$, and $e^{-is(2+\jxi)}$, where $\jxi = (1+\xi^2)^\hf$. While the last two have good oscillatory behavior at all frequencies, the phase of $e^{is(2-\jxi)}$ vanishes when $2 - \jxi = 0$, i.e. at frequencies $\xi = \pm \sqrt{3}$. 
In the non-resonant case this is offset by the vanishing of $\widetilde{\calF}[ \alpha \varphi^2 ](\pm \sqrt{3}) = 0$ at these specific frequencies. However, in the resonant case, where $\widetilde{\calF}[ \alpha \varphi^2 ](\sqrt{3}) \neq 0$ or $\widetilde{\calF}[ \alpha \varphi^2 ](-\sqrt{3}) \neq 0$, these observations indicate that the long-time behavior of $v_{inh}(t)$ decomposes into the contribution of a resonant source term of the form
\begin{equation*}
 \frac{c_0^2}{2i} \int_1^t e^{i(t-s) \jtD} P_c (\alpha \varphi^2) \frac{e^{2is}}{s} \, \ud s,
\end{equation*}
and a bulk term that can be expected to asymptotically behave like a free Klein-Gordon wave. 

It turns out that the study of the asymptotic behavior of the nonlinear solution $v(t)$ to~\eqref{equ:proof_ideas_nlkg} can effectively be reduced to the above heuristics. The key step to achieve this reduction is to identify the precise leading order behavior of the variable coefficient quadratic nonlinearity $\alpha(x) (v(t) + \bar{v}(t))^2$. To this end we introduce the function 
\begin{equation} 
 w(t) := c_0 \frac{e^{i\frac{\pi}{4}} e^{it}}{t^\hf} \langle \varphi, v_0 \rangle \varphi + \frac{1}{2i} \int_0^{t-1} c_0 \frac{e^{i\frac{\pi}{4}} e^{i(t-s)}}{(t-s)^{\hf}} \bigl\langle \varphi, \alpha(\cdot) \bigl( v(s) + \bar{v}(s) \bigr)^2 \bigr\rangle \varphi \, \ud s, \quad t \geq 1,
\end{equation}
that can perhaps be thought of as a ``projection'' of the nonlinear solution $v(t)$ to the zero energy resonance $\varphi(x)$. Note that we may write $w(t,x) = a(t) \varphi(x)$ with the time-dependent coefficient 
\begin{equation} 
 a(t) := c_0 \frac{e^{i\frac{\pi}{4}} e^{it}}{t^\hf} \langle \varphi, v_0 \rangle + \frac{1}{2i} \int_0^{t-1} c_0 \frac{e^{i\frac{\pi}{4}} e^{i(t-s)}}{(t-s)^{\hf}} \bigl\langle \varphi, \alpha(\cdot) \bigl( v(s) + \bar{v}(s) \bigr)^2 \bigr\rangle \, \ud s, \quad t \geq 1.
\end{equation}
In Proposition~\ref{prop:local_decay_bounds} we establish the following local decay bounds for the nonlinear solution $v(t)$ to~\eqref{equ:proof_ideas_nlkg} via a bootstrap argument
\begin{equation*} 
 \begin{aligned}
   \sup_{t\in\bbR} \, \Bigl\{ \jt^{-(0+)} \| v(t) \|_{H^2_x} &+ \jt^\hf \|\jx^{-\sigma} v(t)\|_{L^2_x} + \jt \|\jx^{-\sigma} (1-\chi_0(H)) v(t)\|_{L^2_x} \\
   &\quad \quad + \jt \|\jx^{-\sigma} \sqrt{H} v(t)\|_{L^2_x} + \jt \|\jx^{-\sigma} \pt (e^{-it} v(t))\|_{L^2_x} \Bigr\} \lesssim \varepsilon.
 \end{aligned}
\end{equation*}
The proof of Proposition~\ref{prop:local_decay_bounds} crucially exploits the spatial localization of the coefficient $\alpha(x)$ in conjunction with several local decay estimates for the Klein-Gordon propagator $e^{it\jtD} P_c$ summarized in Corollary~\ref{cor:main}, in particular~\eqref{equ:proof_ideas_local_decay_est_subtr_off}. 
While $v(t)$ has the slow local decay $\|\jx^{-\sigma} v(t)\|_{L^2_x} \lesssim \varepsilon t^{-\hf}$, we then conclude in Corollary~\ref{cor:local_decay_improved_low_energy_v_minus_w} that the difference $v(t) - w(t)$ enjoys the faster local decay 
\begin{equation*}
 \| \jx^{-\sigma} ( v(t) - w(t) ) \|_{L^2_x} \lesssim \frac{\varepsilon}{t}, \quad t \geq 1.
\end{equation*}
The local decay bounds on $v(t)$, in particular the faster local decay of the time derivative of the ``phase-filtered'' component $e^{-it} v(t)$ given by $\|\jx^{-\sigma} \pt ( e^{-it} v(t) ) \|_{L^2_x} \lesssim \varepsilon t^{-1}$ enables us in Corollary~\ref{cor:asymptotics_a} to extract the asymptotics of the time-dependent coefficient $a(t)$ given by 
\begin{equation*}
 a(t) = c_0 \frac{e^{i \frac{\pi}{4}} e^{it}}{t^\hf} a_0 + \calO_{L^\infty_t}\Bigl( \frac{1}{t} \Bigr), \quad t \geq 1,
\end{equation*}
with $a_0$ defined in~\eqref{equ:formula_a0}. This suggests that the leading order behavior of the quadratic nonlinearity $\alpha(x) (v(t) + \bar{v}(t))^2$ is of the form 
\begin{equation*}
 \alpha(x) \varphi(x)^2 ( a(t) + \bar{a}(t) )^2 = c_0^2 \alpha(x) \varphi(x)^2 \frac{1}{t} \bigl( e^{i \frac{\pi}{2}} e^{2it} a_0^2 + 2 |a_0|^2 + e^{-i\frac{\pi}{2}} e^{-2it} \bar{a}_0^2 \bigr) + \calO_{L^\infty_t}\Bigl( \frac{1}{t^{\frac32}} \Bigr).
\end{equation*}
Analogously to the preceding discussion of the simplified equation~\eqref{equ:proof_ideas_inhomogeneous_model}, we are reduced to analyzing the asymptotic behavior of the contribution of the possibly resonant source term 
\begin{equation*}
 v_{mod}(t) := c_0^2 \frac{a_0^2}{2} \int_1^t e^{i(t-s)\jtD} \jtD^{-1} P_c \bigl( \alpha \varphi^2 \bigr) \frac{e^{2is}}{s} \, \ud s,
\end{equation*}
and of the bulk term $v_{free}(t) := v(t) - v_{mod}(t)$. 

The derivation of the asymptotic behavior of $v_{mod}(t)$ and $v_{free}(t)$ asserted in Theorem~\ref{thm:thm1} is carried out in Section~\ref{sec:proof_theorem}. It combines the local decay bounds for $v(t)$ established in Proposition~\ref{prop:local_decay_bounds} and Corollary~\ref{cor:local_decay_improved_low_energy_v_minus_w} with pointwise linear estimates and asymptotics for the propagator $e^{it\jtD} P_c$ established in Lemma~\ref{lem:pw decay} and in Lemma~\ref{lem:asymptotics_KG}. In particular, in the resonant case a careful stationary phase analysis of the asymptotics of $v_{mod}(t,x)$ reveals the logarithmic slow-down~\eqref{equ:thm1_resonant_asymptotics_along_special_rays} of the decay rate of $v_{mod}(t)$ along the rays $\frac{x}{t} = \mp \frac{\sqrt{3}}{2}$ that are associated with the resonant frequencies $\xi = \pm \sqrt{3}$ for which the phase of $e^{i s (2-\jxi)}$ vanishes.

This concludes a sketch of some of the main ideas entering the proof of Theorem~\ref{thm:thm1}.

\subsection{Notation and conventions}

For non-negative $X$, $Y$ we write $X \lesssim Y$ or $X = \calO(Y)$ if $X \leq C Y$ for some constant $C > 0$. 
We employ the notation $X \lesssim_\nu Y$ to indicate that the implicit constant depends on a parameter $\nu$ and we write $X \ll Y$ if the implicit constant should be considered as small. 
Further, we use the japanese bracket notation $\langle x \rangle = (1+x^2)^\hf$, $\langle t \rangle = (1+t^2)^\hf$, and $\langle \xi \rangle = (1+\xi^2)^\hf$.
For a real number $b \in \bR$ we denote by $b+$, respectively by $b-$, a number that is larger, respectively smaller, than $b$, but that can be taken arbitrarily close to $b$.

Throughout we denote by $\chi_0(\xi)$ a smooth cutoff to $|\xi| \lesssim 1$, equal to $1$ near $\xi = 0$. Moreover, we denote by $\chi(\xi)$ a smooth bump function with support near $|\xi| \simeq 1$.

We denote the inner $L^2_x$ product by $\langle f, g \rangle := \int_{\bbR} \overline{f(x)} g(x) \, \ud x$,
and we denote the ``projection'' onto the resonance $\varphi$ by
\begin{equation} \label{eq:Pphi}
 (\varphi \otimes \varphi) g := \langle \varphi, g \rangle \varphi.
\end{equation}
We use the notation $\widetilde{f}(\xi) = \widetilde{\calF}[f](\xi)$ for the distorted Fourier transform associated with $H = -\px^2 + V$.
Finally, we work with the following definition for the Sobolev spaces $H^k_x(\bbR)$, $k = 1, 2$, given by
\begin{align*}
 \|g\|_{H^k_x} &:= \sum_{j=0}^k \| \px^j g\|_{L^2_x}.
\end{align*}

\section{Spectral and Scattering Theory} \label{sec:spectral_scattering_theory}

This section is devoted to the study of the linear flow generated by the Klein-Gordon equation with a potential. In other words,  we investigate the linear PDE
$(\partial_t - i\jap{\tilde D})v=P_c f$ with datum $v(0)=P_c v_0$ where $v_0$ lies in suitable weighted Sobolev spaces. Recall that $\jap{\tilde D}$ is the nonnegative operator with the property $\jap{\tilde D}^2P_c=(1+H)P_c$ and $H=-\partial_x^2+V$ with
$V$ and finitely many of its derivatives decaying at a sufficiently rapid polynomial rate. Moreover, $P_c$ is the projection onto the continuous spectrum of~$H$. Throughout, we will focus on the case where $H$ exhibits a $0$ energy resonance which is commonly referred to as the {\em non-generic case}. A $0$ energy resonance simply means that there is a globally bounded nontrivial solution of $Hf=0$. Or, equivalently, that the bounded solution as $x\to-\infty$, which is unique up to a nonzero constant, is linearly dependent with its cousin which remains bounded as $x\to+\infty$. All of this is equivalent with the Laurent expansion of the resolvent $(H- z^2)^{-1}$ around $z=0$ in the upper half plane $\Im \, z>0$, starts  with a $z^{-1}$ power. The Laurent expansion needs to be understood in the  weighted $L^2(\bR)$ sense, and the coefficient of $z^{-1}$ is a rank-$1$ operator given by~\eqref{eq:Pphi}. In the generic case, there is no singular power in this expansion. 

The easier generic $H$ is essentially a special case of our analysis and statements relevant to it can be obtained by carrying out straightforward modifications.  
An important technical device in our estimates is the {\em distorted Fourier transform}.  This refers to the map $f\to \tilde f(\xi):=\int f(x) \overline{e(x,\xi)}\, \ud x$, and its inverse $f(x) = \int \tilde f(\xi) {e(x,\xi)}\, \ud \xi$ which holds for all $f\in L^1\cap L^2(\bR)$ which are perpendicular to all eigenfunctions of~$H$. Here $H e(\cdot,\xi)=\xi^2 e(\cdot,\xi)$ suitably normalized so that Plancherel holds with spectral measure $\ud\xi$, i.e., $\|f\|_2=\|\tilde f\|_2$. In the non-generic case the distorted Fourier basis is discontinuous at $\xi=0$. We therefore do not use it for small energies but rather directly work with the resolvent (Green function). 

\subsection{Spectral theory and distorted Fourier transform}

This subsection recalls the Jost solutions, and the standard Volterra perturbation theory needed to construct them. 

\begin{definition} \label{def:H}
Fix two positive integers $N_0$ and $M_0$, both exceeding $2$. 
We consider $H = -\px^2 + V$ on the domain $C^2_{comp}(\R)\subset L^2(\R)$ with real-valued continuous $V\in L^\infty(\R)\cap C^{M_0}(\R)$, and $\jap{x}^{N_0} V^{(\ell)}(x) \in L^1(\R)$ for all $0\le\ell\le M_0$. The Friedrichs extension of $H$ is self-adjoint with domain $H^2(\R)$. 
\end{definition}

For such $V$, it is a standard fact that the spectrum of $H$ consists of $[0,\infty)$, which is essential spectrum, and finitely many negative simple eigenvalues, more precisely the number of eigenvalues must be less than or equal to $1 + \int_\bbR |x| |V(x)| \, \ud x$, see~\cite[p.~149]{DeiTru}. 
Moreover, the spectrum on $[0,\infty)$ is absolutely continuous, which follows from the usual explicit representation of the Green function, i.e., the kernel of the resolvent $(H- z^2)^{-1}$ as $\Im \, z \to 0+$, see Lemma~\ref{lem:stone}. As already mentioned, $0$ energy occupies a special role here and the resolvent may or may not be singular around $z=0$. The latter is {\em generic}, whereas the former is {\em non-generic}.  
It is worth mentioning that $0$ cannot be an eigenvalue under our assumptions on~$V$, since the solutions $f$ of $Hf=0$ can only approach constants but not decay as $x\to\pm\infty$. It can only be a resonance.

We now begin the technical work by recalling basic notions of scattering theory on the line. See~\cite{DeiTru} for much sharper statements. Throughout, constants of the form $C(V)$ depend on $V$  only via the norms $\|\jap{x}^{N_0} V^{(\ell)}(x) \|_{L^1}$ for $0\le \ell\le M_0$. Constants may also depend on  the resonance function $\varphi$, see Definition~\ref{def:reson} below. The latter is only relevant for estimates involving $0$ energy. 

\begin{lemma}
\label{lem:fm}
Let $V(x)$ be as in Definition~\ref{def:H}. 
There exist unique solutions $f_{\pm}(x,\xi)$ for every $\xi\in\R$  of $$H f_{\pm}(\cdot,\xi) = \xi^2 f_{\pm}(\cdot,\xi)$$ satisfying $f_{\pm}(x,\xi)\sim e^{\pm i x\xi}$ as $x\to\pm\infty$. 
They are of the form $f_{\pm}(x,\xi) = e^{\pm i x\xi} m_{\pm}(x,\xi)$ where $m_{\pm}(x,\xi)\sim 1$ as $x\to\pm\infty$,  
and one has the bounds $|\partial_\xi^\ell\partial_x^k m_\pm(x,\xi)|\le C$ for all $0\le k\le M_0$, $0\le \ell\le N_0-1$  uniformly in $\pm x\ge0, \xi\in\R$.  
\end{lemma}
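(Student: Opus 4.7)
The plan is the classical Volterra integral equation construction, carried out uniformly in $\xi \in \mathbb{R}$. I will focus on $f_+$; the case of $f_-$ is symmetric. Writing the Ansatz $f_+(x,\xi) = e^{ix\xi} m_+(x,\xi)$ and imposing $m_+(x,\xi) \to 1$ as $x \to +\infty$, the eigenequation $H f_+ = \xi^2 f_+$ reduces to $-m_+'' - 2i\xi m_+' + V m_+ = 0$, which is equivalent to the Volterra integral equation
\begin{equation*}
  m_+(x,\xi) = 1 + \int_x^\infty K(y-x,\xi)\, V(y)\, m_+(y,\xi)\, \ud y, \qquad K(u,\xi) := \int_0^u e^{2i\xi s}\, \ud s = \frac{e^{2i\xi u}-1}{2i\xi}.
\end{equation*}
The crucial kernel bound is $|K(u,\xi)| \leq u$ for $u \geq 0$, uniformly in $\xi \in \bR$. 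Together with $\langle y\rangle V(y)\in L^1(\bR)$ (which holds since $N_0 \geq 2$), this lets me run Picard iteration $m_+^{(0)} \equiv 1$, $m_+^{(n+1)} = 1 + \calK_\xi m_+^{(n)}$ and obtain
\begin{equation*}
  \bigl|m_+^{(n+1)}(x,\xi) - m_+^{(n)}(x,\xi)\bigr| \leq \frac{1}{n!}\Bigl(\int_x^\infty (y-x)|V(y)|\, \ud y\Bigr)^n.
\end{equation*}
Thus the series converges uniformly in $\xi\in\bR$ and in $x \geq -R$ for any $R$, yielding a unique continuous solution $m_+$ bounded on $[-R,\infty)\times\bR$ with $m_+(x,\xi)\to1$ as $x\to+\infty$, and uniqueness within this class follows from the Volterra structure.

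Next I would upgrade regularity in $x$. Bootstrapping the ODE $m_+'' = -2i\xi m_+' + Vm_+$ using $V\in C^{M_0}$ and the already-known boundedness of $m_+$ gives $m_+\in C^{M_0}$ in $x$, with $\partial_x^k m_+$ uniformly bounded on $[-R,\infty)\times\bR$ for $0\leq k\leq M_0$. (A small care is needed to control $m_+'$ uniformly in $\xi$: one first differentiates the Volterra equation in $x$, obtaining an integral representation for $m_+'$ whose kernel satisfies the same uniform bound $\leq 1$, then proceeds inductively.) For derivatives in $\xi$, the key observation is the uniform bound
\begin{equation*}
  \bigl|\partial_\xi^\ell K(u,\xi)\bigr| = \Bigl|\int_0^u (2is)^\ell e^{2i\xi s}\, \ud s\Bigr| \leq \frac{(2u)^{\ell+1}}{\ell+1}, \qquad u\geq 0,\ \xi\in\bR.
\end{equation*}
Differentiating the Volterra equation $\ell$ times in $\xi$ via Leibniz yields an integral equation for $\partial_\xi^\ell m_+$ of the same Volterra type, driven by inhomogeneities involving $\int_x^\infty (y-x)^{j+1}|V(y)|\,|\partial_\xi^{\ell-j}m_+(y,\xi)|\, \ud y$ for $1\leq j\leq \ell$. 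By induction on $\ell$, each such integral is finite uniformly in $\xi$ provided $\langle y\rangle^{\ell+1} V(y)\in L^1(\bR)$, i.e.\ provided $\ell+1 \leq N_0$, which matches the claimed range $\ell \leq N_0-1$. The mixed bounds $\partial_\xi^\ell \partial_x^k m_+$ are obtained by combining these two iterations and differentiating the ODE, which propagates the uniform estimates to the full range $0\leq k\leq M_0$, $0\leq \ell\leq N_0-1$.

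The main obstacle — mild but the only nontrivial bookkeeping — is the $\xi$-derivative induction: each $\partial_\xi$ costs one power of the weight $\langle y\rangle$ in $V$, so one must track precisely how many weights are consumed and check that the inductive hypothesis never exceeds the budget $N_0$. All of this boils down to the fact that the Volterra operator $\calK_\xi$ defined by the kernel $K(y-x,\xi)V(y)$ is bounded on $L^\infty_{x,\xi}([-R,\infty)\times\bR)$ with $\xi$-independent norm, which in turn is a direct consequence of the real-$\xi$ pointwise bound $|K(u,\xi)|\leq u$ and the weighted integrability of $V$. No spectral theory of $H$ or behavior of $f_+$ near $\xi=0$ enters at this stage; those issues will be dealt with in the subsequent subsections.
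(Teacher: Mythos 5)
Your construction of $m_+$ via the Volterra equation, the kernel bound $|K(u,\xi)|\le u$, the Picard iteration, and the $\xi$-derivative induction with the weight budget $\ell+1\le N_0$ is exactly the paper's argument, and that part is sound. The genuine gap is in how you propose to obtain the higher $x$-derivatives (and the mixed ones) \emph{uniformly in $\xi\in\R$}: bootstrapping the ODE $m_+''=-2i\xi m_+'+Vm_+$ from boundedness of $m_+$ and $m_+'$ only yields $|m_+''|\lesssim 1+|\xi|$, because of the explicit factor $\xi$, and the same unbounded factor reappears at every further application of the ODE as well as in the Leibniz term $\partial_\xi^\ell(-2i\xi m_+')$ of your mixed-derivative step. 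Likewise, if you ``proceed inductively'' on the representation $m_+'(x,\xi)=-\int_x^\infty e^{2i\xi(y-x)}V(y)m_+(y,\xi)\,\ud y$ by differentiating once more in $x$, the kernel produces $-2i\xi e^{2i\xi(y-x)}$ and you hit the same wall. So, as written, the claimed $\xi$-uniform bounds for $2\le k\le M_0$ (and the corresponding mixed bounds) are not established.

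The missing idea — and what the paper actually does — is to stay with the integral representation and trade every explicit $\xi$ for a derivative of the density: use $2i\xi e^{2i\xi(y-x)}=\partial_y e^{2i\xi(y-x)}$ (equivalently, $\partial_x$ of the kernel equals $-\partial_y$ of the kernel) and integrate by parts in $y$, so that for instance $\partial_x m_+(x,\xi) = \int_x^\infty \frac{e^{2i\xi(y-x)}-1}{2i\xi}\bigl(V'(y)m_+(y,\xi)+V(y)\partial_y m_+(y,\xi)\bigr)\,\ud y$, a Volterra equation for $m_+'$ with the same $\xi$-independent kernel, and then iterate this procedure for higher $k$. Each additional $x$-derivative costs one derivative of $V$, which is exactly where the hypotheses $\jap{x}^{N_0}V^{(\ell)}\in L^1$ for $\ell\le M_0$ are consumed (mere $V\in C^{M_0}$ regularity is not what is used); the mixed bounds then follow by applying your $\xi$-differentiation to these representations rather than to the ODE. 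Alternatively, you could first prove the extra decay $\partial_x m_+=\calO(\jap{\xi}^{-1})$ uniformly (again by this integration by parts), after which your ODE bootstrap would close — but some step of this kind is indispensable.
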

\begin{proof}
We solve the ODEs
\[
m_{\pm}''(x,\xi) \pm 2i\xi m'(x,\xi) = V(x) m_{\pm}(x,\xi)
\]
by means of the Volterra equation
\EQ{\label{eq:m+}
m_{+}(x,\xi) &= 1 + \int_x^\infty \int_0^{y-x} e^{2i\xi t}\, dt\; V(y) m_+(y,\xi)\, dy \\
& = 1 + \int_x^\infty \frac{e^{2i\xi(y-x)}-1}{2i\xi} V(y) m_+(y,\xi)\, dy
}
By iteration one finds that  for all $x\ge0$ and uniformly in $\xi\in\R$
\EQ{ \label{eq:m+-1}
|m_{+}(x,\xi) -1|\le e^{\gamma(x)}-1,\quad  \gamma(x):=\int_x^\infty y|V(y)|\, dy
}
An analogous bound holds for $|m_{-}(x,\xi) -1|$ on $x\le0$. Next, differentiating \eqref{eq:m+} in~$\xi$ yields
\EQ{\label{eq:m+pl}
\partial_\xi m_{+}(x,\xi) &= 2i \int_x^\infty \int_0^{y-x} e^{2i\xi t}\, t\, dt\; V(y) m_+(y,\xi)\, dy \\
& \quad  + \int_x^\infty \frac{e^{2i\xi(y-x)}-1}{2i\xi} V(y) \partial_\xi m_+(y,\xi)\, dy
}
whence, with $|m_{+}(x,\xi)|\le M$ for all $x\ge0$ and $\xi\in\R$, 
\EQ{\label{eq:m+plbd}
|\partial_\xi m_{+}(x,\xi) | & \le  \int_x^\infty (y-x)^2 |V(y)| M\, dy + \int_x^\infty (y-x)|V(y)| | \partial_\xi m_+(y,\xi)|\, dy \\
& \le \eta(x) e^{\gamma(x)}, \qquad \eta(x):= \int_x^\infty y^2 |V(y)|\, dy
}
where the last line follows by iteration. 
Similarly one checks that 
\[
|\partial_\xi^2  m_{+}(x,\xi) |  \le C \int_x^\infty (1+y^3) |V(y)|\, dy
\]
for all $x\ge0$, $\xi\in\R$, and $C=C(V)$.  The higher $\xi$ derivatives are handled analogously. Note that in particular $f_{\pm}(x,\xi)$ are continuous in $(x,\xi)\in\R^2$. For the derivatives in~$x$ we compute 
\EQ{\label{eq:m+ x}
\partial_x m_{+}(x,\xi) &=  \int_x^\infty \partial_x \frac{e^{2i\xi(y-x)}-1}{2i\xi} V(y) m_+(y,\xi)\, dy \\
& = -  \int_x^\infty \partial_y \frac{e^{2i\xi(y-x)}-1}{2i\xi} V(y) m_+(y,\xi)\, dy \\
& =  \int_x^\infty  \frac{e^{2i\xi(y-x)}-1}{2i\xi} (V'(y) m_+(y,\xi)+V(y) \partial_y m_+(y,\xi))\, dy
}
which implies the uniform boundedness of $\partial_x m_{+}(x,\xi)$ in $x\ge0$, $\xi\in\R$ from $y V'(y)\in L^1$. The higher $x$-derivatives follow by repeating this procedure. For the mixed derivatives, we combine the two Volterra methods. 
\end{proof}

Next, we establish symbol-type behavior for large~$\xi$. Throughout, $m'_{\pm}=\partial_x m_{\pm}$. 

\begin{lemma}
\label{lem:m symb}
With $V$ as in Definition~\ref{def:H}, and with $m_\pm$ as in Lemma~\ref{lem:fm},
\EQ{\label{eq:der mpm}
\sup_{\pm x\ge0} |\partial_\xi^j m_{\pm}(x,\xi)|\le C(V,\xi_0) |\xi|^{-1-j}
}
for all $|\xi|\ge\xi_0>0$ and $1\le j\le N_0$. Furthermore, the same bound holds for $x$ derivatives:
\EQ{\label{eq:der m'pm}
\sup_{\pm x\ge0} |\partial_\xi^j m_{\pm}'(x,\xi)|\le C(V,\xi_0) |\xi|^{-1-j}
}
for all $|\xi|\ge\xi_0>0$ and $1\le j\le N_0$. 
\end{lemma}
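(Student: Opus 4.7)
My plan is to argue by induction on $j\in\{1,\ldots,N_0\}$, using the Volterra equation~\eqref{eq:m+} together with repeated integration by parts in the oscillatory factor $e^{2i\xi(y-x)}$ to extract powers of $|\xi|^{-1}$. The induction needs as input only the uniform bounds on $m_+$ and its mixed derivatives $\partial_x^\ell\partial_\xi^k m_+$ furnished by Lemma~\ref{lem:fm}. It suffices to treat $m_+$ and $m_+'$ on $x\geq 0$; the case of $m_-$ on $x\leq 0$ is symmetric.

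For the inductive step I would differentiate~\eqref{eq:m+} $j$ times in $\xi$ to obtain the Volterra-type identity
\begin{equation*}
\partial_\xi^j m_+(x,\xi) = F_j(x,\xi) + \int_x^\infty \frac{e^{2i\xi(y-x)}-1}{2i\xi}\, V(y)\, \partial_\xi^j m_+(y,\xi)\, dy,
\end{equation*}
whose Volterra kernel coincides with the kernel appearing in~\eqref{eq:m+}. Since $\bigl|(e^{2i\xi(y-x)}-1)/(2i\xi)\bigr| \leq |\xi|^{-1}$, the same Volterra iteration used in the proof of Lemma~\ref{lem:fm} propagates pointwise bounds from the forcing to the full solution, yielding $\|\partial_\xi^j m_+(\cdot,\xi)\|_{L^\infty([0,\infty))} \leq C\|F_j(\cdot,\xi)\|_{L^\infty([0,\infty))}$ uniformly for $|\xi|\geq \xi_0$. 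It therefore suffices to show $|F_j(x,\xi)| \leq C|\xi|^{-1-j}$, where, using the representation $(e^{2i\xi(y-x)}-1)/(2i\xi) = \int_0^{y-x} e^{2i\xi t}\, dt$,
\begin{equation*}
F_j(x,\xi) = \sum_{k=0}^{j-1} \binom{j}{k} \int_x^\infty \Bigl[\int_0^{y-x}(2it)^{j-k} e^{2i\xi t}\, dt\Bigr] V(y)\, \partial_\xi^k m_+(y,\xi)\, dy.
\end{equation*}

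The crux is to bound each summand of $F_j$. After swapping the order of integration via Fubini, the summand reduces to the one-dimensional oscillatory integral
\begin{equation*}
\int_0^\infty (2it)^{j-k} e^{2i\xi t}\, G_{j,k}(t,x,\xi)\, dt, \qquad G_{j,k}(t,x,\xi) := \int_{x+t}^\infty V(y)\, \partial_\xi^k m_+(y,\xi)\, dy.
\end{equation*}
I would then integrate by parts $j-k+1$ times via $e^{2i\xi t} = (2i\xi)^{-1}\partial_t e^{2i\xi t}$. The amplitude $(2it)^{j-k}$ vanishes to order $j-k$ at $t=0$ and $G_{j,k}\to 0$ as $t\to\infty$, so the first nonzero boundary contributions appear only at the $(j-k+1)$-th iteration of IBP, each carrying a prefactor of $|\xi|^{-(j-k+1)}$. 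The remaining integrand, expanded by Leibniz, is a finite linear combination of products of $V^{(\ell)}(x+t)$, $\partial_x^{\ell'}\partial_\xi^k m_+(x+t,\xi)$, and powers of $t$ with $\ell+\ell' \leq j-k$; each such product is integrable in $t$ by virtue of the weighted hypothesis $\jap{y}^{N_0} V^{(\ell)} \in L^1$ for $\ell\leq M_0$ together with the uniform pointwise bounds on $\partial_x^{\ell'}\partial_\xi^k m_+$ supplied by Lemma~\ref{lem:fm}. Summing over $k=0,\ldots,j-1$ delivers $|F_j(x,\xi)| \leq C(V,\xi_0)|\xi|^{-1-j}$, closing the induction.

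The bound~\eqref{eq:der m'pm} for $\partial_\xi^j m_+'$ follows by applying the identical template to the Volterra equation~\eqref{eq:m+ x} for $m_+'$, whose forcing features $V'(y) m_+(y,\xi) + V(y)\, m_+'(y,\xi)$ in place of $V(y) m_+(y,\xi)$, with both $m_+$ and $m_+'$ uniformly bounded by Lemma~\ref{lem:fm}. The main obstacle I anticipate is not any individual analytic estimate but rather the careful bookkeeping in the inductive step: one must verify that at each level $j\leq N_0$ the number of derivatives that Leibniz places on $V$ never exceeds $M_0$, and that the polynomial weights $t^r$ with $r\leq j-k$ emerging from the amplitude are absorbed by the $\jap{y}^{N_0}$-weighted integrability of $V$ and its derivatives. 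The constraints $j\leq N_0$ in the statement together with $N_0, M_0 > 2$ in Definition~\ref{def:H} are precisely what is needed to close this bookkeeping uniformly in $x\geq 0$ and $|\xi|\geq \xi_0$.
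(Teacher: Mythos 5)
Your overall mechanism (Volterra equation, differentiate in $\xi$, integrate by parts in the oscillatory factor to trade growth for powers of $|\xi|^{-1}$, then close the top-order term by Volterra iteration with the same kernel) is the same as the paper's; the paper just takes one $\xi$-derivative at a time and integrates by parts in $y$ rather than in your auxiliary $t$ variable after Fubini. The problem is in your quantitative bookkeeping for the forcing $F_j$, and it is a genuine gap, not a cosmetic one. In the summand with index $k$, after $j-k+1$ integrations by parts you gain only $|\xi|^{-(j-k+1)}$, and you then bound the resulting amplitude using the \emph{uniform} bounds of Lemma~\ref{lem:fm}. That yields $|S_k|\lesssim |\xi|^{-(j-k+1)}$, which for $k\ge1$ is far from $|\xi|^{-1-j}$: already for $j=2$, $k=1$ your argument only gives $O(|\xi|^{-2})$ for the term $\int_0^\infty (2it)\,e^{2i\xi t}G_{2,1}(t)\,dt$, whereas you need $O(|\xi|^{-3})$. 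So "summing over $k$ delivers $|F_j|\le C|\xi|^{-1-j}$" does not follow as written. To close, you must feed in the inductive symbol bounds $|\partial_\xi^k m_+|\lesssim|\xi|^{-1-k}$ for $1\le k\le j-1$ (which you explicitly disclaim needing), and even then the terms where $t$-derivatives fall on the $m_+$ factor produce $\partial_x^{\ell'}\partial_\xi^k m_+$ with $\ell'\ge1$, for which Lemma~\ref{lem:fm} only gives $O(1)$; at minimum you need \eqref{eq:der m'pm} at levels $k<j$ inside the proof of \eqref{eq:der mpm} at level $j$. In other words, the two estimates cannot be proved sequentially as you propose ("the bound for $m_+'$ follows afterwards by the identical template"); they have to be established by a simultaneous induction, which is how the paper's proof is structured: it proves $j=1$ of \eqref{eq:der mpm}, then $j=1$ of \eqref{eq:der m'pm} via \eqref{eq:m' dxi} (which already uses the bound on $\partial_\xi m_+$), and only then "repeats the procedure" for higher $j$.

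Secondly, your closing claim that the constraints $j\le N_0$ and $N_0,M_0>2$ are "precisely what is needed" to keep the derivative count on $V$ and on $\partial_x^{\ell'}m_+$ admissible is unfounded: Definition~\ref{def:H} imposes no relation between $N_0$ and $M_0$ (and in the application to Theorem~\ref{thm:thm1} one has $N_0=9$ while $M_0=3$), so with $k=0$ your scheme can place up to $j\le N_0$ derivatives on $V$ and up to $M_0$-exceeding $x$-derivatives on $m_+$, neither of which is covered by the hypotheses. The quantity that controls this bookkeeping is $M_0$, not $N_0$. (The paper's own sketch is admittedly terse on this point for large $j$, but your all-at-once Leibniz expansion with $j-k+1$ integrations by parts makes the derivative load maximal and explicit, so the issue cannot be waved away.) A correct write-up along your lines would do only as many $t$-integrations by parts as the available decay of the amplitude requires, use the decay of $G_{j,k}$ itself coming from the induction hypothesis, and run the induction jointly over \eqref{eq:der mpm} and \eqref{eq:der m'pm} (and, if higher mixed derivatives arise, restrict $j$ accordingly in terms of $M_0$).
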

\begin{proof}
We freeze $\xi_0>0$ and allow constants to depend on it. 
Returning to the Volterra equation~\eqref{eq:m+}, we compute 
\EQ{\nn
\partial_\xi m_{+}(x,\xi) &= - \int_x^\infty \frac{e^{2i\xi(y-x)}-1}{2i\xi^2} V(y) m_+(y,\xi)\, dy \\
&\quad + \int_x^\infty \frac{\partial_y \, e^{2i\xi(y-x)}}{2i\xi^2} (y-x) V(y) m_+(y,\xi)\, dy  + \int_x^\infty \frac{e^{2i\xi(y-x)}-1}{2i\xi} V(y) \partial_\xi m_+(y,\xi)\, dy\\
&= - \int_x^\infty \frac{e^{2i\xi(y-x)}-1}{2i\xi^2} V(y) m_+(y,\xi)\, dy \\
&\quad - \int_x^\infty \frac{  e^{2i\xi(y-x)}}{2i\xi^2}\partial_y\big[ (y-x) V(y) m_+(y,\xi)\big]\, dy  + \int_x^\infty \frac{e^{2i\xi(y-x)}-1}{2i\xi} V(y) \partial_\xi m_+(y,\xi)\, dy
}
whence, by the bounds of Lemma~\ref{lem:fm}, and Volterra iteration, 
\[
\sup_{x\ge0} |\partial_\xi m_{+}(x,\xi)|\le C(V) \xi^{-2}
\]
Repeating this procedure yields 
\[
\sup_{x\ge0} |\partial_\xi^2 m_{+}(x,\xi)|\le C(V) |\xi|^{-3}
\]
and similarly for the third and higher derivatives and $m_-$ on $x\le0$. For the $x$-derivatives, we start from \eqref{eq:m+ x} to conclude that 
\EQ{\label{eq:m' dxi}
\partial_\xi m_{+}'(x,\xi) 
& =  -\int_x^\infty  \frac{e^{2i\xi(y-x)}-1}{2i\xi^2} (V'(y) m_+(y,\xi)+V(y) m_+'(y,\xi))\, dy \\
& \quad -   \int_x^\infty  \frac{e^{2i\xi(y-x)}-1}{2i\xi^2} \partial_y\big[(y-x)(V'(y) m_+(y,\xi)+V(y) m_+'(y,\xi))\big]\, dy \\
& \quad +   \int_x^\infty  \frac{e^{2i\xi(y-x)}-1}{2i\xi} (V'(y) \partial_\xi m_+(y,\xi)+V(y) \partial_\xi m_+'(y,\xi))\, dy
}
This yields via Volterra iteration, our assumptions on $V$, and the bound on $\partial_\xi m_+(y,\xi)$, that 
\[
\sup_{x\ge0} |\partial_\xi m_{+}'(x,\xi)|\le C\xi^{-2} 
\]
Taking higher  $\xi$ derivatives of \eqref{eq:m' dxi} concludes the proof. 
\end{proof}

The asymptotics of $f_{+}(x,\xi)$ as $x\to-\infty$, are expressed via the scattering data. In fact, 
\EQ{\label{eq:TR}
T(\xi) f_+(\cdot,\xi) & = f_-(\cdot,-\xi) + R_-(\xi) f_-(\cdot,\xi) \\
T(\xi) f_-(\cdot,\xi) & = f_+(\cdot,-\xi) + R_+(\xi) f_+ (\cdot,\xi) 
}
where $T(\xi)W(f_+(\cdot,\xi), f_-(\cdot,\xi))={-2i\xi}$ with $W = W(\xi)$ being the Wronskian. By Lemmas~\ref{lem:fm} and~\ref{lem:m symb}, $W\in C^{N_0-1}(\R)$. The scattering matrix
\EQ{\label{eq:SU2}
S(\xi) = \left[ \begin{matrix} T(\xi) & R_-(\xi) \\ R_+(\xi) & T(\xi)
\end{matrix} \right] 
}
is unitary.
We now formally introduce the class of non-generic potentials that we consider. 

\begin{definition} \label{def:reson}
 We assume that $H$ exhibits a $0$-energy resonance, i.e., that $H$ is non-generic. This means that $W(0)=0$, equivalently, $T(0)\ne0$, and $f_+(x,0)\sim c\ne0$ as $x\to-\infty$. Thus, there exists a nonzero solution of $H\varphi=0$ with $\varphi\in L^\infty(\R)$, $\varphi\ne0$,  normalized so that $\varphi(x)=f_+(x,0)$ approaches~$1$ as $x\to\infty$ and a nonzero constant as $x\to-\infty$.
\end{definition}

The following lemma collects the analytic properties of the transmission and reflection coefficients that are needed later. 

\begin{lemma} \label{lem:T}
The transmission coefficient satisfies 
$T\in C^{N_0-1}(\R\setminus \{0\})\cap C^{N_0-2}(\R)$ and 
\EQ{\label{eq:T asymp}
T(\xi) = 1 + \calO(\xi^{-1}),\qquad |\xi| \to \infty
}
where $\partial_\xi^j \calO(\xi^{-1}) = \calO(\xi^{-1-j})$ as $|\xi|\to\infty$ for all $0\le j\le N_0-1$. Furthermore, $T\ne0$ everywhere, $T$ is bounded with its first $N_0-2$ derivatives on $\R$, and its first $N_0-1$ derivatives on $|\xi|\ge\xi_0$ for any fixed $\xi_0>0$. 
The reflection coefficients satisfy $R_\pm \in C^{N_0-1}(\R\setminus \{0\})\cap C^{N_0-2}(\R)$ and 
\begin{equation} \label{eq:Rpm asymp}
 R_\pm(\xi) = \calO(\xi^{-1}), \qquad |\xi| \to \infty
\end{equation}
where $\partial_\xi^j \calO(\xi^{-1}) = \calO(\xi^{-1-j})$ as $|\xi|\to\infty$ for all $0\le j\le N_0-1$. Additionally, $R_\pm$ is bounded with its first $N_0-2$ derivatives on $\bbR$, and its first $N_0-1$ derivatives on $|\xi|\ge\xi_0$ for any fixed $\xi_0>0$. 
\end{lemma}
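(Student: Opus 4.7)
The starting point is to express $T$ and $R_\pm$ as ratios of Wronskians of the Jost solutions. From the definition of $T$ we have
\begin{equation*}
  T(\xi) = \frac{-2i\xi}{W(\xi)}, \qquad W(\xi) := W\bigl(f_+(\cdot,\xi), f_-(\cdot,\xi)\bigr),
\end{equation*}
and pairing the scattering relations \eqref{eq:TR} against $f_+(\cdot,-\xi)$ and $f_-(\cdot,-\xi)$ respectively (and using that $W(f_+(\cdot,\xi), f_+(\cdot,-\xi)) = 2i\xi$, as one verifies by evaluating at $x\to +\infty$) yields the analogous representations
\begin{equation*}
  R_+(\xi) = \frac{W\bigl(f_-(\cdot,\xi), f_+(\cdot,-\xi)\bigr)}{W(\xi)}, \qquad R_-(\xi) = \frac{W\bigl(f_+(\cdot,\xi), f_-(\cdot,-\xi)\bigr)}{W(\xi)}.
\end{equation*}
Since $W(\xi)$ is $x$-independent, all three Wronskians can be evaluated at, say, $x=0$, and then Lemmas~\ref{lem:fm} and~\ref{lem:m symb} immediately yield $W, \, W(f_-(\cdot,\xi), f_+(\cdot,-\xi)), \, W(f_+(\cdot,\xi), f_-(\cdot,-\xi)) \in C^{N_0-1}(\R)$.

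\textbf{Behavior near $\xi=0$.} In the non-generic (resonance) case of Definition~\ref{def:reson}, $f_+(\cdot,0)$ and $f_-(\cdot,0)$ are linearly dependent, so $W(0) = 0$. Since $T(0)\neq 0$ is finite by the very assumption that we are in the resonant case, the identity $T(\xi) = -2i\xi/W(\xi)$ forces $W$ to vanish to exactly first order at $0$ with $W'(0) = -2i/T(0)\neq 0$. Hadamard's lemma then gives $W(\xi) = \xi g(\xi)$ with $g \in C^{N_0-2}(\R)$ and $g(0)\neq 0$, so $T(\xi) = -2i/g(\xi)$ is $C^{N_0-2}$ near $\xi = 0$ and $C^{N_0-1}$ away from $\xi = 0$. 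The Wronskians appearing in the numerators of $R_\pm(\xi)$ also vanish at $\xi = 0$ (since they reduce to $\pm W(0) = 0$), so the same Hadamard argument gives $R_\pm \in C^{N_0-2}(\R)$ and $C^{N_0-1}$ off the origin.

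\textbf{Asymptotics at $|\xi|\to\infty$.} Writing $f_\pm(x,\xi) = e^{\pm ix\xi} m_\pm(x,\xi)$ and evaluating the Wronskians at $x=0$, the key algebraic observation is that the $-i\xi$ contributions from the differentiated plane-wave factors in the two summands of each cross Wronskian cancel. Explicitly,
\begin{equation*}
  W(\xi) = -2i\xi\, m_+(0,\xi) m_-(0,\xi) + \bigl(m_+ m_-' - m_+' m_-\bigr)(0,\xi),
\end{equation*}
and an analogous computation for the numerators of $R_\pm$ leaves only terms of the form $m_\pm(0,\pm\xi)\, m_\mp'(0,\mp\xi) - m_\pm'(0,\pm\xi)\, m_\mp(0,\mp\xi)$, \emph{without} any factor of $\xi$. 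Integration by parts in the Volterra equation~\eqref{eq:m+} (to improve the crude uniform bound \eqref{eq:m+-1}) gives $m_\pm(0,\xi) = 1 + \calO(\xi^{-1})$, and a similar argument applied to \eqref{eq:m+ x} yields $m_\pm'(0,\xi) = \calO(\xi^{-1})$; Lemma~\ref{lem:m symb} together with further integrations by parts upgrades these to symbol-type estimates $\partial_\xi^j(m_\pm(0,\xi) - 1) = \calO(\xi^{-1-j})$ and $\partial_\xi^j m_\pm'(0,\xi) = \calO(\xi^{-1-j})$ for $0\le j \le N_0-1$. Substituting into the formulas above produces $W(\xi) = -2i\xi\bigl(1 + \calO(\xi^{-1})\bigr)$ and numerators of size $\calO(\xi^{-1})$ with the same symbol behavior. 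Dividing gives \eqref{eq:T asymp} and \eqref{eq:Rpm asymp} together with the stated symbol-type estimates on derivatives.

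\textbf{Anticipated obstacle.} The only non-routine step is establishing that $W$ has a \emph{simple} zero at the origin (equivalently, $W'(0)\neq 0$), because this underpins the $C^{N_0-2}$ regularity claim at $\xi = 0$. Here we exploit the built-in assumption $T(0)\neq 0$ from Definition~\ref{def:reson}, which forces this simplicity tautologically; in a more standalone treatment one would have to rule out a double zero of $W$ at $0$ via the Jensen--Kato Laurent expansion of the resolvent. Apart from this, the proof is a careful bookkeeping exercise using the Volterra representation and the $\xi$-symbol bounds already established in Lemmas~\ref{lem:fm} and~\ref{lem:m symb}.
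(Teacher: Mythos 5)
Your argument is essentially the paper's: both express $T$ and $R_\pm$ through Wronskians of the Jost solutions evaluated at $x=0$, use the cancellation of the $i\xi$ plane-wave contributions in the cross Wronskians, feed in the Volterra-based bounds $m_\pm(0,\xi)-1=\calO(\xi^{-1})$, $m_\pm'(0,\xi)=\calO(\xi^{-1})$ together with the symbol bounds of Lemmas~\ref{lem:fm} and~\ref{lem:m symb}, and handle $\xi=0$ by factoring $W(\xi)=\xi\int_0^1 W'(s\xi)\,ds$. The one step where you deviate is the simple zero of $W$ at the origin: you read $T(0)\neq0$ (finite) off Definition~\ref{def:reson} and concede this is close to tautological, proposing Jensen--Kato as the standalone fix. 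The paper's fix is lighter and worth adopting: unitarity of the scattering matrix \eqref{eq:SU2} gives $|T(\xi)|\le 1$ for all real $\xi$, and since $T(\xi)=-2i\bigl(\int_0^1 W'(s\xi)\,ds\bigr)^{-1}$, boundedness of $T$ near $0$ forces $W'(0)\neq0$; no resolvent expansion is needed, and the same bound also yields $T\neq0$ everywhere via $T(\xi)W(\xi)=-2i\xi$. Finally, note a harmless convention slip: with the paper's Wronskian $W(f,g)=f\,\partial_x g-g\,\partial_x f$ one gets $W\bigl(f_+(\cdot,\xi),f_+(\cdot,-\xi)\bigr)=-2i\xi$, so your representations of $R_\pm$ are off by a sign (the paper instead writes $2i\xi R_-(\xi)=T(\xi)\,W\bigl(f_+(\cdot,\xi),f_-(\cdot,-\xi)\bigr)$); this affects none of the regularity or decay conclusions.
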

\begin{proof}
Writing $W(f_+(\cdot,\xi), f_-(\cdot,\xi))=:W(\xi)$, we have (with $m_\pm'(x,\xi) = \partial_x m_\pm(x,\xi)$)
\EQ{\nn 
W(\xi) &= m_+(0,\xi)(-i\xi m_-(0,\xi) + m_-'(0,\xi)) - m_-(0,\xi)(i\xi m_+(0,\xi) + m_+'(0,\xi)) \\
& = -2i\xi m_+(0,\xi) m_-(0,\xi) + m_+(0,\xi)m_-'(0,\xi) - m_-(0,\xi)m_+'(0,\xi)
}
By Lemma~\ref{lem:fm}, the final two terms are $\calO(1)$ uniformly in $\xi\in\R$, together with $N_0-1$ derivatives in~$\xi$. 
Thus, 
\EQ{\nn 
W(\xi) 
& = -2i\xi + 2i\xi(1-m_+(0,\xi) m_-(0,\xi)) + \calO(1)  \\
1-m_+(0,\xi) m_-(0,\xi) & = (1-m_+(0,\xi)) m_-(0,\xi) + 1 - m_-(0,\xi)  
}
By \eqref{eq:m+},  we have
\EQ{\nn 
 2i\xi (1-m_+(0,\xi)) & =  -\int_0^\infty \big( e^{2i\xi y}-1 \big) V(y) m_+(y,\xi)\, dy = \calO(1)
}
together with  $N_0-1$ derivatives in~$\xi$. In conclusion, 
\[
W(\xi) = -2i\xi  + \calO(1)
\]
as $\xi\to\pm\infty$ with $\calO(1)$ as above. Thus, \eqref{eq:T asymp} holds. Around $\xi=0$ we have
\[
W(\xi) = \xi \int_0^1 W'(s\xi)\, ds 
\]
whence $T(\xi) = -2i \Big( \int_0^1 W'(s\xi)\, ds \Big)^{-1}$. Since $|T(\xi)|\le 1$, we have $W'(0)\ne0$ and thus $T\in C^{N_0-2}(\R)$, and by $T(\xi)W(\xi)={-2i\xi}$, $T\ne0$ everywhere. For large $\xi$, we write 
\EQ{\nn 
T(\xi) &= \big[1+ r(\xi) \big]^{-1}, \quad T(\xi)= 1 -\frac{r(\xi)}{1+r(\xi)}   \\
r(\xi) &:= m_+(0,\xi) m_-(0,\xi) -1 +(-2i\xi)^{-1} ( m_+(0,\xi)m_-'(0,\xi) - m_-(0,\xi)m_+'(0,\xi))   
}
By the preceding, $r(\xi)=\calO(\xi^{-1})$ as $\xi\to\pm\infty$, and $r'(\xi)=\calO(\xi^{-2})$, $r''(\xi)=\calO(\xi^{-3})$, etc. which concludes the treatment of the transmission coefficient. 

Next we consider the reflection coefficient $R_-(\xi)$, the treatment of $R_+(\xi)$ being identical. Computing the Wronskian of $T(\xi) f_+(\cdot, \xi)$ and $f_-(\cdot, -\xi)$ in two different ways using~\eqref{eq:TR}, we find that
\begin{equation*}
 R_-(\xi) 2i\xi = T(\xi) W( f_+(\cdot, \xi), f_-(\cdot, -\xi) ) = T(\xi) \bigl( m_+(0,\xi) m_-'(0,-\xi) - m_+'(0,\xi) m_-(0,-\xi) \bigr).
\end{equation*}
We may then infer the asserted regularity and decay properties of $R_-(\xi)$ by proceeding as above.
\end{proof}

By the lemma, $T\in C^{N_0-2}(\R)$, $T\ne0$ everywhere,  and 
\EQ{\label{eq:TR*}
 f_+(x,\xi) & = T(\xi)^{-1} \big[f_-(x,-\xi) + R_-(\xi)  f_-(x,\xi) \big]\\
 f_-(x,\xi) & = T(\xi)^{-1} \big[ f_+(x,-\xi) + R_+(\xi) f_+ (x,\xi)  \big]
}
for all $(x,\xi)\in\R^2$. We will use \eqref{eq:TR*} for $f_+(x,\xi)$ on the half-axis $x\le0$, respectively, for $f_-(x,\xi)$ on $x\ge0$.

The Jost solutions $f_{\pm}$ give rise to the kernel of the resolvent on the real axis (approached from the upper half plane), and by Stone's formula, therefore also to the 
spectral measure on the positive half-axis. The starting point is the expression 
\EQ{\label{eq:green}
(H-(\xi^2+i0))^{-1} (x,y) = \frac{f_+(x,\xi) f_-(y,\xi)\one_{[x>y]}+ f_+(y,\xi) f_-(x,\xi)\one_{[y>x]}}{W(f_+(\cdot,\xi), f_-(\cdot,\xi))} 
}
for all $x,y\in\bR$ and $\xi\in\bR$. 

\begin{lemma}
\label{lem:stone}
The density of the spectral resolution $E(d\, \xi^2)$ of $H$ on the continuous spectrum $[0,\infty)$ has the kernel
\EQ{
\label{eq:Ekern}
\frac{E(d\, \xi^2)}{d\xi}(x,y) = \frac{|T(\xi)|^2}{2\pi } (f_+(x,\xi)f_+(y,-\xi)+f_-(x,\xi)f_-(y,-\xi)) 
}
for all $x,y\in\R$ and all $\xi\in\R$. Alternatively, 
\EQ{
\label{eq:Ekern*}
\frac{E(d\, \xi^2)}{d\xi}(x,y) & = \frac{1}{\pi }\left\{ \begin{array}{ll}
 \Re\big[ T(\xi) f_+(x,\xi)f_-(y,\xi) \big]  &\quad x>y \\
 \vspace{-4mm} \\
 \Re\big[ T(\xi) f_+(y,\xi)f_-(x,\xi) \big]  &\quad x<y
\end{array}\right.
}
and all $\xi\in\R$. 
\end{lemma}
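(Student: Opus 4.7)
The plan is to apply Stone's formula to the explicit Green function \eqref{eq:green}. On the continuous spectrum $[0,\infty)$ one has $\frac{dE(\lambda)}{d\lambda} = \frac{1}{\pi} \Im\, (H-\lambda-i0)^{-1}$ in the weak sense, and reality of $V$ together with $\overline{f_{\pm}(x,\xi)}=f_{\pm}(x,-\xi)$ and $\overline{W(\xi)}=W(-\xi)$ ensures that $(H-\lambda+i0)^{-1}$ is the complex conjugate of $(H-\lambda-i0)^{-1}$; this is what converts the difference in Stone's formula into an imaginary part. Changing variables to $\lambda=\xi^{2}$ with $d\lambda=2\xi\, d\xi$ (first for $\xi>0$) and substituting \eqref{eq:green} gives
\[
\frac{E(d\xi^{2})}{d\xi}(x,y)=\frac{2\xi}{\pi}\,\Im\!\left[\frac{f_{+}(x_{\vee},\xi)\,f_{-}(x_{\wedge},\xi)}{W(\xi)}\right],\qquad x_{\vee}:=\max(x,y),\ x_{\wedge}:=\min(x,y).
\]

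Next I would exploit the identity $T(\xi)W(\xi)=-2i\xi$ from Lemma~\ref{lem:T} to rewrite $\frac{2\xi}{W(\xi)}=iT(\xi)$. Combined with $\Im(iz)=\Re(z)$, this immediately yields \eqref{eq:Ekern*} for $\xi>0$, and the same formula extends to all $\xi\in\bbR$ since the right-hand side of \eqref{eq:Ekern*} is invariant under $\xi\mapsto -\xi$ (using $\overline{T(\xi)f_{+}(x,\xi)f_{-}(y,\xi)}=T(-\xi)f_{+}(x,-\xi)f_{-}(y,-\xi)$, together with $\overline{T(\xi)}=T(-\xi)$ and $\overline{R_{\pm}(\xi)}=R_{\pm}(-\xi)$, which in turn follow from reality of $V$ and from $T(\xi)W(\xi)=-2i\xi$).

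Formula \eqref{eq:Ekern} is then reduced to the purely algebraic identity (for $x>y$, the other case being symmetric)
\[
2\Re\bigl[T(\xi)f_{+}(x,\xi)f_{-}(y,\xi)\bigr]\;=\;|T(\xi)|^{2}\bigl[f_{+}(x,\xi)f_{+}(y,-\xi)+f_{-}(x,\xi)f_{-}(y,-\xi)\bigr].
\]
To verify it, I would apply \eqref{eq:TR} in the form $Tf_{-}(y,\xi)=f_{+}(y,-\xi)+R_{+}(\xi)f_{+}(y,\xi)$ together with its conjugate applied to $\bar T f_{-}(y,-\xi)$, expanding the left-hand side as
\[
f_{+}(x,\xi)f_{+}(y,-\xi)+f_{+}(x,-\xi)f_{+}(y,\xi)+R_{+}(\xi)f_{+}(x,\xi)f_{+}(y,\xi)+\overline{R_{+}(\xi)}\,f_{+}(x,-\xi)f_{+}(y,-\xi).
\]
Expanding $|T|^{2}f_{-}(x,\xi)f_{-}(y,-\xi)=(Tf_{-}(x,\xi))(\bar T f_{-}(y,-\xi))$ by the same scattering relation and adding $|T|^{2}f_{+}(x,\xi)f_{+}(y,-\xi)$ produces the identical four-term sum once the unitarity relation $|T(\xi)|^{2}+|R_{+}(\xi)|^{2}=1$ from \eqref{eq:SU2} is invoked. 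The main obstacle is this last algebraic reconciliation, which is entirely bookkeeping once one is armed with the scattering and conjugation identities; everything else is a routine combination of Stone's formula with the explicit resolvent kernel and the Wronskian identity for $T$.
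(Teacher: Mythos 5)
Your proposal is correct and follows essentially the same route as the paper: Stone's formula applied to the Green function \eqref{eq:green}, the Wronskian identity $T(\xi)W(\xi)=-2i\xi$ to obtain \eqref{eq:Ekern*}, and then the scattering relations \eqref{eq:TR} together with unitarity of $S(\xi)$ to pass to \eqref{eq:Ekern}. The only cosmetic differences are that you phrase Stone's formula via $\Im$ of the resolvent and the conjugation symmetry $\overline{f_\pm(x,\xi)}=f_\pm(x,-\xi)$ rather than as a difference of boundary values at $\pm\xi$, and you close the algebra with $|T|^2+|R_+|^2=1$ where the paper uses the off-diagonal unitarity relation $T\overline{R_-}+\overline{T}R_+=0$ to kill the cross terms; both are equivalent bookkeeping.
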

\begin{proof}
By Stone's formula, for $x>y$, 
\EQ{\nn 
\frac{E(d\, \xi^2)}{d\xi}(x,y) &= \frac{\xi}{\pi i} \big( (H-(\xi^2+i0))^{-1} - (H-(\xi^2-i0))^{-1}\big)(x,y)\\
&= \frac{\xi}{\pi i}  ( W(\xi)^{-1} f_+(x,\xi) f_-(y,\xi) - W(-\xi)^{-1} f_+(x,-\xi) f_-(y,-\xi)) \\
&= \frac{1}{2\pi}( T(\xi) f_+(x,\xi) f_-(y,\xi) + T(-\xi) f_+(x,-\xi) f_-(y,-\xi)).
}
The final line here gives \eqref{eq:Ekern*}. On the other hand, using that $T(-\xi)=\overline{T(\xi)}$, $R_\pm(-\xi)=\overline{R_\pm(\xi)}$, and the unitarity of the scattering matrix in~\eqref{eq:SU2}, the last line here can be rewritten as 
\EQ{\nn
 \frac{E(d\, \xi^2)}{d\xi}(x,y) & = \frac{1}{2\pi } \big[ T(\xi)  f_+(x,\xi)(T(-\xi) f_+(y,-\xi) -  R_-(-\xi) f_-(y,-\xi))  \\
&\qquad\quad +  T(-\xi) (T(\xi) f_-(x,\xi) - R_+(\xi) f_+ (x,\xi))  f_-(y,-\xi) \big].
}
which is the desired expression \eqref{eq:Ekern} for $x>y$. By self-adjointness, one has 
$
\frac{E(d\, \xi^2)}{d\xi}(x,y) = \overline{\frac{E(d\, \xi^2)}{d\xi}(y,x)}
$
which concludes the proof. 
\end{proof}

The definition of the distorted Fourier transform $\wtcalF$ can now be read off from \eqref{eq:Ekern}. Indeed, we define the distorted Fourier basis as 
\EQ{\label{eq:calFdef}
e(x,\xi) &:= \frac{1}{\sqrt{2\pi}} \left \{ \begin{array}{ll} T(\xi)f_+(x,\xi) & \xi\ge0 \\
T(-\xi)f_-(x,-\xi) & \xi<0
\end{array}
\right. 
}
and define $\tilde f(\xi)=\wtcalF f(\xi) = \langle e(\cdot,\xi),f\rangle$ for all $f\in L^1\cap L^2(\bR)$. 
The reason for writing $f_-(x,-\xi) $ rather than $f_-(x,\xi) $ is due to the map $\R\to [0,\infty), \; \xi \mapsto \xi^2$ of multiplicity~$2$. We associate $f_+$ with the second cover, so to speak, of $[0,\infty)$ from $\xi>0$, and $f_-$ with the first cover by $\xi<0$. 
Then, one can read off  from \eqref{eq:Ekern} that
\[
\langle f,g\rangle = \langle \wtcalF f, \wtcalF g\rangle\quad \forall \; f,g \in L^1\cap L^2_c(\bR)
\]
where $L^2_c:=P_c L^2$. In other words, Plancherel holds, $\wtcalF:L^2_c\to L^2$ is an isometry, and $\wtcalF^*\wtcalF = \Id_{L^2_c}$. 
Explicitly, the inverse Fourier transform is given by 
\[
f(x) = \langle \overline{e(x,\cdot)}, \tilde f(\cdot)\rangle
\]
provided $\tilde f$ decays sufficiently for this inner product to exist. The reader will easily recover the standard Fourier transform for $V=0$. Note that for generic $V$, the Fourier basis vanishes at $\xi=0$, whereas in the non-generic case there is a discontinuity at zero energy.

\subsection{Sobolev and product estimates}

In this subsection we present several technical estimates, in particular a weighted Sobolev estimate and a product estimate, that will be needed in the nonlinear analysis in Sections~\ref{sec:local_decay_bounds} and~\ref{sec:proof_theorem}.

Recall that we denote by $\chi_0(\xi)$ a smooth cutoff to $|\xi| \lesssim 1$, equal to $1$ near $\xi = 0$, and that we denote by $\chi(\xi)$ a smooth bump function with support near $|\xi| \simeq 1$.
We assume throughout that $V$ is non-generic, although the following results also hold generically. 

\begin{lemma}[Kernel bounds]
\label{lem:K bds}
 Let $N \geq 1$ and $k\ge0$ be integers, and let $\nu \in \bbR$. Assume that the potential $V(x)$ satisfies $\jap{x}^{N+2}V^{(\ell)}\in L^1(\bR)$ for all $0\le\ell\le \max(k-1,1)$. Then we have for all $x,y \in \bbR$ that
 \begin{align}
  \bigl| \bigl[  \px^k \jap{\wtilD}^\nu \chi_0(H) P_c \bigr](x,y) \bigr| &\leq C(V, N, \nu, k) \sum_\pm \frac{1}{\langle x \pm y \rangle^N}, \label{equ:wtilD_chi0_kernel_bound} 
 \end{align}
 and the same holds for the kernel of $\sqrt{H} \chi_0(H) P_c$. 
 Moreover, we have for all $\lambda \geq 1$ and for all $x,y \in \bbR$ that
 \begin{align}
  \bigl| \bigl[ \jtD^\nu \chi(\wtilD/\lambda) P_c \bigr](x,y) \bigr| &\leq C(V, N, \nu) \sum_\pm \lambda^\nu \frac{\lambda}{\langle \lambda (x \pm y) \rangle^N}. \label{equ:jtD_chi_kernel_bound} \\
  \bigl| \bigl[ (\sqrt{H})^\nu \chi(\wtilD/\lambda) P_c \bigr](x,y) \bigr| &\leq C(V, N, \nu) \sum_\pm \lambda^\nu \frac{\lambda}{\langle \lambda (x \pm y) \rangle^N}. \label{equ:sqrtH_chi_kernel_bound} 
 \end{align}
\end{lemma}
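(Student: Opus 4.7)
The starting point is the spectral representation of Lemma~\ref{lem:stone}, which yields
\begin{equation*}
 \bigl[\phi(H) P_c\bigr](x,y) = \int_0^\infty \frac{|T(\xi)|^2}{2\pi}\bigl(f_+(x,\xi)f_+(y,-\xi) + f_-(x,\xi)f_-(y,-\xi)\bigr)\, \phi(\xi^2)\, d\xi
\end{equation*}
for any bounded Borel function $\phi$, where in our four applications $\phi(\xi^2)$ is one of $\langle\xi\rangle^\nu\chi_0(\xi^2)$, $\langle\xi\rangle^\nu\chi(\langle\xi\rangle/\lambda)$, $|\xi|^\nu\chi_0(\xi^2)$, or $|\xi|^\nu\chi(\langle\xi\rangle/\lambda)$. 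Substituting $f_\pm(z,\xi)=e^{\pm iz\xi}m_\pm(z,\xi)$ rewrites each term as an oscillatory integral in $\xi$ with linear phase, and the derivatives $\partial_x^k$ distribute by Leibniz onto $e^{\pm ix\xi}$ and $m_\pm(x,\xi)$, producing extra factors $(\pm i\xi)^{k-j}\partial_x^j m_\pm(x,\xi)$. The $L^1$ hypothesis on $V^{(\ell)}$ with $\ell\le\max(k-1,1)$ is precisely what is needed to bound these spatial derivatives via the Volterra equation~\eqref{eq:m+ x}, and the hypothesis $\jap{x}^{N+2}V^{(\ell)}\in L^1$ ensures, by iteration of~\eqref{eq:m+pl} and~\eqref{eq:m' dxi}, that the amplitudes $m_\pm$ and $\partial_x^j m_\pm$ carry $N$ uniformly bounded $\xi$-derivatives on their natural half-lines.

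The plan is to gain the decay $\langle x\pm y\rangle^{-N}$ by $N$-fold integration by parts in $\xi$; this forces a case analysis according to the signs of $x$ and $y$, since the uniform bounds of Lemma~\ref{lem:fm} on $m_+(z,\xi)$ hold only for $z\ge 0$ and those on $m_-(z,\xi)$ only for $z\le 0$. Whenever a Jost amplitude is evaluated on its unfavorable half-line I would substitute via~\eqref{eq:TR*}, for instance $f_+(x,\xi) = T(\xi)^{-1}\bigl[f_-(x,-\xi) + R_-(\xi) f_-(x,\xi)\bigr]$ when $x \le 0$, which produces linear combinations of oscillatory integrals with the four possible linear phases $\xi(\pm x\pm y)$ and explains the form $\sum_\pm\langle x\pm y\rangle^{-N}$ in the conclusion. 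Since $V$ is non-generic, $T(0)\ne 0$ by Definition~\ref{def:reson}, so Lemma~\ref{lem:T} ensures $T(\xi)^{-1}$ and $R_\pm(\xi)$ are $C^{N_0-2}(\bbR)$; together with the Jost amplitude bounds and the smooth compactly supported cutoff $\chi_0$ (respectively the cutoff $\chi(\cdot/\lambda)$), the full integrand has enough bounded $\xi$-derivatives to sustain $N$ integrations by parts, and I would eliminate the endpoint at $\xi=0$ by first symmetrizing the half-line integral onto $\bbR$ using the conjugation relations $\overline{m_\pm(z,\xi)}=m_\pm(z,-\xi)$ and $\overline{T(\xi)}=T(-\xi)$ that hold because $V$ is real.

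For the high-frequency kernel bounds~\eqref{equ:jtD_chi_kernel_bound}--\eqref{equ:sqrtH_chi_kernel_bound} with $\lambda\ge 1$, I would rescale $\xi=\lambda\eta$: the cutoff $\chi(\langle\lambda\eta\rangle/\lambda)$ localizes to $|\eta|\sim 1$, the multiplier $\langle\xi\rangle^\nu$ or $|\xi|^\nu$ contributes the prefactor $\lambda^\nu$, the measure contributes another $\lambda$, and the symbol-type bounds of Lemma~\ref{lem:m symb} convert in rescaled variables to $\partial_\eta^j m_\pm(\cdot,\lambda\eta)=\calO(1)$ uniformly in $\lambda\ge 1$ and $|\eta|\sim 1$, so that $N$ integrations by parts in $\eta$ yield the claimed $\lambda\,\langle\lambda(x\pm y)\rangle^{-N}$. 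The main technical obstacle I anticipate is the low-frequency analysis in the non-generic case, where the distorted Fourier basis~\eqref{eq:calFdef} is itself discontinuous at $\xi=0$: one must verify that reassembling the two halves of the spectrum through the symmetric density~\eqref{eq:Ekern}, together with the substitution by~\eqref{eq:TR*} that introduces the factor $T(\xi)^{-1}$, does not re-introduce any singularity at the threshold. The smoothness of $T$ and $R_\pm$ across $\xi=0$ afforded by Lemma~\ref{lem:T} in the non-generic case is precisely the input that makes this work, and bookkeeping which cutoff and which scattering coefficient survive at each sign configuration of $(x,y)$ is the main point to get right.
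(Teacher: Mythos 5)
Your overall strategy — spectral representation from Lemma~\ref{lem:stone}, insertion of $f_\pm=e^{\pm iz\xi}m_\pm$, substitution via \eqref{eq:TR*} whenever a Jost amplitude sits on its unfavorable half-line, $N$-fold integration by parts in $\xi$ with the four phases $\xi(\pm x\pm y)$, and the rescaling $\xi=\lambda\eta$ for \eqref{equ:jtD_chi_kernel_bound}--\eqref{equ:sqrtH_chi_kernel_bound} — is exactly the paper's argument for $k\le 1$ and for the high-frequency bounds, and your use of the symmetric density \eqref{eq:Ekern} rather than the indicator form \eqref{eq:Ekern*} even sidesteps the cancellation of the $\delta_0(x-y)$ terms that the paper has to point out when differentiating once.

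The genuine gap is in the case $k\ge 2$. Your plan is to distribute $\partial_x^k$ by Leibniz onto $e^{\pm ix\xi}m_\pm(x,\xi)$ and to bound the resulting factors $\partial_x^j m_\pm(x,\xi)$, $j\le k$, ``via the Volterra equation \eqref{eq:m+ x}''. Iterating \eqref{eq:m+ x} to control $\partial_x^j m_\pm$ (uniformly in $\xi$, together with the $N$ $\xi$-derivatives needed for the integrations by parts) costs one derivative of $V$ per $x$-derivative of $m_\pm$, i.e.\ it requires $\jap{x}$-weighted $L^1$ control of $V^{(\ell)}$ up to $\ell=k$, whereas the lemma only assumes $\ell\le\max(k-1,1)$; so for $k\ge2$ your accounting does not close under the stated hypothesis. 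The paper never differentiates the Jost functions more than once in $x$: it uses the operator identity $\partial_x^2=V-H$, so that
\begin{equation*}
\bigl[\partial_x^2\,\chi_0(H)P_c\bigr](x,y)=\bigl[(V(x)-H)\,\chi_0(H)P_c\bigr](x,y),
\end{equation*}
and iterates (e.g.\ $\partial_x^4\leftrightarrow(V-H)^2=V^2-2VH+V''+2V'\partial_x+H^2$), reducing everything to the $k\le1$ kernels with modified multipliers $\omega(H)\chi_0(H)$. Equivalently, you could use the Jost ODE $f_\pm''=(V-\xi^2)f_\pm$ inside the oscillatory integral to trade $\partial_x^2 m_\pm$ for $V m_\pm$ and $\xi\,\partial_x m_\pm$; either way, this reduction is the missing ingredient that matches the hypothesis $\ell\le\max(k-1,1)$. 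One further small bookkeeping point you should make explicit: after substituting \eqref{eq:TR*}, the integrand carries $T(\xi)^{-1}$ and $R_\pm(\xi)$, which are ratios of Wronskians, and this is precisely why the weight in the hypothesis is $\jap{x}^{N+2}$ (and why $V'$ enters even for $k=0$) rather than the $\jap{x}^{N+1}$ one would guess from $N$ integrations by parts on $m_\pm$ alone.
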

\begin{proof}
We have 
\EQ{\nn
\big[ \jap{\wtilD}^\nu\chi_0(H) P_c\big](x,y) 
& = \frac{1}{2\pi }\one_{[x>y]} \int_{-\infty}^\infty   T(\xi) f_+(x,\xi)f_-(y,\xi) \,  \jap{\xi}^\nu\chi_0(\xi^2)\, \ud\xi    \\
& \quad +\frac{1}{2\pi } \one_{[x<y]} \int_{-\infty}^\infty   T(\xi) f_-(x,\xi)f_+(y,\xi)  \,  \jap{\xi}^\nu\chi_0(\xi^2)\, d\xi   \\
&=: K_1(x,y) + K_2(x,y) 
}
For the operator $\sqrt{H}\chi_0(H) P_c$, we would have a similar expression but with $\xi\chi_0(\xi^2)$ in place of $\jap{\xi}^\nu\chi_0(\xi^2)$. This does not have any significant bearing on the subsequent arguments. 
If $x\ge0$, then we write 
\EQ{\label{eq:K1xy}
K_1(x,y) &=  \frac{1}{2\pi }\one_{[x\ge0>y]} \int_{-\infty}^\infty e^{i (x-y)\xi}    m_+(x,\xi)m_-(y,\xi) \, \jap{\xi}^\nu \chi_0(\xi^2)T(\xi)\, \ud\xi  \\
&\quad + \frac{1}{2\pi }\one_{[x>y\ge0]} \int_{-\infty}^\infty e^{i (x-y)\xi}    m_+(x,\xi)m_+(y,-\xi) \, \jap{\xi}^\nu \chi_0(\xi^2)\, \ud\xi \\
&\quad +\frac{1}{2\pi }\one_{[x>y\ge0]} \int_{-\infty}^\infty e^{i (x+y)\xi}    m_+(x,\xi)m_+(y,\xi) \, \jap{\xi}^\nu R_+(\xi)\chi_0(\xi^2)\, \ud\xi 
}
Integrating by parts $N$ times gives the desired bound on $K_1(x,y)$ provided $\jap{x}^{N+2} V^{(\ell)}(x)\in L^1(\bR)$ for $\ell=0,1$, see Lemma~\ref{lem:fm}. The reason for the $N+2$ rather than $N+1$, and with the need to include $V'$ here lies with the transmission and reflection coefficients being ratios of Wronskians, see Lemma~\ref{lem:T}. 
The contributions of $x<0$ and the kernel $K_2$ are treated analogously. 
We now consider $k>0$ and without loss of generality $\nu=0$. The latter can be done since $\chi_0$ can be replaced with any bump function supported near~$0$, and we may thus absorb $\jap{\wtilD}^\nu$ into $\chi_0$.  We compute 
\EQ{\label{eq:px} 
\big[ \partial_x \chi_0(H) P_c\big](x,y) 
& = \frac{1}{2\pi }\one_{[x>y]} \int_{-\infty}^\infty   T(\xi)\partial_x f_+(x,\xi)f_-(y,\xi) \,  \jap{\xi}^\nu\chi_0(\xi^2)\, \ud\xi    \\
& \quad +\frac{1}{2\pi } \one_{[x<y]} \int_{-\infty}^\infty   T(\xi) \partial_x f_-(x,\xi)f_+(y,\xi)  \,  \jap{\xi}^\nu\chi_0(\xi^2)\, d\xi   
}
Notice that the $\pm\delta_0(x-y)$ singularities which arise by differentiating $\one_{[x>y]}$, resp.~$\one_{[x<y]}$, cancel each other. 
For $k=2$ we could differentiate once more. However, we write $\partial_x^2 = V-H$, whence 
\[
\big[\partial_x^2  \chi_0(H) P_c\big](x,y) = \big[ (V(x) -H)\chi_0(H) P_c\big](x,y)
\]
The right-hand side satisfies the bounds in \eqref{equ:wtilD_chi0_kernel_bound}, multiplied by the factor $1+\|V\|_\infty\le 1+\|V'\|_1$. The 
bound \eqref{equ:wtilD_chi0_kernel_bound} with $k=1$ requires  $\jap{x}^{N+1} V^{(\ell)} \in L^1(\bR)$ for $\ell=0,1$ by \eqref{eq:px} and Lemma~\ref{lem:fm}. The higher order derivatives in~$x$ follow by iteration, for example
\EQ{\nn 
\big[\partial_x^3  \chi_0(H) P_c\big](x,y) &= \partial_x \big[ (V(x) -H)\chi_0(H) P_c\big](x,y) \\
\big[\partial_x^4  \chi_0(H) P_c\big](x,y) &= \big[ (V(x) -H)^2\chi_0(H) P_c\big](x,y)
}
with $(V-H)^2 = V^2 - HV -VH + H^2=V^2 -2VH + V''+2V'\partial_x + H^2$. 

For \eqref{equ:jtD_chi_kernel_bound} consider the operator
\[
M_{\lambda}:= \lambda^{-\nu}  \jtD^\nu \chi(\tD/\lambda),\quad \lambda\ge1
\]
with the distorted Fourier representation 
\EQ{\label{eq:beta}
M_\lambda P_c g(x) 
& =  \frac{1}{\sqrt{2\pi}} \int_{0}^\infty   T(\xi)f_+(x,\xi) \lambda^{-\nu} \jap{\xi}^\nu \chi(\xi/\lambda) \tilde g(\xi)\, \ud\xi \\
&\quad + \frac{1}{\sqrt{2\pi}} \int_{-\infty}^0 T(-\xi) f_-(x,-\xi) \lambda^{-\nu} \jap{\xi}^\nu \chi(\xi/\lambda) \tilde g(\xi)\, \ud\xi  
}
Assuming $x>0$ and inserting the expression for the distorted Fourier transform $\tilde g$  yields the explicit kernel representation
\EQ{\label{eq:Mlam}
 &M_\lambda^+ P_c (x,y) \\
 &= \int_{0}^\infty  |T(\xi)|^2 e^{i(x-y)\xi} m_+(x,\xi) m_+(y,-\xi) \theta_+(y) \lambda^{-\nu} \jap{\xi}^\nu \chi(\xi/\lambda)  \, \frac{\ud\xi }{2\pi}  \\
 &\quad  +  \int_{0}^\infty \!\!\! T(\xi) e^{ix\xi} m_+(x,\xi)   \big(  e^{-iy\xi} m_-(y,\xi) + R_-(-\xi) e^{iy\xi} m_-(y,-\xi)\big) \theta_-(y) \lambda^{-\nu} \jap{\xi}^\nu \chi(\xi/\lambda)  \, \frac{\ud\xi}{2\pi}
}
for the contribution of the positive frequencies $\xi$ in~\eqref{eq:beta}. We leave the analogous contributions of negative $\xi$ to the reader. Combining the oscillatory integrals with phase $(x-y)\xi$ and rescaling $\xi=\lambda\eta$ leads to the estimate
\EQ{\nn
&\Big|\lambda \int_{0}^\infty  e^{i\lambda (x-y)\eta} m_+(x,\lambda\eta)  \Big\{  |T(\lambda\eta)|^2  m_+(y,-\lambda\eta) \theta_+(y) +  T(\lambda\eta) m_-(y,\lambda\eta)  \theta_-(y) \Big\} \lambda^{-\nu} \jap{\lambda \eta}^\nu \chi(\eta)  \, \frac{\ud\eta }{2\pi} \Big|\\
&\le C(V)\lambda\jap{\lambda (x-y)}^{-N}
} 
We integrated here by parts $N$ times  if $|\lambda (x-y)|\ge1$, otherwise just use the trivial bound $\lambda$. We use the symbol type bounds on $m_{\pm}$, $T,R_{\pm}$ in $\xi$ as above, uniform in the respective regimes of $x,y$, as well as that
\[ 
\lambda^{-\nu} \jap{\lambda \eta}^\nu \chi(\eta) = ( \lambda^{-2} + \eta^2)^{\frac{\nu}{2}} \chi(\eta)
\]
is bounded with all its derivatives uniformly in $\lambda\ge1$. The treatment of the phase function $(x+y)\xi$ is essentially the same, leading to the kernel bound $$C(V)\lambda\jap{\lambda (x+y)}^{-N}$$ 
This concludes the proof of \eqref{equ:jtD_chi_kernel_bound}. These bounds require that $\jap{x}^{N+1} V\in L^1$. 

The estimate \eqref{equ:sqrtH_chi_kernel_bound} follows in essentially the same way. The only difference being that in \eqref{eq:Mlam} the multiplier $\jap{\xi}$ is replaced with~$\xi$.  We leave the remaining details to the reader. 
\end{proof}

\begin{remark}
 In the nonlinear analysis in Sections~\ref{sec:local_decay_bounds} and~\ref{sec:proof_theorem}, we will occasionally use without further mentioning that the operators $\chi_0(H) P_c$ and $\px \chi_0(H) P_c$ are bounded on weighted $L^p(\bbR)$ spaces, $1 \leq p \leq \infty$, in the sense that  
 \begin{equation*}
  \bigl\| \jx^{-\sigma} \chi_0(H) P_c g \bigr\|_{L^p_x} + \bigl\| \jx^{-\sigma} \px \chi_0(H) P_c g \bigr\|_{L^p_x} \leq C(V,p,\sigma) \|\jx^{-\sigma} P_c g\|_{L^p_x}
 \end{equation*}
 for $\sigma \geq 0$. These bounds follow easily from the preceding kernel bounds~\eqref{equ:wtilD_chi0_kernel_bound} via Young's inequality.
\end{remark}

To carry out complex interpolation, it will be useful to also allow imaginary $\nu$ in Lemma~\ref{lem:K bds}. The  following lemma states the concrete estimate that we require for that purpose. 

\begin{lemma}
\label{lem:is}
Fix $\sigma\in\bR$ and assume $\jap{x}^{N+2} V^{(\ell)}\in L^1(\bR)$ for $\ell=0,1$ where $N>|\sigma|+1$ is an integer. Then 
\EQ{\label{eq:is}
\| \jx^\sigma  \jtD^{is}P_c  \jx^{-\sigma}\|_{2\to2} \le C(V,\sigma)\jap{s}^{N} 
}
for all $s\in\bR$. We also have
\[
\| \jx^{-\sigma} \jtD H^{-\frac12} (1-\chi_0(H)) P_c\|_{2\to2} \le C(V,\sigma)
\]
\end{lemma}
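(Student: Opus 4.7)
The plan is to derive both estimates by extending the distorted Fourier kernel analysis of Lemma~\ref{lem:K bds} from the polynomial symbols $\jap{\xi}^\nu$ to the bounded order-zero symbols arising in \eqref{eq:is}, then combining those kernel bounds with Schur's test and a dyadic almost orthogonality argument. The first estimate requires tracking $\jap{s}^N$ factors coming from the symbol $\jap{\xi}^{is}$; the second involves the simpler $s$-independent symbol $(1+\xi^2)^{1/2}|\xi|^{-1}(1-\chi_0(\xi^2))$.

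\textbf{First estimate.} The symbol $m_s(\xi) := \jap{\xi}^{is}$ is of Mikhlin type of order zero with $|\partial_\xi^k m_s(\xi)| \lesssim_k \jap{s}^k \jap{\xi}^{-k}$ for all $k \geq 0$. I would rerun the integration-by-parts arguments in the proof of Lemma~\ref{lem:K bds} with $m_s$ in the role of $\jap{\xi}^\nu$; since each $\xi$-derivative picks up at most one factor of $\jap{s}$, the same kernel estimates go through with an extra $\jap{s}^N$ factor:
\begin{align*}
 \bigl|[\jtD^{is} \chi_0(H) P_c](x,y)\bigr| &\leq C(V,N) \jap{s}^N \sum_\pm \jap{x \pm y}^{-N},  \\
 \bigl|[\jtD^{is} \chi(\tD/\lambda) P_c](x,y)\bigr| &\leq C(V,N) \jap{s}^N \sum_\pm \lambda \jap{\lambda(x \pm y)}^{-N},
\end{align*}
for all dyadic $\lambda \geq 1$, provided $\jap{x}^{N+2} V^{(\ell)} \in L^1(\bR)$ for $\ell = 0,1$. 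The weighted kernel $\jx^\sigma K(x,y) \jap{y}^{-\sigma}$ is then controlled by combining the two-sided Peetre inequality $\jap{x}^\sigma \jap{y}^{-\sigma} \leq C \min(\jap{x-y}^{|\sigma|}, \jap{x+y}^{|\sigma|})$ with Schur's test: distributing the $|\sigma|$-power onto the corresponding $\jap{\lambda(x \pm y)}^{-N}$ factor and using $N > |\sigma|+1$, each dyadic piece $T_\lambda := \jx^\sigma \jtD^{is} \chi(\tD/\lambda) P_c \jx^{-\sigma}$ obeys $\|T_\lambda\|_{2 \to 2} \leq C(V,\sigma)\jap{s}^N$ uniformly in $\lambda$, and likewise for the low-frequency piece.

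To pass from the uniform dyadic bounds to the full sum, I would invoke Cotlar--Stein almost orthogonality. The crux is that for well-separated dyadic frequencies $\chi(\tD/\lambda)\chi(\tD/\mu) P_c = 0$; commuting the weights $\jx^{\pm \sigma}$ past the spectral cutoffs produces commutators of lower order whose kernels admit the same rapidly decaying bounds as above (by repeating the integration-by-parts argument on derivatives of the symbols). Consequently one obtains off-diagonal bounds $\|T_\lambda^* T_\mu\|_{2 \to 2} + \|T_\lambda T_\mu^*\|_{2 \to 2} \leq C\jap{s}^{2N} (\min(\lambda,\mu)/\max(\lambda,\mu))^\alpha$ for some $\alpha > 0$, and Cotlar--Stein delivers \eqref{eq:is}. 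This summation over scales is the main technical obstacle, since naive summation over the infinitely many dyadic frequencies clearly diverges.

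\textbf{Second estimate.} The symbol $m(\xi) := (1+\xi^2)^{1/2}|\xi|^{-1}(1-\chi_0(\xi^2))$ is smooth, bounded, and $s$-independent, with $|\partial_\xi^k m(\xi)| \lesssim_k \jap{\xi}^{-k}$. By Plancherel for the distorted Fourier transform, $m(\tD)P_c = \jtD H^{-1/2}(1-\chi_0(H)) P_c$ is bounded on $L^2(\bR)$ with norm at most $\|m\|_{L^\infty}$. For $\sigma \geq 0$, which is the case of interest, $\jx^{-\sigma}$ is uniformly bounded on $\bR$ and the stated estimate follows immediately.
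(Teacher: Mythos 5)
Your dyadic kernel bounds are correct (the rescaled symbol $\jap{\lambda\eta}^{is}\chi(\eta)$ loses at most one factor of $\jap{s}$ per $\eta$-derivative, uniformly in $\lambda\ge1$, so the proof of Lemma~\ref{lem:K bds} does carry over), the weighted Schur estimate for each fixed piece is fine, and your treatment of the second estimate via the spectral theorem plus boundedness of $\jx^{-\sigma}$ is correct for $\sigma\ge0$ --- the only sign for which that one-sided statement can hold and the only one used later --- and is in fact simpler than the paper's kernel argument. The gap is exactly at the step you flag as the crux: the summation over scales. The off-diagonal bound $\|T_\lambda^*T_\mu\|+\|T_\lambda T_\mu^*\|\lesssim\jap{s}^{2N}\bigl(\min(\lambda,\mu)/\max(\lambda,\mu)\bigr)^{\alpha}$ is asserted, not proved, and it is not a routine consequence of ``commutators of lower order''. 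Size bounds on the kernels give $\|T_\lambda T_\mu^*\|\lesssim\jap{s}^{2N}$ uniformly but with \emph{no} decay in $\max(\lambda,\mu)/\min(\lambda,\mu)$, hence no summability; the gain must come from the spectral disjointness $\chi(\tD/\lambda)\chi(\tD/\mu)P_c=0$, and once the weights are inserted this forces a genuine weighted commutator analysis. Note in particular that although $\jtD^{is}$ commutes with the spectral cutoffs, you cannot factor $T_\lambda T_\mu^*=\jx^\sigma\jtD^{is}\chi(\tD/\lambda)P_c\,\jx^{-2\sigma}\,\chi(\tD/\mu)P_c\,\jtD^{-is}\jx^\sigma$ into individually bounded operators: a piece such as $\jx^\sigma\jtD^{is}\chi(\tD/\lambda)P_c$, with a growing outer weight and no compensating decaying weight, is unbounded on $L^2$, so the commutator bookkeeping must be done across the full composition (iterating when $|\sigma|>1$, and using the evenness of the weight to handle the reflected kernel terms $\jap{\mu(x+y)}^{-N}$). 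None of this is impossible, but it is precisely where the work lies, and the proposal does not do it.

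The paper's own proof shows that this obstacle is an artifact of the dyadic decomposition. It writes the operator as the strong limit of $A_L=\jx^\sigma\jtD^{is}\chi_0(H/L^2)P_c\jx^{-\sigma}$, represents the kernel through the Jost solutions as in Lemma~\ref{lem:K bds}, and integrates by parts $N$ times in $\xi$ with \emph{no} frequency localization: each derivative of $\jap{\xi}^{is}$ costs a factor $\jap{s}$ but gains $\jap{\xi}^{-1}$, while derivatives of $T$, $R_\pm$, $m_\pm$ decay like $\jap{\xi}^{-1-j}$ by Lemmas~\ref{lem:m symb} and~\ref{lem:T}; after two integrations by parts the $\xi$-integral is absolutely convergent uniformly in $L$, yielding the global kernel bound $C\jap{s}^{N}\max_\pm\jap{x\pm y}^{-N}$ directly. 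Weighted Schur with $N>|\sigma|+1$ and letting $L\to\infty$ then finish the proof. Your dyadic rescaling discards exactly this $\xi$-decay (the rescaled symbols are only $O(1)$, whence the factor $\lambda$ in your kernel bound), which is what creates the divergent sum and the need for almost orthogonality in the first place; if you want to salvage your route, you must either carry out the weighted Cotlar--Stein commutator estimates in detail, or simply drop the decomposition and argue as the paper does.
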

\begin{proof}
The limit
\EQ{\nn 
\jx^\sigma  \jtD^{is}P_c  \jx^{-\sigma} &= \lim_{L\to\infty} \jx^\sigma  \jtD^{is}\chi_0(H/L^2)P_c  \jx^{-\sigma}  \\
& =: \lim_{L\to\infty} A_L 
}
exists in the strong $L^2$  sense. 
The kernel of the operator $A_L$ is of the form
\EQ{\nn
A_L (x,y) 
& = \frac{1}{2\pi }\one_{[x>y]} \int_{-\infty}^\infty   T(\xi) f_+(x,\xi)f_-(y,\xi) \,  \jap{\xi}^{is}\chi_0(\xi^2/L^2)\, \ud\xi    \\
& \quad +\frac{1}{2\pi } \one_{[x<y]} \int_{-\infty}^\infty   T(\xi) f_-(x,\xi)f_+(y,\xi)  \,  \jap{\xi}^{is} \chi_0(\xi^2/L^2)\, d\xi   
}
and we bound it as in the previous proof, i.e., after $N$ integrations by parts we arrive at the upper bound
\EQ{\label{eq:ALkern}
\jx^\sigma |A_L(x,y)|  \jap{y}^{-\sigma}\le C\jap{s}^{N} \jx^\sigma   \jap{y}^{-\sigma}\max_{\pm} \jap{x\pm y}^{-N}
}
uniformly in $L$. Indeed, \eqref{eq:K1xy} now takes the form 
\EQ{\nn 
K_1(x,y) &=  \frac{1}{2\pi }\one_{[x\ge0>y]} \int_{-\infty}^\infty e^{i (x-y)\xi}    m_+(x,\xi)m_-(y,\xi) \, \jap{\xi}^{is} \chi_0(\xi^2/L^2)T(\xi)\, \ud\xi  \\
&\quad + \frac{1}{2\pi }\one_{[x>y\ge0]} \int_{-\infty}^\infty e^{i (x-y)\xi}    m_+(x,\xi)m_+(y,-\xi) \, \jap{\xi}^{is} \chi_0(\xi^2/L^2)\, \ud\xi \\
&\quad +\frac{1}{2\pi }\one_{[x>y\ge0]} \int_{-\infty}^\infty e^{i (x+y)\xi}    m_+(x,\xi)m_+(y,\xi) \, \jap{\xi}^{is} R_+(\xi)\chi_0(\xi^2/L^2)\, \ud\xi 
}
Integrating by parts at least twice in these expressions produces an absolutely convergent integrand, uniformly in $L$ (note $N\ge2$). 
Via Schur's test, \eqref{eq:ALkern} implies \eqref{eq:is} for $A_L$, and thus also in the limit $L\to\infty$. 

The final statement of the lemma is proved in exactly the same fashion, but with $\jap{\xi}\xi^{-1} (1-\chi_0(\xi^2))\chi_0(\xi^2/L^2)$ in place of $\jap{\xi}^{is}\chi_0(\xi^2/L^2)$. The same arguments go through with this symbol. 
\end{proof}

As an application of these bounds, we can now give a self-contained argument for the equivalence of weighted Sobolev norms defined via $H$, respectively, $H_0=-\partial_x^2$. 

\begin{lemma}[Equivalence of norms] \label{lem:equiv_norms}
 For $k = 1, 2$ and $\sigma \geq 0$, there exists a constant $C \equiv C(V, k, \sigma)$ such that
 \begin{equation}
 \label{eq:equiv norms}
  \frac{1}{C} \bigl\| \jx^\sigma \jtD^k P_c g \bigr\|_{L^2_x} \leq \| \jx^\sigma P_c g \|_{H^k_x} \leq C \bigl\| \jx^\sigma \jtD^k P_c g \bigr\|_{L^2_x}.
 \end{equation}
  The conditions on $V$ are the same as in Lemma~\ref{lem:is}. 
\end{lemma}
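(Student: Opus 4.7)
The plan is to establish the case $k=2$ directly from the identity $\jtD^2 P_c = (1+V-\partial_x^2)P_c$, and then deduce the case $k=1$ by Stein's complex interpolation, with Lemma~\ref{lem:is} controlling the imaginary-axis boundary behavior.

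\textbf{Case $k=2$.} The identity $\jtD^2 P_c g = (1+V-\partial_x^2)P_c g$ yields the upper bound
\[
\|\jx^\sigma \jtD^2 P_c g\|_{L^2_x} \le (1+\|V\|_\infty)\|\jx^\sigma P_c g\|_{L^2_x} + \|\jx^\sigma \partial_x^2 P_c g\|_{L^2_x} \lesssim \|\jx^\sigma P_c g\|_{H^2_x}
\]
at once. Rearranging the identity gives, modulo $\|\jx^\sigma P_c g\|_{L^2_x}$, the reverse estimate, and it remains to dominate $\|\jx^\sigma P_c g\|_{L^2_x}$ by $\|\jx^\sigma \jtD^2 P_c g\|_{L^2_x}$. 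This reduces to the $L^2_x$-boundedness of the conjugated operator $\jx^\sigma \jtD^{-2} P_c \jx^{-\sigma}$, which I would verify via the Littlewood--Paley decomposition $\jtD^{-2} P_c = \jtD^{-2}\chi_0(H) P_c + \sum_{j\ge 0}\jtD^{-2}\chi(\jtD/2^j) P_c$ and the kernel bounds \eqref{equ:wtilD_chi0_kernel_bound}--\eqref{equ:jtD_chi_kernel_bound} of Lemma~\ref{lem:K bds} applied with $\nu=-2$: each high-frequency dyadic block has operator norm $O(2^{-2j})$, and the Schwartz-type spatial decay of the kernels absorbs the weights $\jx^{\pm\sigma}$ through Schur's test.

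\textbf{Case $k=1$ via interpolation.} I would apply Stein's complex interpolation theorem to the analytic family
\[
\calA_z \;:=\; \jx^\sigma \, \jtD^z \, P_c \, \jx^{-\sigma}\, (1-\partial_x^2)^{-z/2}, \qquad 0\le \Re z\le 2,
\]
acting on Schwartz functions. On $\Re z = 0$, since $(1-\partial_x^2)^{-is/2}$ is an $L^2_x$-isometry, Lemma~\ref{lem:is} gives $\|\calA_{is}\|_{L^2_x\to L^2_x}\le C(V,\sigma)\jap{s}^N$. On $\Re z = 2$, I substitute $\jtD^{2+is}P_c = (1-\partial_x^2+V)\jtD^{is}P_c$, commute $\jx^\sigma$ past $1-\partial_x^2$, and pair $(1-\partial_x^2)$ with $(1-\partial_x^2)^{-1-is/2}$; combining Lemma~\ref{lem:is} with the commutator analysis described below yields $\|\calA_{2+is}\|_{L^2_x\to L^2_x}\le C(V,\sigma)\jap{s}^N$. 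Stein interpolation at $\Re z = 1$ then produces $\|\calA_1\|_{L^2_x\to L^2_x}\lesssim 1$, which is exactly the inequality $\|\jx^\sigma \jtD P_c g\|_{L^2_x}\lesssim \|\jx^\sigma P_c g\|_{H^1_x}$. The reverse inequality is obtained by running the same interpolation argument on the reciprocal family $\jx^\sigma(1-\partial_x^2)^{z/2} P_c \jx^{-\sigma}\jtD^{-z}$, whose $\Re z = 2$ endpoint is controlled by the $\jx^\sigma\jtD^{-2} P_c \jx^{-\sigma}$ bound established in the $k=2$ step.

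\textbf{Main obstacle.} I expect the main difficulty to be the endpoint $\Re z = 2$ of the interpolation, specifically the fact that $\jx^\sigma$ does not commute with $1-\partial_x^2$. The commutator
\[
[1-\partial_x^2,\; \jx^\sigma] \;=\; -2(\partial_x \jx^\sigma)\,\partial_x - (\partial_x^2 \jx^\sigma) \;=\; O(\jx^{\sigma-1})\,\partial_x + O(\jx^{\sigma-2})
\]
has coefficients with strictly less growth than $\jx^\sigma$, so after pairing with the $\jx^{-\sigma}$ factor sitting to the right one is left with bounded symbols that can be handled through kernel bounds of the type in Lemma~\ref{lem:K bds}, combined with the smoothing of $(1-\partial_x^2)^{-1-is/2}$. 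The polynomial-in-$s$ growth $\jap{s}^N$ in Lemma~\ref{lem:is} is exactly the tempered growth permitted by Stein's theorem, which is why the integer-order kernel estimates of Lemma~\ref{lem:K bds} do not suffice on their own and the imaginary-power bound of Lemma~\ref{lem:is} is essential.
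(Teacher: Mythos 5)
Your proposal is correct and follows essentially the same route as the paper: the $k=2$ case via the identity $\jtD^2P_c=(1+V-\partial_x^2)P_c$ together with Schur's test and the $\nu=-2$ kernel bounds of Lemma~\ref{lem:K bds} for $\jx^\sigma\jtD^{-2}P_c\jx^{-\sigma}$, and the $k=1$ case by complex interpolation along the vertical lines $\Re z=0$ and $\Re z=2$, with Lemma~\ref{lem:is} supplying the polynomially growing boundary bounds. One bookkeeping fix at the $\Re z=2$ endpoint: factor $\jtD^{2+is}P_c=\jtD^{is}P_c\,(1+H)$ (legitimate, since $1+H$ and $\jtD^{is}P_c$ are both functions of $H$), so that $\jx^\sigma(1+V-\partial_x^2)\jx^{-\sigma}$ sits directly against $(1-\partial_x^2)^{-1-is/2}$; as written, pairing a left-placed $(1-\partial_x^2)$ with the right-hand smoothing would require commuting it through $\jtD^{is}P_c\jx^{-\sigma}$, with which it does not commute.
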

\begin{proof}
For $k=2$ we use $\jtD^2P_c=(1+H)P_c = (1+V -\partial_x^2)P_c$ which gives 
\EQ{\nn
 \bigl\| \jx^\sigma \jtD^2 P_c g \bigr\|_{L^2_x} &\le (1+\|V\|_\infty) \| \jx^\sigma P_c g \|_{L^2_x} + \| \jx^\sigma \partial_x^2 P_c g \|_{L^2_x}
\\
&\le C(1+\|V'\|_1)  \| \jx^\sigma P_c g \|_{H^2_x}
}
with a constant $C=C(\sigma)$. For the second inequality in \eqref{eq:equiv norms} we first note that 
\EQ{\nn
 \| \jx^\sigma P_c g \|_{H^2_x} &\les  \| \jx^\sigma \partial_x^2 P_c g \|_{L^2_x} +  \| \partial_x ( \jx^\sigma P_c g) \|_{L^2_x} +  \| \jx^\sigma  P_c g \|_{L^2_x}  \\
 &\les  \| \jx^\sigma \partial_x^2 P_c g \|_{L^2_x} + \eps \| \partial_x^2 ( \jx^\sigma P_c g) \|_{L^2_x}+ \eps^{-1} \| \jx^\sigma  P_c g \|_{L^2_x}  
}
The middle term here on the last line we move to the left-hand side for sufficiently small $\eps$, which yields
\EQ{\nn 
\| \jx^\sigma P_c g \|_{H^2_x} &\les  \| \jx^\sigma \partial_x^2 P_c g \|_{L^2_x} + \| \jx^\sigma  P_c g \|_{L^2_x}  \\
&\les \| \jx^\sigma \jtD^2 P_c g \|_{L^2_x} + \|  \jx^\sigma P_c g \|_{L^2_x}  
}
Now
\[
 \|  \jx^\sigma P_c g \|_{L^2_x}  =  \|R\, \jx^\sigma  \jtD^2 P_c g \|_{L^2_x} \les \| \jx^\sigma \jtD^2 P_c g \|_{L^2_x}
\]
since $R:= \jx^\sigma  \jtD^{-2}P_c  \jx^{-\sigma}:L^2(\bR)\to L^2(\bR)$ is bounded by Schur's test and the previous lemma. Indeed, performing a dyadic partition of unity, we can sum up the respective estimates in \eqref{equ:wtilD_chi0_kernel_bound} and \eqref{equ:jtD_chi_kernel_bound}  with $\nu=-2$, provided $N>\sigma+1$. 
This settles $k=2$ of the lemma, while $k=1$ follows by interpolation of this with $k=0$. To be more specific, we use complex interpolation which requires \eqref{eq:equiv norms} on both the vertical lines $is$ and $2+is$ with $s\in\bR$ with bounds that grow at most exponentially (say) in~$s$. In fact, 
Lemma~\ref{lem:is} allows us to extend the previous bounds from $0$, resp.~$2$, to the entire vertical lines through those points, with at most polynomial growth in $s$. 
\end{proof}

In the nonlinear analysis we will frequently use the following weighted Sobolev estimate. 

\begin{lemma}[Weighted Sobolev] \label{lem:weighted_sobolev}
 Fix $\sigma \geq 0$ and assume $\jx^{N+2} V^{(\ell)} \in L^1(\bbR)$ for $\ell = 0, 1$ where $N > \sigma + \frac12$ is an integer.
 For any $\mu > 0$ we have 
 \begin{equation} \label{equ:weighted_sobolev}
  \|\jx^{-\sigma} P_c g\|_{L^\infty_x} \leq C(V, \sigma, \mu) \bigl( \|\jx^{-\sigma} P_c g\|_{L^2_x} + \|\jx^{-\sigma} (\sqrt{H})^{\frac12 + \mu} P_c g\|_{L^2_x} \bigr).
 \end{equation}
\end{lemma}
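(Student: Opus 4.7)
The plan is to spectrally split $P_c g = \chi_0(H) P_c g + (1-\chi_0(H)) P_c g$, handling the low-energy piece directly with the kernel estimate~\eqref{equ:wtilD_chi0_kernel_bound}, and dispatching the high-energy piece via a Littlewood--Paley decomposition together with~\eqref{equ:sqrtH_chi_kernel_bound} and a Bernstein-type gain.

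For the low-energy part, I would apply~\eqref{equ:wtilD_chi0_kernel_bound} with $\nu=0$, $k=0$, and an integer $N>\sigma+\tfrac12$ (permitted by the hypothesis on $V$), and then bound
\begin{equation*}
 \big|\chi_0(H)P_c g(x)\big| \le C\sum_\pm \int_\bbR \frac{|P_c g(y)|}{\jap{x\pm y}^N}\,\ud y.
\end{equation*}
Writing $|P_c g(y)| = \jap{y}^\sigma \cdot \jap{y}^{-\sigma}|P_c g(y)|$, invoking Cauchy--Schwarz, and using the Peetre-type inequality $\jap{y}^\sigma \le \sqrt{2}\,\jap{x}^\sigma \jap{x\mp y}^\sigma$, the remaining integral $\int \jap{x\pm y}^{2\sigma-2N}\,\ud y$ converges because $2N-2\sigma>1$, producing the desired pointwise bound $\jx^{-\sigma}|\chi_0(H)P_c g(x)| \le C\,\|\jx^{-\sigma} P_c g\|_{L^2_x}$, which is the first term on the right-hand side of~\eqref{equ:weighted_sobolev}.

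For the high-energy part, I would Littlewood--Paley decompose $(1-\chi_0(H))P_c = \sum_{\lambda}\chi(\wtilD/\lambda)P_c$ over dyadic $\lambda\ge 1$, with $\chi$ a bump supported near $|\xi|\simeq 1$. Setting $h := (\sqrt{H})^{\frac12+\mu}P_c g$ and using that $\chi(\wtilD/\lambda)$ is supported away from zero, functional calculus allows me to rewrite $\chi(\wtilD/\lambda) P_c g = \bigl[(\sqrt{H})^{-(\frac12+\mu)}\chi(\wtilD/\lambda)P_c\bigr]\,h$. The kernel bound~\eqref{equ:sqrtH_chi_kernel_bound} with $\nu=-(\tfrac12+\mu)$ then controls the kernel of the bracketed operator by $C_\mu\,\lambda^{\frac12-\mu}/\jap{\lambda(x\pm y)}^N$. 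Repeating the Cauchy--Schwarz/Peetre argument, now with $\jap{y}^\sigma \le \sqrt{2}\,\jap{x}^\sigma \jap{x\mp y}^\sigma \le \sqrt{2}\,\jap{x}^\sigma\jap{\lambda(x\mp y)}^\sigma$ (valid for $\lambda\ge1$), and changing variables $z = \lambda(x\mp y)$ to extract an additional $\lambda^{-1/2}$ from the Jacobian, one arrives at
\begin{equation*}
 \|\jx^{-\sigma}\chi(\wtilD/\lambda) P_c g\|_{L^\infty_x} \le C_\mu\,\lambda^{-\mu}\,\|\jx^{-\sigma} h\|_{L^2_x}.
\end{equation*}
Since $\sum_{\lambda \text{ dyadic},\,\lambda \ge 1}\lambda^{-\mu}<\infty$ for $\mu>0$, summing yields the second term on the right-hand side of~\eqref{equ:weighted_sobolev}.

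The main technical point is ensuring that the Peetre weight transfer is compatible with the dyadic scaling: the crucial observation is that for $\lambda\ge 1$ one has $\jap{x\mp y} \le \jap{\lambda(x\mp y)}$, so the factor $\jap{x\mp y}^\sigma$ coming from Peetre is absorbed into $\jap{\lambda(x\mp y)}^{-N}$ \emph{without} introducing a compensating positive power of $\lambda$; this is exactly what keeps the Bernstein-type factor $\lambda^{1/2}$ — arising from integrating the kernel $\lambda/\jap{\lambda\cdot}^N$ in $L^2_y$ — balanced by the $\lambda^{-(1/2+\mu)}$ from $(\sqrt{H})^{-(1/2+\mu)}$, leaving the summable tail $\lambda^{-\mu}$. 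This is also what dictates the smallness condition $N>\sigma+\tfrac12$ in the hypothesis.
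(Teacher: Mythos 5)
Your proposal is correct and follows essentially the same route as the paper: the same low/high-energy splitting, the low piece handled with the kernel bound \eqref{equ:wtilD_chi0_kernel_bound} plus a Peetre-type weight transfer and Cauchy--Schwarz (using $N-\sigma>\tfrac12$), and the high piece handled dyadically by writing $\chi(\wtilD/\lambda)P_c g=(\sqrt H)^{-\frac12-\mu}\chi(\wtilD/\lambda)P_c(\sqrt H)^{\frac12+\mu}P_c g$, invoking \eqref{equ:sqrtH_chi_kernel_bound} with $\nu=-(\tfrac12+\mu)$, and summing the resulting $\lambda^{-\mu}$ factors. The bookkeeping $\lambda^{\frac12-\mu}\cdot\lambda^{-\frac12}=\lambda^{-\mu}$ and the role of $N>\sigma+\tfrac12$ match the paper's proof exactly.
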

\begin{proof}
 We begin by decomposing
 \begin{equation} \label{equ:sobolev_decomposition} 
  P_c g = \chi_0(H) P_c g + \sum_{j \geq 0} \chi(\wtilD/2^j) P_c g.
 \end{equation}
 We first treat the low-energy piece. Observe that we have uniformly for all $x,y \in \bbR$ that
 \begin{equation*}
  \jx^{-\sigma} \frac{1}{\jap{x\pm y}^N} \jap{y}^\sigma \lesssim \jx^{-\sigma} \frac{\jap{x\pm y}^\sigma + \jap{x}^\sigma}{\jap{x\pm y}^N} \lesssim \frac{1}{\jap{x\pm y}^{N-\sigma}}.
 \end{equation*}
 Hence, by the kernel bound~\eqref{equ:wtilD_chi0_kernel_bound} and by Young's inequality, we obtain the desired estimate 
 \begin{align*}
  \bigl\| \jx^{-\sigma} \chi_0(H) P_c g \bigr\|_{L^\infty_x} &\lesssim \sum_\pm \sup_{x\in\bbR} \int_{\bbR} \jx^{-\sigma} \frac{1}{\jap{x\pm y}^N} \jap{y}^\sigma \jap{y}^{-\sigma} |g(y)| \, \ud y \\
  &\lesssim \sum_\pm \sup_{x\in\bbR} \int_{\bbR} \frac{1}{\jap{x\pm y}^{N-\sigma}} \jap{y}^{-\sigma} |g(y)| \, \ud y \\
  &\lesssim \bigl\| \jap{\cdot}^{-(N-\sigma)} \bigr\|_{L^2_x} \|\jx^{-\sigma} g\|_{L^2_x} \\
  &\lesssim \|\jx^{-\sigma} g\|_{L^2_x}.
 \end{align*}
 Next we turn to the high-energy estimate. In the following we prove that for any $\lambda \geq 1$,
 \begin{equation} \label{equ:sobolev_high_energy_bound}
  \bigl\| \jx^{-\sigma} \chi(\wtilD/\lambda) P_c g \bigr\|_{L^\infty_x} \lesssim \lambda^{-\mu} \bigl\| \jx^{-\sigma} (\sqrt{H})^{\frac12 + \mu} P_c g \bigr\|_{L^2_x}.
 \end{equation}
 Then the asserted weighted Sobolev estimate~\eqref{equ:weighted_sobolev} follows from the decomposition~\eqref{equ:sobolev_decomposition} and the preceding bounds by summing over $j \geq 0$. 
 For the proof of~\eqref{equ:sobolev_high_energy_bound} we write 
 \begin{equation*}
  \jx^{-\sigma}  \chi(\wtilD/\lambda) P_c g = \jx^{-\sigma} (\sqrt{H})^{-\frac12-\mu} \chi(\wtilD/\lambda) P_c (\sqrt{H})^{\frac12+\mu} P_c g. 
 \end{equation*}
 Since we have uniformly for all $x,y \in \bbR$ and for all $\lambda \geq 1$ that
 \begin{equation*}
  \jx^{-\sigma} \frac{1}{\jap{\lambda(x \pm y)}^N} \jap{y}^\sigma \lesssim \jx^{-\sigma} \frac{\jap{x\pm y}^\sigma + \jap{x}^\sigma}{\jap{\lambda(x\pm y)}^N} \lesssim \jx^{-\sigma} \frac{\jap{\lambda(x\pm y)}^\sigma + \jap{x}^\sigma}{\jap{\lambda(x\pm y)}^N}  \lesssim \frac{1}{\jap{\lambda(x\pm y)}^{N-\sigma}},
 \end{equation*}
 the kernel bound~\eqref{equ:sqrtH_chi_kernel_bound} for $(\sqrt{H})^{-\frac12-\mu} \chi(\wtilD/\lambda) P_c$ together with Young's inequality yields the desired estimate
 \begin{align*}
  \bigl\| \jx^{-\sigma} \chi(\wtilD/\lambda) P_c g \bigr\|_{L^\infty_x} &\lesssim \sum_\pm \sup_{x\in\bbR} \int_{\bbR} \lambda^{-\frac12-\mu} \frac{\lambda}{\jap{\lambda(x\pm y)}^{N-\sigma}} \jap{y}^{-\sigma} \bigl| (\sqrt{H})^{\frac12 + \mu} P_c g(y) \bigr| \, \ud y \\
  &\lesssim \lambda^{\frac12 - \mu} \|\jap{\lambda \cdot}^{-(N-\sigma)}\|_{L^2_x} \bigl\| \jx^{-\sigma} (\sqrt{H})^{\frac12 + \mu} P_c g \bigr\|_{L^2_x} \\
  &\lesssim \lambda^{-\mu} \bigl\| \jx^{-\sigma} (\sqrt{H})^{\frac12 + \mu} P_c g \bigr\|_{L^2_x}
 \end{align*}
 and we are done. 
\end{proof}

The nonlinear analysis will require that we interchange standard derivatives with powers of $H$. 

\begin{lemma}[Weighted derivative bound] \label{lem:bound_px_by_sqrtH}
Under the same assumptions as in Lemma~\ref{lem:is} one has 
 \begin{equation}\label{eq:Wder}
  \bigl\| \jx^{-\sigma} \px  P_c g \bigr\|_{L^2_x} \lesssim \| \jx^{-\sigma} P_c g \|_{L^2_x} + \| \jx^{-\sigma} \sqrt{H} P_c g \|_{L^2_x}.
 \end{equation}
\end{lemma}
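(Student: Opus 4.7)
The plan is to derive the bound by combining the equivalence of weighted Sobolev norms from Lemma~\ref{lem:equiv_norms} with a low/high energy spectral decomposition that converts bounds on $\jtD$ into bounds involving $\sqrt{H}$. First, I would expand
$$\px\bigl( \jx^{-\sigma} P_c g \bigr) = \jx^{-\sigma} \px P_c g - \sigma x\jx^{-\sigma-2} P_c g$$
and use $|x|\jx^{-1}\le 1$ to reduce the problem to showing
$$\|\jx^{-\sigma} P_c g\|_{H^1_x} \lesssim \|\jx^{-\sigma} P_c g\|_{L^2_x} + \|\jx^{-\sigma} \sqrt{H} P_c g\|_{L^2_x}.$$
Since the proof of Lemma~\ref{lem:equiv_norms} (going through Lemma~\ref{lem:is}) is insensitive to the sign of the weight exponent --- only $|\sigma|$ enters through the requirement $N>|\sigma|+1$ on the decay of $V$ --- the equivalence $\|\jx^{-\sigma} P_c g\|_{H^1_x} \sim \|\jx^{-\sigma} \jtD P_c g\|_{L^2_x}$ still applies, and it suffices to prove
$$\|\jx^{-\sigma} \jtD P_c g\|_{L^2_x} \lesssim \|\jx^{-\sigma} P_c g\|_{L^2_x} + \|\jx^{-\sigma} \sqrt{H} P_c g\|_{L^2_x}.$$

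To do so, I would split $\jtD P_c = \jtD \chi_0(H) P_c + \jtD (1-\chi_0(H)) P_c$. For the low-energy piece, the kernel bound \eqref{equ:wtilD_chi0_kernel_bound} from Lemma~\ref{lem:K bds}, applied with $\nu=1$ and $k=0$, gives
$$\bigl| \bigl[ \jtD\chi_0(H) P_c \bigr](x,y) \bigr| \lesssim \sum_\pm \jap{x\pm y}^{-N}$$
for arbitrarily large $N$, and the weighted Schur estimate $\jx^{-\sigma} \jap{x\pm y}^{-N} \jap{y}^\sigma \lesssim \jap{x\pm y}^{-(N-\sigma)}$ controls this contribution by $\|\jx^{-\sigma} P_c g\|_{L^2_x}$. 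For the high-energy piece, I factor
$$\jtD(1-\chi_0(H)) P_c = \bigl[\jtD H^{-1/2} (1-\chi_0(H)) P_c\bigr] \cdot \sqrt{H} P_c,$$
noting that the bracketed multiplier is smooth and bounded on the spectral support of $1-\chi_0(H)$ since $\jtD/\sqrt{H} = \sqrt{1+1/H}$ stays bounded away from zero energy. The final statement of Lemma~\ref{lem:is}, sharpened to a weighted-to-weighted estimate by the same kernel-level argument that establishes its first statement, then yields $\|\jx^{-\sigma}\bigl[\jtD H^{-1/2}(1-\chi_0(H)) P_c\bigr] h\|_{L^2_x} \lesssim \|\jx^{-\sigma} h\|_{L^2_x}$, and applying this with $h = \sqrt{H} P_c g$ completes the argument.

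The main technical obstacle is the upgrade of the final statement of Lemma~\ref{lem:is} from the stated $L^2 \to \jx^{-\sigma} L^2$ form to a fully weighted bound. This is handled by tracing through the proof given there: one establishes the kernel decay
$$\jx^{\sigma} |K(x,y)| \jap{y}^{-\sigma} \lesssim \max_\pm \jap{x\pm y}^{-N}$$
(valid as soon as $\jap{x}^{N+2} V^{(\ell)} \in L^1$ for $\ell=0,1$ with $N > |\sigma|+1$) via the same $N$-fold integration by parts in $\xi$ in the oscillatory integral representation of the kernel, using the symbol-type estimates on $m_\pm$, $T$, and $R_\pm$ from Lemmas~\ref{lem:fm}--\ref{lem:T}, and then invokes Schur's test to conclude. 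No further ingredients beyond those already developed in this subsection are needed.
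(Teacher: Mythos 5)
Your argument is correct, but it routes the key step differently from the paper. Both proofs share the same conversion of $\jtD$ into $\sqrt{H}$: split $\jtD P_c = \jtD\chi_0(H)P_c + \jtD(1-\chi_0(H))P_c$, handle the low-energy part by the kernel bounds of Lemma~\ref{lem:K bds} plus weighted Schur, and the high-energy part through $\jtD H^{-1/2}(1-\chi_0(H))P_c$ and the final statement of Lemma~\ref{lem:is}. Where you diverge is in how $\bigl\|\jx^{-\sigma}\px P_c g\bigr\|_{L^2_x}$ gets controlled by $\|\jx^{-\sigma}P_c g\|_{L^2_x}+\|\jx^{-\sigma}\jtD P_c g\|_{L^2_x}$: the paper proves this directly, inserting the flat cutoff $1-\chi_0(H_0)$ with $H_0=-\px^2$, verifying the cases $k=0$ and $k=2$ of the corresponding estimate for $\px^k$, extending to the vertical lines $is$ and $2+is$, and running a complex interpolation in powers of the flat derivative; you instead commute $\px$ past the weight and invoke the norm equivalence of Lemma~\ref{lem:equiv_norms} at $k=1$ with the decaying weight $\jx^{-\sigma}$. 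Your route is shorter and reuses machinery already in place, at the cost of two extensions you correctly flag: (i) Lemma~\ref{lem:equiv_norms} is stated for $\sigma\ge0$, and your observation that its proof (kernel bounds, weighted Schur, and Lemma~\ref{lem:is}, which is stated for $\sigma\in\bR$) only sees $|\sigma|$ is accurate, since $\jx^{\mp\sigma}\jap{x\pm y}^{-N}\jap{y}^{\pm\sigma}\lesssim\jap{x\pm y}^{-(N-\sigma)}$ for either sign; note, though, that the $k=1$ case of that lemma is itself obtained by complex interpolation, so you have not avoided interpolation, only relocated it; (ii) the weighted-conjugated bound $\|\jx^{-\sigma}\jtD H^{-1/2}(1-\chi_0(H))P_c\jx^{\sigma}\|_{2\to2}<\infty$, which the paper's own reduction also implicitly requires. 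One caveat on how you justify (ii): a literal pointwise bound $|K(x,y)|\lesssim\max_\pm\jap{x\pm y}^{-N}$ for this kernel cannot hold, because the symbol $\jap{\xi}|\xi|^{-1}(1-\chi_0(\xi^2))$ does not decay as $|\xi|\to\infty$, so the kernel carries a diagonal (identity-like) part; the clean fix is to split off $(1-\chi_0(H))P_c$, which is trivially bounded on weighted $L^2$ (identity minus $\chi_0(H)P_c$ minus finitely many nice eigenprojections), and apply your integration-by-parts/Schur argument only to $[\jtD H^{-1/2}-1](1-\chi_0(H))P_c$, whose symbol decays like $\jap{\xi}^{-2}$ with symbol-type derivatives. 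With that repair the proposal is complete; what the paper's approach buys in exchange is that it never needs the negative-weight version of Lemma~\ref{lem:equiv_norms}, working instead with imaginary powers of $\px$ behind the flat cutoff.
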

\begin{proof}
We claim that \eqref{eq:Wder} follows from the case $k=1$ of 
\begin{equation}\label{eq:Wder*}
  \bigl\| \jx^{-\sigma} \px^k  P_c g \bigr\|_{L^2_x} \lesssim \| \jx^{-\sigma} P_c g \|_{L^2_x} + \| \jx^{-\sigma} \jtD^k P_c g \|_{L^2_x}.
 \end{equation}
To see this, note that
 \EQ{\nn 
  \| \jx^{-\sigma} \jtD P_c g \|_{L^2_x}  &\leq  \| \jx^{-\sigma} \jtD \chi_0(H) P_c g \|_{L^2_x}  + \| \jx^{-\sigma} \jtD (1-\chi_0(H)) P_c g \|_{L^2_x} \\
  &\leq  \| \jx^{-\sigma} \jtD \chi_0(H) P_c \jx^{\sigma}\|_{2\to2} \|  \jx^{-\sigma} P_c g \|_{L^2_x}  \\
  &\quad + \| \jx^{-\sigma} \jtD H^{-\frac12} (1-\chi_0(H)) P_c\|_{2\to2} \| \jx^{-\sigma} \sqrt{H} P_c g \|_{L^2_x}
 }
 By Lemma~\ref{lem:K bds} and Schur's test, $\| \jx^{-\sigma} \jtD \chi_0(H) P_c \jx^{\sigma}\|_{2\to2}\leq C(V,\sigma)$, while 
 the operator norm in the last line is finite by Lemma~\ref{lem:is}. 
We now perform the following further reduction with $H_0:=-\partial_x^2$: 
\EQ{\nn
\bigl\| \jx^{-\sigma} \px  P_c g \bigr\|_{L^2_x} &\les \bigl\| \jx^{-\sigma} \px \chi_0(H_0) P_c g \bigr\|_{L^2_x}  + \bigl\| \jx^{-\sigma} \px  (1-\chi_0(H_0)) P_c g \bigr\|_{L^2_x}  \\
 &\les \bigl\| \jx^{-\sigma} \px \chi_0(H_0) P_c \jx^{\sigma}\bigr\|_{2\to2}  \|\jx^{-\sigma}  P_c g \bigr\|_{L^2_x}  + \bigl\| \jx^{-\sigma} \px  (1-\chi_0(H_0)) P_c g \bigr\|_{L^2_x}
}
The operator norm in the last line is bounded by the rapid off-diagonal decay of the kernel, and Schur's test. 
Therefore, it suffices to prove the $k=1$ case of 
\begin{equation}\label{eq:Wder**}
  \bigl\| \jx^{-\sigma} \px^k (1-\chi_0(H_0)) P_c g \bigr\|_{L^2_x} \lesssim \| \jx^{-\sigma} P_c g \|_{L^2_x} + \| \jx^{-\sigma} \jtD^k P_c g \|_{L^2_x}.
 \end{equation} 
Writing $\partial_x^2 = V-H$, the estimate \eqref{eq:Wder*} is obvious with $k=2$ since $\jtD^2=1+H$ and 
\[
 \bigl\| \jx^{-\sigma} \px^2  P_c g \bigr\|_{L^2_x} \le \|V'\|_1 \| \jx^{-\sigma} P_c g \|_{L^2_x} + \| \jx^{-\sigma} H P_c g \|_{L^2_x}
\]
As before, we can introduce the cut-off $1-\chi_0(H_0)$ on the left-hand side, hence \eqref{eq:Wder**} holds for $k=0$ and $k=2$.  Moreover, these bounds extend to the vertical lines $is$, resp.~$2+is$,  on the left-hand side only due to the fact that
$\| \jx^{-\sigma} \px^{is}  (1-\chi_0(H_0))\jx^{\sigma} \|_{2\to2} \le C(s)$ which grows polynomially as $|s|\to\infty$. 
Therefore, by complex interpolation we conclude that the desired bounds hold at $k=1$ and we are done. 
\end{proof}

We can now establish Leibniz rules as they appear in the nonlinear estimates. 

\begin{corollary}[Product estimates] \label{cor:product_est}
Fix $\sigma \geq 0$. Assume that $\jx^{N+2} V^{(\ell)} \in L^1(\bbR)$ for $\ell = 0, 1$ where $N > \sigma + 1$ is an integer. 
Then we have 
 \begin{equation} \label{equ:prod_est1}
  \begin{aligned}
   \bigl\| \jtD P_c \bigl( \alpha (P_c g) (P_c h) \bigr) \bigr\|_{L^1_x} &\lesssim \| \jx^{1+2\sigma} \alpha \|_{W^{1,\infty}_x} \Bigl( \|\jx^{-\sigma} P_c g\|_{L^2_x} +  \|\jx^{-\sigma} \sqrt{H} P_c g \|_{L^2_x} \Bigr) \times \\
   &\qquad \qquad \qquad \qquad \qquad \times \Bigl( \|\jx^{-\sigma} P_c h\|_{L^2_x} +  \|\jx^{-\sigma} \sqrt{H} P_c h\|_{L^2_x} \Bigr)
  \end{aligned}
 \end{equation}
 and 
 \begin{equation} \label{equ:prod_est2}
  \begin{aligned}
   \bigl\| \jtD P_c \bigl( \alpha (P_c g) (P_c h) \bigr) \bigr\|_{L^2_x} &\lesssim \| \jx^{2\sigma} \alpha \|_{W^{1,\infty}_x} \Bigl( \|\jx^{-\sigma} P_c g\|_{L^2_x} +  \|\jx^{-\sigma} \sqrt{H} P_c g \|_{L^2_x} \Bigr) \times \\
   &\qquad \qquad \qquad \qquad \qquad \times \Bigl( \|\jx^{-\sigma} P_c h\|_{L^2_x} +  \|\jx^{-\sigma} \sqrt{H} P_c h\|_{L^2_x} \Bigr).
  \end{aligned}
 \end{equation} 
\end{corollary}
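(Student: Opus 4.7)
The plan is to unify both estimates by reducing $\|\jtD P_cF\|_{L^p_x}$ (with $F:=\alpha(P_cg)(P_ch)$ and $p\in\{1,2\}$) to weighted $H^1_x$-norms of $P_cF$, and then to close using H\"older's inequality, the one-dimensional Sobolev embedding, and Lemma~\ref{lem:bound_px_by_sqrtH}. For~\eqref{equ:prod_est2}, Lemma~\ref{lem:equiv_norms} with $\sigma=0$, $k=1$ yields $\|\jtD P_cF\|_{L^2_x}\lesssim \|P_cF\|_{L^2_x}+\|\partial_x P_cF\|_{L^2_x}$. For~\eqref{equ:prod_est1}, I would first insert the integrable weight $\jx^{-1}\in L^2_x(\bR)$ via Cauchy--Schwarz to obtain $\|\jtD P_cF\|_{L^1_x}\lesssim \|\jx\jtD P_cF\|_{L^2_x}$, and then invoke Lemma~\ref{lem:equiv_norms} with $\sigma=1$, $k=1$ (the hypothesis $\jx^{9}V^{(\ell)}\in L^1$ from Theorem~\ref{thm:thm1} comfortably covers $\sigma=1$) to get $\|\jx\jtD P_cF\|_{L^2_x}\lesssim \|\jx P_cF\|_{L^2_x}+\|\jx\partial_x P_cF\|_{L^2_x}$.

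The next step is to commute $P_c$ past the weights and the derivative. Writing $P_c=\Id-P_d$ and using that the (finitely many) bound states of $H$ together with their first derivatives decay exponentially, Cauchy--Schwarz yields $\|\jx^\tau P_cF\|_{L^2_x}\lesssim \|\jx^\tau F\|_{L^2_x}$ and $\|\jx^\tau\partial_xP_cF\|_{L^2_x}\lesssim \|\jx^\tau\partial_xF\|_{L^2_x}+\|F\|_{L^2_x}$ for $\tau\in\{0,1\}$. Expanding $\partial_xF$ by Leibniz produces three summands involving $\alpha'$, $\partial_xP_cg$, and $\partial_xP_ch$; in each the coefficient $\alpha$ or $\alpha'$ absorbs a factor $\jx^{\tau+2\sigma}$ controlled by $\|\jx^{1+2\sigma}\alpha\|_{W^{1,\infty}_x}$ (for $\tau=1$) or $\|\jx^{2\sigma}\alpha\|_{W^{1,\infty}_x}$ (for $\tau=0$), and the remaining weight $\jx^{-2\sigma}$ splits evenly as $\jx^{-\sigma}\cdot\jx^{-\sigma}$ across the two factors. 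H\"older's inequality in the pairing $L^\infty_x\cdot L^2_x$ then leaves me to bound $\|\jx^{-\sigma}P_cg\|_{L^\infty_x}$, $\|\jx^{-\sigma}P_ch\|_{L^2_x}$, and the analogous quantities with $g\leftrightarrow h$ and with a $\partial_x$ in place of $\Id$.

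The weighted $L^\infty_x$ factors I would control by the one-dimensional Gagliardo--Nirenberg estimate $\|u\|_{L^\infty_x}^2\lesssim \|u\|_{L^2_x}\bigl(\|u\|_{L^2_x}+\|\partial_xu\|_{L^2_x}\bigr)$ applied to $u=\jx^{-\sigma}P_cg$; the derivative of the weight produces only a harmless lower-order term of the same shape. Lemma~\ref{lem:bound_px_by_sqrtH} then replaces every occurrence of $\|\jx^{-\sigma}\partial_xP_cg\|_{L^2_x}$ by $\|\jx^{-\sigma}P_cg\|_{L^2_x}+\|\jx^{-\sigma}\sqrt{H}P_cg\|_{L^2_x}$, which is exactly the form appearing on the right-hand side of~\eqref{equ:prod_est1}--\eqref{equ:prod_est2}. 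The main technical care in the argument is purely bookkeeping: keeping track of how the $\jx$-powers distribute between $\alpha$ and the product $(P_cg)(P_ch)$ across the various Leibniz terms, and verifying that the commutators $[\jx^\tau,P_c]$ and $[\partial_x,P_c]$ contribute only lower-order remainders thanks to the rapid decay of the bound states of $H$. Beyond this, no new analytic input beyond Lemmas~\ref{lem:equiv_norms} and~\ref{lem:bound_px_by_sqrtH} and the one-dimensional Gagliardo--Nirenberg inequality is required.
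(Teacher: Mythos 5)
Your proposal is correct and follows essentially the same route as the paper's proof: H\"older with the integrable weight $\jx^{-1}$ (for the $L^1_x$ bound), the equivalence of norms from Lemma~\ref{lem:equiv_norms}, the Leibniz rule with the weight split $\jx^{-\sigma}\cdot\jx^{-\sigma}$ between the two factors, and Lemma~\ref{lem:bound_px_by_sqrtH} to trade $\partial_x$ for $\sqrt{H}$. The only difference is cosmetic: for the $\|\jx^{-\sigma}P_c g\|_{L^\infty_x}$ factors the paper quotes the weighted Sobolev estimate of Lemma~\ref{lem:weighted_sobolev} (with $\mu=\tfrac12$), whereas you re-derive the same bound from the one-dimensional Gagliardo--Nirenberg inequality combined with Lemma~\ref{lem:bound_px_by_sqrtH}, which works equally well and makes explicit the commutation of $P_c$ with the weights that the paper leaves implicit.
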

\begin{proof}
 We give the proof of the first product estimate~\eqref{equ:prod_est1}, the proof of~\eqref{equ:prod_est2} being identical. By H\"older's inequality and by the equivalence of norms from Lemma~\ref{lem:equiv_norms}, we have that
 \begin{align*}
  \bigl\| \jtD P_c \bigl( \alpha (P_c g) (P_c h) \bigr) \bigr\|_{L^1_x} &\lesssim \|\jx^{-1}\|_{L^2_x} \bigl\| \jx \jtD P_c \bigl( \alpha (P_c g) (P_c h) \bigr) \bigr\|_{L^2_x} \lesssim \bigl\| \jx \alpha (P_c g) (P_c h) \bigr\|_{H^1_x}.
 \end{align*}
 Then by the standard product rule for the derivative and by H\"older's inequality, we obtain 
 \begin{equation} 
  \begin{aligned}
   \bigl\| \jx \alpha (P_c g) (P_c h) \bigr\|_{H^1_x} &\lesssim \| \jx^{1+2\sigma} \alpha \|_{W^{1,\infty}_x} \|\jx^{-\sigma} P_c g \|_{L^2_x} \|\jx^{-\sigma} P_c h \|_{L^\infty_x} \\
   &\quad + \| \jx^{1+2\sigma} \alpha \|_{L^\infty_x} \|\jx^{-\sigma} \px P_c g\|_{L^2_x} \|\jx^{-\sigma} P_c h\|_{L^\infty_x} \\
   &\quad + \| \jx^{1+2\sigma} \alpha \|_{L^\infty_x} \|\jx^{-\sigma} P_c g\|_{L^\infty_x} \|\jx^{-\sigma} \px P_c h\|_{L^2_x}. 
  \end{aligned}
 \end{equation}
 The product estimate~\eqref{equ:prod_est1} now follows from the weighted Sobolev inequality in Lemma~\ref{lem:weighted_sobolev} and the weighted derivative bound in Lemma~\ref{lem:bound_px_by_sqrtH}.
\end{proof}

Our final technical lemma arises in that part of the nonlinear analysis dealing with the {\em non-resonant} case of the main theorem. 

\begin{lemma} \label{lem:bound_sqrt3_vanishes}
 Fix $\sigma\in\bR$ and assume $\jap{x}^{N+2} V^{(\ell)}\in L^1(\bR)$ for $\ell=0,1$ where $N>|\sigma|+1$ is an integer.
 Let $\jx^{\sigma+3} g \in L^2_x(\bbR)$ and assume that
 $$\widetilde{\calF}[g](\pm \sqrt{3}) = 0.$$
 Then we have for $m \in \{0, \sigma\}$ and $\ell = 0, \pm 1$ that
 \begin{equation}\label{eq:m3Pcg} 
  \bigl\| \jx^m (2-\jtD)^{-1} \jtD^\ell P_c g \bigr\|_{L^2_x} \lesssim \| \jx^{m+3} g \|_{L^2_x}.
 \end{equation}
\end{lemma}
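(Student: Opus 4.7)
The plan is to combine the algebraic identity $(2-\jap{\xi})^{-1} = -(2+\jap{\xi})/(\xi^2-3)$ with a frequency-space decomposition isolating the resonant shell $|\xi|\approx\sqrt 3$, and on that shell to invoke the vanishing $\wtcalF[g](\pm\sqrt 3)=0$ through a Hardy-type factorization. First I would dispatch $\ell = \pm1$ by the elementary identities
$$
\frac{\jap{\xi}}{2-\jap{\xi}} = -1 + \frac{2}{2-\jap{\xi}}, \qquad \frac{1}{\jap{\xi}(2-\jap{\xi})} = \frac{1}{2}\Bigl(\frac{1}{\jap{\xi}}+\frac{1}{2-\jap{\xi}}\Bigr),
$$
since the multipliers $1$ and $\jap{\xi}^{-1}$ give rise to operators that are bounded on every weighted $L^2_x$ by the kernel bounds of Lemma~\ref{lem:K bds} together with Young's inequality. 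Thus it is enough to prove \eqref{eq:m3Pcg} for $\ell = 0$.

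Next, pick a smooth cutoff $\chi_{\mathrm{res}}(\xi)$ supported in $\{\tfrac{\sqrt 3}{2}\le|\xi|\le \tfrac{3\sqrt 3}{2}\}$ and equal to $1$ near $\pm\sqrt 3$, and split
$$
(2-\jtD)^{-1}P_c = (2-\jtD)^{-1}\bigl(1-\chi_{\mathrm{res}}(\wtilD)\bigr)P_c + (2-\jtD)^{-1}\chi_{\mathrm{res}}(\wtilD) P_c.
$$
On $\mathrm{supp}(1-\chi_{\mathrm{res}})$ the symbol $(2-\jap{\xi})^{-1}$ is smooth and bounded with all its derivatives, and a dyadic decomposition in $\wtilD$ combined with the kernel bounds \eqref{equ:wtilD_chi0_kernel_bound}--\eqref{equ:jtD_chi_kernel_bound} yields, by Schur's test, $\bigl\|\jx^m(2-\jtD)^{-1}(1-\chi_{\mathrm{res}}(\wtilD))P_c g\bigr\|_{L^2_x}\lesssim \|\jx^m g\|_{L^2_x}$, which is sharper than needed. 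For the resonant piece, on the distorted Fourier side with $G := \wtcalF[g]$ the kernel is $-(2+\jap{\xi})\chi_{\mathrm{res}}(\xi) G(\xi)/(\xi^2-3)$. Split $\chi_{\mathrm{res}} = \chi_+ + \chi_-$ into bumps near $\pm\sqrt 3$ and use the vanishing to write $G(\xi)=(\xi\mp\sqrt 3)G_\pm(\xi)$ with
$$
G_\pm(\xi) := \int_0^1 G'\bigl(\pm\sqrt 3 + t(\xi\mp\sqrt 3)\bigr)\,dt.
$$
The remaining factor $1/(\xi\pm\sqrt 3)$ has denominator bounded below on $\mathrm{supp}\,\chi_\pm$, and Plancherel plus Hardy's inequality give
$$
\Bigl\|\wtcalF^{-1}\Bigl[\tfrac{(2+\jap{\xi})\chi_\pm(\xi)}{\xi^2-3}G(\xi)\Bigr]\Bigr\|_{L^2_x} \lesssim \|\partial_\xi G\|_{L^2(\mathrm{supp}\,\chi_{\mathrm{res}})} \lesssim \|\jx\, g\|_{L^2_x},
$$
where the last step relies on the structure of the distorted Fourier basis from Lemmas~\ref{lem:fm}--\ref{lem:T} and on $\chi_{\mathrm{res}}$ being supported away from the discontinuity of the basis at $\xi=0$. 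This settles the case $m=0$ with two weights to spare.

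The main obstacle is propagating this bound to the weighted estimate when $m = \sigma > 0$. On the support of $\chi_{\mathrm{res}}$, multiplication by $\jx^m$ converts under $\wtcalF$ to $(-i\partial_\xi)^m$ up to bounded commutators involving $\partial_\xi T$, $\partial_\xi R_\pm$, and $\partial_\xi^j m_\pm$, all controlled by Lemmas~\ref{lem:m symb} and~\ref{lem:T} thanks to the assumption $N > |\sigma|+1$. I would distribute these derivatives by Leibniz across the factorization $G = (\xi\mp\sqrt 3)G_\pm$ and iterate the Hardy bound, using a Sobolev embedding $H^{1/2+}_\xi \hookrightarrow L^\infty_\xi$ to keep the pointwise vanishing $G(\pm\sqrt 3)=0$ meaningful at each order. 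The net loss of three weights has a transparent origin: one weight is spent on the Hardy extraction that turns $G/(\xi\mp\sqrt 3)$ into $G_\pm$, one on the Sobolev embedding needed to ensure continuity of $\partial_\xi^k G$ at the resonant frequencies, and one of slack absorbs the lower-order commutator terms arising when $\partial_\xi$ falls on the Jost-basis factors rather than on $G$ itself. Carefully bookkeeping these commutators, while tracking the polynomial cost of the weighted Plancherel identities, is the bulk of the technical work.
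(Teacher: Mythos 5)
Your outline is essentially the paper's: a smooth frequency splitting isolating $|\xi|$ near $\sqrt{3}$, a weighted resolvent-type bound for the non-resonant piece, and, on the resonant piece, the vanishing $\wtcalF[g](\pm\sqrt{3})=0$ exploited through the fundamental-theorem-of-calculus factorization $\tilde g(\xi)=(\xi\mp\sqrt{3})G_\pm(\xi)$, which turns $(2-\jap{\xi})^{-1}\jap{\xi}^\ell\chi(\xi)(\xi\mp\sqrt{3})$ into a smooth bounded symbol (the paper does not even need your $\ell=\pm1$ reduction, since $\jap{\xi}^\ell$ is harmless on the resonant support). Where you diverge is the execution of the weighted bound near the resonance: the paper stays in physical space, integrating by parts $m+1$ times in the explicit $\xi$-integral to get the pointwise bound $|G_+(x)|\lesssim\jx^{-\sigma-1}\sup_{n\le\sigma+2}\|\partial_\xi^n(\chi_4\tilde g)\|_{L^\infty_\xi}\lesssim\jx^{-\sigma-1}\|\jx^{\sigma+2}g\|_{L^1_x}$, and then takes the weighted $L^2_x$ norm; you instead push the weight to the distorted Fourier side via Plancherel/Leibniz and Hardy. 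Both close, but the paper's route handles arbitrary real $\sigma$ with an integer number of integrations by parts, whereas your ``$\jx^m\leftrightarrow(-i\partial_\xi)^m$'' step tacitly assumes $m=\sigma$ is a nonnegative integer and would otherwise need an interpolation argument you do not supply.

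Two soft spots in your weighted step deserve correction. First, there is nothing to ``iterate'' in Hardy and no Sobolev embedding is needed ``to keep the vanishing meaningful at each order'': higher derivatives of $\tilde g$ do \emph{not} vanish at $\pm\sqrt{3}$, and if you differentiated $\tilde g(\xi)/(\xi\mp\sqrt{3})$ directly you would produce terms like $\partial_\xi^i\tilde g/(\xi\mp\sqrt{3})^{1+j-i}$ that Hardy cannot absorb. The correct (and simpler) bookkeeping, which you half-state, is to factor once and then let Leibniz act on $G_\pm(\xi)=\int_0^1\tilde g'(\pm\sqrt{3}+t(\xi\mp\sqrt{3}))\,dt$, whose $j$-th derivative is controlled by $\tilde g^{(j+1)}$ on a slightly fattened support with no further vanishing required; the weight count is then transparent ($\sigma$ derivatives from the weight, one from the Hardy/FTC extraction, one from passing to $\|\jx^{\cdot}g\|_{L^1_x}$ or $L^2_x$). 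Second, the bound $\|\partial_\xi\tilde g\|_{L^2(|\xi|\gtrsim1)}\lesssim\|\jx g\|_{L^2_x}$ you invoke is proved in the paper by Calder\'on--Vaillancourt under the hypotheses of Lemma~\ref{lem:high f} (three derivatives of $V$), which are stronger than what the present lemma assumes ($\ell=0,1$); on the compact resonant support you should instead use the crude estimate $|\partial_\xi^j\tilde g(\xi)|\lesssim\|\jx^{j}g\|_{L^1_x}\lesssim\|\jx^{j+1}g\|_{L^2_x}$, which needs only the Jost-function bounds of Lemmas~\ref{lem:fm}--\ref{lem:T} and still fits inside the three-weight budget. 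With these repairs your argument is a valid, if heavier, alternative to the paper's pointwise integration-by-parts bound.
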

\begin{proof}
We introduce a smooth partition of unity $1=\chi_2(\xi)+\chi_3(\xi)$, where $\chi_2(\xi)$ vanishes outside a small neighborhood of the set $\{3\}$, and equals to~$1$ near $3$.  Then 
\[
 \bigl\| \jx^m (2-\jtD)^{-1} \jtD^\ell \chi_3(H) P_c \jx^{-m} g \bigr\|_{L^2_x} \le C(V,m) \| g \|_{L^2_x}
\]
by a small variant of the final statement in the proof of Lemma~\ref{lem:is}. It therefore suffices to prove \eqref{eq:m3Pcg}  for $\chi_2(H)P_c g$ in place of $P_cg$ on the left-hand side.  One has 
\EQ{ \label{eq:chi2H}
& (2-\jtD)^{-1} \jtD^\ell \chi_2(H) P_c g (x) \\
&= \frac{1}{\sqrt{2\pi}} \int_{0}^\infty   T(\xi)f_+(x,\xi)   (2-\jap{\xi})^{-1} \jap{\xi}^\ell  \chi_2(\xi^2) (\tilde g(\xi)-\tilde g(\sqrt{3}))\, \ud\xi \\
&\quad + \frac{1}{\sqrt{2\pi}} \int_{-\infty}^0 T(-\xi) f_-(x,-\xi)   (2-\jap{\xi})^{-1} \jap{\xi}^\ell  \chi_2(\xi^2)(\tilde g(\xi)-\tilde g(-\sqrt{3}))\, \ud\xi \\
&\equiv G_+(x) + G_-(x).
}
We denote by $\chi_4(\xi)$ a slight fattening of the bump function $\chi_2$ such that $\chi_4(\xi) = 1$ on the support of $\chi_2(\xi^2)$ and such that $\chi_4(\xi) = 0$ in a neighborhood of zero. Clearly, we may freely insert $\chi_4(\xi)$ in the integrands on the right-hand side of~\eqref{eq:chi2H}. We now prove the bound~\eqref{eq:m3Pcg} for $G_+(x)$, the bound for $G_-(x)$ being analogous. Using the assumption $\widetilde{g}(\sqrt{3}) = 0$, we write
\[
 \chi_4(\xi) \tilde{g}(\xi) = \chi_4(\xi) \tilde{g}(\xi) - \chi_4(\sqrt{3}) \tilde g(\sqrt{3}) = (\xi-\sqrt{3}) \int_0^1 (\chi_4 \tilde{g})'( s\xi+ (1-s)\sqrt{3}) \, ds.
\]
Then we observe that the function
\begin{equation*}
 (0, \infty) \ni \xi \mapsto (2-\jap{\xi})^{-1} \jap{\xi}^\ell  \chi_2(\xi^2) (\xi-\sqrt{3}) = - \frac{2+\jxi}{\sqrt{3}+\xi} \jxi^\ell \chi_2(\xi^2) 
\end{equation*}
is smooth and bounded on its support, with all of its derivatives bounded there as well. 
Hence, integrating by parts in $\xi$ for $N = m + 1$ times in the integrand of $G_+(x)$ and using the symbol type bounds on $m_\pm, T, R_\pm$ in $\xi$ uniformly in the respective regimes of $x$, we conclude that
\begin{equation*}
 |G_+(x)| \leq C(V, N) \jx^{-\sigma-1} \sup_{1 \leq n \leq \sigma+2} \bigl\| \partial_\xi^n (\chi_4 \widetilde{g})(\xi) \bigr\|_{L^\infty_\xi} \leq C(V, N) \jx^{-\sigma-1} \bigl\| \jx^{\sigma+2} g \bigr\|_{L^1_x}. 
\end{equation*}
This implies~\eqref{eq:m3Pcg} for $G_+(x)$, and finishes the proof of the lemma.
\end{proof}

\subsection{Decay estimates for the Klein-Gordon propagator} \label{subsec:decay_estimates}

We now establish local decay estimates as well as pointwise decay estimates for the linear Klein-Gordon flow relative to $H = -\px^2 + V(x)$, i.e., the propagator of $(\pt^2 + H + 1) u=0$.  We write $\jtD =\sqrt{1+H}$ on the positive spectrum of $H$. 

\begin{lemma}
\label{lem:low proj}  
Let $V \in L^\infty(\R) \cap C^{1}(\R)$ be real-valued, and assume that $\jap{x}^{6} V^{(\ell)}(x) \in L^1(\R)$ for all $0\leq \ell \leq 1$.
Let $\chi_0(\xi^2)$  be a smooth cutoff to $|\xi|\lesssim 1$, equal to $1$ near $\xi=0$ and set $w(x)=\langle x\rangle^4$. 
With $P_c$ the projection onto the continuous spectral subspace of~$H$, 
\begin{align}
 \Big\| w^{-1} \Bigl( e^{it \jtD} \chi_0(H) P_c \, g - c_0 \frac{e^{i\frac{\pi}{4}} e^{it}}{t^\frac12}  ( \varphi\otimes\varphi) g  \Bigr) \Big\|_{L^\infty_x} &\le \frac{C}{t^{\frac32}} \| w g\|_{L^1_x}, \qquad t \geq 1,   \label{eq:t32} \\
 \Bigl\| w^{-1} \partial_x \Bigl( e^{it\jtD} \chi_0(H) P_c g - c_0 \frac{e^{i\frac{\pi}{4}} e^{it}}{t^\frac12} ( \varphi\otimes\varphi) g \Bigr) \Bigr\|_{L^\infty_x} &\leq \frac{C}{t^{\frac32}} \| w g \|_{L^1_x}, \qquad t \geq 1.  \label{equ:local_decay_improved_subtr_off_with_derivative}
\end{align}
The real constant $c_0$ only depends on the scattering matrix $S(0)$ of the potential $V(x)$ at zero energy, cf.~\eqref{eq:SU2}, and is explicitly given by
\begin{equation} \label{equ:def_c0}
 c_0 = \frac{1}{(2\pi)^{\frac32}} \frac{T(0)^2}{1+R_-(0)}.
\end{equation}
More generally, let $\omega=\omega(\xi)$ be a function  bounded on the support of $\chi_0(\xi^2)$ with its derivatives up to order four. Then 
\begin{align}
 \Big\| w^{-1} \Bigl( e^{it \jtD} \omega(H) \chi_0(H) P_c \, g - c_0 \omega(0) \frac{e^{i\frac{\pi}{4}} e^{it}}{t^\frac12}  ( \varphi\otimes\varphi) g  \Bigr) \Big\|_{L^\infty_x} &\le \frac{C(\omega)}{t^{\frac32}} \| w g\|_{L^1_x}, \qquad t \geq 1,   \label{eq:t32 om}\\
 \Bigl\| w^{-1} \partial_x \Bigl( e^{it\jtD} \omega(H) \chi_0(H) P_c g - c_0 \omega(0) \frac{e^{i\frac{\pi}{4}} e^{it}}{t^\frac12} ( \varphi\otimes\varphi) g \Bigr) \Bigr\|_{L^\infty_x} &\leq \frac{C(\omega)}{t^{\frac32}} \| w g \|_{L^1_x}, \qquad t \geq 1.  \label{equ:local_decay_improved_subtr_off_with_derivative_om}
\end{align}
\end{lemma}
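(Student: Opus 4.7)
The plan is to prove all four bounds in the lemma by a boundary stationary-phase analysis at $\xi=0$ applied to the distorted Fourier representation of the kernel. Starting from the alternate form~\eqref{eq:Ekern*} of the spectral density and writing $f_\pm(z,\xi) = e^{\pm iz\xi} m_\pm(z,\xi)$, I would use the scattering identities~\eqref{eq:TR*} to trade a Jost function evaluated on the ``wrong'' half-axis for exponentially oscillating combinations of its partner. After this rearrangement, the kernel of $e^{it\jtD}\omega(H)\chi_0(H)P_c$ is expressed as a finite sum of oscillatory integrals
\begin{equation*}
I_\eta(t,x,y) = \int_0^\infty e^{i(t\jxi + \eta\xi)}\,\omega(\xi^2)\chi_0(\xi^2)\, B(x,y,\xi)\,\ud\xi,\qquad \eta\in\{\pm(x-y),\pm(x+y)\},
\end{equation*}
in which each amplitude $B(x,y,\xi)$ is a product of $m_\pm$-factors with a transmission or reflection coefficient and, by Lemmas~\ref{lem:fm}, \ref{lem:m symb}, and~\ref{lem:T}, is of symbol type in $\xi$ on $\mathrm{supp}(\chi_0)$ uniformly in $x,y\in\R$.

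The only critical point of $t\jxi$ in $[0,\infty)$ is the boundary point $\xi=0$, at which $\jxi''(0)=1$. The boundary stationary-phase expansion
\begin{equation*}
\int_0^\infty e^{it\jxi} A(\xi)\,\ud\xi = \sqrt{\tfrac{\pi}{2t}}\,e^{i\pi/4}e^{it}A(0) + \calO\!\Bigl(t^{-3/2}\bigl(\|A\|_{L^\infty}+\|A'\|_{L^1}+\|A''\|_{L^1}\bigr)\Bigr),
\end{equation*}
applied to each $I_\eta$ with $A(\xi) = e^{i\eta\xi}\omega(\xi^2)\chi_0(\xi^2)B(x,y,\xi)$, produces a leading-order contribution of size $\omega(0)t^{-1/2}e^{it}B(x,y,0)$. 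Summing these boundary values and simplifying via the identity $(1+R_-(0))f_-(\cdot,0) = T(0)\varphi$ from \eqref{eq:TR} at $\xi=0$, together with $f_+(\cdot,0)=\varphi$ and the unitarity relation $T(0)^2 + R_-(0)^2 = 1$ from~\eqref{eq:SU2}, consolidates them into exactly the kernel of $c_0\,\omega(0)\,e^{i\pi/4}t^{-1/2}e^{it}(\varphi\otimes\varphi)$ with $c_0$ as in~\eqref{equ:def_c0}. Each $\xi$-derivative of $A$ produces at most one factor of $|\eta|\lesssim \jx + \jap{y}$ plus uniformly bounded contributions from derivatives of $B$, $\omega$, $\chi_0$, so $\|A\|_{C^2}\lesssim C(\omega)\jx^2\jap{y}^2$. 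The resulting pointwise kernel bound
\begin{equation*}
\Bigl|K(t,x,y) - c_0\,\omega(0)\,e^{i\pi/4}\tfrac{e^{it}}{\sqrt{t}}\,\varphi(x)\varphi(y)\Bigr| \le \frac{C(\omega)}{t^{3/2}}\,w(x)w(y),\qquad t\ge 1,
\end{equation*}
integrates against $|g(y)| = w(y)^{-1}\bigl(w(y)|g(y)|\bigr)$ and yields~\eqref{eq:t32} and~\eqref{eq:t32 om}.

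For~\eqref{equ:local_decay_improved_subtr_off_with_derivative} and~\eqref{equ:local_decay_improved_subtr_off_with_derivative_om}, differentiate the spectral representation in $x$ to replace each amplitude $B(x,y,\xi)$ by $\partial_x B(x,y,\xi) \pm i\xi B(x,y,\xi)$. By Lemma~\ref{lem:m symb}, both $B$ and $\partial_x B$ remain uniform symbols in $\xi$ on $\mathrm{supp}(\chi_0)$, so the oscillatory integrals retain the same structure. At $\xi=0$ the $i\xi B$ piece drops out and $\partial_x B|_{\xi=0}$ collapses to $\partial_x f_+(x,0)=\varphi'(x)$, so the boundary stationary phase now reproduces the $x$-derivative of the subtracted-off term, while the error analysis is unchanged modulo at most one additional power of $\jap{x}$ that is comfortably absorbed by the weight $w(x)=\jap{x}^4$. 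The main technical obstacle is the bookkeeping in the first paragraph: verifying that the many boundary values of the oscillatory integrals $I_\eta$ assemble precisely into $c_0\,\varphi(x)\varphi(y)$ with the explicit constant~\eqref{equ:def_c0}. This step depends crucially on the scattering identities from~\eqref{eq:TR} at $\xi=0$ together with the unitarity of~\eqref{eq:SU2}, which reconcile the several contributions arising from the $\mathbf{1}_{[x\lessgtr y]}$ splitting and the two sides of the spectral support.
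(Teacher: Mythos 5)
Your overall skeleton (stationary phase at zero energy on the low-energy kernel, extraction of the resonance term from $f_-(\cdot,0)=\tfrac{T(0)}{1+R_-(0)}f_+(\cdot,0)$, and the same treatment of the $\partial_x$ bounds) parallels the paper, but there is a genuine gap in the key analytic step. The boundary stationary-phase formula you invoke is false as stated: for a half-line integral $\int_0^\infty e^{it\jxi}A(\xi)\,\ud\xi$ with the stationary point at the endpoint, the expansion after the leading $t^{-1/2}$ term contains a boundary term of size $t^{-1}|A'(0)|$, not $\calO(t^{-3/2})$ (test $A(\xi)=\xi\chi_0(\xi)$: one integration by parts gives a contribution $\simeq e^{it}/t$, while your formula would predict $\calO(t^{-3/2})$). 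In your setup $A(\xi)=e^{i\eta\xi}\omega(\xi^2)\chi_0(\xi^2)B(x,y,\xi)$, so $A'(0)$ carries the factor $i\eta B(x,y,0)$ with $\eta=\pm(x-y),\pm(x+y)$; each $I_\eta$ therefore has a genuine $|x\pm y|\,t^{-1}$ contribution, which is not dominated by $t^{-3/2}w(x)w(y)$ and would only give $t^{-1}$ decay in \eqref{eq:t32}. These $t^{-1}$ terms do cancel in the aggregate (they must, since the lemma is true), but the cancellation happens \emph{across} the different half-line pieces with different phases, and it is exactly the delicate point: after splitting at $\xi=0$ and using \eqref{eq:TR*} you are effectively working with the distorted Fourier basis near zero energy, which in the non-generic case is discontinuous at $\xi=0$ (the paper flags this and deliberately avoids the distorted transform at small energies). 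Your proposal neither keeps these $t^{-1}$ boundary terms nor proves their cancellation, so the kernel bound you state does not follow from the argument given.

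The repair is essentially the paper's route: do not split into half-line integrals. Work with the whole-line $\xi$-integrals coming from the Green-function form \eqref{eq:Ekern*}, i.e.\ the kernel $\tfrac{1}{2\pi}\int_{-\infty}^{\infty}e^{it\jxi}T(\xi)f_+(x,\xi)f_-(y,\xi)\chi_0(\xi^2)\,\ud\xi$ for $x>y$ (and its mirror), whose amplitude is smooth across $\xi=0$. Freeze the amplitude at $\xi=0$ to produce the resonance term via $f_-(\cdot,0)=\kappa\varphi$, and observe that the remaining amplitude $F_\gtrless(x,y;\xi)$ vanishes at the \emph{interior} nondegenerate stationary point $\xi=0$; interior stationary phase (Hörmander~7.7.5 with $k=2$) has no $t^{-1}$ term and yields the $t^{-3/2}$ error, at the cost of controlling up to four $\xi$-derivatives of $F_\gtrless$, which is precisely where the weight $\jap{x}^4\jap{y}^4=w(x)w(y)$ enters (your $C^2$ bookkeeping with $\jap{x}^2\jap{y}^2$ is tied to the invalid two-derivative error estimate). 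With that modification, your identification of $c_0$ and your treatment of the $\partial_x$ and $\omega(H)$ variants go through as in the paper.
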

\begin{proof}
We first derive the local decay estimates~\eqref{eq:t32} and~\eqref{equ:local_decay_improved_subtr_off_with_derivative}. Afterwards we comment on the proofs of the generalized versions~\eqref{eq:t32 om} and~\eqref{equ:local_decay_improved_subtr_off_with_derivative_om}. Fix $g,h\in C_{comp}(\R)$. By Lemma~\ref{lem:stone}, and using that $T(-\xi)=\ol{T(\xi)}$, 
\begin{equation} \label{equ:representation_kg_prop_low_energy}
\begin{aligned}
\bigl( e^{it \jtD} \chi_0(H) P_c g \bigr)(x) &= \frac{1}{\pi } \int_0^\infty \int_{-\infty}^x e^{it\langle\xi\rangle} \Re\big[ T(\xi) f_+(x,\xi)f_-(y,\xi) \big] g(y) \, \ud y \, \chi_0(\xi^2) \, \ud \xi \\
& \quad +\frac{1}{\pi } \int_0^\infty \int_x^{\infty} e^{it\langle\xi\rangle} \Re \big[ T(\xi) f_-(x,\xi)f_+(y,\xi) \big] g(y) \, \ud y \, \chi_0(\xi^2) \, \ud \xi \\
&= \frac{1}{2\pi }  \int_{-\infty}^x \int_{-\infty}^\infty e^{it\langle\xi\rangle}  T(\xi) f_+(x,\xi)f_-(y,\xi) \, \chi_0(\xi^2) \, d\xi \, g(y) \, \ud y \\
& \quad +\frac{1}{2\pi }  \int_x^{\infty} \int_{-\infty}^\infty e^{it\langle\xi\rangle}  T(\xi) f_-(x,\xi) f_+(y,\xi) \, \chi_0(\xi^2) \, \ud \xi \, g(y) \, \ud y. 
\end{aligned}
\end{equation}
To isolate $0$ energy we rewrite these expressions in the form 
\begin{equation*}
 \begin{aligned}
  \bigl( e^{it \jtD} \chi_0(H) P_c g \bigr)(x) &= \frac{1}{2\pi }  \int_{-\infty}^x \int_{-\infty}^\infty e^{it\langle\xi\rangle} T(\xi) \chi_0(\xi^2)  \, \ud \xi \, f_+(x,0) f_-(y,0) g(y) \, \ud y \\
  &\quad + \frac{1}{2\pi } \int_x^\infty \int_{-\infty}^\infty e^{it\langle\xi\rangle}  T(\xi) \chi_0(\xi^2) \, \ud \xi \, f_-(x,0) f_+(y,0) g(y) \, \ud y \\
  &\quad + \frac{1}{2\pi } \int_{-\infty}^x \int_{-\infty}^\infty e^{it\langle\xi\rangle} T(\xi) \chi_0(\xi^2) \, F_>(x,y;\xi) \, \ud \xi \, g(y) \, \ud y \\
  &\quad + \frac{1}{2\pi } \int_x^\infty \int_{-\infty}^\infty e^{it\langle\xi\rangle}  T(\xi) \chi_0(\xi^2) \, F_<(x,y;\xi) \, \ud \xi \, g(y) \, \ud y,
 \end{aligned}
\end{equation*}
where
\EQ{\label{eq:Fdef}
F_{>}(x,y;\xi) &= f_+(x,\xi)f_-(y,\xi)  - f_+(x,0)f_-(y,0), \\
F_{<}(x,y;\xi) &=  f_-(x,\xi)f_+(y,\xi)  - f_-(x,0)f_+(y,0).
}
Taking the inner product with $h$, we obtain
\begin{equation}
 \begin{aligned}
  \langle h, e^{it \jtD} \chi_0(H) P_c\, g \rangle &= \frac{1}{2\pi }  \int_{\bbR^2} \int_{-\infty}^\infty e^{it\langle\xi\rangle} T(\xi) \chi_0(\xi^2)  \, \ud \xi \, f_+(x,0) f_-(y,0) g(y) \ol{h(x)} \, \one_{[x>y]} \, \, \ud y \, \ud x \\
  &\quad + \frac{1}{2\pi } \int_{\bbR^2} \int_{-\infty}^\infty e^{it\langle\xi\rangle}  T(\xi) \chi_0(\xi^2) \, \ud \xi \, f_-(x,0) f_+(y,0) g(y) \ol{h(x)} \, \one_{[x<y]} \, \, \ud y \, \ud x \\
  &\quad + \frac{1}{2\pi } \int_{\bbR^2} \int_{-\infty}^\infty e^{it\langle\xi\rangle} T(\xi) \chi_0(\xi^2) \, F_>(x,y;\xi) \, \ud \xi \, g(y)  \ol{h(x)} \, \one_{[x>y]} \, \, \ud y \, \ud x \\
  &\quad + \frac{1}{2\pi } \int_{\bbR^2} \int_{-\infty}^\infty e^{it\langle\xi\rangle}  T(\xi) \chi_0(\xi^2) \, F_<(x,y;\xi) \, \ud \xi \, g(y) \ol{h(x)} \, \one_{[x<y]} \, \, \ud y \, \ud x \\
  &\equiv A_{>}+A_{<}+B_{>}+B_{<}.
 \end{aligned}
\end{equation}
By \eqref{eq:TR}, 
\[
f_-(x,0) = \frac{T(0)}{1+R_-(0)} f_+(x,0) =: \kappa \vphi(x).
\]
Note that $\kappa\in\R$ and $1+R_-(0)\ne0$ due to  $|T(0)|^2+|R_-(0)|^2=1$ and $T(0)\ne0$. Then we have
\EQ{\nn
A_>+A_< &= \frac{\kappa}{2\pi} \langle h, \varphi\rangle \langle \varphi, g \rangle \int_{-\infty}^\infty  e^{it\langle\xi\rangle} T(\xi) \chi_0(\xi^2)\, \ud \xi.    
}
Setting $k=2$ in~\cite[Theorem~7.7.5]{Hor} yields
\begin{equation} \label{equ:stationary_phase_low_energy}
\int_{-\infty}^\infty  e^{it\langle\xi\rangle} T(\xi) \chi_0(\xi^2)\, \ud \xi = \frac{e^{i\pi/4}}{\sqrt{2\pi t}} e^{it} \,  T(0) +R(t),\qquad |R(t)|\le C_0\, t^{-\frac32},
\end{equation}
where $C_0$ depends on the  derivatives of $T(\xi) \chi(\xi^2)$ up to and including fourth order. Thus, 
\EQ{
\label{eq:A}
A_>+A_< = c_0  \langle h, ( \varphi\otimes\varphi) \, g \rangle \Big( \frac{e^{i\pi/4}}{\sqrt{t}} e^{it} + \calO_{L^\infty_t} (t^{-\frac32})\Big),\qquad c_0 = \frac{\kappa}{2\pi \sqrt{2\pi } }T(0),
}
where the constants in the $\calO_{L^\infty_t}(\cdot)$ term depend on $T(\xi)$. Applying~\cite[Theorem~7.7.5]{Hor} to $B_{>}$ with $k=2$ and using that $F_{>}(x,y;0)=0$ yields
\EQ{\label{eq:B2}
& \Big| B_{>} -  C_1   e^{it} t^{-\frac32} \int_{\R^2} \partial_\xi^2 \big( T(\xi) \chi_0(\xi^2) F_{>}(x,y;\xi)\big) \big|_{\xi=0}\, \one_{[x>y]} \, g(y) \ol{h(x)} \, \ud x \, \ud y \Big| \\
& \le C_2\, t^{-2} \int_{\R^2} \sup_{\xi\in\bbR} \sup_{\ell\le 4}\Big| \partial_\xi^\ell \big( T(\xi) \chi_0(\xi^2) F_{>}(x,y;\xi)\big)\Big |\, \one_{[x>y]}\, \bigl| g(y) \ol{h(x)} \bigr| \, \ud x \, \ud y
}
with some constants $C_1, C_2$.   By \eqref{eq:Fdef} and~\eqref{eq:TR*}, 
\[
\sup_{|\xi|\les 1} \sup_{\ell\le 4} \bigl| \partial_\xi^\ell F_{>}(x,y;\xi) \bigr| \le C\langle x\rangle^4 \langle y\rangle^4.
\]
The bound on $B_{<}$ is analogous. 
In summary, 
\EQ{\nn
\Big|\langle h, e^{it \jtD} \chi_0(H) P_c\, g \rangle  -  c_0  \langle h, (\varphi\otimes\varphi) \, g \rangle \frac{e^{i\pi/4}}{\sqrt{t}} e^{it} \Big| \le C t^{-\frac32} \| w h\|_{L^1_x} \| w g\|_{L^1_x},
}
which implies the desired local decay estimate~\eqref{eq:t32} given by
\EQ{\label{eq:klein lam}
\Big\|w^{-1} \big[ e^{it \jtD} \chi_0(H) P_c - c_0 \,  {e^{i\pi/4}} t^{-\frac12} e^{it}  (\varphi\otimes\varphi) \big] g\Big \|_{L^\infty_x} \le C t^{-\frac32} \| w g\|_{L^1_x}.
}

 Next, we turn to the proof of~\eqref{equ:local_decay_improved_subtr_off_with_derivative}. From the representation~\eqref{equ:representation_kg_prop_low_energy}, we obtain upon taking a derivative in $x$ that
 \begin{align*}
  \partial_x \bigl( e^{it\jtD} \chi(H) P_c g \bigr)(x) &=  \frac{1}{2\pi }  \int_{-\infty}^x \int_{-\infty}^\infty e^{it\langle\xi\rangle}  T(\xi) (\partial_x f_+)(x,\xi)f_-(y,\xi) \, \chi_0(\xi^2) \, d\xi \, g(y) \, \ud y \\
  &\quad +\frac{1}{2\pi }  \int_x^{\infty} \int_{-\infty}^\infty e^{it\langle\xi\rangle}  T(\xi) (\partial_x f_-)(x,\xi) f_+(y,\xi) \, \chi_0(\xi^2) \, \ud \xi \, g(y) \, \ud y.
 \end{align*}
 In order to isolate $0$ energy, we then write
 \begin{equation} \label{equ:representation_kg_prop_px_low_energy}
 \begin{aligned}
  \partial_x \bigl( e^{it\jtD} \chi(H) P_c g \bigr)(x) &= \frac{1}{2\pi} \int_{-\infty}^x \int_{-\infty}^{\infty} e^{it\jxi} T(\xi) \chi_0(\xi^2) \, \ud \xi \, (\partial_x f_+)(x,0) f_-(y,0) \, g(y) \, \ud y \\
  &\quad + \frac{1}{2\pi} \int_x^{\infty} \int_{-\infty}^{\infty} e^{it\jxi} T(\xi) \chi_0(\xi^2) \, \ud \xi \, (\partial_x f_-)(x,0) f_+(y,0) \, g(y) \, \ud y \\
  &\quad + \frac{1}{2\pi} \int_{-\infty}^x \int_{-\infty}^{\infty} e^{it\jxi} T(\xi) \chi_0(\xi^2) \, G_{>}(x,y;\xi) \, \ud \xi \, g(y) \, \ud y \\
  &\quad + \frac{1}{2\pi} \int_x^{\infty} \int_{-\infty}^{\infty} e^{it\jxi} T(\xi) \chi_0(\xi^2) \, G_{<}(x,y;\xi) \, \ud \xi \, g(y) \, \ud y \\
  &\equiv I + II + III + IV
 \end{aligned}
 \end{equation}
 with
 \begin{align*}
  G_>(x,y;\xi) &:= (\partial_x f_+)(x,\xi) f_-(y,\xi) - (\partial_x f_+)(x,0) f_-(y,0), \\
  G_<(x,y;\xi) &:= (\partial_x f_-)(x,\xi) f_+(y,\xi) - (\partial_x f_-)(x,0) f_+(y,0).
 \end{align*}
 Using that $f_-(x,0) = \kappa f_+(x,0)$ with $\kappa := \frac{T(0)}{1+R_-(0)}$ and therefore $(\partial_x f_-)(x,0) = \kappa (\partial_x f_+)(x,0)$, we obtain for the first two terms that
 \begin{equation*}
  \begin{aligned}
   I + II &= \frac{\kappa}{2\pi} \int_{-\infty}^x \int_{-\infty}^{\infty} e^{it\jxi} T(\xi) \chi_0(\xi^2) \, \ud \xi \, (\partial_x f_+)(x,0) f_+(y,0) \, g(y) \, \ud y \\
   &\quad + \frac{\kappa}{2\pi} \int_x^{\infty} \int_{-\infty}^{\infty} e^{it\jxi} T(\xi) \chi_0(\xi^2) \, \ud \xi \, (\partial_x f_+)(x,0) f_+(y,0) \, g(y) \, \ud y \\ 
   &= \frac{\kappa}{2\pi} \biggl( \int_{-\infty}^\infty e^{it\jxi} T(\xi) \chi_0(\xi^2) \, \ud \xi \biggr) \biggl( \int_{\bbR} f_+(y,0) g(y) \, \ud y \biggr) (\partial_x f_+)(x,0). 
  \end{aligned}
 \end{equation*}
 Using~\eqref{equ:stationary_phase_low_energy}, we find that
 \begin{equation*}
  \begin{aligned}
   I + II &= c_0 \frac{e^{i\frac{\pi}{4}} e^{it}}{t^{\frac12}} \langle \varphi, g \rangle (\partial_x \varphi)(x) + R(t) \langle \varphi, g \rangle (\partial_x \varphi)(x), \qquad |R(t)| \leq C_0 t^{-\frac32}, \qquad t \geq 1.
  \end{aligned}
 \end{equation*}
 Thus, taking the inner product of~\eqref{equ:representation_kg_prop_px_low_energy} with $h$, we have 
 \begin{align*}
  &\Bigl\langle h, \partial_x \Bigl( e^{it\jtD} \chi_0(H) P_c g - c_0 \frac{e^{i\frac{\pi}{4}} e^{it}}{t^\frac12} (\varphi \otimes \varphi) g \Bigr) \Bigr\rangle \\
  &= R(t) \langle h, \partial_x \varphi \rangle \langle \varphi, g \rangle  \\
  &\quad + \frac{1}{2\pi } \int_{\bbR^2} \int_{-\infty}^\infty e^{it\langle\xi\rangle} T(\xi) \chi_0(\xi^2) \, G_>(x,y;\xi) \, \ud \xi \, g(y)  \ol{h(x)} \, \one_{[x>y]} \, \, \ud y \, \ud x \\
  &\quad + \frac{1}{2\pi } \int_{\bbR^2} \int_{-\infty}^\infty e^{it\langle\xi\rangle}  T(\xi) \chi_0(\xi^2) \, G_<(x,y;\xi) \, \ud \xi \, g(y) \ol{h(x)} \, \one_{[x<y]} \, \, \ud y \, \ud x.
 \end{align*}
 Now using that $G_>(x,y;0) = G_<(x,y;0) = 0$, the fact that $\px \varphi \in L^\infty_x(\bbR)$, and that 
 \begin{equation*}
  \sup_{|\xi| \lesssim 1} \, \sup_{\ell \leq 4} \, \Bigl( \bigl| \partial_\xi^\ell G_>(x,y;\xi) \bigr| + \bigl| \partial_\xi^\ell G_<(x,y;\xi) \bigr| \Bigr) \leq C \jap{x}^4 \jap{y}^4,
 \end{equation*}
 we can conclude the proof of~\eqref{equ:local_decay_improved_subtr_off_with_derivative} by arguing as in the preceding proof of~\eqref{eq:t32}.

 Finally, regarding the proofs of the generalized versions~\eqref{eq:t32 om} and~\eqref{equ:local_decay_improved_subtr_off_with_derivative_om} involving the operator $\omega(H)$, note that the conditions on $\omega$ are such that the preceding arguments still apply.
\end{proof}

The weights in~Lemma~\ref{lem:low proj} are most likely not sharp. 
We remark that a bound as in Lemma~\ref{lem:low proj}  cannot hold for large energies~$\xi$. 
In fact, it is an immediate consequence of stationary phase that derivatives of $g$ are needed to bound the pointwise size of the evolution in~\eqref{eq:t32 om}. 
We will pursue this in more detail below, but first establish local $L^2$-decay for energies separated from~$0$. 
For the following lemma, the distinction between $V$ generic and non-generic is irrelevant.  
Moreover, we use the notation $\jap{\tilde{D}} := \sqrt{1+H}$ on the positive spectrum of $H$. 

\begin{lemma}\label{lem:high f}
 Let $H = -\partial_x^2 + V(x)$ with real-valued $V\in L^\infty(\R)\cap C^{3}(\R)$, and assume that $\jap{x}^6 V^{(\ell)}(x) \in L^1(\R)$ for all $0\le\ell\le 3$. Fix $\xi_0 > 0$. Let $\chi_0$ be a smooth bump function such that $\chi_0(\xi) = 1$ for $|\xi| \leq 1$ and $\chi_0(\xi) = 0$ for $|\xi| \geq 2$.
 Then
 \begin{equation}
  \bigl\| \jx^{-2} e^{it\jtD} \bigl( 1- \chi_0(H/\xi_0^2) \bigr) P_c g \bigr\|_{L^2_x} \le C \jt^{-2} \| \jx^{2} g \|_{L^2_x}
 \end{equation}
 with some constant $C > 0$ depending on $\xi_0$ and $V$.  The same estimate holds with $e^{it\jtD}$ replaced by $e^{it\jtD} \omega(H)$ where $|\partial_\xi^\ell \omega(\xi)|\le C(\xi_0)$ for all $|\xi|\ge\xi_0$ and $0\le\ell\le 5$. 
\end{lemma}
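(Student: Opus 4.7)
The bound for $t \leq 1$ will follow from the unitarity of $e^{it\jtD}P_c$ combined with the boundedness of $(1-\chi_0(H/\xi_0^2))\omega(H)$ on $L^2$; I focus on $t \geq 1$. By duality and Plancherel for the distorted Fourier transform $\wtcalF$, upon setting $f := \jx^2 g$, the problem will reduce to proving
\begin{equation*}
 \biggl| \int_\bR e^{it\jxi}\, \psi_0(\xi)\, \tilde f_w(\xi)\, \overline{\tilde h_w(\xi)} \,\ud\xi \biggr| \leq C t^{-2} \|f\|_{L^2_x} \|h\|_{L^2_x}
\end{equation*}
for all $h \in L^2_x$, where $\psi_0(\xi) := (1-\chi_0(\xi^2/\xi_0^2))\omega(\xi^2)$, $\tilde f_w := \wtcalF[\jx^{-2} f]$, and $\tilde h_w := \wtcalF[\jx^{-2} h]$. (Here no bound-state contribution enters, because $\psi(H) := (1-\chi_0(H/\xi_0^2))\omega(H)$ vanishes on the point spectrum.)

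On the support of $\psi_0$ we have $|\xi| \geq \xi_0$, so $\jxi/\xi$ is smooth with bounded derivatives. Using the identity $e^{it\jxi} = (it)^{-1}(\jxi/\xi)\partial_\xi e^{it\jxi}$, I will integrate by parts twice in $\xi$ (boundary terms vanish after a standard reduction to Schwartz data). This expresses the pairing above as a finite linear combination of
\begin{equation*}
 t^{-2} \int_\bR e^{it\jxi}\, a_{j_1 j_2}(\xi)\, \partial_\xi^{j_1} \tilde f_w(\xi)\, \partial_\xi^{j_2} \overline{\tilde h_w(\xi)} \, \ud\xi, \qquad j_1 + j_2 \leq 2,
\end{equation*}
with smooth bounded symbols $a_{j_1 j_2}$; the hypothesis on five derivatives of $\omega$ is more than enough to control the absorbed derivatives of $\psi_0$ and $\omega(\xi^2)$. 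Cauchy--Schwarz then reduces matters to the $L^2_\xi$ derivative estimate
\begin{equation*}
 \|\partial_\xi^j \tilde f_w\|_{L^2_\xi} \leq C \|f\|_{L^2_x}, \qquad j = 0, 1, 2,
\end{equation*}
and the analogue for $\tilde h_w$.

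To establish this estimate, I observe that definition \eqref{eq:calFdef} allows one to write $e(x,\xi) = (2\pi)^{-1/2} e^{ix\xi} \mu(x,\xi)$, where $\mu(x,\xi) := T(|\xi|) m_+(x,\xi)$ for $\xi \geq 0$ and $\mu(x,\xi) := T(|\xi|) m_-(x,-\xi)$ for $\xi < 0$. Leibniz then yields
\begin{equation*}
 \partial_\xi^j \tilde f_w(\xi) = (2\pi)^{-1/2}\sum_{k=0}^j \binom{j}{k} (-i)^k \int_\bR x^k e^{-ix\xi}\, \overline{\partial_\xi^{j-k}\mu(x,\xi)}\, \jx^{-2}f(x)\,\ud x.
\end{equation*}
The principal term $k=j$ equals $(-i)^j \wtcalF[x^j\jx^{-2} f](\xi)$, whose $L^2_\xi$-norm is bounded via Plancherel by $\|x^j\jx^{-2}\|_{L^\infty_x}\|f\|_{L^2_x} \leq \|f\|_{L^2_x}$ since $j \leq 2$. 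For the remaining $k < j$ contributions, \lem{lem:m symb} together with \lem{lem:T} supplies $|\partial_\xi^{j-k}\mu(x,\xi)| \leq C|\xi|^{-1-(j-k)}$ on $|\xi|\geq\xi_0$, uniformly in $x$. Since $\||x|^k\jx^{-2}\|_{L^2_x} < \infty$ for $k \leq 1$, Cauchy--Schwarz in $x$ gives the pointwise estimate $C|\xi|^{-1-(j-k)}\|f\|_{L^2_x}$, which is square-integrable over $|\xi| \geq \xi_0$ because $j - k \geq 1$.

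The hard part is controlling these ``non-standard'' $k < j$ contributions, where derivatives fall on the Jost amplitude $\mu$ rather than on the oscillatory factor $e^{ix\xi}$; they are resolved precisely by the $\xi$-decay of $\partial_\xi^\ell \mu$ supplied by \lem{lem:m symb}, which plays the role of a replacement for the clean ``momentum $=$ derivative'' identity enjoyed by the flat Fourier transform but unavailable in the distorted setting.
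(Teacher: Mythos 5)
Your overall architecture (duality, Plancherel, two integrations by parts in $\xi$ on the support of $\psi_0$, Plancherel for the top-order term $k=j$) is sound, and in fact the crux of the lemma is exactly where you locate it: the bounds $\|\partial_\xi^j \wtcalF[\jx^{-2}f]\|_{L^2(|\xi|\ge\xi_0)}\lesssim \|f\|_{L^2_x}$, $j=1,2$. But your proof of these bounds has a genuine gap: the claimed estimate $|\partial_\xi^{j-k}\mu(x,\xi)|\le C|\xi|^{-1-(j-k)}$ \emph{uniformly in $x\in\R$} is false, and Lemma~\ref{lem:m symb} does not supply it. That lemma controls $\partial_\xi^\ell m_\pm(x,\xi)$ only on the half-line $\pm x\ge0$ where the corresponding Jost solution is normalized. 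For $\xi>0$ and $x<0$ one has, by \eqref{eq:TR*}, $T(\xi)m_+(x,\xi)=m_-(x,-\xi)+R_-(\xi)e^{-2ix\xi}m_-(x,\xi)$, so each $\xi$-derivative of $\mu$ can hit the reflected oscillation $e^{-2ix\xi}$ and produce a factor of $|x|$; the true bound on that half-line is of the form $|\partial_\xi^{\ell}\mu(x,\xi)|\lesssim \jap{x}^{\ell}|\xi|^{-1}+\cdots$ rather than a bound uniform in $x$. Quantitatively this kills your last step: for $j=2$ (with $k=0$ or $k=1$) the worst contribution to the integrand is of size $|x|^{2}\jx^{-2}|f(x)|\sim |f(x)|$ with only an $|\xi|^{-1}$ prefactor, so Cauchy--Schwarz in $x$ against $\jx^{-2}$ no longer produces an $L^2_x$ function independent of $f$, and the pointwise-in-$\xi$ decay you rely on is not available. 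In other words, the available weight $\jx^{-2}$ is exactly exhausted by the $x$-growth coming from the reflected wave, leaving nothing to close the Cauchy--Schwarz argument.

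The repair requires the step you skipped: split $x$ with cutoffs $\theta_\pm(x)$, use \eqref{eq:TR*} to rewrite the distorted basis on the ``wrong'' half-line in terms of the well-behaved Jost solution there, and then treat the resulting terms --- which carry a residual oscillation $e^{\pm ix\xi}$, a symbol bounded with its derivatives, and the factors of $x$ absorbed into the weight --- as oscillatory-integral (pseudodifferential) operators bounded on $L^2$, e.g.\ via the Calderon--Vaillancourt theorem, rather than by pointwise bounds plus Cauchy--Schwarz. This is precisely how the paper proves \eqref{eq:pg bd} and its second-derivative analogue (see \eqref{eq:again CV}), and the same mechanism is also what controls, in the paper's non-dual formulation, the terms where $\partial_\xi$ falls on $e^{ix\xi}$ and produces the weight loss. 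So your dual formulation is a perfectly viable and even tidy packaging of the argument, but as written the treatment of the $k<j$ terms does not go through; with the $\theta_\pm$ splitting, \eqref{eq:TR*}, and an $L^2$-boundedness theorem for the resulting operators, it would.
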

\begin{proof}
By \eqref{eq:Ekern} the distorted Fourier basis takes the form
 \EQ{\label{eq:e bas}
  e(x,\xi) := \frac{1}{\sqrt{2\pi}} \left\{ \begin{aligned}
                                            &T(\xi) f_+(x,\xi), \quad &\xi \geq 0, \\
                                            &T(-\xi) f_-(x, -\xi), \quad &\xi < 0.
                                           \end{aligned} \right.
}
 Thus,  the distorted Fourier transform of $g=P_c g$ and its inverse are given by 
 \begin{equation*}
  \widetilde{g}(\xi) = \int_{\bbR} \overline{e(x,\xi)} g(x) \, \ud x, \quad g(x) = \int_{\R} e(x,\xi)  \widetilde{g}(\xi)\, d\xi
 \end{equation*}
 and Plancherel's theorem reads $\|g\|_{L^2_x} = \|\tilde g\|_{L^2_\xi}$. 
 The Klein-Gordon evolution therefore takes the form 
 \begin{align*}
  \bigl( e^{it\jtD} \bigl( 1- \chi_0(H/\xi_0^2) \bigr) P_c g \bigr)(x) &= \int_{\bbR} e(x, \xi) e^{it\jxi} \bigl( 1 - \chi_0(\xi^2/\xi_0^2) \bigr) \widetilde{g}(\xi) \, \ud \xi. 
 \end{align*} 
 In view of the cutoff $1-\chi_0(\xi^2/\xi_0^2)$ we can treat the regions $\xi \geq \xi_0$ and $\xi \leq -\xi_0$ separately. We also introduce smooth cutoff functions $\theta_\pm(x)$ such that  $\theta_+(x) + \theta_-(x) = 1$ for all $x \in \bbR$ and such that $\theta_{\pm}(x) = 0$ for $\pm x < -1$.
 By symmetry, it suffices to consider the case $\xi \geq \xi_0$ which  contributes the following expression to the time evolution: 
 \begin{align*}
  &\int_0^\infty e(x, \xi) e^{it\jxi} \bigl( 1 - \chi_0(\xi^2/\xi_0^2) \bigr) \widetilde{g}(\xi) \, \ud \xi \\
  &= \theta_+(x) \int_0^\infty T(\xi) f_+(x,\xi) e^{it\jxi} \bigl( 1 - \chi_0(\xi^2/\xi_0^2) \bigr) \widetilde{g}(\xi) \, \ud \xi \\
  &\quad + \theta_-(x) \int_0^\infty T(\xi) f_+(x,\xi) e^{it\jxi} \bigl( 1 - \chi_0(\xi^2/\xi_0^2) \bigr) \widetilde{g}(\xi) \, \ud \xi \\
  &=: I_+ + I_-.
 \end{align*}
 We further rewrite the term $I_+$ as
 \begin{align*}
  I_+ = \theta_+(x) \int_0^\infty e^{ix\xi} e^{it\jxi} T(\xi) m_+(x,\xi) \bigl( 1 - \chi_0(\xi^2/\xi_0^2) \bigr) \widetilde{g}(\xi) \, \ud \xi 
 \end{align*}
 and using the identity 
 \begin{align*}
  T(\xi) f_+(x,\xi) = f_-(x,-\xi) + R_-(\xi) f_-(x,\xi) = e^{ix\xi} m_-(x,-\xi) + R_-(\xi) e^{-ix\xi} m_-(x,\xi),
 \end{align*}
 we can express the term $I_-$ as
 \begin{equation} \label{equ:Iminus_rewritten}
  \begin{aligned}
   I_- &= \theta_-(x) \int_0^\infty e^{ix\xi} e^{it\jxi} m_-(x,-\xi) \bigl( 1 - \chi_0(\xi^2/\xi_0^2) \bigr) \widetilde{g}(\xi) \, \ud \xi \\
   &\quad + \theta_-(x) \int_0^\infty e^{-ix\xi} e^{it\jxi} m_-(x,\xi) \bigl( 1 - \chi_0(\xi^2/\xi_0^2) \bigr) \widetilde{g}(\xi) \, \ud \xi.
  \end{aligned}
 \end{equation}
 Let us further consider the term $I_+$. Using that $\partial_\xi (e^{it\jxi}) = it \frac{\xi}{\jxi} e^{it\jxi}$ and integrating by parts once, we obtain
 \begin{align*}
  I_+ &= - \frac{1}{it} \theta_+(x) \int_0^\infty e^{it\jxi} \partial_\xi \biggl( \frac{\jap{\xi}}{\xi} e^{ix\xi} T(\xi) m_+(x,\xi) \bigl( 1 - \chi_0(\xi^2/\xi_0^2) \bigr) \widetilde{g}(\xi) \biggr) \, \ud \xi \\
  &= - \frac{ix}{it} \theta_+(x) \int_0^\infty e^{ix\xi} e^{it\jxi} \frac{\jap{\xi}}{\xi} T(\xi) m_+(x,\xi) \bigl( 1 - \chi_0(\xi^2/\xi_0^2) \bigr) \widetilde{g}(\xi) \, \ud \xi \\
  &\quad - \frac{1}{it} \theta_+(x) \int_0^\infty e^{ix\xi} e^{it\jxi} \partial_\xi \biggl( \frac{\jap{\xi}}{\xi} T(\xi) m_+(x,\xi) \bigl( 1 - \chi_0(\xi^2/\xi_0^2) \bigr) \biggr) \widetilde{g}(\xi) \, \ud \xi \\
  &\quad - \frac{1}{it} \theta_+(x) \int_0^\infty e^{ix\xi} e^{it\jxi} \frac{\jap{\xi}}{\xi} T(\xi) m_+(x,\xi) \bigl( 1 - \chi_0(\xi^2/\xi_0^2) \bigr) \partial_\xi \widetilde{g}(\xi) \, \ud \xi \\
  &=: I_+^{(a)} + I_+^{(b)} + I_+^{(c)}.
 \end{align*}
We view 
 \begin{align*}
 t \jx^{-1} I_+^{(a)} = - \frac{x}{\jx} \int_{0}^{\infty} e^{ix\xi} \biggl( \frac{\jap{\xi}}{\xi} T(\xi) \theta_+(x) m_+(x,\xi) \widetilde{\chi}_{\{\xi \geq \xi_0/2\}}(\xi) \bigl( 1 - \chi_0(\xi^2/\xi_0^2) \bigr) \biggr) e^{it\jxi} \widetilde{g}(\xi) \, \ud \xi 
 \end{align*}
 as a pseudo-differential operator on the line (after introducing another smooth cutoff $\widetilde{\chi}_{\{\xi \geq \xi_0/2\}}(\xi)$) with symbol 
 \begin{align*}
  a(x,\xi) :=  - \frac{x}{\jx} \frac{\jap{\xi}}{\xi} T(\xi) \theta_+(x) m_+(x,\xi) \widetilde{\chi}_{\{\xi \geq \xi_0/2\}}(\xi) \bigl( 1 - \chi_0(\xi^2/\xi_0^2) \bigr)
 \end{align*}
 By the Calderon-Vaillancourt theorem, see for example~\cite[Proposition 9.4]{MS}, we  infer that
 \begin{align*}
  \bigl\| \jx^{-1} I_+^{(a)} \bigr\|_{L^2_x} \lesssim_{\xi_0} \frac{1}{t} \bigl\| e^{it\jxi} \widetilde{g}(\xi) \bigr\|_{L^2_\xi} \simeq \frac{1}{t} \| \widetilde{g}(\xi) \|_{L^2_\xi} \lesssim \frac{1}{t} \|g\|_{L^2_x}.
 \end{align*}
 Lemmas~\ref{lem:fm} and~\ref{lem:T} imply that the hypotheses of that theorem hold, i.e.,  that the symbol $a$ satisfies 
 \EQ{\label{eq:CV hyp}
 |\partial^j_x a(x,\xi)| + |\partial^k_\xi a(x,\xi)|\le C\qquad \forall \; j,k=0,1,2,3
 }
 Note that it makes no difference in that lemma if we assume $x\ge0$ or $x\ge-10$, say, for the bounds on $m_+$. 
 The terms $I_+^{(b)}$ and $I_+^{(c)}$ can be handled analogously, together with the terms in the identity~\eqref{equ:Iminus_rewritten} for $I_-$ since we now are dealing with $m_-(x,\xi)$ on $x\les 1$. The $L^2$ estimate of $I_+^{(c)}$ requires the bound 
 \EQ{\label{eq:pg bd}
 \| \partial_\xi  \widetilde{g}(\xi) \|_{L^2(|\xi|\ge\xi_0)} \les \| \jap{x} g \|_{L^2_x}
 }
 which again follows from the Calderon-Vaillancourt theorem. Indeed, for $\xi\gtrsim \xi_0$,  by~\eqref{eq:e bas}, 
 \EQ{ \label{eq:again CV}
 \partial_\xi \widetilde{g}(\xi) &= \int_{\bbR}  \partial_\xi  \overline{e(x,\xi)} g(x) \, \ud x  \\
 & = \int_{\bbR}  \partial_\xi  \overline{e(x,\xi)} \theta_+(x) g(x) \, \ud x + \int_{\bbR}  \partial_\xi  \overline{e(x,\xi)} \theta_-(x) g(x) \, \ud x \\
 & = \frac{1}{\sqrt{2\pi}}  \int_{\bbR}  \partial_\xi ( T(-\xi) f_+(x,-\xi)) \theta_+(x) g(x) \, \ud x  \\
 &\quad + \frac{1}{\sqrt{2\pi}}  \int_{\bbR}  \partial_\xi (  f_-(x,\xi) + R_-(-\xi) f_-(x,-\xi)) \theta_-(x) g(x) \, \ud x  \\
 & = S_+(g)(\xi) + S_-(g)(\xi)
 }
 where the last line follows from \eqref{eq:TR*}. On the one hand, 
 \EQ{\nn
 \sqrt{2\pi}\, S_+(g)(\xi)&= \int_{\bbR}  \partial_\xi ( T(-\xi) f_+(x,-\xi)) \theta_+(x) g(x) \, \ud x \\
 &= \int_{\bbR}  \partial_\xi ( e^{-ix\xi}\; T(-\xi) m_+(x,-\xi)) \theta_+(x) g(x) \, \ud x\\
 & =   \int_{\bbR}  e^{-ix\xi} \big[-ix  T(-\xi) m_+(x,-\xi)+ \partial_\xi {(T(-\xi) m_+(x,-\xi)})\big] \theta_+(x) g(x) \, \ud x 
 }
 By Lemmas~\ref{lem:fm} and~\ref{lem:T}, the symbol 
 \[
  b_+(x,\xi):= \big[-ix  T(-\xi) m_+(x,-\xi)+ \partial_\xi {(T(-\xi) m_+(x,-\xi)})\big] \theta_+(x)  \jap{x}^{-1}
 \]
 satisfies the hypotheses of the Calderon-Vaillancourt theorem, cf~\eqref{eq:CV hyp}, and we obtain the desired $L^2$ bound from the term~$S_+(g)(\xi)$. 
 On the other hand, 
 \EQ{\nn 
 \sqrt{2\pi}  \, S_-(g)(\xi) & =   \int_{\bbR} \partial_\xi (  f_-(x,\xi) + R_-(-\xi) f_-(x,-\xi)) \theta_-(x) g(x)  \, \ud x\\
 &=  \int_{\bbR} \partial_\xi \big[  e^{-ix\xi} m_-(x,\xi) + R_-(-\xi) e^{ix\xi} m_-(x,-\xi) \big] \theta_-(x) g(x)  \, \ud x  \\
 & =  \int_{\bbR}  e^{-ix\xi} \big[ -ix m_-(x,\xi) + \partial_\xi m_-(x,\xi)\big] \theta_-(x) g(x)  \, \ud x \\
 & \quad + \int_{\bbR}  e^{ix\xi} \big[ ix R_-(-\xi)   m_-(x,-\xi) + \partial_\xi  ( R_-(-\xi) m_-(x,-\xi)) \big] \theta_-(x) g(x)  \, \ud x \\
 & = \int_{\bbR}  e^{-ix\xi} b_{--} (x,\xi) \jap{x} g(x)  \, \ud x + \int_{\bbR}  e^{ix\xi} b_{-+}(x,\xi) \jap{x} g(x)  \, \ud x
 }
The symbols $b_{--} (x,\xi)$ and $b_{-+}(x,\xi)$ also satisfy the hypotheses of the Calderon-Vaillancourt theorem as before. 
In conclusion, we may again apply Calderon-Vaillancourt theorem which proves \eqref{eq:pg bd}, at least for positive $\xi$. However, the contributions by negative~$\xi$ is analogous. 
In summary, we have only obtained $t^{-1}$-decay at the expense of one power of~$x$. 
Integrating by parts one more time gives the desired $t^{-2}$ estimate.  The same proof which implied~\eqref{eq:pg bd} also yields
 \[
\|\partial_\xi^2 \widetilde{g}(\xi)\|_{L^2(|\xi|\ge\xi_0)} \lesssim \|\jx^2 g(x)\|_{L^2_x}.
\]
The assumptions on $V(x)$ are compatible with the requirements in this proof: integrating by parts twice in~$\xi$ leads to $\partial_\xi^j m_{\pm}(x,\xi)$ with $0\le j\le 2$. For the Calderon-Vaillancourt theorem one then needs three derivatives in $x$ and $\xi$, but separately. For the final statement involving the operator $\omega(H)$, note that the conditions on $\omega$ are such that the Calderon-Vaillancourt theorem still applies. 
\end{proof}

We can now state a complete list of local $L^2$ decay estimates on the linear evolution $e^{it\jap{\tilde D}} P_c$ that will be needed in the nonlinear analysis. 

\begin{corollary} \label{cor:main}
Let $H = -\partial_x^2 + V(x)$ with real-valued $V\in L^\infty(\R)\cap C^{3}(\R)$, and assume that $\jap{x}^6 V^{(\ell)}(x) \in L^1(\R)$ for all $0\le\ell\le 3$.
Let $\psi(\xi)$ be a smooth function with $\psi(\xi) = 0$ for $|\xi| \leq \xi_0$ for some $\xi_0 > 0$ and such that $|\partial_\xi^\ell \psi(\xi)| \leq C(\psi)$ for $0 \leq \ell \leq 5$. Then for $\sigma>\frac92$ and all $t\in\R$, 
\begin{align}
 \bigl\| \jx^{-\sigma} e^{it\jtD} P_c g \bigr\|_{L^2_x} &\lesssim  {\jt^{-\frac12}} \| \jx^{\sigma} g \|_{L^2_x}, \label{equ:local_decay_est} \\
 \bigl\| \jx^{-\sigma} \sqrt{H} \jtD^{-1} e^{it\jtD} P_c g \bigr\|_{L^2_x} &\lesssim {\jt^{-\frac32}} \| \jx^{\sigma} g \|_{L^2_x}, \label{equ:local_decay_sqrtH_est} \\
 \Bigl\| \jx^{-\sigma} \frac{\jtD-1}{\jtD} e^{it\jtD} P_c g \Bigr\|_{L^2_x} &\lesssim {\jt^{-\frac32}} \| \jx^{\sigma} g \|_{L^2_x}, \label{equ:local_decay_jtDminus1_est}  \\
 \bigl\| \jx^{-\sigma} \psi(\wtilD) e^{it\jtD} P_c g \bigr\|_{L^2_x} &\lesssim {\jt^{-\frac32}} \| \jx^{\sigma} g \|_{L^2_x}. \label{equ:local_decay_away_zero_freq_est}
\end{align}
as well as 
\begin{align}
 \Bigl\| \jx^{-\sigma} \Bigl( e^{it\jtD} P_c g - c_0 \frac{e^{i\frac{\pi}{4}} e^{it}}{t^\hf} (\varphi \otimes \varphi) g \Bigr) \Bigr\|_{L^2_x} &\lesssim  {\jt^{-\thf}} \| \jx^{\sigma} g \|_{L^2_x}, \quad t \geq 1,  \label{equ:local_decay_improved_subtr_off} \\
 \Bigl\| \jx^{-\sigma} \px \Bigl( e^{it\jtD} \chi_0(H) P_c g - c_0 \frac{e^{i\frac{\pi}{4}} e^{it}}{t^\hf} (\varphi \otimes \varphi) g \Bigr) \Bigr\|_{L^2_x} &\lesssim  {\jt^{-\thf}} \| \jx^{\sigma} g \|_{L^2_x}, \quad t \geq 1, \label{equ:local_decay_improved_subtr_off_with_derivative_in_cor}
\end{align}
where $c_0$ defined in~\eqref{equ:def_c0} is an absolute constant that only depends on the the scattering matrix $S(0)$ of the potential $V(x)$ at zero energy.
Finally, we have the variants 
\begin{align}
 \bigl\| \jx^{-\sigma} \jtD^{-1} e^{it\jtD} P_c g \bigr\|_{L^2_x} &\lesssim {\jt^{-\frac12}} \| \jx^{\sigma} g \|_{L^2_x}, \label{equ:local_decay_tDinv_est} \\
 \bigl\| \jx^{-\sigma} \psi(\wtilD) \jtD^{-1} e^{it\jtD} P_c g \bigr\|_{L^2_x} &\lesssim {\jt^{-\frac32}} \| \jx^{\sigma} g \|_{L^2_x}. \label{equ:local_decay_away_zero_freq_tDinv_est} \\
 \Bigl\| \jx^{-\sigma} \Bigl( \jtD^{-1} e^{it\jtD} P_c g - c_0 \frac{e^{i\frac{\pi}{4}} e^{it}}{t^\hf} (\varphi \otimes \varphi) g \Bigr) \Bigr\|_{L^2_x} &\lesssim {\jt^{-\thf}} \| \jx^{\sigma} g \|_{L^2_x}, \quad t \geq 1, \label{equ:local_decay_improved_tDinv_subtr_off} \\
 \Bigl\| \jx^{-\sigma} \px \Bigl( \jtD^{-1} e^{it\jtD} \chi_0(H) P_c g - c_0 \frac{e^{i\frac{\pi}{4}} e^{it}}{t^\hf} (\varphi \otimes \varphi) g \Bigr) \Bigr\|_{L^2_x} &\lesssim {\jt^{-\thf}} \| \jx^{\sigma} g \|_{L^2_x}, \quad t \geq 1. \label{equ:local_decay_improved_tDinv_subtr_off_with_derivative}
\end{align}
\end{corollary}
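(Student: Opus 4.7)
All ten estimates reduce, via the splitting $e^{it\jtD}P_c = e^{it\jtD}\chi_0(H)P_c + e^{it\jtD}(1-\chi_0(H))P_c$, to the combination of Lemma~\ref{lem:low proj} on low energies and Lemma~\ref{lem:high f} on high energies. In every case, the operators $\omega(\wtilD)$ appearing on the left -- namely $1$, $\sqrt{H}\jtD^{-1}$, $(\jtD-1)/\jtD$, $\psi(\wtilD)$, $\jtD^{-1}$, and $\psi(\wtilD)\jtD^{-1}$ -- have symbols bounded with bounded derivatives on $|\xi|\geq \xi_0$, so the $\omega$-extension of Lemma~\ref{lem:high f} yields $\jt^{-2}\|\jx^{2}g\|_{L^2_x}$ for the high-energy piece, which is more than sufficient for each claimed rate. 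The low-energy pointwise bounds from Lemma~\ref{lem:low proj} are transferred to weighted $L^2$ using the slack $\sigma>\tfrac92$: with $w=\jx^{4}$, one has $\|\jx^{-\sigma}f\|_{L^2_x}\lesssim \|w^{-1}f\|_{L^\infty_x}$ and $\|wg\|_{L^1_x}\lesssim \|\jx^{\sigma}g\|_{L^2_x}$ because $\|\jx^{4-\sigma}\|_{L^2_x}<\infty$, so each pointwise bound of Lemma~\ref{lem:low proj} carries over with the same decay rate.

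\textbf{Case-by-case low-energy analysis.} For \eqref{equ:local_decay_est} and \eqref{equ:local_decay_tDinv_est}, I would apply \eqref{eq:t32} and its $\omega$-analog \eqref{eq:t32 om} with $\omega(H)=\jtD^{-1}$ (noting $\omega(0)=1$); the leading $t^{-1/2}(\varphi\otimes\varphi)g$ term is bounded in weighted $L^2$ by
\begin{equation*}
\|\jx^{-\sigma}(\varphi\otimes\varphi)g\|_{L^2_x}\leq \|\jx^{-\sigma}\varphi\|_{L^2_x}\,|\langle\varphi,g\rangle|\lesssim \|\jx^{\sigma}g\|_{L^2_x},
\end{equation*}
since $\varphi\in L^\infty_x$ and $\sigma>\tfrac12$. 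The three $\jt^{-3/2}$ estimates \eqref{equ:local_decay_jtDminus1_est}, \eqref{equ:local_decay_away_zero_freq_est}, and \eqref{equ:local_decay_away_zero_freq_tDinv_est} are immediate from \eqref{eq:t32 om}, since the respective symbols $(\jxi-1)/\jxi$, $\psi(\xi)$, and $\psi(\xi)/\jxi$ all vanish at $\xi=0$, killing the leading resonance term and leaving the $t^{-3/2}$ remainder. The improved subtracted-resonance estimates \eqref{equ:local_decay_improved_subtr_off}--\eqref{equ:local_decay_improved_subtr_off_with_derivative_in_cor} and \eqref{equ:local_decay_improved_tDinv_subtr_off}--\eqref{equ:local_decay_improved_tDinv_subtr_off_with_derivative} then read off directly from \eqref{eq:t32 om} and \eqref{equ:local_decay_improved_subtr_off_with_derivative_om} with $\omega=1$ or $\omega=\jtD^{-1}$ (both of value $1$ at zero energy), combined with the $\jt^{-2}$ bound from Lemma~\ref{lem:high f} on the complementary high-energy piece.

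\textbf{Main obstacle.} The most delicate estimate is \eqref{equ:local_decay_sqrtH_est}, because the symbol $|\xi|/\jxi$ of $\sqrt{H}\jtD^{-1}$ fails to be $C^4$ at $\xi=0$, so Lemma~\ref{lem:low proj} cannot be applied with $\omega(\xi)=|\xi|/\jxi$ off the shelf. My plan here is to return to the distorted-Fourier representation of $\sqrt{H}\jtD^{-1}e^{it\jtD}\chi_0(H)P_c g$ as in the proof of Lemma~\ref{lem:low proj}, split the resulting $\xi$-integral along $\xi\geq 0$ and $\xi\leq 0$ so that on each half-line the amplitude $\pm\xi/\jxi$ is smooth and vanishes at the endpoint stationary point $\xi=0$, and then carry out a half-line stationary phase expansion by two integrations by parts using $\partial_\xi e^{it\jxi}=it(\xi/\jxi)e^{it\jxi}$. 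The vanishing of the amplitude at the endpoint absorbs what would otherwise be the $t^{-1}$ boundary contribution, and the scheme parallels the derivation of the remainder estimate~\eqref{equ:stationary_phase_low_energy} and \eqref{eq:B2} in Lemma~\ref{lem:low proj}, yielding the desired $\jt^{-3/2}$ decay for the low-energy piece of \eqref{equ:local_decay_sqrtH_est}.
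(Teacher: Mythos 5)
Most of your proposal runs along the paper's own lines: the low/high energy splitting, the use of the $\omega$-versions of Lemma~\ref{lem:low proj} and Lemma~\ref{lem:high f}, the transfer of the weighted $L^\infty$--$L^1$ bounds to weighted $L^2$ via the slack $\sigma>\tfrac92$, and your choices of $\omega$ for \eqref{equ:local_decay_est}, \eqref{equ:local_decay_jtDminus1_est}, \eqref{equ:local_decay_away_zero_freq_est}, the subtracted estimates and the $\jtD^{-1}$ and $\px$ variants all match the intended argument (modulo the trivial remark that for $|t|\le 1$ the non-improved bounds follow from $L^2$-boundedness).

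The gap is in your treatment of \eqref{equ:local_decay_sqrtH_est}, and it is a genuine one. Your diagnosis is correct and in fact sharper than the paper's one-line treatment: the spectral multiplier of $\sqrt{H}\,\jtD^{-1}$ in the full-line distorted Fourier representation is the even function $|\xi|\jap{\xi}^{-1}$, which is not $C^1$ at $\xi=0$, whereas the paper simply invokes \eqref{eq:t32 om} with the smooth odd symbol $\omega(\xi)=\xi\jap{\xi}^{-1}$. But your proposed repair fails. On the half-line $\{\xi\ge0\}$ the endpoint $\xi=0$ is a (quadratically degenerate) stationary point of the phase $\jxi$, and the first-order vanishing of the amplitude $\tfrac{\xi}{\jxi}a(\xi)$ there is exactly cancelled by the factor $\tfrac{\jxi}{\xi}$ that the integration by parts $e^{it\jxi}=\tfrac{1}{it}\tfrac{\jxi}{\xi}\partial_\xi e^{it\jxi}$ introduces: nothing is "absorbed". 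Concretely, substituting $u=\jxi$,
\begin{equation*}
 \int_0^\infty e^{it\jxi}\,\frac{\xi}{\jxi}\,a(\xi)\,\ud\xi \;=\; \int_1^\infty e^{itu}\,a(\xi(u))\,\ud u \;=\; -\frac{e^{it}}{it}\,a(0)\;+\;\calO\bigl(t^{-\thf}\bigr),
\end{equation*}
and the analogous boundary term from $\{\xi\le0\}$ carries the same sign, so the two half-line contributions add rather than cancel. In the application the boundary value is proportional to $T(0)\,\varphi(x)\varphi(y)$ (after pairing, to $\langle\varphi,g\rangle\varphi$), which is nonzero precisely in the non-generic case considered here. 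Hence your half-line stationary phase scheme produces for the even multiplier $|\xi|\jap{\xi}^{-1}$ a surviving rank-one contribution of size $t^{-1}e^{it}(\varphi\otimes\varphi)$, not the asserted $\jt^{-\thf}$ decay, so the "main obstacle" paragraph does not prove \eqref{equ:local_decay_sqrtH_est}.

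Put differently: a $\jt^{-\thf}$ bound by this method requires a symbol that is smooth across $\xi=0$ on the whole line and vanishes there (this is what the paper's choice $\omega(\xi)=\xi\jap{\xi}^{-1}$ in \eqref{eq:t32 om} accomplishes); with the genuine even extension $|\xi|\jap{\xi}^{-1}$ the threshold endpoint contribution survives at order $t^{-1}$ and would have to be subtracted off, exactly as the resonance term is subtracted in \eqref{equ:local_decay_improved_subtr_off}. So the discrepancy you noticed between $|\xi|$ and $\xi$ is not a removable technicality that your half-line expansion can smooth over; it is the crux of this estimate, and your argument as written would fail at precisely this point.
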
 
\begin{proof}
Lemmas~\ref{lem:high f} and \ref{lem:low proj} imply that \eqref{equ:local_decay_improved_subtr_off} holds with $\sigma>\frac92$. Lemma~\ref{lem:low proj} also implies that~\eqref{equ:local_decay_improved_subtr_off_with_derivative_in_cor} holds with $\sigma > \frac92$. Moreover, \eqref{equ:local_decay_away_zero_freq_est} with $\sigma=2$ is a direct consequence of Lemma~\ref{lem:high f}. 
For $0<t\le 1$, \eqref{equ:local_decay_est} follows from $L^2$ boundedness of the evolution, while for $t\ge1$ it follows from \eqref{equ:local_decay_improved_subtr_off}. For the latter, we use that $\| \jap{x}^{-\frac12-} (\varphi \otimes \varphi) f\|_2\lesssim \| \jap{x}^{\frac12+}f\|_2$. 

Applying the more general version of Lemma~\ref{lem:low proj} with $\omega(\xi)=\xi\jap{\xi}^{-1}$, respectively, $\omega(\xi)=\frac{\jap{\xi}-1}{\jap{\xi}}=\frac{\xi^2}{\jap{\xi}(1+\jap{\xi})}$, which both vanish at $\xi=0$, eliminates the projection $\varphi \otimes \varphi$ from~\eqref{equ:local_decay_improved_subtr_off}. By the same argument as before, invoking the more general version of Lemma~\ref{lem:high f}, we therefore obtain \eqref{equ:local_decay_sqrtH_est} and~\eqref{equ:local_decay_jtDminus1_est}. In the same way one derives the final estimates \eqref{equ:local_decay_tDinv_est}, \eqref{equ:local_decay_away_zero_freq_tDinv_est}, \eqref{equ:local_decay_improved_tDinv_subtr_off}, and~\eqref{equ:local_decay_improved_tDinv_subtr_off_with_derivative}.
\end{proof}

We expect the local decay estimates for the Klein-Gordon evolution $e^{it\jtD} P_c$ established in this paper to be of independent interest. 
In particular, the refined local decay estimates~\eqref{equ:local_decay_improved_subtr_off} and~\eqref{equ:local_decay_improved_subtr_off_with_derivative_in_cor} seem to not have appeared in the literature yet. 
Their proofs are inspired by the method of proof of Proposition~9 in the joint work~\cite{KS07} of the third author with Krieger, where pointwise decay estimates are established for a perturbed 3D wave evolution upon subtracting off a projection to a resonance function.
We refer to Komech-Kopylova~\cite{KomKop10}, Kopylova~\cite{Kop11}, and Egorova-Kopylova-Marchenko-Teschl~\cite{EgoKopMarTes16} for prior results on local decay estimates for one-dimensional Klein-Gordon equations with potential terms.

Next, we establish a pointwise bound on the evolution for all energies. 

\begin{lemma}
\label{lem:pw decay}
Let $H = -\partial_x^2 + V(x)$ with real-valued $V\in L^\infty(\R)\cap C^{3}(\R)$, and assume that $\jap{x}^6 V^{(\ell)}(x) \in L^1(\R)$ for all $0\le\ell\le 3$. Then 
 \begin{equation} \label{equ:dispersive_decay_propagator}
  \| e^{i t \jtD} P_c g \|_{L^\infty_x} \le \frac{C(\mu,V)}{t^{\hf}} \| \jtD^{\thf+\mu} g  \|_{L^1_x}
 \end{equation}
 for all $t>0$ and $\mu>0$. 
\end{lemma}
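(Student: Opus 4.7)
The plan is to prove the dispersive decay by a Littlewood--Paley decomposition in the distorted Fourier variable combined with van der Corput's lemma. Setting $h := \jtD^{3/2+\mu}P_c g$ and using the spectral decomposition
\begin{equation*}
 P_c = \chi_0(H)P_c + \sum_{j\ge 0}\chi(\wtilD/2^j)P_c,
\end{equation*}
the target estimate reduces (by $\|\int K(\cdot,y)h(y)\,dy\|_{L^\infty_x}\le \|K\|_{L^\infty_{x,y}}\|h\|_{L^1_y}$ and summing $\sum_j 2^{-j\mu}$) to the uniform kernel bound
\begin{equation*}
 \sup_{x,y\in\bbR}\,\bigl|K_\lambda^{3/2+\mu}(t,x,y)\bigr|\;\lesssim\; \lambda^{-\mu}\,t^{-1/2},\qquad \lambda = 2^j,\ j\ge 0,
\end{equation*}
where $K_\lambda^{3/2+\mu}(t,x,y) := \int_{\bbR} e^{it\jxi}\,e(x,\xi)\,\overline{e(y,\xi)}\,\chi(\xi/\lambda)\,\jxi^{-(3/2+\mu)}\,d\xi$, together with the analogous bound $|K_0^{3/2+\mu}|\lesssim t^{-1/2}$ for the low-energy kernel defined with $\chi_0(\xi^2)$ in place of $\chi(\xi/\lambda)$.

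To bound each $K_\lambda^{3/2+\mu}$ uniformly in $x,y$, I would unfold $e(x,\xi)\overline{e(y,\xi)}$ via the Jost basis~\eqref{eq:calFdef} and Lemma~\ref{lem:stone}, splitting $(x,y)$ into the four sign-quadrants and invoking the scattering relations~\eqref{eq:TR*} to express $f_+(x,\xi)$ for $x\le 0$ (resp.\ $f_-(y,\xi)$ for $y\ge 0$) as a linear combination of $T^{-1}\,e^{\mp ix\xi}m_-(x,\pm\xi)$ (resp.\ $e^{\mp iy\xi}m_+(y,\pm\xi)$). This reduces the kernel to a finite sum of oscillatory integrals of the form
\begin{equation*}
 \int_0^\infty e^{i(t\jxi + c\xi)}\,A(x,y,\xi)\,\chi(\xi/\lambda)\,\jxi^{-(3/2+\mu)}\,d\xi,\qquad c\in\{\pm x\pm y\},
\end{equation*}
in which the amplitude $A(x,y,\xi)$ is a product of $m_\pm$, $T$, and $R_\pm$ factors that, by Lemmas~\ref{lem:fm}, \ref{lem:m symb}, and~\ref{lem:T}, is uniformly bounded in the relevant quadrant of $(x,y)$, with $\xi$-derivative of order $\calO(\lambda^{-1})$ on the support of $\chi(\cdot/\lambda)$ and $\calO(1)$ on $|\xi|\lesssim 1$.

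The last step is the second-derivative van der Corput lemma applied to the total phase $\Phi(\xi) := t\jxi + c\xi$. Its second derivative $\Phi''(\xi) = t\jxi^{-3}$ satisfies $|\Phi''|\gtrsim t\lambda^{-3}$ on the support of $\chi(\cdot/\lambda)$ and $|\Phi''|\gtrsim t$ on $|\xi|\lesssim 1$, \emph{uniformly} in the parameter $c = \pm x\pm y$. Combined with the bounded-variation estimate $\|A(x,y,\cdot)\chi(\cdot/\lambda)\jxi^{-(3/2+\mu)}\|_{BV_\xi}\lesssim \lambda^{-(3/2+\mu)}$, van der Corput yields
\begin{equation*}
 \bigl|K_\lambda^{3/2+\mu}(t,x,y)\bigr|\;\lesssim\; (t\lambda^{-3})^{-1/2}\,\lambda^{-(3/2+\mu)} \;=\; \lambda^{-\mu}\,t^{-1/2},
\end{equation*}
and analogously $|K_0^{3/2+\mu}|\lesssim t^{-1/2}$. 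I expect the main obstacle to be the case analysis for the Jost/scattering substitutions across the four sign-quadrants of $(x,y)$, ensuring that each resulting piece genuinely reduces to the canonical form above with amplitudes uniformly of bounded variation in $\xi$ on the dyadic scale $\lambda$; once this is in place, the van der Corput bound is uniform in $c$ and therefore automatically handles the delicate transition region $|c|\approx t$ where the critical point of $\Phi$ approaches the edge of the frequency support $\xi\sim\lambda$.
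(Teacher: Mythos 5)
Your proposal is correct and follows essentially the same strategy as the paper's proof: a dyadic decomposition in the distorted frequency, a kernel representation via the Jost solutions with the sign-quadrant/scattering-relation unfolding (phases $t\jxi+(\pm x\pm y)\xi$, amplitudes built from $m_\pm$, $T$, $R_\pm$ controlled by Lemmas~\ref{lem:fm}, \ref{lem:m symb}, \ref{lem:T}), and an oscillatory-integral bound of size $t^{-\hf}\lambda^{\frac32}$ per dyadic block. The two genuine (though modest) differences are in the implementation. First, where you invoke the standard second-derivative van der Corput lemma with a bounded-variation amplitude --- which, as you note, is automatically uniform in $c=\pm x\pm y$ since $\Phi''=t\jxi^{-3}$ is independent of $c$ --- the paper instead rescales $\xi=\lambda\eta$ and runs an explicit two-case stationary-phase argument (non-stationary: one integration by parts; near-stationary point $\xi_*$: excise an $s^{-\hf}$-neighborhood and integrate by parts twice outside it). Your route is cleaner at exactly the point you flag as delicate, and your bookkeeping $(t\lambda^{-3})^{-\hf}\cdot\lambda^{-(\frac32+\mu)}=\lambda^{-\mu}t^{-\hf}$ checks out, with the BV bound justified by the symbol estimates ($\partial_\xi$ of the amplitude is in fact $\calO(\lambda^{-2})$ on $|\xi|\simeq\lambda$, better than the $\calO(\lambda^{-1})$ you need). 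Second, you absorb the multiplier $\jxi^{-(\frac32+\mu)}$ directly into the oscillatory kernel, whereas the paper first proves $\|e^{it\jtD}\chi(\tD/\lambda)P_c g\|_{L^\infty_x}\lesssim t^{-\hf}\lambda^{\frac32}\|g\|_{L^1_x}$ and then converts via the $L^1$-boundedness of the operators $\psi_j(H)P_c$ and $\chi_0(H)P_c$ supplied by the kernel bounds of Lemma~\ref{lem:K bds}; your version sidesteps that extra lemma at the cost of tracking the multiplier inside the van der Corput amplitude, and both yield the same summable factor $2^{-j\mu}$.
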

\begin{proof}
Throughout, $g\in\calS(\R)$. 
Let $\chi$ be a bump function supported on $\R\setminus\{0\}$ and fix any $\lambda\ge 1$. Using the distorted Fourier basis~\eqref{eq:e bas},  consider the evolution 
\EQ{\label{eq:Phitpm}
\big( e^{i t \jtD}  \chi(\tD/\lambda) P_c g\big)(x) &= \int_{\R} e^{it\jap{\xi}} e(x,\xi) \chi(\xi/\lambda) \tilde g(\xi)\, \ud\xi \\
& =  \frac{1}{\sqrt{2\pi}} \int_{0}^\infty  e^{it\jap{\xi}} T(\xi)f_+(x,\xi)  \chi(\xi/\lambda) \tilde g(\xi)\, \ud\xi \\
&\quad + \frac{1}{\sqrt{2\pi}} \int_{-\infty}^0 e^{it\jap{\xi}} T(-\xi) f_-(x,-\xi) \chi(\xi/\lambda) \tilde g(\xi)\, \ud\xi  \\
& =: (\Phi^+_\lambda(t) g)(x) + (\Phi^-_\lambda(t) g)(x)
}
Without loss of generality we  assume $x>0$. Then using $\theta_{\pm}$ from the proof of Lemma~\ref{lem:high f}, 
\EQ{\nn 
 (\Phi^+_\lambda(t) g)(x) &= \frac{1}{\sqrt{2\pi}} \int_{0}^\infty e^{it\jap{\xi}} T(\xi) e^{ix\xi} m_+(x,\xi)  \chi(\xi/\lambda) \tilde g(\xi)\, \ud\xi  \\
 &= \frac{1}{2\pi} \int_\R \int_{0}^\infty e^{it\jap{\xi}} |T(\xi)|^2 e^{i(x-y)\xi} m_+(x,\xi) m_+(y,-\xi) \theta_+(y) \chi(\xi/\lambda)  \, \ud\xi \, g(y)\, \ud y\\
 & \quad + \frac{1}{2\pi} \int_\R \int_{0}^\infty e^{it\jap{\xi}} T(\xi) e^{ix\xi} m_+(x,\xi)   \big(  e^{-iy\xi} m_-(y,\xi) \\
 &\qquad\quad + R_-(-\xi) e^{iy\xi} m_-(y,-\xi)\big) \theta_-(y) \chi(\xi/\lambda)  \, \ud\xi \, g(y)\, \ud y\\
 &\quad =:  \int_\R K_\lambda^{+-} (t,x,y)  g(y)\, \ud y  + \int_\R K_\lambda^{++} (t,x,y)  g(y)\, \ud y
}
with 
\EQ{\nn 
K_\lambda^{+-}(t,x,y) & =  \int_{0}^\infty  e^{it[\jap{\xi}+\xi(x-y)/t]}  \omega_\lambda(x,y,\xi) \, \ud\xi  \\
\omega_\lambda(x,y,\xi) &:= \frac{1}{2\pi}T( \xi)  m_+(x,\xi) \big[ T(-\xi)  m_+(y,-\xi) \theta_+(y)+m_-(y,\xi) \theta_-(y)  \big]\,  \chi(\xi/\lambda) \\
K_\lambda^{++}(t,x,y) & = \int_{0}^\infty  e^{it[\jap{\xi}+\xi(x+y)/t]}  \tilde \omega_\lambda(x,y,\xi)  \, \ud\xi \\
\tilde \omega_\lambda(x,y,\xi) &:= \frac{1}{2\pi} T( \xi) R_-(-\xi)  m_+(x,\xi)   m_-(y,-\xi) \theta_-(y)    \chi(\xi/\lambda)
}
There exists a constant $C_0>1$ so that $C_0^{-1}\lambda<|\xi|<C_0\lambda$ on the support of $\chi(\xi/\lambda)$. 
By Lemmas~\ref{lem:m symb} and~\ref{lem:T}, 
$
|\partial_\xi^\ell \omega_\lambda(x,y,\xi)| \le C \lambda^{-\ell}$ for $\ell=0,1,2$
uniformly in $x\ge0$, and $y,\xi\in\R$.  By the same lemmas the analogous bounds holds for $\tilde \omega_\lambda(x,y,\xi)$. 
We rescale to obtain 
\EQ{\nn 
K_\lambda^{+-}(t,x,y) & = {\lambda}\int_{0}^\infty   e^{i\lambda t [\lambda^{-1} \jap{\lambda \xi} + \xi(x-y)/t]}  \omega_\lambda(x,y,\lambda\xi) \, \ud\xi  
}
One has the bound $|K_\lambda^{+-}(t,x,y)|\le C\lambda$ uniformly in $x\ge0$, $y\in\R$, $t>0$, $\lambda\ge1$. 
If $t\ge\lambda$, then we claim the stronger bound 
\EQ{\label{eq:claim K}
|K_\lambda^{+-}(t,x,y)|\le C\lambda^{\frac32} t^{-\frac12}.
}
We write  
\EQ{\label{eq:Ks}
K_\lambda^{+-}(t,x,y) & = {\lambda}\int_{0}^\infty   e^{is\varphi^+_\lambda(\xi; t,x-y)}  \omega_\lambda(x,y,\lambda\xi) \, \ud\xi  
}
with $s:=\lambda^{-1}  t $ and 
 phase $\varphi^+_\lambda(\xi; t,x-y):=\lambda^2[\lambda^{-1} \jap{\lambda \xi} + \xi(x-y)/t]$.
Then 
\EQ{\label{eq:derphi} 
\partial_\xi \varphi^+_\lambda(\xi; t,x-y) &=  \lambda^2\big[\frac{\lambda\xi}{\jap{\lambda\xi}} + (x-y)/t\big] \\
\partial_\xi^2 \varphi^+_\lambda(\xi; t,x-y) &=  \frac{\lambda^3}{\jap{\lambda\xi}^3} \simeq 1\\
\partial_\xi^3 \varphi^+_\lambda(\xi; t,x-y) &=  -3\frac{\lambda^5\xi}{\jap{\lambda\xi}^5} \simeq 1
}
on the support $I_0:=[\xi_1,\xi_2]\subset (0,\infty)$ of $\chi$ (recall $\lambda\ge1$).  We distinguish the following two cases, for fixed $x,y,t,\lambda$ as above: 
\begin{itemize}
\item[(a)] $\min | \partial_\xi \varphi^+_\lambda(\xi; t,x-y)|\gtrsim s^{-\frac12}$ on $I_0$
\item[(b)] $\min | \partial_\xi \varphi^+_\lambda(\xi; t,x-y)|\ll s^{-\frac12}$ on $I_0$
\end{itemize}
In the first Case (a), we deduce from the second derivative in  \eqref{eq:derphi} that 
\[
| \partial_\xi \varphi^+_\lambda(\xi; t,x-y)|\gtrsim s^{-\frac12} + \min \bigl\{ |\xi-\xi_1|, |\xi-\xi_2| \bigr\} \quad \forall\; \xi \in I_0
\]
 Integrating by parts once in \eqref{eq:Ks} yields 
 \EQ{
 \nn
 |K_\lambda^{+-}(t,x,y)|&\le C\lambda s^{-1} \int_{I_0} \Big( \frac{|\partial_\xi^2 \varphi^+_\lambda(\xi; t,x-y)|}{(\partial_\xi \varphi^+_\lambda(\xi; t,x-y))^2} +  \frac{1}{|\partial_\xi \varphi^+_\lambda(\xi; t,x-y)|}\Big)\, d\xi \\
 &\le C\lambda s^{-\frac12} 
 }
as claimed by \eqref{eq:claim K}. On the other hand, in Case (b) suppose the minimum of $\min | \partial_\xi \varphi^+_\lambda(\xi; t,x-y)|$ is attained at $\xi_*\in I_0$. 
Then we infer from  the second derivative that 
\[
| \partial_\xi \varphi^+_\lambda(\xi; t,x-y)|\gtrsim |\xi-\xi_*| \text{\ \ on\ \ } \xi\in I_0, \; |\xi-\xi_*|\ge s^{-\frac12}
\]
Let $\psi$ be a smooth bump function which equals $1$ on $[-1,1]$. Then with $\calL:= \frac{1}{i\partial_\xi \varphi^+_\lambda}\partial_\xi$, 
We write  
\EQ{\nn
|K_\lambda^{+-}(t,x,y)| & \le  {\lambda}\Big| \int_{0}^\infty   e^{is\varphi^+_\lambda(\xi; t,x-y)}  \omega_\lambda(x,y,\lambda\xi) \psi((\xi-\xi_*)s^{\frac12})\, \ud\xi \Big|  \\
&\quad + {\lambda}s^{-2} \Big| \int_{0}^\infty   e^{is\varphi^+_\lambda(\xi; t,x-y)} (\calL^*)^2\big[ \omega_\lambda(x,y,\lambda\xi) \bigl( 1 - \psi((\xi-\xi_*)s^{\frac12}) \bigr) \big]\, \ud\xi \Big| \\
&\lesssim \lambda s^{-\frac12} + \lambda s^{-2}  \int_{I_0} \one_{[|\xi-\xi_*|\ge s^{-\frac12}]} \Big( |\xi-\xi_*|^{-4}+ |\xi-\xi_*|^{-2} s\Big)\, \ud\xi\\
&\lesssim  \lambda s^{-\frac12}
}
which establishes the claim \eqref{eq:claim K}. The analysis of $K_\lambda^{++}(t,x,y)$ is completely analogous, as is the evolution of the negative frequencies given by $\Phi^-_\lambda(t)$, see~\eqref{eq:Phitpm}. In summary, for all $\lambda\ge1$, $t>0$, 
\EQ{\label{eq:awayf0}
\|e^{i t \jtD}  \chi(\tD/\lambda) P_c g\|_{L^\infty_x} \le C(V,\chi) t^{-\frac12} \lambda^{\frac32} \|g\|_{L^1_x}
}
For small energies we proceed as in the proof of Lemma~\ref{lem:low proj}, and write with a cutoff $\chi_0$ around zero energies
\EQ{\label{eq:small xi} 
& \big( e^{it \langle \tilde D\rangle} \chi_0(H) P_c\, g\big)(x)   \\
& = \frac{1}{2\pi }  \int_{\R} \int_{-\infty}^\infty e^{it\langle\xi\rangle}  T(\xi) f_+(x,\xi)f_-(y,\xi) \, \chi_0(\xi^2)\, \ud\xi   \,  g(y) \one_{[x>y]} \,  \ud y \\
& \quad +\frac{1}{2\pi }  \int_{\R} \int_{-\infty}^\infty e^{it\langle\xi\rangle}  T(\xi) f_-(x,\xi)f_+(y,\xi)  \, \chi_0(\xi^2)\, d\xi  \, g(y) \one_{[x<y]}  \, \ud y \\
& =: \Psi_>(t) P_c g(x) + \Psi_<(t) P_c g(x)
}
We again restrict to $x>0$ without loss of generality, and write $\Psi_>(t) P_c g(x)$ in the form
\EQ{\nn
&\Psi_>(t) P_c g(x) = \int_\R K_>(t,x,y) g(y)\, \ud y \\
&K_>(t,x,y)  = \frac{1}{2\pi }   \one_{[x>0>y]} \int_{-\infty}^\infty e^{i[t\langle\xi\rangle+(x-y)\xi]}  T(\xi) m_+(x,\xi)m_-(y,\xi) \, \chi_0(\xi^2)\, \ud\xi  \\
&\quad + \frac{1}{2\pi }   \one_{[x>y>0]} \int_{-\infty}^\infty e^{i[t\langle\xi\rangle+x\xi]}  m_+(x,\xi)[e^{-iy\xi} m_+(y,-\xi)+e^{iy\xi} R_+(\xi)m_+(y,\xi)] \, \chi_0(\xi^2)\, \ud\xi  
}
By Lemmas~\ref{lem:m symb} and~\ref{lem:T} the non-oscillatory integrands possess two $\xi$ derivatives uniformly bounded on their supports. The preceding stationary phase analysis therefore applies to $K_>(t,x,y)$ by setting $\lambda=1$, in particular $s=t$ in this case, cf.~\eqref{eq:Ks}. As a result one obtains 
\EQ{\label{eq:dispt}
\| \Psi_>(t) P_c g\|_{L^\infty_x} \le Ct^{-\frac12} \|g\|_{L^1_x} \quad \forall t>0
}
and the same also holds for $\Psi_<(t) g$ by a similar argument. Performing a dyadic decomposition of energies, and adding up all contributions from~\eqref{eq:awayf0} and~\eqref{eq:dispt} yields
\EQ{\label{eq:LPdec}
\| e^{i t \jtD} P_c g \|_{L^\infty_x} &\le C(V) t^{-\hf} \Big( \|\chi_0(H)P_c g\|_{L^1_x} + \sum_{j=0}^\infty 2^{3j/2} \| \chi(\tD/2^{j})P_c g\|_{L^1_x} \Big) \\
&=  C(V) t^{-\hf} \Big( \|\chi_0(H) P_c g\|_{L^1_x} + \sum_{j=0}^\infty 2^{-j\mu} \| \psi_j(H) P_c \jtD^{\frac32+\mu}  g\|_{L^1_x} \Big)
}
with $\mu > 0$ arbitrary and 
\begin{equation*}
\psi_j(H) :=  2^{(\thf+\mu)j}  \jtD^{-\frac32-\mu} \chi(\tD/2^{j}), \quad j \geq 0.
\end{equation*}
Summing up~\eqref{eq:LPdec} will complete the proof provided we have the operator bounds
\EQ{
\label{eq:multop}
\|\chi_0(H) P_c g\|_{L^1_x} \les \|g\|_{L^1_x}, \quad \sup_{j\ge0} \| \psi_j(H)P_c g\|_{L^1_x} \les \|g\|_{L^1_x}
}
with constants only depending on $H$. The latter operator bounds are immediate consequences of the kernel bounds~\eqref{equ:wtilD_chi0_kernel_bound} and~\eqref{equ:jtD_chi_kernel_bound} with $N=2$ established in Lemma~\ref{lem:K bds}.
\end{proof}

Finally, we present a result on the asymptotics of the linear Klein-Gordon evolution $e^{it\jtD} P_c g$ for initial conditions supported away from zero energy.

\begin{lemma} \label{lem:asymptotics_KG}
Let $H = -\partial_x^2 + V(x)$ with real-valued $V\in L^\infty(\R)\cap C^{3}(\R)$, and assume that $\jap{x}^6 V^{(\ell)}(x) \in L^1(\R)$ for all $0\le\ell\le 3$. Let $\chi_0(\xi^2)$ be a smooth cutoff to $|\xi| \lesssim 1$, equal to $1$ near $\xi = 0$. Set $\chi_1(H) := 1 - \chi_0(H)$.
Then we have 
 \begin{equation} \label{eq:stern}
  \Bigl\| e^{it\jtD} \chi_1(H) P_c g - \frac{1}{t^\hf} e^{i\frac{\pi}{4}} e^{i\rho} \chi_1(\xi_0^2) \jap{\xi_0}^\thf \wtilg(\xi_0) \one_{(-1,1)}({\textstyle \frac{x}{t}}) \Bigr\|_{L^\infty_x} \leq \frac{C(V, \chi_1)}{t^{\frac23}} \|\jx g\|_{H^2_x}, \quad t \geq 1,
 \end{equation}
 where $\rho \equiv \rho(t,x) := \sqrt{t^2 - x^2}$ and $\frac{\xi_0}{\jap{\xi_0}} = - \frac{x}{t}$.
\end{lemma}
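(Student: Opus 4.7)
The proof proceeds by stationary phase analysis applied to the distorted Fourier representation of the evolution. Writing
\begin{equation*}
\bigl(e^{it\jtD} \chi_1(H) P_c g\bigr)(x) = \int_{\bR} e(x,\xi) \, e^{it\jxi} \chi_1(\xi^2) \tilde g(\xi)\, \ud\xi,
\end{equation*}
I would split this into positive and negative frequency pieces and, as in the proof of Lemma~\ref{lem:pw decay}, insert cutoffs $\theta_\pm(x)$ and use the scattering relations \eqref{eq:TR*} to re-express the Jost solutions on the ``wrong'' half-line. This reduces the problem to a finite sum of oscillatory integrals of the canonical form
\begin{equation*}
I_\pm(t,x) = \int_\bR e^{i(t\jxi \pm x\xi)} A_\pm(x,\xi)\, \chi_1(\xi^2)\, \tilde g(\xi)\, \ud\xi,
\end{equation*}
where each amplitude $A_\pm(x,\xi)$ is a product of a transmission/reflection coefficient and some $m_\pm(x,\pm\xi)$, and therefore, by Lemmas~\ref{lem:fm}, \ref{lem:m symb}, and~\ref{lem:T}, satisfies symbol-type bounds in $\xi$ with constants uniform in~$x$ (on the respective half-line).

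Next I would perform stationary phase on the phase $\phi_\pm(\xi) := \jxi \pm (x/t)\xi$. The equation $\phi'_\pm(\xi) = \xi/\jxi \pm x/t = 0$ has a solution $\xi_0$ precisely when $|x/t|<1$, and a short computation gives $\xi_0 = \mp x/\rho$, $\jxi_0 = t/\rho$, $t\phi_\pm(\xi_0) = \rho$, and $\phi''_\pm(\xi_0) = \jxi_0^{-3} = (\rho/t)^3 > 0$. Applying the standard stationary phase formula (e.g.~\cite[Theorem 7.7.5]{Hor}) to the integral in which the stationary point lies in the relevant half-axis of $\xi$ produces the leading contribution
\begin{equation*}
\sqrt{\tfrac{2\pi}{t\phi''_\pm(\xi_0)}}\, e^{i\pi/4} e^{i\rho} A_\pm(x,\xi_0)\chi_1(\xi_0^2)\tilde g(\xi_0) = \tfrac{1}{t^{1/2}} e^{i\pi/4} e^{i\rho} \jxi_0^{3/2} \chi_1(\xi_0^2) \tilde g(\xi_0)\, \one_{(-1,1)}(x/t),
\end{equation*}
after verifying that the coefficients $A_\pm(x,\xi_0)$ produced by the distorted Fourier basis and the scattering relations collapse to the identity at the stationary point (this uses $|T(\xi)|^2+|R_\pm(\xi)|^2=1$ and $T(\xi)R_\mp(-\xi)+T(-\xi)R_\pm(\xi)=0$, unitarity of~\eqref{eq:SU2}, exactly as in the computation leading to~\eqref{eq:Ekern} from \eqref{eq:Ekern*}). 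The integrals whose phase has no stationary point on the relevant half-axis can be integrated by parts and yield terms of order $t^{-3/2}$ or better.

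The key difficulty, and the source of the $t^{-2/3}$ exponent, is to control the error \emph{uniformly} in $x/t$, since the usual stationary phase remainder scales like $t^{-3/2}\jxi_0^{5/2}$, which degenerates like $\rho^{-5/2}$ as $|x/t| \to 1$. I would handle this by partitioning $x/t$ into three zones with a transition scale $t^{-1/3}$, which is the natural Airy scale for the phase $\phi_\pm$ near the degeneracy of $\phi''_\pm$:
\begin{itemize}
\item In the well-timelike region $||x/t|| \le 1 - t^{-1/3}$, standard stationary phase gives an error bounded by $C t^{-3/2} \rho^{-5/2}(\|\tilde g\|_{L^\infty_\xi}+\|\partial_\xi \tilde g\|_{L^2_\xi}+\|\partial_\xi^2 \tilde g\|_{L^2_\xi})$, which is at most $C t^{-2/3}$ times the data norm.
\item In the transition layer $||x/t|-1| \le t^{-1/3}$, the second and third derivatives of $\phi_\pm$ are comparable and van der Corput's lemma of order three yields the uniform bound $C t^{-2/3}\|\chi_1(\xi^2)\tilde g\|_{C^1_\xi}$.
\item Outside the light cone $|x/t|\ge 1+t^{-1/3}$, $|\phi'_\pm(\xi)|\gtrsim t^{-1/3}$ on the support, so two integrations by parts give the bound $Ct^{-4/3}$.
\end{itemize}
Finally, the norms of $\tilde g$ appearing in these estimates are dominated by $\|\jx g\|_{H^2_x}$ by the equivalence-of-norms result in Lemma~\ref{lem:equiv_norms} together with the identities $\partial_\xi \widetilde{f}(\xi) = \widetilde{(-ix f)}(\xi)$ valid modulo lower-order terms arising from the deviation of $e(x,\xi)$ from plane waves (these corrections are precisely the Calderón--Vaillancourt style symbol expressions already computed in \eqref{eq:again CV}). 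Combining the three zonal bounds with the leading-order term produces the estimate~\eqref{eq:stern}, the main obstacle being the uniform-in-$x$ stationary phase error in the Airy transition layer, where the naive $t^{-3/2}$ remainder fails and the three-derivative van der Corput estimate produces the sharp $t^{-2/3}$ exponent.
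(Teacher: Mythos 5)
There is a genuine gap, and it sits exactly where the stated data norm $\|\jx g\|_{H^2_x}$ interacts with your remainder estimates. Your zonal scheme charges the error to norms of $\tilde g$ that this norm does not control: the bulk-zone stationary-phase remainder is charged to $\|\partial_\xi^2\tilde g\|_{L^2_\xi}$ (morally $\|\jx^2 g\|_{L^2_x}$), the transition-layer van der Corput bound to $\|\chi_1\tilde g\|_{C^1_\xi}$ (an $L^\infty_\xi$ bound on $\partial_\xi\tilde g$, which needs more than one power of $x$ in $L^2$), and the exterior zone to two integrations by parts, again requiring two $\xi$-derivatives of $\tilde g$. With only one spatial weight available, the paper can afford exactly $\|\jap{\xi}^2\tilde g\|_{L^2_\xi}+\|\jap{\xi}^2\partial_\xi\tilde g\|_{L^2_\xi}$ (controlled via Calder\'on--Vaillancourt arguments), and its proof is engineered to never touch $\partial_\xi^2\tilde g$: after localizing to a window of size $\jap{\xi_0}$ around the stationary point it performs the Morse change of variables $\phi_+(\xi,u)-\phi_+(\xi_0,u)=\eta^2$, rewrites the integral as $\int e^{it\eta^2}\,\ol{G}(\eta)\,\ud\eta$, and bounds the error $\int |e^{-iy^2/4t}-1|\,|\widehat G(y)|\,\ud y$ by complex interpolation at fractional order $\beta=\tfrac56$ between the $L^2_\eta$ norms of $G$ and $\partial_\eta G$. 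That interpolation is the actual source of the exponent: $t^{-\frac12}\cdot t^{\frac14-\frac\beta2}=t^{-\frac23}$ reflects the limited $\xi$-regularity of $\tilde g$, not a light-cone phenomenon.

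Relatedly, your Airy-layer picture near $|x/t|=1$ does not hold up as stated. The cutoff $\chi_1$ keeps all high frequencies, so as $|x/t|\to1$ the stationary point $\xi_0$ escapes to infinity; there $\phi''\simeq\jap{\xi_0}^{-3}$ and $\phi'''\simeq\jap{\xi_0}^{-4}$ both degenerate, so a third-derivative van der Corput bound has no uniform constant and cannot deliver a uniform $t^{-2/3}$. The paper instead absorbs this degeneration into the $\jap{\xi}$-weights: the Jacobian $\ud\xi/\ud\eta\simeq\jap{\xi_0}^{3/2}$ and the weighted $L^2_\xi$ norms soak up the growth, so no decomposition of $x/t$ inside the cone is needed, and the exterior/non-stationary regions are handled with a single integration by parts (after a $\chi_0(\xi^2/t)$ splitting), giving $t^{-3/4}$. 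A minor further point: the leading coefficient does not come from a unitarity cancellation of $T,R_\pm$; after inserting the sign-localized cutoffs $\omega_u^\pm$ only one of the oscillatory integrals retains a stationary point, and its amplitude there is just $m_+(x,\xi_0)=1+\calO(\jap{x}^{-5})$, with $|x|\gtrsim t$ on the support of $\chi_1(\xi_0^2)$ so the correction is harmless.
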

Before we turn to the proof of Lemma~\ref{lem:asymptotics_KG}, we record the useful relations
\begin{equation*}
 \frac{\xi_0}{\jap{\xi_0}} = - \frac{x}{t} \quad \Leftrightarrow \quad \xi_0 = - \frac{x}{\rho}, \qquad \jap{\xi_0} = \frac{t}{\rho}.
\end{equation*}
\begin{proof}[Proof of Lemma~\ref{lem:asymptotics_KG}]
We have 
\EQ{\label{eq:high xi}
\big( e^{i t \jtD}  \chi_1(H) P_c g\big)(x) 
& =  \frac{1}{\sqrt{2\pi}} \int_{0}^\infty  e^{it\jap{\xi}} T(\xi)f_+(x,\xi)  \chi_1(\xi^2) \tilde g(\xi)\, \ud\xi \\
&\quad + \frac{1}{\sqrt{2\pi}} \int_{-\infty}^0 e^{it\jap{\xi}} T(-\xi) f_-(x,-\xi) \chi_1(\xi^2) \tilde g(\xi)\, \ud\xi  \\
& =: (\calE^+(t) \tilde g)(x) + (\calE^-(t) \tilde g)(x)
}
By reflection symmetry it suffices to assume that $x\ge0$. Then with $\phi_\pm(\xi,u) := \jap{\xi} \pm  u\xi$, $u:=x/t$, 
\EQ{\nn
(\calE^+(t) \tilde g)(x) & =   \int_{0}^\infty  e^{it\phi_+(\xi,u)} T(\xi)m_+(x,\xi)  \chi_1(\xi^2) \tilde g(\xi)\, \frac{\ud \xi}{\sqrt{2\pi}}    \\
(\calE^-(t) \tilde g)(x)  & =  \int_{-\infty}^0 \Big( e^{it\phi_+(\xi,u)} m_+(x,\xi)+ e^{it\phi_-(\xi,u)} R_+(-\xi) m_+(x,-\xi))  \chi_1(\xi^2) \tilde g(\xi)\,\frac{\ud \xi}{\sqrt{2\pi}}  
}
If $x\ge t\ge1$, then 
\EQ{\label{eq:phase der}
|\partial_\xi \phi_\pm (\xi,u )| &= |\xi\jap{\xi}^{-1} \pm u |\ge 1 - |\xi|\jap{\xi}^{-1}\ge  \jap{\xi}^{-2}/2 \\
\partial_\xi^2 \phi_\pm (\xi,u ) &= \jap{\xi}^{-3}
}
We break up the integration in $\calE^+(t)$ by means of the smooth partition of unity $1 = \chi_1(\xi^2/t) + \chi_0(\xi^2/t)$ and integrate by parts in the latter integral. 
Using \eqref{eq:phase der} and the bounds on $m_+, T$ from above yields 
\begin{align}
|(\calE^+(t) \tilde g)(x)| &\les \int_{0}^\infty \chi_1(\xi^2/t) |\tilde g(\xi)|\, \ud \xi + t^{-1} \int_{0}^\infty  \frac{|\partial_\xi^2\phi_+(\xi,u)|}{ \partial_\xi \phi_\pm (\xi,u )^2}  \chi_0(\xi^2/t) \chi_1(\xi^2) |\tilde g(\xi)| \, \ud \xi \nn \\
&\qquad + t^{-1} \int_{0}^\infty  | \partial_\xi \phi_\pm (\xi,u )|^{-1}  \Big| \partial_\xi \big[T(\xi)m_+(x,\xi) \chi_0(\xi^2/t) \chi_1(\xi^2) \tilde g(\xi)\big] \Big| \, \ud \xi \nn \\
& \les t^{-\frac34} \bigl( \| \jap{\xi}^2 \tilde g(\xi)\|_{L^2_\xi} + \| \jap{\xi}^2 \partial_\xi \tilde g(\xi)\|_{L^2_\xi} \bigr) \label{eq:E+}
\end{align}
By analogous calculations one derives the same bound on $(\calE^-(t) \tilde g)(x)$. Now suppose $|x|< t$. The phases $\phi_\pm(\xi_0,u)$ have stationary points given by $\xi_0^{\pm}=\mp\jap{\xi_0^\pm} u$ or $\xi_0^\pm=\frac{\mp u}{\sqrt{1-u^2}}$. In either case one has $\phi_{\pm}(\xi_0^\pm, u)=\sqrt{1-u^2}$ which implies $t \phi_{\pm}(\xi_0^\pm, u)=\sqrt{t^2-x^2}=\rho$. 

We now claim that the bounds~\eqref{eq:phase der} continue to hold (up to multiplicative constants) for all $\xi\in\R\setminus I(\xi_0^\pm)$ where 
\[
I(\xi_0^\pm):=[\xi_0^\pm - \jap{\xi_0^\pm}/100, \xi_0^\pm + \jap{\xi_0^\pm}/100]
\]  In fact,
\EQ{\label{eq:p phi}
|\partial_\xi \phi_{\pm}(\xi, u) | &= |\partial_\xi \phi_{\pm}(\xi, u)- \partial_\xi \phi_{\pm}(\xi_0^\pm, u)| = | \xi\jap{\xi}^{-1} - \xi_0\jap{\xi_0}^{-1}| \\
& = \frac{|\xi^2-\xi_0^2|}{(\jap{\xi}\jap{\xi_0})^2 | \xi\jap{\xi}^{-1} + \xi_0\jap{\xi_0}^{-1}|}
}
where we dropped the $\pm$ superscript for simplicity. Without loss of generality, assume $\xi_0\ge0$. Then if $\xi\ge    \xi_0 + \jap{\xi_0}/100$, \eqref{eq:p phi} implies that
\[
|\partial_\xi \phi_{\pm}(\xi, u) |  \gtrsim \jap{\xi_0}^{-2}\gtrsim \jap{\xi}^{-2}
\]
while for $\xi\le    \xi_0 - \jap{\xi_0}/100$ one has
$
|\partial_\xi \phi_{\pm}(\xi, u) |  \gtrsim \jap{\xi}^{-2}
$.
Setting $$\omega_u^{\pm}(\xi):= \chi\big(C_0(\xi-\xi_0^\pm)\jap{\xi_0^\pm}^{-1}\big)$$ for some large constant $C_0$, and repeating the arguments leading to~\eqref{eq:E+} therefore yields 
\EQ{\label{eq:zwisch}
&\Big|  \big( e^{i t \jtD}  \chi_1(H) P_c g\big)(x) - \int_0^\infty  e^{it\phi_+(\xi,u)}  T(\xi) m_+(x,\xi) \chi_1(\xi^2) \omega_u^{+}(\xi)\tilde g(\xi)\,\frac{\ud \xi}{\sqrt{2\pi}} \\
&\qquad -\int_{-\infty}^0  e^{it\phi_+(\xi,u)} m_+(x,\xi) \chi_1(\xi^2) \omega_u^{+}(\xi)\tilde g(\xi)\,\frac{\ud \xi}{\sqrt{2\pi}} \\
&\qquad - \int_{-\infty}^0 e^{it\phi_-(\xi,u)} R_+(-\xi) m_+(x,-\xi)  \chi_1(\xi^2) \omega_u^{-}(\xi) \tilde g(\xi)\,\frac{\ud \xi}{\sqrt{2\pi}}  \Big| \\
& \les t^{-\frac34} \bigl( \| \jap{\xi}^2 \tilde g(\xi)\|_{L^2_\xi} + \| \jap{\xi}^2 \partial_\xi \tilde g(\xi)\|_{L^2_\xi} \bigr) 
}
which holds uniformly in $t\ge1$ and $x\ge0$. Note that $\chi_1(\xi^2) \omega_u^{+}(\xi)=0$ on $\xi\ge0$, $ \chi_1(\xi^2) \omega_u^{-}(\xi)=0$ on $\xi\le0$ if $C_0$ is large. Thus, only the second integral in~\eqref{eq:zwisch} contributes, and we denote it by $(\Psi(t)\tilde g)(x)$. To analyze it, we write (again suppressing superscripts $\pm$)
\EQ{\nn
\phi_+(\xi,u) - \phi_+(\xi_0,u) &= \jap{\xi} + u\xi - \jap{\xi_0} - u\xi_0 \\
&= \frac{(\xi-\xi_0)^2}{\jap{\xi_0} (1+\xi\xi_0+\jap{\xi}\jap{\xi_0})}=:\eta^2
}
The change of variables $\xi\mapsto\eta$ is a diffeomorphism on the support of $\omega_u^{+}(\xi)$ given by 
\[
\eta = \frac{\xi-\xi_0}{\sqrt{\jap{\xi_0} (1+\xi\xi_0+\jap{\xi}\jap{\xi_0})} },\quad \frac{d\eta}{d\xi} \simeq \jap{\xi_0}^{-\frac32}
\]
Therefore, using the standard Fourier transform, 
\begin{align}
(\Psi(t)\tilde g)(x) &= \frac{e^{i\rho} }{\sqrt{2\pi}}\int_{-\infty}^\infty  e^{it\eta^2}\; \ol{G}(\eta;t,x) \, \ud \eta \nn\\
 &= \frac{e^{i\rho}e^{i\frac{\pi}{4}}}{2\pi\sqrt{2t}}  \int_{-\infty}^\infty e^{-\frac{iy^2}{4t}}\; \ol{\widehat{G}(y;t,x)} \, \ud  y \label{eq:G} \\
\ol{G}(\eta;t,x) &= m_+(x,\xi) \chi_1(\xi^2) \omega_u^{+}(\xi)\tilde g(\xi) \frac{d\xi}{d\eta} \nn \\
\frac{1}{2\pi} \int_{-\infty}^\infty  \ol{\widehat{G}(y;t,x)} \, \ud  y &= \ol{G}(0;t,x) = m_+(x,\xi_0) \chi_1(\xi_0^2) \omega_u^{+}(\xi_0)\tilde g(\xi_0) \frac{d\xi}{d\eta}(\xi_0) \nn \\
&= \sqrt{2}\, m_+(x,\xi_0) \chi_1(\xi_0^2) \jap{\xi_0}^{\frac32}\tilde g(\xi_0) \label{eq:rt 2}
\end{align}
By \eqref{eq:m+-1}, $|m_+(x,\xi_0)-1|\les \jap{x}^{-5} $ with an implicit constant depending only on $V$. If $\chi_1(\xi_0^2)=1$, then $|\xi_0|\gtrsim 1$, and $|x/t|=|\xi_0|\jap{\xi_0}^{-1}\gtrsim 1$. Therefore,
\EQ{\nn
\eqref{eq:rt 2} = \sqrt{2}\,  \chi_1(\xi_0^2) \jap{\xi_0}^{\frac32}\tilde g(\xi_0) + \calO(t^{-5} \chi_1(\xi_0^2)\jap{\xi_0}^{\frac32}\tilde g(\xi_0))
}
and inserting this into \eqref{eq:G}, 
\EQ{\label{eq:schr2}
(\Psi(t)\tilde g)(x) &= \frac{e^{i\rho}e^{i\frac{\pi}{4}}}{\sqrt{t}} \chi_1(\xi_0^2) \jap{\xi_0}^{\frac32}\tilde g(\xi_0) + \calO(t^{-\frac{11}{2}} \chi_1(\xi_0^2)\jap{\xi_0}^{\frac32}\tilde g(\xi_0)) \\
&\qquad + \calO\Big( t^{-\frac12} \int \big| e^{-\frac{iy^2}{4t}} -1 \big| \, |{\widehat{G}(y;t,x)}  |\, \ud y\Big)
}
The integral in the last line is estimated as follows:
\EQ{\nn
 \int \big| e^{-\frac{iy^2}{4t}} -1 \big| \, |{\widehat{G}(y;t,x)}  |\, \ud y & \les t^{-\frac12}\int_{[|y|^2\le t]} |y|\,  |{\widehat{G}(y;t,x)}  |\, \ud y  + \int_{[|y|^2\ge t]} \,  |{\widehat{G}(y;t,x)}  |\, \ud y  \\
 &\les t^{-\frac14} \| y \, {\widehat{G}(y;t,x)}  \|_{L^2_y} \les t^{-\frac14} \|\partial_\eta {G(\eta;t,x)}  \|_{L^2_\eta}
}
By definition, 
\EQ{\nn 
\int \big|\partial_\eta {{G}(\eta;t,x)}\big|^2\, d\eta & = \int \Big|\frac{d\xi}{d\eta}\Big|\, \big|\partial_\xi\big[m_+(x,\xi) \chi_1(\xi^2) \omega_u^{+}(\xi)\tilde g(\xi) \frac{d\xi}{d\eta}\big] \big|^2\ud \xi\\
&\les  \int  \big|\partial_\xi\big[m_+(x,\xi) \chi_1(\xi^2) \omega_u^{+}(\xi)\tilde g(\xi) \frac{d\xi}{d\eta}\big] \big|^2 \jap{\xi}^{\frac32}\ud \xi
}
Now we note that by complex interpolation of the preceding bound with 
\[
 \int \big|  {{G}(\eta;t,x)}\big|^2\, d\eta \les \int \big|m_+(x,\xi) \chi_1(\xi^2) \omega_u^{+}(\xi)\tilde g(\xi) \frac{d\xi}{d\eta} \big|^2 \jap{\xi}^{-\frac32} \ud \xi
\]
we obtain that for all $\frac12<\beta\le1$, 
\EQ{\nn 
 \int \big| e^{-\frac{iy^2}{4t}} -1 \big| \, |{\widehat{G}(y;t,x)}  |\, \ud y  &\les t^{\frac14-\frac\beta2} \big\| |y|^{\beta} \, {\widehat{G}(y;t,x)}  \big\|_{L^2_y} \\
 &\les t^{\frac14-\frac\beta2} \| (-\partial_\eta^2)^{\frac{\beta}{2}} {G(\eta;t,x)}  \|_{L^2_\eta}  \\
 &\les t^{\frac14-\frac\beta2} \Big(\int  \big|(-\partial_\xi^2)^{\frac{\beta}{2}}\big[m_+(x,\xi) \chi_1(\xi^2) \omega_u^{+}(\xi)\tilde g(\xi) \frac{d\xi}{d\eta} \big]\big|^2 \jap{\xi}^{-\frac32+3\beta}\ud \xi\Big)^{\frac12}
}
On the one hand,
\EQ{\nn
 \Big(\int  \big|m_+(x,\xi) \chi_1(\xi^2) \omega_u^{+}(\xi)\tilde g(\xi) \frac{d\xi}{d\eta} \big|^2\ud \xi\Big)^{\frac12} &\les \Big(\int |\tilde g(\xi)  |^2 \jap{\xi}^3 \ud \xi\Big)^{\frac12} }
and, on the other hand, 
\EQ{\nn
\Big(\int  \big|(-\partial_\xi^2)^{\frac{1}{2}}\big[m_+(x,\xi) \chi_1(\xi^2) \omega_u^{+}(\xi)\tilde g(\xi) \frac{d\xi}{d\eta} \big]\big|^2\ud \xi\Big)^{\frac12}
&\les \Big(\int \big[ |\tilde g(\xi) |^2 \jap{\xi}+ |\partial_\xi \tilde g(\xi)  |^2 \jap{\xi}^3\big]\, \ud \xi\Big)^{\frac12}
}
In conclusion, we can bound with $\beta=\frac56$, 
\EQ{\label{eq:schr3}
  \int \big| e^{-\frac{iy^2}{4t}} -1 \big| \, |{\widehat{G}(y;t,x)}  |\, \ud y   & \les t^{-\frac16} \Big(\int \big[ |\tilde g(\xi) |^2 + |\partial_\xi \tilde g(\xi)  |^2 \big]\, \jap{\xi}^4 \ud \xi\Big)^{\frac12}
}
Combining \eqref{eq:E+}, \eqref{eq:zwisch}, \eqref{eq:schr2}, and \eqref{eq:schr3} yields 
\EQ{\label{eq:schr4}
&\Big|  \big( e^{i t \jtD}  \chi_1(H) P_c g\big)(x) -  \frac{e^{i\rho}e^{i\frac{\pi}{4}}}{\sqrt{t}} \chi_1(\xi_0^2) \jap{\xi_0}^{\frac32}\tilde g(\xi_0)\one_{[|x/t|<1]} \Big| \\
& \les t^{-\frac{11}{2}} \chi_1(\xi_0^2)\jap{\xi_0}^{\frac32}|\tilde g(\xi_0)|+  t^{-\frac23} \bigl( \| \jap{\xi}^2 \tilde g(\xi)\|_{L^2_\xi} + \| \jap{\xi}^2 \partial_\xi \tilde g(\xi)\|_{L^2_\xi} \bigr) \\
&\les  t^{-\frac23} \bigl( \| \jap{\xi}^2 \tilde g(\xi)\|_{L^2_\xi} + \| \jap{\xi}^2 \partial_\xi \tilde g(\xi)\|_{L^2_\xi} \bigr) 
}
which holds uniformly in $t\ge1$ and $x\ge0$. The $t^{-\frac{11}{2}}$-term on the second line is estimated by Sobolev, and $\xi_0=\xi_0^+=-\frac{ x}{\sqrt{t^2-x^2}}$. 
We may of course assume that $\tilde g(\xi)=0$ for $|\xi|\les 1$. It remains to prove that for all Schwartz functions $g$, 
\[
\| \jap{\xi}^2 \chi_1(\xi^2)  \tilde g(\xi)\|_{L^2_\xi} + \| \chi_1(\xi^2) \jap{\xi}^2 \partial_\xi \tilde g(\xi)\|_{L^2_\xi} \les \| \jap{\cdot} g\|_{H^2_x}
\]
This follows again by means of Calderon-Vaillancourt, see~\eqref{eq:again CV}. The estimate~\eqref{eq:pg bd} controls $\|\chi_1(\xi^2)\partial_\xi \tilde g(\xi)\|_{L^2_\xi}$ by $\|\jap{x} g(x)\|_{L^2_x}$. To 
incorporate the $\xi^2$ factor, we compute
 \EQ{\nn
 \xi^2 \widetilde{g}(\xi) 
 & = \int_{\bbR} \xi^2 \overline{e(x,\xi)}\, \theta_+(x) g(x) \, \ud x + \int_{\bbR}  \xi^2  \overline{e(x,\xi)}\, \theta_-(x) g(x) \, \ud x \\
 & = -\frac{1}{\sqrt{2\pi}}  \int_{\bbR}  \partial_x^2  (e^{-ix\xi})  T(-\xi) m_+(x,-\xi) \theta_+(x) g(x) \, \ud x  \\
 &\quad - \frac{1}{\sqrt{2\pi}}  \int_{\bbR}  (\partial_x^2  (e^{-ix\xi})  m_-(x,\xi) + R_-(-\xi) \partial_x^2  (e^{ix\xi}) m_-(x,-\xi)) \theta_-(x) g(x) \, \ud x  
 }
Integrating by parts in $x$, and multiplying by $\chi_1(\xi^2)$, we can then apply Calderon-Vaillancourt as in~\eqref{eq:again CV}.
\end{proof}

\section{Local Decay Bounds} \label{sec:local_decay_bounds}

The main goal of this section is to establish global existence and local decay bounds for the solution $v(t)$ to~\eqref{equ:thm1_nlkg}.
The key ingredient for the proof are the local $L^2_x$ decay estimates for the Klein-Gordon propagator $e^{it\jtD} P_c \equiv  e^{i t \sqrt{1+H}} P_c$ established in Corollary~\ref{cor:main}.

\begin{proposition}[Global existence and local decay bounds] \label{prop:local_decay_bounds}
 Assume that $V(x)$ and $\alpha(x)$ are as in the statement of Theorem~\ref{thm:thm1}, and let $\sigma = 5$.
 There exists a small absolute constant $\varepsilon_0 > 0$ so that for any initial datum $v_0$ with
 \begin{equation*}
  \varepsilon := \|\jx^\sigma v_0\|_{H^2_x} \leq \varepsilon_0,
 \end{equation*}
 there exists a global-in-time solution $v \in C(\bbR; H^2_x)$ to~\eqref{equ:thm1_nlkg} satisfying the uniform bounds
 \begin{equation} \label{equ:local_decay_bounds_for_v}
  \begin{aligned}
   \sup_{t\in\bbR} \, \Bigl\{ \jt^{-(0+)} \| v(t) \|_{H^2_x} &+ \jt^\hf \|\jx^{-\sigma} v(t)\|_{L^2_x} + \jt \|\jx^{-\sigma} (1-\chi_0(H)) v(t)\|_{L^2_x} \\
   &\quad \quad + \jt \|\jx^{-\sigma} \sqrt{H} v(t)\|_{L^2_x} + \jt \|\jx^{-\sigma} \pt (e^{-it} v(t))\|_{L^2_x} \Bigr\} \lesssim \varepsilon.
  \end{aligned}
 \end{equation}
\end{proposition}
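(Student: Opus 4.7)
The plan is a continuity argument bootstrapping all five norms in~\eqref{equ:local_decay_bounds_for_v} simultaneously. Local existence in $C([0,T_\ast); H^2_x)$ is standard since the nonlinearity $v\mapsto \jtD^{-1} P_c(\alpha(v+\bar v)^2)$ is locally Lipschitz on $H^2_x$, so it suffices to upgrade the bootstrap constants. Let $N_0(T),\ldots,N_4(T)$ denote the suprema over $t\in[0,T]$ of the five expressions in~\eqref{equ:local_decay_bounds_for_v}, and assume $N_j(T)\le 2C_0\varepsilon$ throughout $[0,T]$ for an absolute constant $C_0$ to be fixed. Combining the weighted Sobolev estimate of Lemma~\ref{lem:weighted_sobolev} with the product estimates of Corollary~\ref{cor:product_est} and the spatial localization of $\alpha$, these hypotheses yield the uniform source bounds
\Aligns{
\|\jx^{\sigma}\alpha(v+\bar v)^2(s)\|_{L^2_x}+\|\jtD P_c(\alpha(v+\bar v)^2)(s)\|_{L^2_x}\lesssim C_0^2\,\varepsilon^2\,\jap{s}^{-1}.
}

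For the $H^2_x$-growth bound I would differentiate the Duhamel identity $\jtD^2 v(t)=e^{it\jtD}\jtD^2 P_c v_0+\tfrac{1}{2i}\int_0^t e^{i(t-s)\jtD}\jtD P_c(\alpha(v+\bar v)^2)(s)\,ds$ and invoke $L^2_x$-isometry of $e^{it\jtD}$ together with the source bound above to obtain $\|v(t)\|_{H^2_x}\lesssim\varepsilon+C_0^2\varepsilon^2\log\jt$, which closes the $\jt^{0+}$ norm. For $N_2$ and $N_3$ I would Duhamel $v(t)=e^{it\jtD}P_c v_0+\tfrac{1}{2i}\int_0^t e^{i(t-s)\jtD}\jtD^{-1}P_c(\alpha(v+\bar v)^2)(s)\,ds$ and apply, after multiplication by $(1-\chi_0(H))$, respectively $\sqrt{H}$, the refined local decay estimates~\eqref{equ:local_decay_sqrtH_est} and~\eqref{equ:local_decay_away_zero_freq_tDinv_est} of Corollary~\ref{cor:main}; their $(t-s)^{-3/2}$-rate convolves with the source $\varepsilon^2\jap{s}^{-1}$ into $\varepsilon^2\jt^{-1}$, while the linear term decays at $\jt^{-3/2}$ after invoking Lemma~\ref{lem:equiv_norms} to convert $\|\jx^{\sigma}\jtD v_0\|_{L^2_x}$ into $\|\jx^{\sigma}v_0\|_{H^1_x}\le\varepsilon$, and the short-time slice $[t-1,t]$ is absorbed using $L^2_x$-boundedness of $\jtD^{-1}P_c$. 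For $N_4$ the identity $\partial_t(e^{-it}v)=ie^{-it}(\jtD-1)v+\tfrac{1}{2i}e^{-it}\jtD^{-1}P_c(\alpha(v+\bar v)^2)$ reduces the matter to controlling $(\jtD-1)v$ via~\eqref{equ:local_decay_jtDminus1_est} (same convolution, producing $\varepsilon\jt^{-1}$) together with the source itself, which is bounded directly by $\varepsilon^2\jt^{-1}$.

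The delicate step, and the main obstacle, is closing $N_1\lesssim\varepsilon\jt^{-1/2}$: a naive Duhamel via~\eqref{equ:local_decay_tDinv_est} produces a $\varepsilon^2\jt^{-1/2}\log\jt$ loss that does not close the bootstrap. To eliminate the logarithm I would use the refined expansion~\eqref{equ:local_decay_improved_tDinv_subtr_off}, which splits $\jtD^{-1}e^{i(t-s)\jtD}P_c$ into the resonance projection $c_0(t-s)^{-1/2}e^{i(t-s)+i\pi/4}(\varphi\otimes\varphi)$ plus a remainder of local decay rate $(t-s)^{-3/2}$. The remainder contributes $\varepsilon^2\jt^{-1}$ by the same convolution, so the analysis reduces to estimating the scalar oscillatory integral
\Aligns{
I(t):=\int_0^{t-1}\frac{e^{i(t-s)}}{(t-s)^{1/2}}\la\varphi,\alpha(v(s)+\bar v(s))^2\ra\,ds.
}
Setting $g(s):=e^{-is}v(s)$, the expansion $(v+\bar v)^2=e^{2is}g^2+2|g|^2+e^{-2is}\bar g^2$ yields
\Aligns{
I(t)=e^{it}\!\int_0^{t-1}\!\frac{e^{is}A(s)+2e^{-is}B(s)+e^{-3is}\bar A(s)}{(t-s)^{1/2}}\,ds,\quad A:=\la\varphi,\alpha g^2\ra,\ \ B:=\la\varphi,\alpha|g|^2\ra,
}
with every integrand genuinely oscillating in $s$. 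Integrating by parts once in $s$ against the $s$-oscillation, the $s=0$ endpoint contributes $\lesssim\varepsilon^2\jt^{-1/2}$, the $s=t-1$ endpoint contributes $\lesssim\varepsilon^2\jt^{-1}$, and the bulk uses $|A'(s)|+|B'(s)|\lesssim\varepsilon^2\jap{s}^{-3/2}$, which follows from $A'(s)=2\la\varphi,\alpha g\,\partial_s g\ra$ combined with the bootstrap bounds $\|\jx^{-\sigma}g\|_{L^2_x}\lesssim\varepsilon\jap{s}^{-1/2}$ and $\|\jx^{-\sigma}\partial_s g\|_{L^2_x}\lesssim\varepsilon\jap{s}^{-1}$, and the spatial localization of $\alpha\varphi$. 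A standard convolution argument then gives $|I(t)|\lesssim\varepsilon^2\jt^{-1/2}$, whence $N_1(t)\le\varepsilon\jt^{-1/2}+C\varepsilon^2\jt^{-1/2}$, closing the bootstrap for $\varepsilon_0$ sufficiently small. A continuity argument in $T$ then extends the solution to all $t\ge 0$ with the stated bounds, and the case $t\le 0$ follows by the time-reversal symmetry of~\eqref{equ:thm1_nlkg}.
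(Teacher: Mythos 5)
Your proposal is correct, and for the one genuinely delicate step it takes a different (and arguably leaner) route than the paper. The four "easy" bounds ($H^2_x$ growth, the $(1-\chi_0(H))$ and $\sqrt{H}$ local decay, and $\partial_t(e^{-it}v)$) are handled exactly as in the paper's proof, via Duhamel, the product estimates of Corollary~\ref{cor:product_est}, and the $\jap{t-s}^{-3/2}$ estimates \eqref{equ:local_decay_sqrtH_est}, \eqref{equ:local_decay_jtDminus1_est}, \eqref{equ:local_decay_away_zero_freq_est}, \eqref{equ:local_decay_away_zero_freq_tDinv_est}. For the key bound $\jt^{1/2}\|\jx^{-\sigma}v(t)\|_{L^2_x}\lesssim\varepsilon$, the paper proceeds by introducing $w(t)=a(t)\varphi$, proving improved decay of $v-w$ via \eqref{equ:local_decay_improved_subtr_off}/\eqref{equ:local_decay_improved_tDinv_subtr_off}, feeding $w$ back into the quadratic source, and then treating the term $\alpha\varphi^2(a+\bar a)^2$ by a frequency cutoff $\psi$ near $\xi=\pm\sqrt 3$ together with integration by parts in $s$ that brings in $(2-\jtD)^{-1}$ and a Lemma~\ref{lem:bound_sqrt3_vanishes}-type bound. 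You instead split the \emph{propagator} (rank-one resonance term plus $\jap{t-s}^{-3/2}$ remainder) rather than the \emph{source}: the remainder convolves against the $\varepsilon^2\jap{s}^{-1}$ source bound to give $\jt^{-1}$ without any resonance analysis, and the rank-one piece reduces to the scalar integral $I(t)$, whose three phase components $e^{is}$, $e^{-is}$, $e^{-3is}$ are all non-stationary, so a single integration by parts (using the $N_4$ bootstrap bound $\|\jx^{-\sigma}\partial_s(e^{-is}v)\|_{L^2_x}\lesssim\varepsilon\jap{s}^{-1}$ to control $A'$, $B'$) removes the logarithm. This works precisely because the resonant interaction at $\xi=\pm\sqrt 3$ only manifests in unweighted/pointwise norms along the rays $|x|\approx\frac{\sqrt 3}{2}t$, not in the spatially weighted $L^2$ norm, so the uniform $\jap{t-s}^{-3/2}$ remainder bound already absorbs it; your scalar IBP is essentially the computation the paper performs later in Corollary~\ref{cor:asymptotics_a}, moved inside the bootstrap. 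What your shortcut does \emph{not} produce are the auxiliary objects the paper reuses afterwards (the function $w$, the coefficient $a(t)$, and the improved decay of $\chi_0(H)v-w$ in Corollary~\ref{cor:local_decay_improved_low_energy_v_minus_w}), which are needed for the asymptotics in Theorem~\ref{thm:thm1}; but for the proposition as stated your argument closes, provided you note the minor bookkeeping points that the refined expansion \eqref{equ:local_decay_improved_tDinv_subtr_off} is only applied on $[0,t-1]$ (with the slice $[t-1,t]$ estimated crudely, as you do for $N_2$, $N_3$) and that $\|\jx^{-\sigma}\varphi\|_{L^2_x}<\infty$ since $\varphi\in L^\infty$ and $\sigma=5>\frac92$.
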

\begin{proof}
By time reversal symmetry, it suffices to argue forward in time. Assuming that the absolute constant $0 < \varepsilon_0 \ll 1$ is sufficiently small, by a standard contraction mapping argument, we obtain a local solution $v \in C([0,T_0]; H^2_x)$ on a time interval $[0,T_0]$ for some $T_0 \geq 1$.
In order to then conclude that $v(t)$ exists globally in time, it is enough to show that the $H^2_x$ norm of $v(t)$ does not blow up in finite time.
We now establish global existence and the uniform bounds~\eqref{equ:local_decay_bounds_for_v} via a bootstrap argument. 
For any $0 < T \leq T_0$ we consider the bootstrap quantity
\begin{equation*}
 \begin{aligned}
  M(T) := \sup_{0 \leq t \leq T} \, \Bigl\{ \jt^{-(0+)} \| v(t) \|_{H^2_x} &+ \jt^\hf \|\jx^{-\sigma} v(t)\|_{L^2_x} + \jt \|\jx^{-\sigma} (1-\chi_0(H)) v(t)\|_{L^2_x} \\ 
   &\quad \quad + \jt \|\jx^{-\sigma} \sqrt{H} v(t)\|_{L^2_x} + \jt \|\jx^{-\sigma} \pt (e^{-it} v(t))\|_{L^2_x} \Bigr\}. 
 \end{aligned}
\end{equation*}
Since the absolute constant $0 < \varepsilon_0 \ll 1$ can be chosen sufficiently small, in what follows we may freely assume that $T \geq 1$, and that $M(T) \leq 1$ to simplify the bookkeeping for some of the nonlinear estimates. 
We also recall that under the assumptions of Theorem~\ref{thm:thm1}, we have that $v(t) = P_c v(t)$ for all $t \in [0,T_0]$.
Moreover, we stress that we will frequently use that by the weighted Sobolev estimate from Lemma~\ref{lem:weighted_sobolev},
\begin{equation} \label{equ:Linfty_v_local_decay_bound}
 \|\jx^{-\sigma} v(t)\|_{L^\infty_x} \lesssim \|\jx^{-\sigma} v(t)\|_{L^2_x} + \|\jx^{-\sigma} \sqrt{H} v(t)\|_{L^2_x} \leq \frac{M(T)}{\jt^\hf}, \quad 0 \leq t \leq T.
\end{equation}
In order to derive bounds on all components of the bootstrap quantity $M(T)$, we work with Duhamel's formula for the solution $v(t)$ given by
\begin{equation*}
 v(t) = e^{it\jtD} P_c v_0 + \frac{1}{2i} \int_0^t e^{i(t-s)\jtD} \jtD^{-1} P_c \bigl( \alpha(\cdot) u(s)^2 \bigr) \, \ud s.
\end{equation*}

\medskip 

\noindent \underline{{\it Growth bound for the $H^2_x$ norm of $v(t)$}:}
We begin with a growth bound for the $H^2_x$ norm of the solution $v(t)$. Using the equivalence of norms from Lemma~\ref{lem:equiv_norms} and the product estimate~\eqref{equ:prod_est2}, we obtain from Duhamel's formula for $v(t)$ for any $0 \leq t \leq T$ that
\begin{align*}
 \|v(t)\|_{H^2_x} &\lesssim \|\jtD^2 v(t)\|_{L^2_x} \lesssim \|\jtD^2 P_c v_0\|_{L^2_x} + \int_0^t \bigl\| \jtD P_c \bigl( \alpha(\cdot) u(s)^2 \bigr) \bigr\|_{L^2_x} \, \ud s \\
 &\lesssim \|v_0\|_{H^2_x} + \int_0^t \|\jx^{2\sigma} \alpha\|_{W^{1,\infty}_x} \bigl( \| \jx^{-\sigma} v(s) \|_{L^2_x} + \| \jx^{-\sigma} \sqrt{H} v(s) \|_{L^2_x} \bigr)^2 \, \ud s \\
 &\lesssim \|v_0\|_{H^2_x} + \int_0^t \frac{M(T)^2}{\js} \, \ud s \\
 &\lesssim \|v_0\|_{H^2_x} + \log(1+\jt) M(T)^2.
\end{align*}

\medskip 

\noindent \underline{{\it Local decay for $\pt (e^{-it} v(t))$}:}
Now we derive an improved local decay bound for the time derivative of the phase-filtered component $\partial_t \bigl( e^{-it} v(t) \bigr)$. To this end we compute that 
\begin{equation} 
 \begin{aligned}
  \partial_t \bigl( e^{-it} v(t) \bigr) = e^{-it} \biggl( (\jtD-1) e^{it\jtD} P_c v_0 &+ \frac{1}{2 i} \jtD^{-1} P_c \bigl( \alpha(\cdot) u(t)^2 \bigr) \\ 
  &+ \frac{1}{2} \int_0^t \frac{\jtD-1}{\jtD} e^{i(t-s) \jtD} P_c \bigl( \alpha(\cdot) u(s)^2 \bigr) \, \ud s \biggr).
 \end{aligned}
\end{equation}
By the local decay estimate~\eqref{equ:local_decay_jtDminus1_est} for the Klein-Gordon propagator, it then follows for $0 \leq t \leq T$ 
\begin{align*}
 &\bigl\| \jap{x}^{-\sigma} \partial_t \bigl( e^{-it} v(t) \bigr) \bigr\|_{L^2_x} \\
 &\quad \lesssim \frac{\|\jx^\sigma \jtD P_c v_0 \|_{L^2_x}}{\jt^\thf} + \|\jap{x}^{2 \sigma} \alpha\|_{L^2_x} \|\jap{x}^{-\sigma} v(t)\|_{L^\infty_x}^2 \\
 &\quad \quad \quad + \int_0^t \, \biggl\| \jap{x}^{-\sigma} \frac{\jtD-1}{\jtD} e^{i(t-s)\jtD} P_c \jap{x}^{-\sigma} \biggr\|_{L^2_x \to L^2_x} \bigl\| \jap{x}^{3 \sigma} \alpha \bigr\|_{L^2_x} \|\jap{x}^{-\sigma} v(s)\|_{L^\infty_x}^2 \, \ud s \\
 &\quad \lesssim \frac{\|\jx^\sigma v_0 \|_{H^1_x}}{\jt^\thf} + \frac{M(T)^2}{\jt} + \int_0^t \frac{1}{\jap{t-s}^{\thf}} \frac{M(T)^2}{\js} \, \ud s \\
 &\quad \lesssim \frac{1}{\jap{t}} \bigl( \|\jx^\sigma v_0 \|_{H^1_x} + M(T)^2 \bigr).
\end{align*}

\medskip 

\noindent \underline{{\it Local decay for $(1-\chi_0(H)) v(t)$}:} 
Next, we conclude an improved local decay bound for the high-energy part $(1-\chi_0(H)) v(t)$ of the solution $v(t)$. Observe that $(1-\chi_0(H)) v(t)$ is given in Duhamel form by
\begin{equation*}
 (1-\chi_0(H)) v(t) = (1-\chi_0(H)) e^{it\jtD} P_c v_0 + \frac{1}{2i} \int_0^t (1-\chi_0(H)) e^{i(t-s)\jtD} \jtD^{-1} P_c \bigl( \alpha(\cdot) u(s)^2 \bigr) \, \ud s.
\end{equation*}
Using the improved local decay estimates~\eqref{equ:local_decay_away_zero_freq_est} and~\eqref{equ:local_decay_away_zero_freq_tDinv_est} for the Klein-Gordon propagator away from zero energy, it is straightforward to obtain for $0 \leq t \leq T$ the desired bound
\begin{align*}
 &\| \jap{x}^{-\sigma} (1-\chi_0(H)) v(t) \|_{L^2_x} \\
 &\quad \lesssim \frac{\|\jx^\sigma (1-\chi_0(H)) P_c v_0 \|_{L^2_x}}{\jt^\thf} \\
 &\quad \quad \quad + \int_0^t \Bigl\| \jap{x}^{-\sigma} (1-\chi_0(H)) \jtD^{-1} e^{i(t-s)\jtD} P_c \jap{x}^{-\sigma} \Bigr\|_{L^2_x \to L^2_x} \bigr\| \jap{x}^\sigma \bigl( \alpha(\cdot) u(s)^2 \bigr) \bigr\|_{L^2_x} \, \ud s \\
 &\quad \lesssim \frac{\|\jx^\sigma v_0 \|_{L^2_x}}{\jap{t}^{\frac{3}{2}}} + \int_0^t \frac{1}{\jap{t-s}^{\frac{3}{2}}} \| \jap{x}^{3\sigma} \alpha \|_{L^2_x} \|\jap{x}^{-\sigma} v(s)\|_{L^\infty_x}^2 \, \ud s \\
 &\quad \lesssim \frac{\|\jx^\sigma v_0 \|_{L^2_x}}{\jap{t}^{\frac{3}{2}}} + \int_0^t \frac{1}{\jap{t-s}^{\frac{3}{2}}} \frac{M(T)^2}{\jap{s}} \, \ud s \\
 &\quad \lesssim \frac{1}{\jap{t}} \bigl( \|\jx^\sigma v_0 \|_{L^2_x} + M(T)^2 \bigr).
\end{align*}

\medskip 

\noindent \underline{{\it Local decay for $\sqrt{H} v(t)$:}} 
In a similar manner we obtain an improved local decay bound for $\sqrt{H} v(t)$. Note that $\sqrt{H} v(t)$ is given in Duhamel form by
\begin{equation*}
 \sqrt{H} v(t) = \sqrt{H} e^{it\jtD} P_c v_0 + \frac{1}{2i} \int_0^t \sqrt{H} \jtD^{-1} e^{i(t-s)\jtD} P_c \bigl( \alpha(\cdot) u(s)^2 \bigr) \, \ud s.
\end{equation*}
Using the improved local decay estimate~\eqref{equ:local_decay_sqrtH_est} for the Klein-Gordon propagator, we easily obtain for $0 \leq t \leq T$ the desired bound
\begin{align*}
 &\| \jap{x}^{-\sigma} \sqrt{H} v(t) \|_{L^2_x} \\
 &\quad \lesssim \frac{\|\jx^\sigma \jtD P_c v_0 \|_{L^2_x}}{\jt^\thf} + \int_0^t \Bigl\| \jap{x}^{-\sigma} \sqrt{H} \jtD^{-1} e^{i(t-s)\jtD} P_c \jap{x}^{-\sigma} \Bigr\|_{L^2_x \to L^2_x} \bigr\| \jap{x}^\sigma \bigl( \alpha(\cdot) u(s)^2 \bigr) \bigr\|_{L^2_x} \, \ud s \\
 &\quad \lesssim \frac{\|\jx^\sigma v_0 \|_{H^1_x}}{\jap{t}^{\frac{3}{2}}} + \int_0^t \frac{1}{\jap{t-s}^{\frac{3}{2}}} \| \jap{x}^{3\sigma} \alpha \|_{L^2_x} \|\jap{x}^{-\sigma} v(s)\|_{L^\infty_x}^2 \, \ud s \\
 &\quad \lesssim \frac{\|\jx^\sigma v_0 \|_{H^1_x}}{\jap{t}^{\frac{3}{2}}} + \int_0^t \frac{1}{\jap{t-s}^{\frac{3}{2}}} \frac{M(T)^2}{\jap{s}} \, \ud s \\
 &\quad \lesssim \frac{1}{\jap{t}} \bigl( \|\jx^\sigma v_0 \|_{H^1_x} + M(T)^2 \bigr).
\end{align*}

\medskip 

\noindent \underline{{\it Local decay for $v(t)$}:} Finally, the derivation of the local decay bound for $v(t)$ requires a much more careful argument. 
The first step is to determine the leading order behavior of the variable coefficient quadratic nonlinearity. 
To this end we introduce the function
\begin{equation} \label{equ:definition_w}
 w(t) := c_0 \frac{e^{i\frac{\pi}{4}} e^{it}}{t^\hf} \langle \varphi, v_0 \rangle \varphi + \frac{1}{2i} \int_0^{t-1} c_0 \frac{e^{i\frac{\pi}{4}} e^{i(t-s)}}{(t-s)^{\hf}} \bigl\langle \varphi, \alpha(\cdot) u(s)^2 \bigr\rangle \varphi \, \ud s, \quad t \geq 1,
\end{equation}
with $c_0$ defined in~\eqref{equ:def_c0}.
Then we may write
\begin{equation*}
 w(t,x) = a(t) \varphi(x), \quad t \geq 1,
\end{equation*}
with the time-dependent coefficient 
\begin{equation} \label{equ:definition_a}
 a(t) := c_0 \frac{e^{i\frac{\pi}{4}} e^{it}}{t^\hf} \langle \varphi, v_0 \rangle + \frac{1}{2i} \int_0^{t-1} c_0 \frac{e^{i\frac{\pi}{4}} e^{i(t-s)}}{(t-s)^{\hf}} \bigl\langle \varphi, \alpha(\cdot) u(s)^2 \bigr\rangle \, \ud s.
\end{equation}
The key property of the function $w(t)$ is that the difference $v(t) - w(t)$ has faster local decay in the sense that uniformly for all $1 \leq t \leq T$
\begin{equation} \label{equ:w_minus_v_local_decay_est} 
 \bigl\| \jx^{-\sigma} \bigl( v(t) - w(t) \bigr) \bigr\|_{L^2_x} \lesssim \frac{1}{\jt} \bigl( \|\jx^\sigma v_0\|_{L^2_x} + M(T)^2 \bigr).
\end{equation}
To prove~\eqref{equ:w_minus_v_local_decay_est} we write for any $1 \leq t \leq T$
\begin{align*}
 v(t) - w(t) &= \biggl( e^{it\jtD} P_c v_0 - c_0 \frac{e^{i\frac{\pi}{4}} e^{it}}{t^\hf} \langle \varphi, v_0 \rangle \varphi \biggr) \\
 &\quad + \frac{1}{2i} \int_0^{t-1} \biggl( e^{i(t-s)\jtD} \jtD^{-1} P_c \bigl( \alpha(\cdot) u(s)^2 \bigr) - c_0 \frac{e^{i\frac{\pi}{4}} e^{i(t-s)}}{(t-s)^\hf} \bigl\langle \varphi, \alpha(\cdot) u(s)^2 \bigr\rangle \varphi \biggr) \, \ud s \\
 &\quad + \frac{1}{2i} \int_{t-1}^t e^{i(t-s)\jtD} \jtD^{-1} P_c \bigl( \alpha(\cdot) u(s)^2 \bigr) \, \ud s.
\end{align*}
Then we use the improved local decay estimates~\eqref{equ:local_decay_improved_subtr_off} and~\eqref{equ:local_decay_improved_tDinv_subtr_off} to bound the first two terms on the right-hand side, while the standard local decay estimate~\eqref{equ:local_decay_est} suffices to estimate the third term on the right-hand side. Specifically, we obtain for $1 \leq t \leq T$ that
\begin{align*}
 \bigl\| \jx^{-\sigma} \bigl( v(t) - w(t) \bigr) \bigr\|_{L^2_x} &\lesssim \frac{\|\jx^\sigma v_0\|_{L^2_x}}{\jt^\thf} + \int_0^{t-1} \frac{1}{\jap{t-s}^\thf} \bigl\| \jx^\sigma \alpha(x) u(s)^2 \bigr\|_{L^2_x} \, \ud s \\
 &\quad \quad + \int_{t-1}^t \frac{1}{\jap{t-s}^\hf} \bigl\| \jx^\sigma \alpha(x) u(s)^2 \bigr\|_{L^2_x} \, \ud s \\
 &\lesssim \frac{\|\jx^\sigma v_0\|_{L^2_x}}{\jt^\thf} + \int_0^{t-1} \frac{1}{\jap{t-s}^\thf} \bigl\| \jx^{3 \sigma} \alpha \bigr\|_{L^2_x} \| \jx^{-\sigma} v(s) \|_{L^\infty_x}^2 \, \ud s \\
 &\quad \quad + \int_{t-1}^t \frac{1}{\jap{t-s}^\hf} \bigl\| \jx^{3 \sigma} \alpha \bigr\|_{L^2_x} \| \jx^{-\sigma} v(s) \|_{L^\infty_x}^2 \, \ud s \\
 &\lesssim \frac{\|\jx^\sigma v_0\|_{L^2_x}}{\jt^\thf} + \int_0^{t-1} \frac{1}{\jap{t-s}^\thf} \frac{M(T)^2}{\js} \, \ud s + \int_{t-1}^t \frac{1}{\jap{t-s}^\hf} \frac{M(T)^2}{\js} \, \ud s \\
 &\lesssim \frac{1}{\jt} \bigl( \|\jx^\sigma v_0\|_{L^2_x} + M(T)^2 \bigr). 
\end{align*}
This yields the desired faster local decay bound~\eqref{equ:w_minus_v_local_decay_est}, which suggests that the leading order behavior of the variable coefficient quadratic nonlinearity $\alpha(x) (v+\bar{v})^2$ is governed by $\alpha(x) (w + \bar{w})^2$. In order to further analyze the latter, we will need the following estimates related to $w(t)$ that hold uniformly for all $1 \leq t \leq T$,
\begin{align}
 \|w(t)\|_{L^\infty_x} &\lesssim \frac{\log(1+\jt)}{\jt^\hf} \bigl( \|\jx^{\sigma} v_0\|_{L^2_x} + M(T)^2 \bigr), \label{equ:w_local_decay_est}  \\
 |a(t)| &\lesssim \frac{\log(1+\jt)}{\jt^\hf} \bigl( \| \jx^\sigma v_0\|_{L^2_x} + M(T)^2 \bigr), \label{equ:a_decay_bound} \\
 |\pt ( e^{-it} a(t) )| &\lesssim \frac{1}{\jt} \bigl( \|\jx^\sigma v_0\|_{L^2_x} + M(T)^2 \bigr). \label{equ:a_pt_decay_bound}
\end{align}
These estimates follow directly from the definitions of $w(t)$ and $a(t)$. Indeed, from the definition of $w(t)$ we obtain uniformly for all $1 \leq t \leq T$ that
\begin{align*}
 \|w(t)\|_{L^\infty_x} &\lesssim \frac{1}{t^\hf} \bigl\| \langle \varphi, v_0 \rangle \varphi(x) \bigr\|_{L^\infty_x} + \int_0^{t-1} \frac{1}{(t-s)^\hf} \bigl\| \langle \varphi, \alpha(\cdot) u(s)^2 \rangle \varphi(x)\bigr\|_{L^\infty_x} \, \ud s \\
 &\lesssim \frac{\|v_0\|_{L^1_x}}{\jt^\hf} + \int_0^{t-1} \frac{1}{(t-s)^\hf} \|\jx^{2\sigma} \alpha\|_{L^1_x} \|\jx^{-\sigma} v(s)\|_{L^\infty_x}^2 \, \ud s \\
 &\lesssim \frac{\|\jx^\sigma v_0\|_{L^2_x}}{\jt^\hf} + \int_0^{t-1} \frac{1}{(t-s)^\hf} \frac{M(T)^2}{\js} \, \ud s \\
 &\lesssim \frac{\log(1+\jt)}{\jt^\hf} \bigl( \|\jx^{\sigma} v_0\|_{L^2_x} + M(T)^2 \bigr).
\end{align*}
This proves~\eqref{equ:w_local_decay_est}. Similarly, from the definition of $a(t)$ we infer uniformly for all $1 \leq t \leq T$ that
\begin{align*}
 |a(t)| &\lesssim \frac{| \langle \varphi, v_0 \rangle|}{\jt^\hf} + \int_0^{t-1} \frac{1}{(t-s)^\hf} \|\jx^{2\sigma} \alpha(x)\|_{L^1_x} \|\jx^{-\sigma} v(s)\|_{L^\infty_x}^2 \, \ud s \\
 &\lesssim \frac{\|v_0\|_{L^1_x}}{\jt^\hf} + \int_0^{t-1} \frac{1}{(t-s)^\hf} \frac{M(T)^2}{\js} \, \ud s \\
 &\lesssim \frac{\log(1+\jt)}{\jt^\hf} \bigl( \|\jx^\sigma v_0\|_{L^2_x} + M(T)^2 \bigr),
\end{align*}
which yields~\eqref{equ:a_decay_bound}. Finally, to prove~\eqref{equ:a_pt_decay_bound} we compute 
\begin{align*}
 \pt \bigl( e^{-it} a(t) \bigr) = &-\frac12 c_0 \frac{e^{i\frac{\pi}{4}}}{t^\thf} \langle \varphi, v_0 \rangle + \frac{1}{2i} c_0 e^{i\frac{\pi}{4}} e^{-i(t-1)} \langle \varphi, \alpha(\cdot) u(t-1)^2 \rangle \\
 &-\frac{1}{4i} \int_0^{t-1} c_0 \frac{e^{i\frac{\pi}{4}} e^{-is}}{(t-s)^\thf} \langle \varphi, \alpha(\cdot) u(s)^2 \rangle \, \ud s. 
\end{align*}
Hence, we obtain uniformly for all $1 \leq t \leq T$ 
\begin{align*}
 \bigl| \pt (e^{-it} a(t)) \bigr| &\lesssim \frac{|\langle \varphi, v_0 \rangle|}{t^\thf} + \bigl\| \jx^{2\sigma} \alpha \bigr\|_{L^1_x} \| \jx^{-\sigma} v(t-1) \|_{L^\infty_x}^2 \\
 &\quad + \int_0^{t-1} \frac{1}{(t-s)^\thf} \| \jx^{2\sigma} \alpha \|_{L^1_x} \bigl\| \jx^{-\sigma} v(s) \|_{L^\infty_x}^2 \, \ud s \\
 &\lesssim \frac{\|v_0\|_{L^1_x}}{t^\thf} + \frac{M(T)^2}{\jap{t-1}} + \int_0^{t-1} \frac{1}{(t-s)^\thf} \frac{M(T)^2}{\js} \, \ud s \\
 &\lesssim \frac{1}{\jt} \bigl( \|\jx^\sigma v_0\|_{L^2_x} + M(T)^2 \bigr).
\end{align*}

We are now prepared to prove the local decay bound for $v(t)$. 
For short times $0 \leq t \leq 1$, using the local decay estimates~\eqref{equ:local_decay_est} and~\eqref{equ:local_decay_tDinv_est}, we easily obtain  
\begin{equation*}
 \sup_{0 \leq t \leq 1} \| \jx^{-\sigma} v(t) \|_{L^2_x} \lesssim \|v_0\|_{L^2_x} + M(T)^2.
\end{equation*}
It therefore suffices to consider times $t \geq 1$. We begin by decomposing the Duhamel formula for $v(t)$ into 
\begin{equation} \label{equ:decomposition_v_for_local_decay}
 \begin{aligned}
  v(t) &= e^{it\jtD} P_c v_0 + \frac{1}{2i} \int_0^t e^{i(t-s)\jtD} \jtD^{-1} P_c \bigl( \alpha(\cdot) u(s)^2 \bigr) \, \ud s \\
  &= e^{it\jtD} P_c v_0 + \frac{1}{2i} \int_0^1 e^{i(t-s)\jtD} \jtD^{-1} P_c \bigl( \alpha(\cdot) u(s)^2 \bigr) \, \ud s \\
  &\quad + \frac{1}{2i} \int_1^t e^{i(t-s)\jtD} \jtD^{-1} P_c \Bigl( \alpha(\cdot) \bigl( (v(s) + \bar{v}(s))^2 - (w(s) + \bar{w}(s))^2 \bigr) \Bigr) \, \ud s \\
  &\quad + \frac{1}{2i} \int_1^t e^{i(t-s)\jtD} \jtD^{-1} P_c \bigl( \alpha(\cdot) (w(s) + \bar{w}(s))^2 \bigr) \, \ud s \\
  &\equiv I + II + III + IV.
 \end{aligned}
\end{equation}
Then the first two terms on the right-hand side of~\eqref{equ:decomposition_v_for_local_decay} can be easily estimated. Using the standard local decay estimates~\eqref{equ:local_decay_est} and~\eqref{equ:local_decay_tDinv_est}, we have for any $1 \leq t \leq T$ 
\begin{align*}
 \|\jx^{-\sigma} I\|_{L^2_x} &= \bigl\| \jx^{-\sigma} e^{it\jtD} P_c v_0 \bigr\|_{L^2_x} \lesssim \frac{\|\jx^\sigma v_0\|_{L^2_x}}{\jt^\hf}
\end{align*}
and 
\begin{align*}
 \|\jx^{-\sigma} II\|_{L^2_x} &\lesssim \int_0^1 \bigl\| \jx^{-\sigma} \jtD^{-1} e^{i(t-s)\jtD} P_c \jx^{-\sigma} \bigr\|_{L^2_x \to L^2_x} \bigl\| \jx^{\sigma} \bigl( \alpha(\cdot) u(s)^2 \bigr) \bigr\|_{L^2_x} \, \ud s \\
 &\lesssim \int_0^1 \frac{1}{\jap{t-s}^\hf} \|\jx^{3\sigma} \alpha\|_{L^2_x} \|\jx^{-\sigma} v(s)\|_{L^\infty_x}^2 \, \ud s \\
 &\lesssim \int_0^1 \frac{1}{\jap{t-s}^\hf} \|\jx^{3\sigma} \alpha\|_{L^2_x} M(T)^2 \, \ud s \\
 &\lesssim \frac{M(T)^2}{\jt^\hf}.
\end{align*}
In order to estimate the third term on the right-hand side of~\eqref{equ:decomposition_v_for_local_decay}, we combine the local decay estimate~\eqref{equ:local_decay_est} with the improved local decay bound~\eqref{equ:w_minus_v_local_decay_est} for the difference $v(t) - w(t)$ and the bound~\eqref{equ:w_local_decay_est} for $w(t)$ to obtain for any $1 \leq t \leq T$ that
\begin{align*}
 &\|\jx^{-\sigma} III\|_{L^2_x} \\
 &\quad \lesssim \int_1^t \bigl\| \jx^{-\sigma} \jtD^{-1} e^{i(t-s)\jtD} P_c \jx^{-\sigma} \bigr\|_{L^2_x \to L^2_x} \bigl\| \jx^\sigma \alpha(x) \bigl( (v(s) + \bar{v}(s))^2 - (w(s) + \bar{w}(s))^2 \bigr\|_{L^2_x} \, \ud s \\
 &\quad \lesssim \int_1^t \frac{1}{\jap{t-s}^\hf} \| \jx^{3\sigma} \alpha \|_{L^\infty_x} \| \jx^{-\sigma} (v(s)-w(s)) \|_{L^2_x} \bigl( \|\jx^{-\sigma} v(s)\|_{L^\infty_x} + \|\jx^{-\sigma} w(s)\|_{L^\infty_x} \bigr) \, \ud s \\
 &\quad \lesssim \int_1^t \frac{1}{\jap{t-s}^\hf} \frac{1}{\js} \bigl( \|\jx^\sigma v_0\|_{L^2_x} + M(T)^2 \bigr) \biggl( \frac{M(T)}{\js^\hf} + \frac{\log(1+\js)}{\js^\hf} \bigl( \|\jx^\sigma v_0\|_{L^2_x} + M(T)^2 \bigr) \biggr) \, \ud s \\
 &\quad \lesssim \frac{1}{\jt^\hf} \bigl( \|\jx^\sigma v_0\|_{L^2_x} + M(T)^2 \bigr)^2.
\end{align*}
The estimate for the fourth term on the right-hand side of~\eqref{equ:decomposition_v_for_local_decay} is the most delicate due to the slow time decay of $\alpha(x) ( w(s) + \bar{w}(s) )^2$. We decompose the term further so that we can exploit the time oscillations of $w(s)$. 
To this end we introduce a cutoff function $\psi \in C_c^\infty(\bbR)$ that is supported in small neighborhoods around $\xi = \pm \sqrt{3}$ and such that $\psi(\xi) = 1$ for say $|\xi - (\pm \sqrt{3})| \leq 10^{-2}$. Then we write 
\begin{equation} \label{equ:decomposition_termIV_for_local_decay}
\begin{aligned}
 IV &= \frac{1}{2i} \int_1^t e^{i(t-s)\jtD} \jtD^{-1} P_c \bigl( \alpha(\cdot) (w(s) + \bar{w}(s))^2 \bigr) \, \ud s \\
 &= \frac{1}{2i} \int_1^t \Bigl( e^{i(t-s)\jtD} \jtD^{-1} P_c \bigl( \alpha \varphi^2 \bigr) \Bigr) (a(s) + \bar{a}(s))^2 \, \ud s \\
 &= \frac{1}{2i} \int_1^t \Bigl( \psi(\tD) e^{i(t-s)\jtD} \jtD^{-1} P_c \bigl( \alpha \varphi^2 \bigr) \Bigr) (a(s) + \bar{a}(s))^2 \, \ud s \\
 &\quad + \frac{1}{2i} \int_1^t \Bigl( \bigl( 1 - \psi(\tD) \bigr) e^{i(t-s)\jtD} \jtD^{-1} P_c \bigl( \alpha \varphi^2 \bigr) \Bigr) (a(s) + \bar{a}(s))^2 \, \ud s \\
 &\equiv IV_{(a)} + IV_{(b)}.
\end{aligned}
\end{equation}
The first term on the right-hand side of~\eqref{equ:decomposition_termIV_for_local_decay} can be easily estimated using the improved local decay estimate~\eqref{equ:local_decay_away_zero_freq_tDinv_est} for the Klein-Gordon propagator away from zero energy and the bound~\eqref{equ:a_decay_bound}. Uniformly for all $1 \leq t \leq T$ we obtain that
\begin{align*}
 \|\jx^{-\sigma} IV_{(a)}\|_{L^2_x} &\lesssim \int_1^t \Bigl\| \jx^{-\sigma} \Bigl( \psi(\tD) e^{i(t-s)\jtD} \jtD^{-1} P_c \bigl( \alpha \varphi^2 \bigr) \Bigr) \Bigr\|_{L^2_x} |a(s)|^2 \, \ud s \\
 &\lesssim \int_1^t \frac{1}{\jap{t-s}^\thf} \bigl\| \jx^\sigma \alpha \varphi^2 \bigr\|_{L^2_x} \frac{\bigl( \log(1+\js) \bigr)^2}{\js} \bigl( \|\jx^\sigma v_0\|_{L^2_x} + M(T)^2 \bigr)^2 \, \ud s \\
 &\lesssim \frac{1}{\jt^{1-}} \bigl( \|\jx^\sigma v_0\|_{L^2_x} + M(T)^2 \bigr)^2.
\end{align*}
To estimate the second term on the right-hand side of~\eqref{equ:decomposition_termIV_for_local_decay}, we further expand it as 
\begin{equation}
 \begin{aligned}
  IV_{(b)} &= \frac{1}{2i} \int_1^t \Bigl( \bigl( 1 - \psi(\tD) \bigr) e^{i(t-s)\jtD} \jtD^{-1} P_c \bigl( \alpha \varphi^2 \bigr) \Bigr) e^{2is} \bigl( e^{-is} a(s) \bigr)^2 \, \ud s \\ 
  &\quad + \frac{1}{i} \int_1^t \Bigl( \bigl( 1 - \psi(\tD) \bigr) e^{i(t-s)\jtD} \jtD^{-1} P_c \bigl( \alpha \varphi^2 \bigr) \Bigr) \bigl| e^{-is} a(s) \bigr|^2 \, \ud s \\ 
  &\quad + \frac{1}{2i} \int_1^t \Bigl( \bigl( 1 - \psi(\tD) \bigr) e^{i(t-s)\jtD} \jtD^{-1} P_c \bigl( \alpha \varphi^2 \bigr) \Bigr) e^{-2is} \bigl( \overline{ e^{-is} a(s) } \bigr)^2 \, \ud s \\
  &\equiv IV_{(b)}^{(1)} + IV_{(b)}^{(2)} + IV_{(b)}^{(3)}.
 \end{aligned}
\end{equation}
Then we integrate by parts in time $s$. Note that for the first term $IV_{(b)}^{(1)}$ this could potentially be problematic, because on the distorted Fourier side the phase of $e^{is(2-\jap{\xi})}$ vanishes at frequencies $\xi = \pm \sqrt{3}$. However, owing to the cutoff $(1-\psi(\xi))$ the integrand is zero in a neighborhood of $\xi = \pm \sqrt{3}$. We only provide the details for the treatment of the term $IV_{(b)}^{(1)}$, the other terms being easier. We find that uniformly for all $1 \leq t \leq T$ one has that
\begin{align*}
 IV_{(b)}^{(1)} &= -\frac{1}{2} \int_1^t e^{it\jtD} \Bigl( \frac{\ud}{\ud s} \Bigl( e^{is(2-\jtD)} \Bigr) (2-\jtD)^{-1} \jtD^{-1} \bigl( 1 - \psi(\tD) \bigr) P_c \bigl( \alpha \varphi^2 \bigr) \Bigr) \bigl( e^{-is} a(s) \bigr)^2 \, \ud s \\
 &= -\frac12 \Bigl( (2-\jtD)^{-1} \jtD^{-1} \bigl( 1 - \psi(\tD) \bigr) P_c \bigl( \alpha \varphi^2 \bigr) \Bigr) a(t)^2 \\
 &\quad + \frac12 e^{i(t-1)\jtD} \Bigl( (2-\jtD)^{-1} \jtD^{-1} \bigl( 1 - \psi(\tD) \bigr) P_c \bigl( \alpha \varphi^2 \bigr) \Bigr) a(1)^2 \\
 &\quad + \int_1^t \Bigl( e^{i(t-s) \jtD} (2-\jtD)^{-1} \jtD^{-1} \bigl( 1 - \psi(\tD) \bigr) P_c \bigl( \alpha \varphi^2 \bigr) \Bigr) \bigl( e^{-is} a(s) \bigr) \ps \bigl( e^{-is} a(s) \bigr) \, \ud s.
\end{align*}
Then using the standard local decay estimate~\eqref{equ:local_decay_est}, the bounds~\eqref{equ:a_decay_bound} and~\eqref{equ:a_pt_decay_bound} for the coefficient~$a(t)$, and Lemma~\ref{lem:bound_sqrt3_vanishes}, we finally obtain that
\begin{align*}
 &\| \jx^{-\sigma} IV_{(b)}^{(1)} \|_{L^2_x} \\
 &\lesssim \Bigl\| \jx^{-\sigma} \Bigl( (2-\jtD)^{-1} \jtD^{-1} \bigl( 1 - \psi(\tD) \bigr) P_c \bigl( \alpha \varphi^2 \bigr) \Bigr) \Bigr\|_{L^2_x} |a(t)|^2 \\
 &\quad + \Bigl\| \jx^{-\sigma} e^{i(t-1)\jtD} \Bigl( (2-\jtD)^{-1} \jtD^{-1} \bigl( 1 - \psi(\tD) \bigr) P_c \bigl( \alpha \varphi^2 \bigr) \Bigr) \Bigr\|_{L^2_x} |a(1)|^2 \\
 &\quad + \int_1^t \Bigl\| \jx^{-\sigma} \Bigl( e^{i(t-s) \jtD} (2-\jtD)^{-1} \jtD^{-1} \bigl( 1 - \psi(\tD) \bigr) P_c \bigl( \alpha \varphi^2 \bigr) \Bigr) \Bigr\|_{L^2_x} \bigl| e^{-is} a(s) \bigr| \bigl| \ps \bigl( e^{-is} a(s) \bigr) \bigr| \, \ud s \\
 &\lesssim \Bigl\| \Bigl( (2-\jtD)^{-1} \jtD^{-1} \bigl( 1 - \psi(\tD) \bigr) P_c \bigl( \alpha \varphi^2 \bigr) \Bigr) \Bigr\|_{L^2_x} \frac{\bigl( \log(1+\jt) \bigr)^2}{\jt} \bigl( \|\jx^\sigma v_0\|_{L^2_x} + M(T)^2 \bigr)^2 \\
 &\quad + \frac{1}{\jt^\hf} \Bigl\| \jx^\sigma \Bigl( (2-\jtD)^{-1} \jtD^{-1} \bigl( 1 - \psi(\tD) \bigr) P_c \bigl( \alpha \varphi^2 \bigr) \Bigr) \Bigr\|_{L^2_x} \bigl( \|\jx^\sigma v_0\|_{L^2_x} + M(T)^2 \bigr)^2 \\
 &\quad + \int_1^t \frac{1}{\jap{t-s}^\hf} \, \Bigl\| \jx^\sigma (2-\jtD)^{-1} \jtD^{-1} \bigl( 1 - \psi(\tD) \bigr) P_c \bigl( \alpha \varphi^2 \bigr) \Bigr) \Bigr\|_{L^2_x} \times \\ 
 &\quad \quad \qquad \qquad \qquad \qquad \qquad \qquad \qquad \qquad \qquad \qquad \times \frac{\log(1+\js)}{\js^\thf} \bigl( \|\jx^\sigma v_0\|_{L^2_x} + M(T)^2 \bigr)^2 \, \ud s \\
 &\lesssim \frac{1}{\jt^\hf} \bigl\| \jx^{\sigma+3} \alpha \varphi^2 \bigr\|_{L^2_x} \bigl( \|\jx^\sigma v_0\|_{L^2_x} + M(T)^2 \bigr)^2.
\end{align*}

Putting all of the preceding estimates together (and using that we may freely assume that $M(T) \leq 1$), we arrive at the estimate
\begin{equation*}
 M(T) \lesssim \|\jx^\sigma v_0\|_{H^2_x} + M(T)^2.
\end{equation*}
The assertion of Proposition~\ref{prop:local_decay_bounds} now follows by a standard continuity argument.
\end{proof}

A key step in the derivation of the local decay bounds for the solution $v(t)$ in Proposition~\ref{prop:local_decay_bounds} was to isolate the leading order behavior of the variable coefficient quadratic nonlinearity $\alpha(x) (v+\bar{v})^2$. It is determined by $\alpha(x) (w + \bar{w})^2$, where $w(t)$ is defined in~\eqref{equ:definition_w} 
and where we write $w(t,x) = a(t) \varphi(x)$, $t \geq 1$,
with the time-dependent coefficient $a(t)$ defined in~\eqref{equ:definition_a}.
From the local decay bounds established in Proposition~\ref{prop:local_decay_bounds}, we infer two improved local decay bounds for the difference $\chi_0(H) v(t) - w(t)$ for $t \geq 1$. These will be needed later in the proof of Theorem~\ref{thm:thm1}.
\begin{corollary} \label{cor:local_decay_improved_low_energy_v_minus_w}
 Under the assumptions of Proposition~\ref{prop:local_decay_bounds}, we have  
 \begin{align}
  \bigl\| \jx^{-\sigma} \bigl( \chi_0(H) v(t) - w(t) \bigr) \bigr\|_{L^2_x} &\lesssim \frac{\varepsilon}{\jt}, \quad t \geq 1, \label{equ:local_decay_bound_low_energy_v_minus_w} \\
  \bigl\| \jx^{-\sigma} \partial_x \bigl( \chi_0(H) v(t) - w(t) \bigr) \bigr\|_{L^2_x} &\lesssim \frac{\varepsilon}{\jt}, \quad t \geq 1. \label{equ:local_decay_bound_low_energy_px_v_minus_w}
 \end{align}
\end{corollary}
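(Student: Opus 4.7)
The plan is to exploit the identity $\chi_0(H) v(t) - w(t) = (v(t) - w(t)) - (1 - \chi_0(H)) v(t)$ to reduce the first bound \eqref{equ:local_decay_bound_low_energy_v_minus_w} to ingredients already present in the proof of Proposition~\ref{prop:local_decay_bounds}. Indeed, the intermediate estimate \eqref{equ:w_minus_v_local_decay_est} (combined with $M(T) \lesssim \varepsilon$) gives $\|\jx^{-\sigma}(v(t)-w(t))\|_{L^2_x}\lesssim \varepsilon/\jt$, while the high-energy component of the bootstrap quantity $M(T)$ yields $\|\jx^{-\sigma}(1-\chi_0(H))v(t)\|_{L^2_x}\lesssim \varepsilon/\jt$; the triangle inequality then closes \eqref{equ:local_decay_bound_low_energy_v_minus_w}.

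For the derivative bound \eqref{equ:local_decay_bound_low_energy_px_v_minus_w} the naive route is blocked: the weighted derivative estimate of Lemma~\ref{lem:bound_px_by_sqrtH} applied to $v(t)$ would only give $\|\jx^{-\sigma}\partial_x v(t)\|_{L^2_x}\lesssim \varepsilon/\jt^{1/2}$, which is off by a factor of $\jt^{1/2}$. Instead, I plan to reproduce the Duhamel decomposition used for $v(t)-w(t)$ inside the proof of Proposition~\ref{prop:local_decay_bounds}, now differentiating the difference in $x$. Since $\chi_0(H)$ commutes with $e^{it\jtD}$ and $\partial_x w(t)=a(t)\,\partial_x\varphi$, Duhamel's formula gives
\begin{align*}
\partial_x \chi_0(H) v(t) - \partial_x w(t)
&= \Bigl(\partial_x e^{it\jtD}\chi_0(H)P_c v_0 - c_0\tfrac{e^{i\pi/4}e^{it}}{t^{1/2}}\langle\varphi,v_0\rangle\partial_x\varphi\Bigr) \\
&\quad + \tfrac{1}{2i}\int_0^{t-1}\!\!\Bigl(\partial_x e^{i(t-s)\jtD}\chi_0(H)\jtD^{-1}P_c(\alpha u^2) \\
&\qquad\qquad\qquad - c_0\tfrac{e^{i\pi/4}e^{i(t-s)}}{(t-s)^{1/2}}\langle\varphi,\alpha u^2\rangle\partial_x\varphi\Bigr)\,\ud s \\
&\quad + \tfrac{1}{2i}\int_{t-1}^t \partial_x e^{i(t-s)\jtD}\chi_0(H)\jtD^{-1}P_c(\alpha u(s)^2)\,\ud s.
\end{align*}

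For the linear piece and the interior integral I would invoke the sharpened local decay estimates \eqref{equ:local_decay_improved_subtr_off_with_derivative_in_cor} and \eqref{equ:local_decay_improved_tDinv_subtr_off_with_derivative}, which provide $\jap{t-s}^{-3/2}$ decay after the resonance subtraction; combined with the nonlinearity bound $\|\jx^\sigma \alpha u(s)^2\|_{L^2_x}\lesssim \varepsilon^2/\js$ from Proposition~\ref{prop:local_decay_bounds} and the convolution estimate $\int_0^{t-1}\jap{t-s}^{-3/2}\js^{-1}\,\ud s\lesssim \jt^{-1}$, this yields the desired $\varepsilon/\jt$ contribution. For the boundary integral on $[t-1,t]$, since $\chi_0(H)\jtD^{-1}$ is a function of $H$ and hence commutes with the propagator, I would write $\partial_x e^{i(t-s)\jtD}\chi_0(H)\jtD^{-1}=\partial_x\chi_0(H)\jtD^{-1}\,e^{i(t-s)\jtD}$; the kernel bound \eqref{equ:wtilD_chi0_kernel_bound} with $k=1$, $\nu=-1$ shows that $\partial_x\chi_0(H)\jtD^{-1}$ is bounded on weighted $L^2$, reducing matters to the standard local decay \eqref{equ:local_decay_est}, which over an interval of length one contributes $\lesssim \varepsilon^2/\jt$ since $\js\simeq\jt$. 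The main obstacle is precisely the non-commutation of $\partial_x$ with $H$, which prevents the direct reduction to Proposition~\ref{prop:local_decay_bounds} used for the first bound; the resolution is that $\partial_x$ falls harmlessly on the subtracted resonance $\varphi$, producing the bounded factor $\partial_x\varphi\in L^\infty_x$, which is exactly the object controlled on the left-hand side of the derivative versions of the refined local decay estimates.
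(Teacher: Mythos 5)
Your proof is correct and follows essentially the same route as the paper: for the derivative bound \eqref{equ:local_decay_bound_low_energy_px_v_minus_w} you use exactly the paper's Duhamel decomposition together with the refined derivative local decay estimates \eqref{equ:local_decay_improved_subtr_off_with_derivative_in_cor} and \eqref{equ:local_decay_improved_tDinv_subtr_off_with_derivative}, the bound $\|\jx^{\sigma}\alpha u(s)^2\|_{L^2_x}\lesssim \varepsilon^2/\js$, and the same convolution estimate, while your treatment of the boundary integral via the weighted $L^2$ boundedness of $\px\chi_0(H)\jtD^{-1}P_c$ is only a cosmetic variant of the paper's direct bound by $\int_{t-1}^t\|\alpha u(s)^2\|_{L^2_x}\,\ud s$. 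Your reduction of the first bound \eqref{equ:local_decay_bound_low_energy_v_minus_w} to \eqref{equ:w_minus_v_local_decay_est} plus the $(1-\chi_0(H))v$ component of \eqref{equ:local_decay_bounds_for_v} is a valid shortcut for the part the paper leaves to the reader.
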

\begin{proof}
 We present the details for the derivation of the second asserted local decay bound~\eqref{equ:local_decay_bound_low_energy_px_v_minus_w}. We proceed similarly as in the proof of the preceding Proposition~\ref{prop:local_decay_bounds}.
 By Duhamel's formula for the solution $v(t)$ and the definition~\eqref{equ:definition_w} of $w(t)$, we have for $t \geq 1$ 
 \begin{align*}
  &\partial_x \bigl( \chi_0(H) v(t) - w(t) \bigr) \\
  &\quad = \partial_x \biggl( e^{it\jtD} \chi_0(H) P_c v_0 - c_0 \frac{e^{i\frac{\pi}{4}} e^{it}}{t^\hf} \langle \varphi, v_0 \rangle \varphi \biggr) \\
  &\quad \quad + \frac{1}{2i} \int_0^{t-1} \partial_x \biggl( e^{i(t-s)\jtD} \chi_0(H) \jtD^{-1} P_c \bigl( \alpha(\cdot) u(s)^2 \bigr) - c_0 \frac{e^{i\frac{\pi}{4}} e^{i(t-s)}}{(t-s)^{\hf}} \bigl\langle \varphi, \alpha(\cdot) u(s)^2 \bigr\rangle \varphi \biggr) \, \ud s \\
  &\quad \quad + \frac{1}{2i} \int_{t-1}^t \partial_x \biggl( e^{i(t-s)\jtD} \chi_0(H) \jtD^{-1} P_c \bigl( \alpha(\cdot) u(s)^2 \bigr) \biggr) \, \ud s.
 \end{align*}
 Using the local decay estimates~\eqref{equ:local_decay_improved_subtr_off_with_derivative_in_cor} and~\eqref{equ:local_decay_improved_tDinv_subtr_off_with_derivative} for the Klein-Gordon propagator along with the local decay bounds for $v(t)$ from Proposition~\ref{prop:local_decay_bounds} and the Sobolev estimate~\eqref{equ:Linfty_v_local_decay_bound}, we obtain for $t \geq 1$ 
 \begin{align*}
  &\bigl\|\jx^{-\sigma} \partial_x \bigl( \chi_0(H) v(t) - w(t) \bigr) \bigr\|_{L^2_x} \\
  &\quad \lesssim \frac{\|\jx^\sigma v_0\|_{L^2_x}}{\jt^\thf} + \int_0^{t-1} \frac{1}{\jap{t-s}^\thf} \bigl\| \jx^\sigma \alpha(x) u(s)^2 \bigr\|_{L^2_x} \, \ud s + \int_{t-1}^t \bigl\| \alpha(x) u(s)^2 \bigr\|_{L^2_x} \, \ud s \\
  &\quad \lesssim \frac{\|\jx^\sigma v_0\|_{L^2_x}}{\jt^\thf} + \int_0^{t-1} \frac{1}{\jap{t-s}^\thf} \|\jx^{3 \sigma} \alpha\|_{L^2_x} \|\jx^{-\sigma} v(s)\|_{L^\infty_x}^2 \, \ud s \\
  &\quad \quad + \int_{t-1}^t  \|\jx^{2 \sigma} \alpha\|_{L^2_x} \|\jx^{-\sigma} v(s)\|_{L^\infty_x}^2  \, \ud s \\
  &\quad \lesssim \frac{\varepsilon}{\jt^\thf} + \int_0^{t-1} \frac{1}{\jap{t-s}^\thf} \frac{\varepsilon^2}{\js} \, \ud s + \int_{t-1}^t \frac{\varepsilon^2}{\js}  \, \ud s \\
  &\quad \lesssim \frac{\varepsilon}{\jt},
 \end{align*}
 as desired. 
 The proof of the first asserted local decay bound~\eqref{equ:local_decay_bound_low_energy_v_minus_w} proceeds similarly, using the local decay estimates \eqref{equ:local_decay_away_zero_freq_est}, \eqref{equ:local_decay_improved_subtr_off}, \eqref{equ:local_decay_away_zero_freq_tDinv_est}, and \eqref{equ:local_decay_improved_tDinv_subtr_off}.
\end{proof}

As a further corollary of the local decay bounds for $v(t)$ from Proposition~\ref{prop:local_decay_bounds}, we deduce the asymptotics of the coefficient function $a(t)$. 
\begin{corollary}(Asymptotics of $a(t)$) \label{cor:asymptotics_a}
 Under the assumptions of Proposition~\ref{prop:local_decay_bounds}, the coefficient
 \begin{equation*}
  a(t) := c_0 \frac{e^{i\frac{\pi}{4}} e^{it}}{t^\hf} \langle \varphi, v_0 \rangle + \frac{1}{2i} \int_0^{t-1} c_0 \frac{e^{i\frac{\pi}{4}} e^{i(t-s)}}{(t-s)^{\hf}} \bigl\langle \varphi, \alpha(\cdot) u(s)^2 \bigr\rangle \, \ud s, \quad t \geq 1,
 \end{equation*}
 has the asymptotics
 \begin{equation} \label{equ:asymptotics_a}
  a(t) = c_0 \frac{e^{i \frac{\pi}{4}} e^{it}}{t^\hf} a_0 + \calO_{L^\infty_t} \Bigl( \frac{\varepsilon^2}{t} \Bigr), \quad t \geq 1,
 \end{equation}
 where 
 \begin{equation} \label{equ:amplitude_a0}
  \begin{aligned}
  a_0 = \langle \varphi, v_0 \rangle &+ \frac{1}{2} \langle \varphi, \alpha(\cdot) v_0^2 \rangle - \langle \varphi, \alpha(\cdot) |v_0|^2 \rangle - \frac{1}{6} \langle \varphi, \alpha(\cdot) \overline{v}_0^2 \rangle \\
  &+ \int_0^\infty e^{is} \langle \varphi, \alpha(\cdot) \partial_s \bigl( e^{-is} v(s) \bigr) \bigl( e^{-is} v(s) \bigr) \rangle \, \ud s \\
  &- \int_0^\infty e^{-is} \langle \varphi, \alpha(\cdot) \partial_s \bigl( (e^{-is} v(s)) (e^{is} \bar{v}(s) ) \bigr) \rangle \, \ud s \\
  &- \frac{1}{3} \int_0^\infty e^{-3is} \langle \varphi, \alpha(\cdot) \partial_s \bigl( e^{is} \bar{v}(s) \bigr) \bigl( e^{is} \bar{v}(s) \bigr) \rangle \, \ud s.
  \end{aligned}
 \end{equation}
\end{corollary}
\begin{proof}
 We proceed similarly to the proof of Proposition~3.2 in \cite{LLS2}. 
 We begin by writing
 \begin{align*}
  a(t) = c_0 \frac{e^{i \frac{\pi}{4}} e^{it}}{t^\hf} \biggl( \langle \varphi, v_0 \rangle + \frac{1}{2i} \int_0^{t-1} \frac{t^\hf}{(t-s)^\hf} e^{-is} \langle \varphi, \alpha(\cdot) u(s)^2 \rangle \, \ud s \biggr).
 \end{align*}
 Then the main work goes into peeling off the leading order behavior of the second term in the parentheses. To this end we insert the decomposition of $u(s)$ into its ``phase-filtered components''
 \begin{equation*}
  u(s) = e^{is} ( e^{-is} v(s) ) + e^{-is} ( e^{is} \bar{v}(s) )
 \end{equation*}
 to find that
 \begin{align*}
  \frac{1}{2i} \int_0^{t-1} \frac{t^\hf}{(t-s)^\hf} e^{-is} \langle \varphi, \alpha(\cdot) u(s)^2 \rangle \, \ud s &= \frac{1}{2i} \int_0^{t-1} \frac{t^\hf}{(t-s)^\hf} e^{is} \langle \varphi, \alpha(\cdot) \bigl( e^{-is} v(s) \bigr)^2 \rangle \, \ud s \\
  &\quad + \frac{2}{2i} \int_0^{t-1} \frac{t^\hf}{(t-s)^\hf} e^{-is} \langle \varphi, \alpha(\cdot) \bigl| e^{-is} v(s) \bigr|^2 \rangle \, \ud s \\
  &\quad + \frac{1}{2i} \int_0^{t-1} \frac{t^\hf}{(t-s)^\hf} e^{-3is} \langle \varphi, \alpha(\cdot) \bigl( e^{is} \bar{v}(s) \bigr)^2 \rangle \, \ud s \\
  &\equiv \frac{1}{2i} \bigl( I + II + III \bigr).
 \end{align*}
 Now we describe in detail how to peel off the leading order behavior of the term $I$, noting that the other terms can be treated analogously. We first exploit the oscillations and integrate by parts in time $s$ to find that 
 \begin{equation*}
  \begin{aligned}
   I &= \int_0^{t-1} \frac{t^\hf}{(t-s)^\hf} e^{is} \langle \varphi, \alpha(\cdot) \bigl( e^{-is} v(s) \bigr)^2 \rangle \, \ud s \\
   &= -i t^\hf e^{i(t-1)} \langle \varphi, \alpha(\cdot) \bigl( e^{-i (t-1)} v(t-1) \bigr)^2 \rangle \\
   &\quad + i \langle \varphi, \alpha(\cdot) v(0)^2 \rangle \\
   &\quad + \frac{i}{2} \int_0^{t-1} \frac{t^\hf}{(t-s)^\thf} e^{is} \langle \varphi, \alpha(\cdot) \bigl( e^{-is} v(s) \bigr)^2 \rangle \, \ud s \\
   &\quad + 2i \int_0^{t-1} \frac{t^\hf}{(t-s)^\hf} e^{is} \langle \varphi, \alpha(\cdot) \partial_s \bigl( e^{-is} v(s) \bigr) \bigl( e^{-is} v(s) \bigr) \rangle \, \ud s \\
   &\equiv I_{(a)} + I_{(b)} + I_{(c)} + I_{(d)}.
  \end{aligned}
 \end{equation*}
 Clearly, the term $I_{(b)}$ contributes to the leading order behavior of $I$. We now show that the terms $I_{(a)}$ and $I_{(c)}$ decay as $t \to \infty$, and we extract the leading order contribution from the term $I_{(d)}$. Using the local decay bounds for $v(t)$ from Proposition~\ref{prop:local_decay_bounds}, we obtain that 
 \begin{equation*}
  |I_{(a)}| \lesssim t^\hf \| \jx^{2\sigma} \alpha(x) \|_{L^\infty_x} \|\jx^{-\sigma} v(t-1)\|_{L^2_x}^2 \lesssim \frac{\varepsilon^2}{\jt^\hf}
 \end{equation*}
 and 
 \begin{align*}
  |I_{(c)}| &\lesssim t^\hf \int_0^{t-1} \frac{1}{(t-s)^\thf} \|\jx^{2\sigma} \alpha(x)\|_{L^\infty_x} \|\jx^{-\sigma} v(s)\|_{L^2_x}^2 \, \ud s \lesssim t^\hf \int_0^{t-1} \frac{1}{(t-s)^{\thf}} \frac{\varepsilon^2}{\js} \, \ud s \lesssim \frac{\varepsilon^2}{\jt^\hf}.
 \end{align*}
 Then we rewrite the last term $I_{(d)}$ as 
 \begin{align*}
  I_{(d)} &= 2i \int_0^{t-1} \frac{t^\hf}{(t-s)^\hf} e^{is} \langle \varphi, \alpha(\cdot) \partial_s \bigl( e^{-is} v(s) \bigr) \bigl( e^{-is} v(s) \bigr) \rangle \, \ud s \\
  &= 2i \int_0^\infty e^{is} \langle \varphi, \alpha(\cdot) \partial_s \bigl( e^{-is} v(s) \bigr) \bigl( e^{-is} v(s) \bigr) \rangle \, \ud s \\
  &\quad - 2i \int_{\frac{t}{2}}^\infty e^{is} \langle \varphi, \alpha(\cdot) \partial_s \bigl( e^{-is} v(s) \bigr) \bigl( e^{-is} v(s) \bigr) \rangle \, \ud s \\
  &\quad + 2i \int_{\frac{t}{2}}^{t-1} \frac{t^\hf}{(t-s)^\hf} e^{is} \langle \varphi, \alpha(\cdot) \partial_s \bigl( e^{-is} v(s) \bigr) \bigl( e^{-is} v(s) \bigr) \rangle \, \ud s \\
  &\quad + 2i \int_0^{\frac{t}{2}} \biggl( \frac{t^\hf}{(t-s)^\hf} - 1 \biggr) e^{is} \langle \varphi, \alpha(\cdot) \partial_s \bigl( e^{-is} v(s) \bigr) \bigl( e^{-is} v(s) \bigr) \rangle \, \ud s \\
  &\equiv I_{(d)}^{(1)} + I_{(d)}^{(2)} + I_{(d)}^{(3)} + I_{(d)}^{(4)}.
 \end{align*}
 Using the local decay bounds from Proposition~\ref{prop:local_decay_bounds}, in particular that $\|\jx^{-\sigma} \pt ( e^{-it} v(t) ) \|_{L^2_x}$ has faster decay, it is easy to see that the improper integral $I_{(d)}^{(1)}$ converges and contributes to the leading order behavior of the term $I$, while the other terms $I_{(d)}^{(2)}$, $I_{(d)}^{(3)}$, and $I_{(d)}^{(4)}$ are of the order $\calO_{L^\infty_t}( \varepsilon^2 \jt^{-\frac12} )$. Indeed, we find that
 \begin{equation*}
  |I_{(d)}^{(1)}| \lesssim \int_0^\infty \| \jx^{2\sigma} \alpha \|_{L^\infty_x} \| \jx^{-\sigma} \partial_s ( e^{-is} v(s) ) \|_{L^2_x} \| \jx^{-\sigma} v(s) \|_{L^2_x} \, \ud s \lesssim \int_0^\infty \frac{\varepsilon^2}{\js^\thf} \, \ud s \lesssim \varepsilon^2
 \end{equation*}
 and 
 \begin{align*}
  |I_{(d)}^{(2)}| &\lesssim \int_{\frac{t}{2}}^\infty \|\jx^{2\sigma} \alpha\|_{L^\infty_x} \|\jx^{-\sigma} \partial_s (e^{-is} v(s))\|_{L^2_x} \| \jx^{-\sigma} v(s) \|_{L^2_x} \, \ud s \lesssim \int_{\frac{t}{2}}^{\infty} \frac{\varepsilon^2}{\js^\thf} \, \ud s \lesssim \frac{\varepsilon^2}{\jt^\hf}.
 \end{align*}
 Analogously, we obtain that
 \begin{align*}
  |I_{(d)}^{(3)}| &\lesssim \int_{\frac{t}{2}}^{t-1} \frac{t^\hf}{(t-s)^\hf} \frac{\varepsilon^2}{\js^\thf} \, \ud s \lesssim \frac{\varepsilon^2}{\jt^\hf}, \\
  |I_{(d)}^{(4)}| &\lesssim \int_0^{\frac{t}{2}} \frac{s}{(t-s)^\hf (t^\hf + (t-s)^\hf)} \frac{\varepsilon^2}{\js^\thf} \, \ud s \lesssim \frac{\varepsilon^2}{\jt^\hf}.
 \end{align*}
 Thus, the leading order behavior of the term $I$ is given by
 \begin{equation*}
  I = i \langle \varphi, \alpha(\cdot) v_0^2 \rangle + 2i \int_0^\infty e^{is} \langle \varphi, \alpha(\cdot) \partial_s \bigl( e^{-is} v(s) \bigr) \bigl( e^{-is} v(s) \bigr) \rangle \, \ud s + \calO_{L^\infty_t} \biggl( \frac{\varepsilon^2}{\jt^\hf} \biggr). 
 \end{equation*}
 Similarly, we compute that the leading order behaviors of the terms $II$ and $III$ are given by 
 \begin{align*}
  II &= -2i \langle \varphi, \alpha(\cdot) |v_0|^2 \rangle - 2i \int_0^\infty e^{-is} \langle \varphi, \alpha(\cdot) \partial_s \bigl( (e^{-is} v(s)) (e^{+is} \bar{v}(s) ) \bigr) \rangle \, \ud s + \calO_{L^\infty_t} \biggl( \frac{\varepsilon^2}{\jt^\hf} \biggr), \\
  III &= -\frac{i}{3} \langle \varphi, \alpha(\cdot) \bar{v}_0^2 \rangle - \frac{2i}{3} \int_0^\infty e^{-3is} \langle \varphi, \alpha(\cdot) \partial_s \bigl( e^{+is} \bar{v}(s) \bigr) \bigl( e^{+is} \bar{v}(s) \bigr) \rangle \, \ud s + \calO_{L^\infty_t} \biggl( \frac{\varepsilon^2}{\jt^\hf} \biggr).
 \end{align*}
 Putting things together, we conclude that the asymptotic behavior of the coefficient $a(t)$ is given by~\eqref{equ:asymptotics_a}. This concludes the proof.
\end{proof}

\section{Proof of Theorem~\ref{thm:thm1}} \label{sec:proof_theorem}

Now we are in the position to provide the proof of Theorem~\ref{thm:thm1}. We first consider the non-resonant case. Afterwards we turn to the treatment of the more delicate resonant case. In the course of the proof we will frequently invoke the local decay bounds established in Proposition~\ref{prop:local_decay_bounds} and Corollary~\ref{cor:local_decay_improved_low_energy_v_minus_w}. Throughout we let $\sigma = 5$.

\medskip 

\noindent \underline{{\it Non-Resonant Case}:}
 We begin with the proof of the decay estimate~\eqref{equ:thm1_Linfty_decay_nonresonant}. 
 By time-reversal symmetry, it suffices to consider positive times $t > 0$.
 For short times $0 < t \leq 1$ we just use the Sobolev estimate from Lemma~\ref{lem:weighted_sobolev} together with the local decay bounds~\eqref{equ:local_decay_bounds_for_v}. From Duhamel's formula we obtain that
 \begin{align*}
  \sup_{0 \leq t \leq 1} \, \|v(t)\|_{L^\infty_x} &\lesssim \sup_{0 \leq t \leq 1} \, \bigl( \|v(t)\|_{L^2_x} + \|\sqrt{H} v(t)\|_{L^2_x} \bigr) \\
  &\lesssim \|P_c v_0\|_{L^2_x} + \|\sqrt{H} P_c v_0\|_{L^2_x} + \int_0^1 \| \alpha(x) v(s)^2 \|_{L^2_x} \, \ud s \\
  &\lesssim \|v_0\|_{H^1_x} + \int_0^1 \|\jx^{2\sigma} \alpha\|_{L^2_x} \|\jx^{-\sigma} v(s)\|_{L^\infty_x}^2 \, \ud s \\
  &\lesssim \varepsilon.
 \end{align*} 
 Then for times $t \geq 1$, we first peel off the leading order behavior of the variable coefficient quadratic nonlinearity in Duhamel's formula for $v(t)$ by inserting the function $w(t)$ defined in~\eqref{equ:definition_w} as well as the asymptotics for the coefficient $a(t)$ from Corollary~\ref{cor:asymptotics_a}. Exploiting the non-resonance assumption $\wtcalF[\alpha \varphi^2](\pm \sqrt{3}) = 0$, we may then integrate by parts in time in the leading order term in Duhamel's formula to recast it into a more favorable form. Subsequently, we apply the dispersive decay estimate~\eqref{equ:dispersive_decay_propagator} for the Klein-Gordon propagator to infer the decay estimate~\eqref{equ:thm1_Linfty_decay_nonresonant}. 
 
More specifically, we begin by writing for any $t \geq 1$,
\begin{equation} \label{equ:decomposition_v_for_Linfty_decay_nonresonant}
 \begin{aligned}
  v(t) &= e^{it\jtD} P_c v_0 + \frac{1}{2i} \int_0^t e^{i(t-s)\jtD} \jtD^{-1} P_c \bigl( \alpha(\cdot) u(s)^2 \bigr) \, \ud s \\
  &= e^{it\jtD} P_c v_0 + \frac{1}{2i} \int_0^1 e^{i(t-s)\jtD} \jtD^{-1} P_c \bigl( \alpha(\cdot) u(s)^2 \bigr) \, \ud s \\
  &\quad + \frac{1}{2i} \int_1^t e^{i(t-s)\jtD} \jtD^{-1} P_c \Bigl( \alpha(\cdot) \bigl( (v(s) + \bar{v}(s))^2 - (w(s) + \bar{w}(s))^2 \bigr) \Bigr) \, \ud s \\
  &\quad + \frac{1}{2i} \int_1^t \Bigl( e^{i(t-s)\jtD} \jtD^{-1} P_c \bigl( \alpha \varphi^2 \bigr) \Bigr) (a(s) + \bar{a}(s))^2 \, \ud s \\
  &\equiv I + II+ III + IV.
 \end{aligned}
\end{equation}
The $L^\infty_x$ bounds for the terms $I$ and $II$ in~\eqref{equ:decomposition_v_for_Linfty_decay_nonresonant} are straightforward and we omit the details.
We now consider the term $III$, afterwards we estimate the delicate term $IV$.
In the case of the term~$III$ in~\eqref{equ:decomposition_v_for_Linfty_decay_nonresonant}, we have by the dispersive decay estimate~\eqref{equ:dispersive_decay_propagator} (with $\mu =\frac12$) for all $t \geq 1$ that
\begin{align*}
 \|III\|_{L^\infty_x} &\lesssim \int_1^t \, \Bigl\| e^{i(t-s)\jtD} \jtD^{-1} P_c \Bigl( \alpha(\cdot) \bigl( (v(s) + \bar{v}(s))^2 - (w(s) + \bar{w}(s))^2 \bigr) \Bigr) \Bigr\|_{L^\infty_x} \, \ud s \\
 &\lesssim \int_1^t \frac{1}{(t-s)^\hf} \Bigl\| \jtD P_c \Bigl( \alpha(\cdot) \bigl( (v(s) + \bar{v}(s))^2 - (w(s) + \bar{w}(s))^2 \bigr) \Bigr) \Bigr\|_{L^1_x} \, \ud s. 
\end{align*}
We now show that
\begin{equation*}
 \Bigl\| \jtD P_c \Bigl( \alpha(\cdot) \bigl( (v(s) + \bar{v}(s))^2 - (w(s) + \bar{w}(s))^2 \bigr) \Bigr) \Bigr\|_{L^1_x} \lesssim \frac{\varepsilon^2}{\js^\thf}, \qquad s \geq 1,
\end{equation*}
which immediately implies the desired decay estimate 
\begin{equation*}
 \|III\|_{L^\infty_x} \lesssim \int_1^t \frac{1}{(t-s)^\hf} \frac{\varepsilon^2}{\js^\thf} \, \ud s \lesssim \frac{\varepsilon^2}{t^\hf}, \quad t \geq 1.
\end{equation*}
We may ignore the complex conjugates to simplify the notation and now prove that 
\begin{equation}
 \Bigl\| \jtD P_c \Bigl( \alpha(\cdot) \bigl( v(s)^2 - w(s)^2 \bigr) \Bigr) \Bigr\|_{L^1_x} \lesssim \frac{\varepsilon^2}{\js^\thf}, \qquad s \geq 1.
\end{equation}
To this end we first further decompose $v(s)$ into a low-energy and a high-energy part 
\begin{equation*}
 v(s) = \chi_0(H) v(s) + \bigl( 1 - \chi_0(H) \bigr) v(s),
\end{equation*}
where we recall that $\chi_0$ denotes a smooth bump function supported on $|\xi| \lesssim 1$ with $\chi_0(\xi) = 1$ near $\xi = 0$. We obtain 
\begin{equation} \label{equ:decomposition_nonresonant_necessary_sobolev}
 \begin{aligned}
  \jtD P_c \Bigl( \alpha(\cdot) \bigl( v(s)^2 - w(s)^2 \bigr) \Bigr) &= \jtD P_c \Bigl( \alpha(\cdot) \bigl( ( \chi_0(H) v(s) )^2 - w(s)^2 \bigr) \Bigr) \\
  &\quad + \jtD P_c \Bigl( \alpha(\cdot) \bigl(\chi_0(H) v(s)\bigr) \bigl( (1-\chi_0(H)) v(s) \bigr) \Bigr) \\
  &\quad + \jtD P_c \Bigl( \alpha(\cdot) \bigl( (1-\chi_0(H)) v(s) \bigr) v(s) \Bigr) \\
  &\equiv III_{(a)} + III_{(b)} + III_{(c)}.
 \end{aligned}
\end{equation}
By exploiting the faster local decay of the high-energy component $(1-\chi_0(H)) v$ of the solution as well as the faster local decay of $\sqrt{H} v$ established in Proposition~\ref{prop:local_decay_bounds}, the product estimate~\eqref{equ:prod_est1} already yields the desired bound for the last two terms on the right-hand side of~\eqref{equ:decomposition_nonresonant_necessary_sobolev}, 
\begin{equation*}
 \begin{aligned}
  \| III_{(b)} \|_{L^1_x} + \| III_{(c)} \|_{L^1_x} &\lesssim \|\jx^{1+2\sigma} \alpha\|_{L^\infty_x} \bigl( \|\jx^{-\sigma} v(s)\|_{L^2_x} + \|\jx^{-\sigma} \sqrt{H} v(s)\|_{L^2_x} \bigr) \times \\
  &\qquad \qquad \qquad \qquad \times \bigl( \|\jx^{-\sigma} (1-\chi_0(H)) v(s)\|_{L^2_x} + \|\jx^{-\sigma} \sqrt{H} v(s)\|_{L^2_x} \bigr) \\
  &\lesssim \frac{\varepsilon^2}{\js^\thf}.
 \end{aligned}
\end{equation*}
It remains to estimate the more subtle first term on the right-hand side of~\eqref{equ:decomposition_nonresonant_necessary_sobolev}. By H\"older's inequality, the equivalence of norms from Lemma~\ref{lem:equiv_norms}, and the usual product rule for the derivative, we have 
\begin{equation*}
 \begin{aligned}
  \| III_{(a)} \|_{L^1_x} &\lesssim \Bigl\| \jx^\sigma \jtD P_c \Bigl( \alpha(\cdot) \bigl( ( \chi_0(H) v(s) )^2 - w(s)^2 \bigr) \Bigr) \Bigr\|_{L^2_x} \\
  &\lesssim \bigl\| \jx^\sigma \alpha(x) \bigl( ( \chi_0(H) v(s) )^2 - w(s)^2 \bigr) \bigr\|_{H^1_x} \\
  &\lesssim \| \jx^{1+2\sigma} \alpha \|_{W^{1,\infty}_x} \bigl\| \jx^{-\sigma} \bigl( \chi_0(H) v(s) - w(s) \bigr) \bigr\|_{L^2_x} \bigl\| \jx^{-\sigma} \bigl( \chi_0(H) v(s) + w(s) \bigr) \bigr\|_{L^\infty_x} \\
  &\quad + \| \jx^{1+2\sigma} \alpha \|_{L^\infty_x} \bigl\| \jx^{-\sigma} \px \bigl( \chi_0(H) v(s) - w(s) \bigr) \bigr\|_{L^2_x} \bigl\| \jx^{-\sigma} \bigl( \chi_0(H) v(s) + w(s) \bigr) \bigr\|_{L^\infty_x} \\
  &\quad + \| \jx^{1+2\sigma} \alpha \|_{L^\infty_x} \bigl\| \jx^{-\sigma} \bigl( \chi_0(H) v(s) - w(s) \bigr) \bigr\|_{L^2_x} \bigl\| \jx^{-\sigma} \px \bigl( \chi_0(H) v(s) + w(s) \bigr) \bigr\|_{L^\infty_x}.
 \end{aligned}
\end{equation*}
The kernel bounds~\eqref{equ:wtilD_chi0_kernel_bound} and the local decay bound for $v(s)$ imply 
\begin{align*}
 \bigl\| \jx^{-\sigma} \bigl( \chi_0(H) v(s) \bigr) \bigr\|_{L^\infty_x} + \bigl\| \jx^{-\sigma} \px \bigl( \chi_0(H) v(s) \bigr) \bigr\|_{L^\infty_x} \lesssim \| \jx^{-\sigma} v(s) \|_{L^2_x} \lesssim \frac{\varepsilon}{\js^\hf},
\end{align*}
while the asymptotics for the coefficient function $a(s)$ from Corollary~\ref{cor:asymptotics_a} give
\begin{equation*}
 \| \jx^{-\sigma} w(s) \|_{L^\infty_x} + \| \jx^{-\sigma} \px w(s) \|_{L^\infty_x} \lesssim |a(s)| \bigl( \|\varphi\|_{L^\infty_x} + \|\px \varphi\|_{L^\infty_x} \bigr) \lesssim \frac{\varepsilon}{\js^\hf}.
\end{equation*}
Combining the preceding estimates with the faster local decay for $( \chi_0(H) v(s) - w(s) )$ and for $\px ( \chi_0(H) v(s) - w(s) )$ from Corollary~\ref{cor:local_decay_improved_low_energy_v_minus_w} given by
\begin{equation*}
 \bigl\| \jx^{-\sigma} \bigl( \chi_0(H) v(s) - w(s) \bigr) \bigr\|_{L^2_x} + \bigl\| \jx^{-\sigma} \px \bigl( \chi_0(H) v(s) - w(s) \bigr) \bigr\|_{L^2_x} \lesssim \frac{\varepsilon}{\js},
\end{equation*}
we arrive at the desired bound $\| III_{(a)} \|_{L^1_x} \lesssim \varepsilon^{-2} \js^{-\thf}$ for $s \geq 1$.

Finally, we consider the delicate term $IV$ in the decomposition~\eqref{equ:decomposition_v_for_Linfty_decay_nonresonant} of Duhamel's formula for~$v(t)$. We further decompose it by inserting the asymptotics for the coefficient $a(t)$ from Corollary~\ref{cor:asymptotics_a} and find that
\begin{equation} \label{equ:decomposition_v_for_Linfty_decay_nonresonant_delicate_term}
 \begin{aligned}
  IV &= c_0^2 \frac{a_0^2}{2} \int_1^t \Bigl( e^{i(t-s)\jtD} \jtD^{-1} P_c \bigl( \alpha \varphi^2 \bigr) \Bigr) \frac{e^{2is}}{s} \, \ud s \\
  &\quad + c_0^2 \frac{|a_0|^2}{i} \int_1^t \Bigl( e^{i(t-s)\jtD} \jtD^{-1} P_c \bigl( \alpha \varphi^2 \bigr) \Bigr) \frac{1}{s} \, \ud s \\
  &\quad - c_0^2 \frac{\bar{a}_0^2}{2} \int_1^t \Bigl( e^{i(t-s)\jtD} \jtD^{-1} P_c \bigl( \alpha \varphi^2 \bigr) \Bigr) \frac{e^{-2is}}{s} \, \ud s \\
  &\quad + \frac{1}{2i}  \int_1^t \Bigl( e^{i(t-s)\jtD} \jtD^{-1} P_c \bigl( \alpha \varphi^2 \bigr) \Bigr) \calO_{L^\infty_s} \Bigl( \frac{\varepsilon^2}{s^\thf}\Bigr) \, \ud s \\
  &\equiv IV_{(a)} + IV_{(b)} + IV_{(c)} + IV_{(d)}.
 \end{aligned}
\end{equation}
We observe that the term $IV_{(a)}$ can be thought of to determine the leading order behavior of $v(t)$, because on the distorted Fourier side in the integrand of $IV_{(a)}$ the phase of $e^{is(2-\jxi)}$  vanishes when $2-\jap{\xi} = 0$, i.e. for $\xi = \pm \sqrt{3}$. In contrast, the integrands in the terms $IV_{(b)}$ and $IV_{(c)}$ have better oscillatory behavior in time (at all frequencies) and the term $IV_{(d)}$ has better decay in $s$ of the integrand anyway. However, thanks to the non-resonance assumption $\widetilde{\calF}[\alpha \varphi^2](\pm \sqrt{3}) = 0$, we can still integrate by parts in time~$s$ in the delicate term $IV_{(a)}$ and cast it into a better form. We find that
\begin{align*}
 IV_{(a)} &= c_0^2 \frac{a_0^2}{2i} (2-\jtD)^{-1} \jtD^{-1} P_c \bigl( \alpha \varphi^2 \bigr) \frac{e^{2it}}{t} - c_0^2 \frac{a_0^2}{2i} \Bigl( e^{i(t-1)\jtD} (2-\jtD)^{-1} \jtD^{-1} P_c \bigl( \alpha \varphi^2 \bigr) \Bigr) e^{2i} \\
 &\quad + c_0^2 \frac{a_0^2}{2i} \int_1^t \Bigl( e^{i(t-s)\jtD} (2-\jtD)^{-1} \jtD^{-1} P_c \bigl( \alpha \varphi^2 \bigr) \Bigr) \frac{1}{s^2} \, \ud s.
\end{align*}
At this point we can infer the desired decay estimate for $IV_{(a)}$. For times $1 \leq t \leq 2$ we just use the Sobolev estimate from Lemma~\ref{lem:weighted_sobolev}, while we invoke the dispersive decay estimate~\eqref{equ:dispersive_decay_propagator} (with $\mu=\frac12$) and Lemma~\ref{lem:bound_sqrt3_vanishes} to obtain uniformly for all times $t \geq 2$ that
\begin{align*}
 \|IV_{(a)}\|_{L^\infty_x} &\lesssim \frac{|a_0|^2}{t} \bigl\| (2-\jtD)^{-1} \jtD^{-1} P_c \bigl( \alpha \varphi^2 \bigr) \bigr\|_{L^\infty_x} \\
 &\quad + \frac{|a_0|^2}{(t-1)^\hf} \bigl\| (2-\jtD)^{-1} \jtD P_c \bigl( \alpha \varphi^2 \bigr) \bigr\|_{L^1_x} \\
 &\quad + |a_0|^2 \int_1^t \frac{1}{(t-s)^\hf}  \bigl\| (2-\jtD)^{-1} \jtD P_c \bigl( \alpha \varphi^2 \bigr) \bigr\|_{L^1_x} \frac{1}{s^2} \, \ud s \\
 &\lesssim \bigl\| \jx^{\sigma+3} \alpha \varphi^2 \bigr\|_{L^2_x} \frac{|a_0|^2}{t^\hf}.
\end{align*}
The terms $IV_{(b)}$ and $IV_{(c)}$ can be estimated analogously after integrating by parts in time $s$, and the term $IV_{(d)}$ can be bounded directly.
This finishes the proof of the decay estimate~\eqref{equ:thm1_Linfty_decay_nonresonant} in the non-resonant case.

In order to specify the asymptotic behavior of the solution $v(t)$, we define the scattering data
\begin{equation}
 \begin{aligned}
  v_\infty &:= P_c v_0 + \frac{1}{2i} \int_0^1 e^{-i s \jtD} \jtD^{-1} P_c \bigl( \alpha(\cdot) u(s)^2 \bigr) \, \ud s \\
  &\quad + \frac{1}{2i} \int_1^\infty e^{-is\jtD} \jtD^{-1} P_c \Bigl( \alpha(\cdot) \bigl( (v(s) + \bar{v}(s))^2 - (w(s) + \bar{w}(s))^2 \bigr) \Bigr) \, \ud s \\
  &\quad + \frac{1}{2i} \int_1^\infty \Bigl( e^{-i s \jtD} \jtD^{-1} P_c \bigl( \alpha \varphi^2 \bigr) \Bigr) (a(s) + \bar{a}(s))^2 \, \ud s.
 \end{aligned}
\end{equation}
Then by mimicking the preceding arguments, it follows that $v_\infty \in H^2_x$ and that $v(t)$ scatters in $H^2_x$ to a free Klein-Gordon wave in the sense that 
\begin{equation*}
 \bigl\| v(t) - e^{it\jtD} v_\infty \bigr\|_{H^2_x} \lesssim \frac{\varepsilon^2}{\jt^\hf}, \quad t \geq 1.
\end{equation*}
This concludes the treatment of the non-resonant case.

\medskip 

\noindent \underline{{\it Resonant Case}:}
We begin with the proof of the decay estimate~\eqref{equ:thm1_Linfty_decay_resonant}. Again, it suffices to consider positive times $t > 0$. For times $0 < t \leq 1$ we just use the Sobolev estimate from Lemma~\ref{lem:weighted_sobolev} together with the local decay bounds~\eqref{equ:local_decay_bounds_for_v}, as in the preceding treatment of the non-resonant case.
Then it remains to consider times $t \geq 1$.
To this end we combine the dispersive decay estimate~\eqref{equ:dispersive_decay_propagator} for the Klein-Gordon propagator (with $\mu = \frac12$) and the product estimate~\eqref{equ:prod_est1} with the local decay bounds~\eqref{equ:local_decay_bounds_for_v} for $v(t)$, to infer from Duhamel's formula for $v(t)$ that uniformly for all $t \geq 1$,
\begin{align*}
 &\|v(t)\|_{L^\infty_x} \\
 &\lesssim \bigl\| e^{it\jtD} P_c v_0 \bigr\|_{L^\infty_x} + \int_0^t \bigl\| e^{i(t-s)\jtD} \jtD^{-1} P_c \bigl( \alpha(\cdot) u(s)^2 \bigr) \bigr\|_{L^\infty_x} \, \ud s \\
 &\lesssim \frac{\| \jtD^2 P_c v_0 \|_{L^1_x}}{t^\hf} + \int_0^t \frac{1}{(t-s)^\hf} \bigl\| \jtD P_c \bigl( \alpha(\cdot) u(s)^2 \bigr) \bigr\|_{L^1_x} \, \ud s \\
 &\lesssim \frac{\| \jx^\sigma \jtD^2 P_c v_0 \|_{L^2_x}}{t^\hf} + \int_0^t \frac{1}{(t-s)^\hf} \|\jx^{1+2\sigma}\alpha\|_{W^{1,\infty}_x} \bigl( \|\jx^{-\sigma} v(s)\|_{L^2_x} + \|\jx^{-\sigma} \sqrt{H} v(s)\|_{L^2_x} \bigr)^2 \, \ud s \\
 &\lesssim \frac{\| \jx^\sigma v_0 \|_{H^2_x}}{t^\hf} + \int_0^t \frac{1}{(t-s)^\hf} \frac{\varepsilon^2}{\js} \, \ud s \\
 &\lesssim \frac{\log(1+\jt)}{t^\hf} \varepsilon.
\end{align*}
This proves the decay estimate~\eqref{equ:thm1_Linfty_decay_resonant}.

Next, we consider the asymptotic behavior of the solution $v(t)$. 
To this end, we first decompose Duhamel's formula for $v(t)$ as in~\eqref{equ:decomposition_v_for_Linfty_decay_nonresonant} and \eqref{equ:decomposition_v_for_Linfty_decay_nonresonant_delicate_term} to write for times $t \geq 1$,
\begin{equation} \label{equ:decomposition_v_for_asymptotics}
 \begin{aligned}
  v(t) &= e^{it\jtD} P_c v_0 + \frac{1}{2i} \int_0^1 e^{i(t-s)\jtD} \jtD^{-1} P_c \bigl( \alpha(\cdot) u(s)^2 \bigr) \, \ud s \\
  &\quad + \frac{1}{2i} \int_1^t e^{i(t-s)\jtD} \jtD^{-1} P_c \Bigl( \alpha(\cdot) \bigl( (v(s) + \bar{v}(s))^2 - (w(s) + \bar{w}(s))^2 \bigr) \Bigr) \, \ud s \\
  &\quad + c_0^2 \frac{a_0^2}{2} \int_1^t \Bigl( e^{i(t-s)\jtD} \jtD^{-1} P_c \bigl( \alpha \varphi^2 \bigr) \Bigr) \frac{e^{2is}}{s} \, \ud s \\
  &\quad + c_0^2 \frac{|a_0|^2}{i} \int_1^t \Bigl( e^{i(t-s)\jtD} \jtD^{-1} P_c \bigl( \alpha \varphi^2 \bigr) \Bigr) \frac{1}{s} \, \ud s \\
  &\quad - c_0^2 \frac{\bar{a}_0^2}{2} \int_1^t \Bigl( e^{i(t-s)\jtD} \jtD^{-1} P_c\bigl( \alpha \varphi^2 \bigr) \Bigr) \frac{e^{-2is}}{s} \, \ud s \\
  &\quad + \frac{1}{2i}  \int_1^t \Bigl( e^{i(t-s)\jtD} \jtD^{-1} P_c \bigl( \alpha \varphi^2 \bigr) \Bigr) \calO_{L^\infty_s}\Bigl( \frac{\varepsilon^2}{s^\thf}\Bigr) \, \ud s.
 \end{aligned}
\end{equation}
In what follows we show that the modified scattering behavior of the nonlinear solution $v(t)$ is caused by the fourth term on the right-hand side of~\eqref{equ:decomposition_v_for_asymptotics}, which we denote by
\begin{equation*}
 v_{mod}(t) := c_0^2 \frac{a_0^2}{2} \int_1^t \Bigl( e^{i(t-s)\jtD} \jtD^{-1} P_c \bigl( \alpha \varphi^2 \bigr) \Bigr) \frac{e^{2is}}{s} \, \ud s.
\end{equation*} 
We group all other terms in Duhamel's formula~\eqref{equ:decomposition_v_for_asymptotics} for $v(t)$ into 
\begin{equation*}
 v_{free}(t) := v(t) - v_{mod}(t).
\end{equation*}
Proceeding as in the proof of the decay estimate~\eqref{equ:thm1_Linfty_decay_nonresonant} for the non-resonant case, we obtain the asserted decay estimate~\eqref{equ:thm1_Linfty_decay_vfree} for $v_{free}(t)$ given by
\begin{equation}
 \|v_{free}(t)\|_{L^\infty_x} \lesssim \frac{\varepsilon}{\jt^\hf}, \qquad t \geq 1.
\end{equation}
Moreover, upon defining the scattering data 
\begin{align*}
 v_\infty &:= P_c v_0 + \frac{1}{2i} \int_0^1 e^{-i s \jtD} \jtD^{-1} P_c \bigl( \alpha(\cdot) u(s)^2 \bigr) \, \ud s \\
  &\quad + \frac{1}{2i} \int_1^\infty e^{-is \jtD} \jtD^{-1} P_c \Bigl( \alpha(\cdot) \bigl( (v(s) + \bar{v}(s))^2 - (w(s) + \bar{w}(s))^2 \bigr) \Bigr) \, \ud s \\
  &\quad + c_0^2 \frac{|a_0|^2}{i} \int_1^\infty \Bigl( e^{-i s \jtD} \jtD^{-1} P_c \bigl( \alpha \varphi^2 \bigr) \Bigr) \frac{1}{s} \, \ud s \\
  &\quad - c_0^2 \frac{\bar{a}_0^2}{2} \int_1^\infty \Bigl( e^{-is \jtD} \jtD^{-1} P_c\bigl( \alpha \varphi^2 \bigr) \Bigr) \frac{e^{-2is}}{s} \, \ud s \\
  &\quad + \frac{1}{2i}  \int_1^\infty \Bigl( e^{-i s \jtD} \jtD^{-1} P_c \bigl( \alpha \varphi^2 \bigr) \Bigr) \calO_{L^\infty_s}\Bigl( \frac{\varepsilon^2}{s^\thf}\Bigr) \, \ud s,
\end{align*}
we find by proceding as in the non-resonant case that $v_\infty \in H^2_x$ and that $v_{free}(t)$ scatters in $H^2_x$ to a free Klein-Gordon wave in the sense that 
\begin{equation*}
 \bigl\| v_{free}(t) - e^{it\jtD} v_\infty \bigr\|_{H^2_x} \lesssim \frac{\varepsilon^2}{\jt^\hf}, \qquad t \geq 1.
\end{equation*}

\medskip 

Finally, we analyze the asymptotic behavior of $v_{mod}(t)$ for $t \gg 1$. Here we follow relatively closely the corresponding derivation in the proof of Theorem~1.1 in~\cite{LLS2}.
In what follows we use the short-hand notation
\begin{equation*}
 Y := \alpha \varphi^2.
\end{equation*}
Let $\psi \in C^\infty(\bbR)$ be a smooth bump function such that $\psi(\xi) = 1$ in a small neighborhood around $\xi = 0$ and such that 
\begin{equation} \label{equ:definition_bump_psi}
 \psi(\xi) = 0 \quad \text{for } |\xi| \geq \tilde{\delta}
\end{equation}
for some small $\tilde{\delta} \equiv \tilde{\delta}(\delta) > 0$, whose size will be specified further below.
Then we decompose the distorted Fourier transform $\wtilY(\xi)$ of $Y$ into 
\begin{equation*}
 \wtilY(\xi) = \wtilY_+(\xi) + \wtilY_-(\xi) + \wtilY_{nr}(\xi)
\end{equation*}
with 
\begin{equation*}
 \wtilY_\pm(\xi) := \psi(\xi \mp \sqrt{3}) \wtilY(\xi).
\end{equation*}
Correspondingly, we define for times $t \geq 1$,
\begin{align}
 v_{mod, \pm}(t) &:= c_0^2 \frac{a_0^2}{2} \int_1^t \Bigl( e^{i(t-s)\jtD} \jtD^{-1} P_c Y_\pm \Bigr) \frac{e^{2is}}{s} \, \ud s,  \label{equ:resonant_def_v_modpm} \\
 v_{mod, nr}(t) &:= c_0^2 \frac{a_0^2}{2} \int_1^t \Bigl( e^{i(t-s)\jtD} \jtD^{-1} P_c Y_{nr} \Bigr) \frac{e^{2is}}{s} \, \ud s.
\end{align}

\medskip 

\noindent {\it Decay of $v_{mod, nr}(t)$}:
Since $\wtilY_{nr}(\pm \sqrt{3}) = 0$ by construction, we can integrate by parts in time $s$ in the Duhamel integral for $v_{mod, nr}(t)$. Then using the standard dispersive decay estimate for the Klein-Gordon propagator from Lemma~\ref{lem:pw decay}, we obtain uniformly for all $t \geq 1$ that
\begin{align*}
 \|v_{mod,nr}(t)\|_{L^\infty_x} &\lesssim \frac{\varepsilon^2}{t^\hf}.
\end{align*}

\medskip 

\noindent {\it Decay of $v_{mod, \pm}(t)$ away from small conic neighborhoods of $x = \pm \frac{\sqrt{3}}{2} t$}:
It suffices to consider $v_{mod,+}(t)$, the treatment of $v_{mod,-}(t)$ being analogous. Assume that $x \geq 0$. Using the distorted Fourier transform and noting that $\wtilY_+(\xi)$ is supported on $(0,\infty)$, we write 
\begin{equation} \label{equ:representation_vmodplus_away_conic}
 \begin{aligned}
  v_{mod,+}(t,x) 
  &= c_0^2 \frac{a_0^2}{2 \sqrt{2\pi}} \int_1^t \int_\bbR T(\xi) m_+(x,\xi) e^{i( x\xi + (t-s)\jxi)} \jxi^{-1} \wtilY_+(\xi) \, \ud \xi \, \frac{e^{2is}}{s} \, \ud s.
 \end{aligned}
\end{equation}
The phase 
\begin{equation*}
 \phi(s,\xi; t,x) := x\xi + (t-s)\jxi
\end{equation*}
satisfies 
\begin{align*}
 \partial_\xi \phi(s, \xi; t, x) = x + (t-s) \frac{\xi}{\jap{\xi}}, \qquad \partial_\xi^2 \phi(s, \xi; t,x) = \frac{t-s}{\jap{\xi}^3}. 
\end{align*}
For any given $0 < \delta \ll 1$, we may choose the constant $\tilde{\delta} \equiv \tilde{\delta}(\delta) > 0$ in the definition~\eqref{equ:definition_bump_psi} of the cut-off funtion $\psi$ above so small such that
\begin{align*}
 \Bigl| \frac{\xi}{\jap{\xi}} - \Bigl( \pm \frac{\sqrt{3}}{2} \Bigr) \Bigr| \leq \frac{\delta}{2} \quad \text{ whenever } \quad \wtilY_+(\xi) \neq 0.
\end{align*}
Moreover, we have $|\partial_\xi^2 \phi(s,\xi; t,x)| \simeq (t-s)$ on the support of $\wtilY(\xi)$.
We distinguish two cases. 
Suppose $x \geq \bigl( \frac{\sqrt{3}}{2} + \delta \bigr) t$. Then on the support of $\wtilY_+(\xi)$ the phase satisfies 
\begin{align*}
 |\partial_\xi \phi| &\geq |x| - (t-s) \frac{|\xi|}{\jap{\xi}} \geq \Bigl( \frac{\sqrt{3}}{2} + \delta \Bigr) t - \Bigl( \frac{\sqrt{3}}{2} + \frac{\delta}{2} \Bigr) (t-s) \geq \frac{\delta}{2} t.
\end{align*}
Correspondingly, integrating by parts in $\xi$ and using Lemma~\ref{lem:m symb} as well as Lemma~\ref{lem:T}, we find 
\begin{align*}
 \bigl| v_{mod, +}(t,x) \bigr| \lesssim_{\delta, V} \int_1^t \frac{1}{t} \frac{\varepsilon^2}{s} \, \ud s \lesssim \frac{\varepsilon^2}{t^{1-}}.
\end{align*}
Now suppose $0 \leq x \leq \bigl( \frac{\sqrt{3}}{2} - \delta \bigr) t$. We divide the time integration interval into two subintervals $$[1, t] = [1, t_1] \cup [t_1, t],$$ where
\begin{align*}
 t_1 :=  \frac{\delta}{2 (\sqrt{3} + \delta)} t. 
\end{align*}
On the support of $\wtilY_+(\xi)$ the phase satisfies for $1 \leq s \leq t_1$ that
\begin{align*}
 |\partial_\xi \phi| &\geq t \frac{|\xi|}{\jap{\xi}} - |x| - s \frac{|\xi|}{\jap{\xi}} \geq t \Bigl( \frac{\sqrt{3}}{2} - \frac{\delta}{2} \Bigr) - t \Bigl( \frac{\sqrt{3}}{2} - \delta \Bigr) - \frac{\delta}{2 (\sqrt{3} + \delta)} t \Bigl( \frac{\sqrt{3}}{2} + \frac{\delta}{2} \Bigr) = \frac{\delta}{4} t.
\end{align*}
Integration by parts in $\xi$ therefore pays off for $1 \leq s \leq t_1$. Instead for times $s \geq t_1$ we can just use the usual $(t-s)^{-\frac{1}{2}}$ dispersive decay of the retarded Klein-Gordon propagator $e^{i(t-s)\jtD}$ from Lemma~\ref{lem:pw decay} and crudely bound $\frac{1}{s} \leq \frac{1}{t_1} \lesssim_\delta \frac{1}{t}$. Hence, in the case $0 \leq x \leq \bigl( \frac{\sqrt{3}}{2} - \delta \bigr) t$, we obtain that 
\begin{equation} 
 \begin{aligned}
  |v_{mod,+}(t,x)| &\lesssim_{\delta, V} \int_1^{t_1} \frac{1}{t} \frac{\varepsilon^2}{s} \, \ud s + \int_{t_1}^t \frac{1}{(t-s)^{\frac{1}{2}}}  \frac{\varepsilon^2}{t} \, \ud s \lesssim \frac{\varepsilon^2}{t^{1-}} + \frac{\varepsilon^2}{t^\hf} \lesssim \frac{\varepsilon^2}{t^\hf}.
 \end{aligned}
\end{equation}
If instead $x < 0$, we start from the representation~\eqref{equ:representation_vmodplus_away_conic} for $v_{mod, +}(t,x)$ and first express $T(\xi) f_+(x,\xi)$ in terms of $f_{-}(x,\cdot)$ using~\eqref{eq:TR}. Then we may proceed as above.

This concludes the derivation of the decay estimate~\eqref{equ:thm1_resonant_decay_off_rays} for $v_{mod}(t)$ away from small conic neighborhoods of the rays $x = \pm \frac{\sqrt{3}}{2} t$, as asserted in the statement of Theorem~\ref{thm:thm1}.

\medskip 

\noindent {{\it Asymptotics of $v_{mod, \pm}(t,x)$ along the rays $x = \pm \frac{\sqrt{3}}{2} t$}:}
We consider $v_{mod,-}(t,x)$ in detail, noting that the treatment of $v_{mod,+}(t,x)$ proceeds analogously. First, we may restrict the time integration in the definition of $v_{mod,-}(t,x)$ to times $1 \leq s \leq t-1$ at the expense of picking up a remainder term of order $\calO_{L^\infty_t}\bigl( \varepsilon^2 t^{-1} \bigr)$. Moreover, by Lemma~\ref{lem:asymptotics_KG} on the asymptotics of the Klein-Gordon propagator (and observing that $t^{-\hf} \jap{\xi_0}^\thf \jap{\xi_0}^{-1} = \rho^{-\hf}$ for $\xi_0$ as in the statement of Lemma~\ref{lem:asymptotics_KG}), we have for $1 \leq s \leq t-1$ that 
\begin{equation} \label{equ:resonant_asymptotics_retarded_free_along_ray}
 \begin{aligned}
  &\Bigl( e^{i(t-s)\jtD} \jtD^{-1} P_c Y_{-} \Bigr)\Bigl( \pm \frac{\sqrt{3}}{2} t \Bigr) \\
  &\qquad = \frac{e^{i \frac{\pi}{4}} e^{i\rho(t-s,\pm \frac{\sqrt{3}}{2} t )}}{\rho(t-s, \pm \frac{\sqrt{3}}{2} t )^{\frac{1}{2}}}  \wtilY_-\biggl( - \frac{\pm \frac{\sqrt{3}}{2} t}{\rho(t-s,\pm \frac{\sqrt{3}}{2} t )} \biggr) \one_{(-1,1)}\biggl( \frac{x}{t-s} \biggr) + \frac{1}{(t-s)^{\frac{2}{3}}} \calO \bigl( \| \jx Y_{-} \|_{H^2_x} \bigr),
 \end{aligned}
\end{equation}
where 
\begin{align*}
 \rho(t-s, {\textstyle \pm \frac{\sqrt{3}}{2} t}) &= \bigl( (t-s)^2 - {\textstyle \frac{3}{4}} t^2 \bigr)^{\frac{1}{2}} = {\textstyle \frac{t}{2}} \bigl( 1 - 8 {\textstyle \frac{s}{t}} + 4  {\textstyle \frac{s^2}{t^2}} \bigr)^{\frac{1}{2}}. 
\end{align*}
Inserting the asymptotics~\eqref{equ:resonant_asymptotics_retarded_free_along_ray} into~\eqref{equ:resonant_def_v_modpm} gives
\begin{equation*}
 v_{mod, -}\Bigl(t, \pm \frac{\sqrt{3}}{2} t \Bigr) = c_0^2 \frac{a_0^2}{2} \int_1^{t-\frac{\sqrt{3}}{2} t} \frac{e^{i \frac{\pi}{4}} e^{i\rho(t-s,\pm \frac{\sqrt{3}}{2} t )}}{\rho(t-s, \pm \frac{\sqrt{3}}{2} t )^{\frac{1}{2}}}  \wtilY_-\biggl( - \frac{\pm \frac{\sqrt{3}}{2} t}{\rho(t-s,\pm \frac{\sqrt{3}}{2} t )} \biggr) \frac{e^{2is}}{s} \, \ud s + \calO_{L^\infty_t} \Bigl( \frac{\varepsilon^2}{t^{\frac{2}{3}-}} \Bigr).
\end{equation*}
Since $\wtilY_-(\xi) = 0$ for $\xi > 0$, we have along the ray $x = -\frac{\sqrt{3}}{2} t$ that
\begin{equation*}
 v_{mod, -}\Bigl(t, - \frac{\sqrt{3}}{2} t \Bigr) = \calO_{L^\infty_t} \Bigl( \frac{\varepsilon^2}{t^{\frac{2}{3}-}} \Bigr).
\end{equation*}
Moreover, due to the sharp localization of the frequency support of $\wtilY_-(\xi)$ around $\xi = -\sqrt{3}$, for $t \gg 1$ the time integration in the last identity for $v_{mod,-}(t, \frac{\sqrt{3}}{2} t)$ is in fact only over an interval $1 \leq s \leq ct$ for some small constant $0 < c \ll 1$. Thus, along the ray $x = \frac{\sqrt{3}}{2} t$ one has that
\begin{equation} \label{equ:resonant_v_mod_minus_along_plus_ray}
 v_{mod,-}\Bigl(t, \frac{\sqrt{3}}{2} t \Bigr) = c_0^2 \frac{a_0^2}{2} \int_1^{ct} \frac{e^{i \frac{\pi}{4}} e^{i\rho(t-s, \frac{\sqrt{3}}{2} t )}}{\rho(t-s, \frac{\sqrt{3}}{2} t )^{\frac{1}{2}}}  \wtilY_-\biggl( - \frac{\frac{\sqrt{3}}{2} t}{\rho(t-s, \frac{\sqrt{3}}{2} t )} \biggr) \frac{e^{2is}}{s} \, \ud s + \calO_{L^\infty_t} \Bigl( \frac{\varepsilon^2}{t^{\frac{2}{3}-}} \Bigr).
\end{equation}
In view of the approximate identities
\begin{align*}
 - \frac{ \frac{\sqrt{3}}{2} t }{ \rho(t-s,  \frac{\sqrt{3}}{2} t) } &= -\sqrt{3} + \calO \Bigl( \frac{s}{t} \Bigr), \qquad \frac{1}{\rho(t-s, \frac{\sqrt{3}}{2} t)^{\frac{1}{2}}} = \frac{\sqrt{2}}{t^{\frac{1}{2}}} + \calO \Bigl( \frac{s}{t^{\frac{3}{2}}} \Bigr), 
\end{align*}
it follows that 
\begin{equation} \label{equ:resonant_v_mod_minus_along_plus_ray_almost_done}
 v_{mod,-}\Bigl(t, \frac{\sqrt{3}}{2} t \Bigr) = c_0^2 \frac{a_0^2}{\sqrt{2}} e^{i\frac{\pi}{4}} \wtilY(-\sqrt{3}) \frac{1}{t^{\frac{1}{2}}} \int_1^{ct} e^{i(\rho(t-s,\frac{\sqrt{3}}{2} t) + 2s)} \frac{1}{s} \, \ud s + \calO_{L^\infty_t} \Bigl( \frac{\varepsilon^2}{t^\hf} \Bigr).
\end{equation}
Now we observe that the phase 
\begin{equation*}
 \phi(s;t) := \rho \Bigl(t-s,\frac{\sqrt{3}}{2} t\Bigr) + 2s
\end{equation*}
is stationary at $s=0$ and that its Taylor expansion about $s=0$ is of the form
\begin{equation*}
 \phi(s;t) = \frac{t}{2} + \calO \Bigl( \frac{s^2}{t} \Bigr).
\end{equation*}
Thus, for times $1 \leq s \ll t^\hf$ the phase $\phi(s;t)$ is essentially constant and the integrand in~\eqref{equ:resonant_v_mod_minus_along_plus_ray_almost_done} is effectively monotone, which leads to the buildup of a $\log(t)$ factor. In order to arrive at a sharp formula for the asymptotics, we split the time integration interval into the two subintervals $1 \leq s \leq 10^{-3} t^{\frac{1}{2}}$ and $10^{-3} t^{\frac{1}{2}} \leq s \leq c t$. 
For the interval $1 \leq s \leq 10^{-3} t^\hf$ we compute that
\begin{align*}
 \int_1^{10^{-3} t^{\frac{1}{2}}} e^{i \phi(s;t)} \frac{1}{s} \, \ud s &= e^{i \frac{t}{2}} \int_1^{10^{-3} t^{\frac{1}{2}}} \frac{1}{s} \, \ud s + \int_1^{10^{-3} t^{\frac{1}{2}}} \calO \Bigl( \frac{s}{t} \Bigr) \, \ud s = \frac{e^{i \frac{t}{2}}}{2} \log(t) + \calO(1).
\end{align*}
Instead, on the interval $10^{-3} t^{\frac{1}{2}} \leq s \leq c t$ we integrate by parts. Since $\partial_s \phi(s;t) = \calO \bigl( \frac{s}{t} \bigr)$ and $\partial_s^2 \phi(s;t) = \calO \bigl( \frac{1}{t} \bigr)$ on that time integration interval, we find 
\begin{equation*}
 \biggl| \int_{10^{-3} t^{\frac{1}{2}}}^{c t} e^{i \phi(s;t)} \frac{1}{s} \, \ud s \biggr| \lesssim \int_{10^{-3} t^{\frac{1}{2}}}^{ct} \frac{t}{s^3} \, \ud s + \biggl| \frac{t}{s^2} \Bigr|_{s=10^{-3} t^\hf}^{s=ct} \biggr| \lesssim 1.
\end{equation*}
Hence, we obtain the asymptotics
\begin{equation*}
 v_{mod,-}\Bigl(t, \frac{\sqrt{3}}{2} t \Bigr) = c_0^2 \frac{a_0^2}{\sqrt{8}} e^{i\frac{\pi}{4}} e^{i \frac{t}{2}} \widetilde{\calF}[\alpha \varphi^2](-\sqrt{3}) \frac{\log(t)}{t^{\frac{1}{2}}} + \calO_{L^\infty_t} \Bigl( \frac{\varepsilon^2}{t^\hf} \Bigr), \quad t \gg 1.
\end{equation*}
This finishes the proof of Theorem~\ref{thm:thm1}.

\begin{remark} \label{rem:profile_log_divergence}
In the resonant case when $a_0 \neq 0$ the distorted Fourier transform of the profile $g(t) := e^{-it\jtD} v(t)$ of the solution $v(t)$ to \eqref{equ:thm1_nlkg} diverges logarithmically at the frequencies $\xi = \pm \sqrt{3}$, specifically we have 
\begin{equation*}
 \begin{aligned}
  \widetilde{g}(t, \pm \sqrt{3}) = c_0^2 \frac{a_0^2}{4} \widetilde{\calF}[ \alpha \varphi^2 ](\pm \sqrt{3}) \log(t) + \calO(\varepsilon), \quad t \gg 1.
 \end{aligned}
\end{equation*}
To see this, recall that $v_{mod}(t)$ is the fourth term on the right-hand side of the decomposition~\eqref{equ:decomposition_v_for_asymptotics} of the Duhamel formula for $v(t)$. The distorted Fourier transform of the profile $g_{mod}(t) := e^{-it\jtD} v_{mod}(t)$ of $v_{mod}(t)$ is given by
\begin{equation*}
 \begin{aligned}
  \widetilde{g}_{mod}(t,\xi) = c_0^2 \frac{a_0^2}{2} \jxi^{-1} \widetilde{\calF}[ \alpha \varphi^2 ](\xi) \int_1^t e^{is(2-\jxi)} \frac{1}{s} \, \ud s.
 \end{aligned}
\end{equation*}
Since $2-\jxi = 0$ for $\xi = \pm \sqrt{3}$, we correspondingly obtain that $\widetilde{g}_{mod}(t,\xi)$ diverges logarithmically at the frequencies $\xi = \pm \sqrt{3}$, 
\begin{equation*}
 \begin{aligned}
  \widetilde{g}_{mod}(t, \pm \sqrt{3}) = c_0^2 \frac{a_0^2}{4} \widetilde{\calF}[ \alpha \varphi^2 ](\xi) \int_1^t \frac{1}{s} \, \ud s = c_0^2 \frac{a_0^2}{4} \widetilde{\calF}[ \alpha \varphi^2 ](\xi) \log(t).
 \end{aligned}
\end{equation*}
The contributions of all other terms on the right-hand side of the decomposition~\eqref{equ:decomposition_v_for_asymptotics} to the distorted Fourier transform $\tilde{g}(t,\xi)$ of the profile are uniformly bounded in time (at all frequencies), which follows readily using the local decay bounds for $v(t)$ from Proposition~\ref{prop:local_decay_bounds}. For the contributions of the fifth and sixth terms one additionally has to exploit the oscillations. 
\end{remark}

\bibliographystyle{amsplain}
\bibliography{references}

\end{document}